\documentclass[a4paper,10pt]{amsart}
\usepackage[utf8]{inputenc}
\usepackage{amsthm}
\usepackage{amsmath}
\usepackage{bm}
\usepackage{amsfonts}
\usepackage{amssymb}
\usepackage{mathtools}
\usepackage{mathrsfs}
\usepackage{setspace}
\usepackage{xcolor}
\usepackage{textgreek}
\usepackage{todonotes}
\usepackage[a4paper, left=3.5cm, right=3.5cm, top=4cm, bottom=4cm]{geometry}
\usepackage{thmtools} 
\usepackage{soul}

\usepackage[pdfdisplaydoctitle,colorlinks,breaklinks,urlcolor=blue,linkcolor=blue,citecolor=blue]{hyperref} 

\newtheorem*{thm*}{Theorem}
\newtheorem{thm}{Theorem}[section]

\newtheorem{defn}[thm]{Definition}
\newtheorem{cor}[thm]{Corollary}

\newtheorem{lem}[thm]{Lemma}

\newtheorem{prop}[thm]{Proposition}

\newcommand{\N}{\mathbb{N}}
\newcommand{\R}{\mathbb{R}}
\newcommand{\PP}{\mathbb{P}}

\theoremstyle{remark}
\newtheorem{rmk}[thm]{Remark}

\numberwithin{equation}{section}



\title[Perturbation theory two-scale systems in fluid dynamics]{Second order perturbation theory of two-scale systems in fluid dynamics}
\author[A. Debussche]{Arnaud Debussche}
  \address{Univ Rennes, CNRS, IRMAR - UMR 6625, F-35000 Rennes, France}
  \email{\href{arnaud.debussche(at)ens-rennes.fr}{arnaud.debussche(at)ens-rennes.fr}}
\author[U. Pappalettera]{Umberto Pappalettera}
  \address{Scuola Normale Superiore, Piazza dei Cavalieri, 7, 56126 Pisa, Italia}
  \email{\href{umberto.pappalettera(at)sns.it}{umberto.pappalettera(at)sns.it}}
\keywords{Fluid dynamics, Transport noise, It\"o-Stokes drift.}
\date\today

\begin{document}

\begin{abstract}
In the present paper we study slow-fast systems of coupled equations from fluid dynamics, where the fast component is perturbed by additive noise.
We prove that, under a suitable limit of infinite separation of scales, the slow component of the system converges in law to a solution of the initial equation perturbed with transport noise, and subject to the influence of an additional It\=o-Stokes drift.
The obtained limit equation is very similar to turbulent models derived heuristically. 
Our results apply to the Navier-Stokes equations in dimension $d=2,3$; the Surface Quasi-Geostrophic equations in dimension $d=2$; and the Primitive equations in dimension $d=2,3$.
\end{abstract}

\maketitle

\section{Introduction}
Let $T>0$ be fixed. 
In this work we consider fast-slow systems of coupled abstract equations of fluid dynamics
\begin{align} \label{eq:system}
\begin{cases}
du^\epsilon_t = Au^\epsilon_t dt + b(u^\epsilon_t,u^\epsilon_t) dt + b(v^\epsilon_t,u^\epsilon_t) dt,
\\
dv^\epsilon_t = \epsilon^{-1} Cv^\epsilon_t dt + Av^\epsilon dt + b(u^\epsilon_t,v^\epsilon_t) dt + b(v^\epsilon_t,v^\epsilon_t) dt + \epsilon^{-1} Q^{1/2}dW_t,
\end{cases}
\end{align}
where $t \in [0,T]$, $\epsilon\in(0,1)$ is a small parameter indicating separation of scales, and $Q^{1/2} dW$ is an additive Gaussian noise, white in time and coloured in space.
$A$ and $C$ are (possibly unbounded) negative definite linear operators on a separable Hilbert space $H$, and should be interpreted as dissipation terms. The map $b:H \times H \to H$ is bilinear and enjoys suitable properties detailed below.
In the present paper, as examples of physical equations for $u^\epsilon$, $v^\epsilon$ that can be coupled into \eqref{eq:system} and described within this formalism, we consider: the Navier-Stokes equations in dimension $d=2,3$; the Surface Quasi-Geostrophic equations in dimension $d=2$; the Primitive equations in dimension $d=2,3$. 
Our goal is to describe the asymptotic behaviour of the slow component $u^\epsilon$ as the scaling parameter $\epsilon\to 0$. 

System \eqref{eq:system} above aims to describe heuristically the dynamics of a coupled fast-slow system, where $u^\epsilon$ (resp. $v^\epsilon$) corresponds to the slow-varying, large-scale (resp. fast-varying, small-scale) component of the fluid.
It is not clear that such a separation of scales holds so strictly in fluids, however \eqref{eq:system} can be a physically sensible, yet mathematically challenging starting point in the investigation on turbulence in fluids. 
Indeed, \emph{"turbulent flows contain self-sustaining velocity fluctuations in addition to the main flow"} \cite[Chapter 26]{Pa13}, and additive noise in fluid dynamics equations has been widely used for decades to capture the statistical properties of turbulent fluids \cite{BeFe20,DPDe03,FlRo08,HaMa06}.
 
Under suitable conditions, we are able to prove the following result (for a precise statement thereof see \autoref{thm:main}):
\begin{thm*} 
Let $\{u^\epsilon\}_{\epsilon\in(0,1)}$ be a family of solutions to \eqref{eq:system} in the sense of \autoref{def:sol} below. Then $\{u^\epsilon\}_{\epsilon\in(0,1)}$ admits converging in law subsequences, and every weak accumulation point $u$ solves the equation with transport noise and It\=o-Stokes drift velocity $r$:
\begin{align} \label{eq:u_trans}
du_t = Au_t dt + b(u_t,u_t) dt + b((-C)^{-1}Q^{1/2}\circ dW_t,u_t) +  b(r,u_t) dt. 
\end{align}
In addition, if pathwise uniqueness holds for \eqref{eq:system} and \eqref{eq:u_trans} then the whole sequence $u^\epsilon$ converges to $u$ in probability. 
\end{thm*}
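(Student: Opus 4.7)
My approach is a stochastic-averaging argument in the Khasminskii--Papanicolaou spirit, tailored to the fluid bilinearity $b$. The heuristic is that on the fast time scale $v^\epsilon$ behaves like a rescaled Ornstein--Uhlenbeck process with stationary covariance $\tfrac{1}{2}(-C)^{-1}Q$ under the rescaling $\tilde v^\epsilon := \epsilon^{1/2}v^\epsilon$, while the effective transport noise driving $u^\epsilon$ emerges from the identity $\epsilon^{-1}Cv^\epsilon\,dt+\epsilon^{-1}Q^{1/2}dW_t \approx -dv^\epsilon$ read off from the $v^\epsilon$-equation. The plan is: uniform moment bounds, tightness, identification via a corrector substitution, and finally the convergence-in-probability upgrade.

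The first step is uniform moment estimates. Applying It\^o's formula to $\|v^\epsilon\|^2$ and using the skew-symmetry identities $\langle v,b(w,v)\rangle=0$ typical of incompressible fluid nonlinearities, the dissipation $\epsilon^{-1}\langle -Cv^\epsilon,v^\epsilon\rangle$ balances the noise intensity $\epsilon^{-2}\mathrm{Tr}(Q)$ and yields $\mathbb{E}\int_0^T\|(-C)^{1/2}v^\epsilon_s\|^2 ds=O(\epsilon^{-1})$; equivalently, $\tilde v^\epsilon$ is uniformly bounded in the natural energy norm. Combined with standard energy estimates for $u^\epsilon$, whose coupling term $b(v^\epsilon,u^\epsilon)$ is controlled via the bound on $v^\epsilon$, this yields tightness of $\{\mathrm{Law}(u^\epsilon)\}$ in a suitable path space.

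The heart of the argument is a corrector substitution. Solving the $v^\epsilon$-equation for $(-C)v^\epsilon\,dt$ and applying $(-C)^{-1}$ gives
\begin{align*}
v^\epsilon_t\,dt=(-C)^{-1}Q^{1/2}dW_t-\epsilon(-C)^{-1}dv^\epsilon_t+\epsilon(-C)^{-1}\bigl[Av^\epsilon_t+b(u^\epsilon_t,v^\epsilon_t)+b(v^\epsilon_t,v^\epsilon_t)\bigr]dt,
\end{align*}
which is substituted into the pathwise integral $\int_0^t b(v^\epsilon_s,u^\epsilon_s)ds$. The $(-C)^{-1}Q^{1/2}dW$ piece, combined via integration by parts with the contribution of the $dv^\epsilon$ piece, converges---by a Wong--Zakai-type argument for the fast OU process---to the Stratonovich transport integral $\int_0^t b((-C)^{-1}Q^{1/2}\circ dW_s,u_s)$; crucially, the It\^o-to-Stratonovich correction emerges from ergodic averaging of the cross-quadratic $b((-C)^{-1}\tilde v^\epsilon_s,b(\tilde v^\epsilon_s,u^\epsilon_s))$ appearing in the bulk part of the integration by parts. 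The remaining corrector terms ($Av^\epsilon$, $b(u^\epsilon,v^\epsilon)$, and boundary pieces) are $o(1)$, while the quadratic-in-$v^\epsilon$ piece, after rescaling $\epsilon b(v^\epsilon,v^\epsilon)=b(\tilde v^\epsilon,\tilde v^\epsilon)$, is $O(1)$ and by ergodic averaging converges to the It\^o--Stokes drift $\int_0^t b(r,u_s)ds$ with $r=(-C)^{-1}\mathbb{E}[b(\tilde v,\tilde v)]$, $\tilde v$ denoting the stationary law of the rescaled fast process. After extracting an a.s.\ convergent subsequence via Skorokhod/Jakubowski representation, every accumulation point satisfies \eqref{eq:u_trans}.

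The principal obstacle is the joint limit in the nonlinear correctors: one must replace time-averages of functionals of the fast variable by their invariant means \emph{while} $u^\epsilon$ evolves on the slow scale, requiring a quantitative mixing estimate for the fast OU dynamics combined with suitable weak continuity of $b$ in the available Sobolev scale---a delicate point given the unboundedness of $b$ in fluid settings, and the main source of technical work across the various examples (Navier--Stokes, SQG, Primitive equations). Once every weak accumulation point is identified as a solution of \eqref{eq:u_trans}, the convergence-in-probability statement under pathwise uniqueness is a standard application of the Gy\"ongy--Krylov lemma to pairs $(u^{\epsilon_n},u^{\epsilon_m})$: any joint weak limit concentrates on the diagonal, whence the whole family $\{u^\epsilon\}$ converges in probability to the unique solution $u$.
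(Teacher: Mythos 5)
Your route is genuinely different from the one in the paper: you integrate the fast equation once in time and substitute $v^\epsilon_s\,ds$ back into the slow equation (a pathwise Wong--Zakai/Khasminskii argument in the spirit of \cite{FlPa21,FlPa22+}), whereas the paper runs the perturbed test function method: correctors $\varphi_1^\epsilon,\varphi_2^\epsilon$ are built as solutions of Poisson equations for the Ornstein--Uhlenbeck generator $\mathscr{L}^\epsilon_y$ (\autoref{sec:poisson}), and the limit is identified through a martingale problem. Your decomposition does produce the correct three limit objects --- the It\=o transport integral, the Stratonovich corrector arising from the cross-quadratic term after integration by parts, and the It\=o--Stokes drift with the right formula $r=\int(-C)^{-1}b(w,w)\,d\mu(w)$ --- and your final Gy\"ongy--Krylov step coincides with the paper's.

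There are, however, two genuine gaps. First, all your ergodic-averaging steps are performed as if the fast component were an Ornstein--Uhlenbeck process, but $v^\epsilon$ solves a nonlinear equation; nothing in your argument controls the distance between $v^\epsilon$ and its linearisation. This is not cosmetic: the invariant measure, the mixing rates and the very meaning of ``stationary law of the rescaled fast process'' are only available for the linear process, and the paper must first prove the quantitative bound $\epsilon^{-1}\int_0^T\mathbb{E}\left[\|y^\epsilon_s-Y^\epsilon_s\|^2_{H^\gamma}\right]ds\lesssim1$ (\autoref{prop:y-Y}) and the accompanying substitution estimates (\autoref{prop:linearisation}) before any averaging can be done. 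Second, you flag the replacement of time averages of functionals of the fast variable by their invariant means, jointly with the slow evolution of $u^\epsilon$, as ``the principal obstacle,'' but you do not supply the argument. The functionals involved, e.g.\ $b((-C)^{-1}\tilde v,b(\tilde v,u))$ and $(-C)^{-1}b(\tilde v,\tilde v)$, are unbounded quadratic forms on Sobolev spaces, so a soft ``quantitative mixing plus weak continuity of $b$'' statement does not close the estimate; one needs uniform-in-$\epsilon$ solvability and regularity of the Poisson equation $\mathscr{L}^\epsilon_y\phi=-\psi$ for quadratic data $\psi$, including control of $D_y\phi$ and $D_u\phi$ (the content of \autoref{prop:inverse_theta} and \autoref{sec:test}), or an equivalent quantitative averaging lemma that you would have to build from scratch. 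As it stands the proposal is a correct roadmap whose two hardest steps --- the linearisation of the fast variable and the averaging of unbounded quadratic functionals --- are asserted rather than proved, and these are precisely the steps the paper's corrector machinery is designed to carry out.
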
 

The It\=o-Stokes drift velocity $r$, usually defined as the difference between Lagrangian and Eulerian average flows, has important consequences in wave-induced sediment transport and sandbar migration in the coastal zone, as well as transport of heat, salt and other natural or man-made tracers in the upper ocean layer \cite{BrBr18}.
Here we find a precise expression for $r$, related to the invariant measure $\mu$ of the linearised equation $dv_t=Cv_t dt + Q^{1/2}dW_t$  via the formula
\begin{align*}
r = \int (-C)^{-1} b(w,w) d\mu(w).
\end{align*}
Equation \eqref{eq:u_trans} contains a transport noise. 
There have been heuristic arguments developed to derive transport noise in Navier-Stokes and Euler equations by perturbing deterministic balance laws \cite{MiRo04}, variational principles \cite{Ho15} or homogenization techniques \cite{CoGoHo17}. Ideas of  \cite{MiRo04} have been extended and systematically used to derive stochastic fluid  models in a series of work initiated in \cite{Memin14}. Compared to the previous ones, the models in \cite{BaChChLiMe20} also contain the It\=o-Stokes drift, cf. (11b) therein. It is remarkable that we recover exactly the same expression for this It\=o-Stokes drift. 

More recently, in the particular case $A$ equals to the Laplace operator and $C=-Id$, \cite{FlPa21,FlPa22+} considered a similar problem as ours and were able to obtain similar results for fluids in dimension two by proving a Wong-Zakai-type convergence for the (stochastic) characteristics associated with \eqref{eq:system}. In these articles, the
It\=o-Stokes drift was not apparent in the limit model because of their assumptions on the spatial structure of the noise. Also the arguments do not seem to be applicable to more complicated models.

{
The present paper adds to this picture and goes much further. The transport noise  is justified rigorously for many fluid models through a very robust method, the adequat generalization of the classical perturbed test function method to partial differential equations. Also, up to our knowledge, it is the first rigorous derivation of a stochastic fluid model containing the It\=o-Stokes drift. 
In fact, the transport noise in the limit equation comes from a diffusion-approximation argument (instead of Wong-Zakai type results as in \cite{FlPa21,FlPa22+}), while the It\=o-Stokes drift is due to an average constraint similar to those appearing in homogenization theory. Both phenomena are handled by the perturbed test function method. 

In addition, as already mentioned our results are very general and we are able to study many different systems, some of them notably difficult to study in the Lagrangian formulation, highlighting the fundamental nature of transport noise in fluids. More precisely, we show that our method also applies to the Surface Quasi-Geostrophic equations and to the Primitive equations. 
In each case, we derive stochastic fluid models containing a transport noise and a It\=o-Stokes drift, they are very similar to the models obtained by E. M\'emin and his co-authors (\cite{BaChChLiMe20}), justifying their use through rigorous mathematical arguments. We restrict to these examples but, as argued in \cite{AHHS22}, \cite{AHHS22+}, \cite{FLL23}, \cite{LT22}, 
our arguments  can be used to study other climate models. Note also that the authors of \cite{Ag22}, \cite{AgVe22} mention our 
work as a way to justify their models for an application in a totally different context. More generally, we believe that many two-scales partial differential equations models can and will be treated thanks to suitable adaptations of our arguments. 

The perturbed test function method has been introduced in \cite{PaStVa77} and is now a standard method to study finite dimensional approximation-diffusion problems. It has been generalized to infinite dimensional problems recently (see for instance \cite{dBGa12,DedMVo16,DeVo12, DeVo21}). However the partial differential equations considered there were simpler and the present work is the first application of this method to nonlinear fluid models such as the ones considered here.
 
Also, the fast variable is $v^\epsilon$, solution of a nonlinear partial differential equation. This creates difficulties since, in the context of partial differential equations, the correctors have a more complex expressions than in the classical finite dimensional framework. They involve the fast variable and its spatial derivatives. It is well known that $v^\epsilon$ does not have much spatial smoothness for realistic three dimensional models such as the ones we want to handle. We see below that in fact we can use a linearization trick and replace  $v^\epsilon$ by an infinite dimensional Ornstein-Uhlenbeck process, solution of a linear problem. Also, a central tool of the method is the generator of the fast variable. Here, this generator is an infinite dimensional differential operator. We analyse this operator and prove that its domains contains the functions necessary to perform the method: prove tightness of $u^\epsilon$ and take the limit in the infinite dimensional problem thanks to the correctors. Among the various difficulties this implies, let us mention the uniform bounds on the correctors and the identification of the terms appearing at the limit, namely the transport noise, the associate It\=o-Stratonovitch corrector and the It\=o-Stokes drift. }

Besides that, in \autoref{sec:eddy} we restrict our attention to the Navier-Stokes system and discuss the interesting problem of the limit behaviour of $u^\epsilon$ when the covariance operator $Q=Q_\epsilon$ is depending on the scaling parameter $\epsilon\in(0,1)$.
Mostly inspired by \cite{FlLu21} (cf. also the series of works \cite{FlGaLu21+,FlGaLu22,FlLu20}), where the authors prove the convergence, in a certain scaling limit, of the solution of Navier-Stokes equations perturbed by transport noise to their deterministic counterpart with extra dissipation, we identify conditions under which  $u^\epsilon$ converges in law towards a process $u$, that solves the deterministic Navier-Stokes equations with an additional dissipative term $\kappa$ (\autoref{thm:deterministic_limit}):
\begin{align*}
du_t
&= 
A u_t dt + b(u_t,u_t) dt+ \kappa(u_t)dt.
\end{align*}
This is the first result showing that eddy viscosity in 3D Navier-Stokes equation can be created by additive noise. 
Notice that in the equation above we have neither the stochastic integral (because of the scaling) nor the It\=o-Stokes drift (because of the isotropy of the noise chosen in \cite{FlLu21}, see also part \emph{ii}) of \autoref{rmk:commuting+stokes}).

Fluid dynamics equations perturbed by noise of transport type have the subject of a lot of research recently:
\begin{align*}
du_t = Au_t dt + b(u_t,u_t) dt + b((-C)^{-1}Q^{1/2}\circ dW_t,u_t).
\end{align*} 
They stand out for often enjoying well-posedness \cite{BrFlMa16,BrCaFl92,BrSl20+,FlGaLu21c,FlGuPr10,FlLu21,FlMaNe14,FlOl18,MiRo05}, and manifest properties as enhanced dissipation \cite{FlGaLu21+,GeYa21+,Lu21+} and mixing \cite{FlGaLu21+,FlGaLu22}, typical of turbulent fluids.
Still, unlike the case of additive noise that is widely accepted as source of randomness, transport noise needs a more careful justification.
For instance, in \cite{MaKr99} the authors write: \emph{"In principle, the turbulent velocity which advects the passive scalar should be a solution to the Navier-Stokes equations [...] with some external stirring which maintains the fluid in a turbulent state. But the analytical representation of such solutions corresponding to complex, especially turbulent flows, are typically unwieldy or unknown.
We shall therefore instead utilize simplified velocity field models which exhibit some empirical features of turbulent or other flows, though these models may not be actual solutions to the
Navier-Stokes equations."}
More recently, passive scalars advected by solutions to the stochastic Navier-Stokes equations have been studied in the series of papers \cite{BeBlPu20,BeBlPu22b,BeBlPu22c,BeBlPu22}.

In view of this, and being aware of all the differences and limitations of our model, we look at \eqref{eq:system} as a more accurate way to model the evolution of a fluid $u^\epsilon$ subject to the influence of a turbulent component $v^\epsilon$ and believe that our results provides a good justification of transport noise. 
The choice of the parameter $\epsilon^{-1}$ in front of both noise and dissipation is appropriate when looking at some particular systems from to the point of view of a large-scale observer, see for instance \cite{FlPa21,FlPa22+}, and we adopt the same scaling in the present paper by analogy.
Also, the reader can notice a strong similarity with the theory of stochastic model reduction developed in \cite{MaTiVa01}, where the authors study finite-dimensional stochastic systems with quadratic nonlinearities and two widely separated time scales as an approximation of geophysical PDEs, with noise replacing the self-interaction of unresolved variables; however, their results were obtained by formal second-order perturbation theory following \cite{Ku73} and lack explicit error estimates, thus it is not clear whether they can be extended to infinite-dimensional frameworks like ours.

\subsection{Perturbed test function method} \label{ssec:perturbed_test_function}

Let us describe the main technique allowing us to determine the limiting behaviour of the slow process $u^\epsilon$ as $\epsilon \to 0$.
The method here presented is originally due to Papanicolaou, Stroock and Varadhan \cite{PaStVa77}. The reader may find new developments and a presentation in the book \cite{FGPS}. It has been recently extended to infinite dimension and partial differential equations, see for instance \cite{dBGa12,DedMVo16,DeVo12, DeVo21}.

Let us consider again system \eqref{eq:system}, and denote $y^\epsilon_t = \epsilon^{1/2} v^\epsilon_t$:
\begin{align} \label{eq:system_2}
\begin{cases}
du^\epsilon_t = Au^\epsilon_t dt + b(u^\epsilon_t,u^\epsilon_t) dt + \epsilon^{-1/2} b(y^\epsilon_t,u^\epsilon_t) dt,
\\
dy^\epsilon_t = \epsilon^{-1} Cy^\epsilon_t dt + A y^\epsilon_t dt + b(u^\epsilon_t,y^\epsilon_t) dt + \epsilon^{-1/2}b(y^\epsilon_t,y^\epsilon_t) dt + \epsilon^{-1/2} Q^{1/2} dW_t.
\end{cases}
\end{align}
{ In order to better present the ideas, let us suppose for the moment that $(u^\epsilon,y^\epsilon)$ is a Markov process whose evolution is described by its infinitesimal generator $\mathscr{L}^\epsilon$}, which takes the following form when applied to a suitable test function $\varphi$:
\begin{align*}
\mathscr{L}^\epsilon \varphi (u,y)
&=
\langle Au + b(u,u) , D_u \varphi \rangle
+
\epsilon^{-1/2} \langle b(y,u) , D_u \varphi \rangle
\\
&\quad
+
\langle Ay + b(u,y) , D_y \varphi \rangle
+
\epsilon^{-1/2} \langle b(y,y) , D_y \varphi \rangle
\\
&\quad+
\epsilon^{-1} \langle Cy , D_y \varphi \rangle
+
\frac{\epsilon^{-1}}{2} Tr (Q D^2_y \varphi).
\end{align*}
{ We shall see later that the hypothesis of $(u^\epsilon,y^\epsilon)$ being Markov is not strictly necessary, as one only needs the validity of a suitable It\=o Formula for functions of $(u^\epsilon,y^\epsilon)$, cf. \autoref{lem:ito}.}
Since we are interested in the limiting behaviour of $u^\epsilon$ as the parameter $\epsilon$ goes to zero, we add correctors to $\varphi$ in order to cancel out singular terms in the expression of $\mathscr{L}^\epsilon \varphi$, on the one hand, and simultaneously eliminate the dependence on $y$ in the terms of order one, on the other.
The motivation behind this procedure lies in the fact that we would like to obtain in the limit a closed equation in the sole variable $u^\epsilon$.

\subsubsection*{The classical method}
At this point, one could carry on the previous program in the following way. Consider the \emph{perturbed test function}
\begin{align*}
\varphi^\epsilon(u,y) = \varphi(u) + \epsilon^{1/2}\varphi_1(u,y) + \epsilon \varphi_2 (u,y),
\end{align*}
where $\varphi_1$ and $\varphi_2$ are suitable correctors.

Denoting $\mathscr{L}_y$ the operator 
\begin{align*}
\mathscr{L}_y 
= 
\langle Cy , D_y \cdot \rangle
+
\frac12 Tr (Q D^2_y \cdot),
\end{align*} 
it is immediately clear that terms of order $\epsilon^{-1}$ in the expression of $\mathscr{L}^\epsilon \varphi^\epsilon$ vanish: indeed, they are given by a factor $\epsilon^{-1}$ times $\mathscr{L}_y \varphi$, which equals zero since $\varphi$ does not depend on $y$.
Moving to terms of order $\epsilon^{-1/2}$, they are given by a factor $\epsilon^{-1/2}$ times the quantity 
\begin{align} \label{eq:order_eps-1/2}
\langle b(y,u) , D_u \varphi \rangle
+
\langle b(y,y) , D_y \varphi \rangle
+
\mathscr{L}_y \varphi_1.
\end{align}
Therefore, \eqref{eq:order_eps-1/2} cancels out if $\varphi_1$ is a solution to the Poisson equation:
\begin{align}\label{1.4bis} 
\mathscr{L}_y \varphi_1(u,y) 
= 
-\langle b(y,u) , D_u \varphi \rangle.
\end{align}

Finally, the terms of order $1$ in the expression of $\mathscr{L}^\epsilon \varphi^\epsilon$ equal:
\begin{align*}
\langle Au + b(u,u) , D_u \varphi \rangle
+
\langle b(y,u) , D_u \varphi_1 \rangle
+
\langle b(y,y) , D_y \varphi_1 \rangle
+
\mathscr{L}_y \varphi_2.
\end{align*}
As previously remarked, it is not necessary to require the previous quantity to be zero, but it is sufficient to just seek for $\varphi_2$ such that it does not depend on $y$, namely
\begin{align*}
\mathscr{L}^0 \varphi (u)
&=
\langle Au + b(u,u) , D_u \varphi \rangle
+
\langle b(y,u) , D_u \varphi_1 \rangle
\\
&\quad+ \nonumber
\langle b(y,y) , D_y \varphi_1 \rangle
+
\mathscr{L}_y \varphi_2(u,y)
\end{align*}
for some \emph{effective} generator $\mathscr{L}^0$.
In this way, one would formally get $\mathscr{L}^\epsilon \varphi^\epsilon (u^\epsilon,y^\epsilon)=\mathscr{L}^0 \varphi (u^\epsilon)$ and identify the limit behaviour of $\mathscr{L}^\epsilon \varphi(u^\epsilon)$ up to a correction that has to be shown to be infinitesimal as $\epsilon \to 0$.
Moreover, it is needed to properly justify the previous procedure for a sufficiently large class of test function $\varphi$.

\subsubsection*{Linearisation Trick}
The method described above is very powerful, and strictly speaking it does allow to rigorously understand the limiting behaviour of $u^\epsilon$, $\epsilon \to 0$ for the Navier-Stokes system in dimension $d=2,3$ for some sufficiently regularizing operator $C$; namely, it is possible to show that correctors $\varphi_1$ and $\varphi_2$ exist, and they are such that both $\varphi^\epsilon(u^\epsilon,y^\epsilon) - \varphi(u^\epsilon)$ and $\mathscr{L}^\epsilon \varphi^\epsilon (u^\epsilon,y^\epsilon) - \mathscr{L}^0 \varphi (u^\epsilon)$ are actually infinitesimal in the limit $\epsilon \to 0$, with respect to suitable topologies.
However, in general, checking these conditions requires a certain degree of regularity both for the solutions $(u^\epsilon,y^\epsilon)$ and the dynamics itself - namely the coefficients $A$, $C$ and $b$ cannot be too bad.
In particular, the previous method does not permit to study the limiting behaviour of other equations of interest, as the Surface Quasi-Geostrophic in dimension $d=2$ and the Primitive equations in dimension $d=2,3$, and requires strong assumptions on $C$ in the case of Navier-Stokes equations.

For this reason, also inspired by \cite{GaMe22,Zak-NLS22}, here we develop a modification of the classical method that permits to replace the process $y^\epsilon$ by its linear counterpart $Y^\epsilon$, satisfying:
\begin{align*}
dY^\epsilon_t = \epsilon^{-1} C Y^\epsilon_t dt + \epsilon^{-1/2} Q^{1/2} dW_t,
\quad Y^\epsilon_0 = 0.
\end{align*} 

Loosely speaking, the key observation is that one can rewrite
\begin{align*}
\langle b(y,u) , D_u \varphi_1(y) \rangle
&=
\langle b(Y,u) , D_u \varphi_1(Y) \rangle
+
\langle b(y-Y,u) , D_u \varphi_1(Y) \rangle
\\
&\quad+
\langle b(y,u) , D_u \varphi_1(y-Y) \rangle,
\\
\langle b(y,y) , D_y \varphi_1(u) \rangle
&=
\langle b(Y,Y) , D_y \varphi_1(u) \rangle
+
\langle b(y-Y,Y) , D_y \varphi_1(u) \rangle
\\
&\quad+
\langle b(y,y-Y) , D_y \varphi_1(u) \rangle,
\end{align*} 
and prove that the terms involving the difference $y-Y$ are infinitesimal as $\epsilon \to 0$ when evaluated at $y=y^\epsilon_t$, $Y=Y^\epsilon_t$, and integrated with respect to time.
Therefore, the actual terms of order one in the expression of $\mathscr{L}^\epsilon \varphi^\epsilon$ are given by
\begin{align}\label{1.4ter}
\langle Au + b(u,u) , D_u \varphi \rangle
+
\langle b(Y,u) , D_u \varphi_1(Y) \rangle
+
\langle b(Y,Y) , D_y \varphi_1(u) \rangle
+
\mathscr{L}_y \varphi_2,
\end{align}   
and thus we only need to find a corrector $\varphi_2 = \varphi_2(u,Y)$ such that the previous expression only depends on $u$, and control remainders.
This task is sensibly easier, due to the higher space regularity of $Y$.

\subsubsection*{Non-homogeneous Ornstein-Uhlenbeck generator}
On top of the linearisation trick described above, we can further sharpen our results if we replace $\mathscr{L}_y$ by its inhomogeneous counterpart 
\begin{align} \label{eq:L^eps_y}
\mathscr{L}_y^\epsilon
=
\langle C_\epsilon y, D_y \cdot \rangle 
+
\frac12 Tr(QD^2_y \cdot),
\quad
C_\epsilon = C+\epsilon A.
\end{align} 
Roughly speaking, this choice is of help because it allows us to trade additional space regularity for the correctors $\varphi_1=\varphi_1^\epsilon$ and $\varphi_2=\varphi_2^\epsilon$ with multiplicative factors $\epsilon^{-1}$ (in our examples the operator $C$ is less regularizing than $A$). As a particular instance of this trading, the reader could take a look at \autoref{prop:regularity_varphi1} in \autoref{sec:test}.
A little inconvenience, however, is that using $\mathscr{L}^\epsilon_y$ instead of $\mathscr{L}_y$ produces correctors $\varphi_1^\epsilon$, $\varphi_2^\epsilon$ and effective generator $\mathscr{L}^{0,\epsilon}$ that depend on $\epsilon$, although it is easy to see that this does not affect the limiting behaviour of $u^\epsilon$, i.e. the equation satisfied by $u = \lim_{\epsilon \to 0} u^\epsilon$ remains the same.

\subsection{Structure of the paper}
The paper is structured as follows.

In \autoref{sec:prelim} we introduce the necessary notation and preliminaries for our analysis. In particular, we introduce the abstract spaces and operators governing our system, and we identify their key properties. Working assumptions on the covariance operator $Q$ are declared in this section. Also, here we present the notion of bounded-energy family $\{(u^\epsilon,y^\epsilon)\}_{\epsilon\in(0,1)}$ of weak martingale solutions to our system, that is a family of solutions enjoying some uniform-in-$\epsilon$ bound on the energy.

In \autoref{sec:poisson} we introduce a class of test functions $\psi$ for which it is possible to solve implicitly the Poisson equation $\mathscr{L}^\epsilon_y \phi = -\psi$. The class consists of quadratic functions on $H$ that are continuous maps from some Sobolev space $H^\theta$ to $\R$. We also show that, depending on the regularity of $C$, solutions of the Poisson equations so constructed are more regular than the datum $\psi$, and recover bounds on the regularity of $\phi$ and its derivative in terms of the regularity of $\psi$ and $C$.

In \autoref{sec:test} we apply abstract results on the Poisson equation to carry on the program presented in the Introduction; we identify suitable correctors $\varphi_1^\epsilon$ and $\varphi_2^\epsilon$ to cancel out divergent terms in the expression of $\mathscr{L}^\epsilon \varphi$, and recover the limiting behaviour of the slow variable $u^\epsilon$ alone.

In \autoref{sec:conv}, we prove our main \autoref{thm:main} dividing the proof into three different steps: at first, we prove that the family (of the laws of) $\{u^\epsilon\}_{\epsilon\in(0,1)}$ is tight in a suitable space of functions; then, checking that the contribution due to correctors $\varphi_1^\epsilon$ and $\varphi_2^\epsilon$ is actually negligible as $\epsilon \to 0$,  we prove that every weak accumulation point $u$ is a solution of a limit closed equation; finally, we recognize the different terms in the equation solved by $u$ as the sum of the original slow dynamics, a Stratonovich transport noise and an It\=o-Stokes drift.

The subsequent \autoref{sec:eddy}, whose results are applied only to the Navier-Stokes system, link our results to the ones in \cite{FlLu21} to provide conditions under which a suitable scaling of the parameters $\epsilon,Q$ makes our system converge towards solutions of the deterministic Navier-Stokes equations with additional dissipation.
We provide an explicit example (due to \cite{FlLu21}) of parameters that gives in the limit a large multiple of the Laplace operator.  

Finally, in the main body of this work we prefer to illustrate only the arguments needed for the Navier-Stokes system in dimension $d=3$, for the sake of a clear presentation, and we devote \autoref{sec:models} to discuss all the necessary changes needed to take into account other models of interest.

\section{Preliminaries and assumptions} \label{sec:prelim}

In this section we collect all the necessary notation, assumptions, preliminaries and auxiliary results useful for our analysis.
As discussed in the Introduction, our theory covers at least three models of fundamental relevance in fluid dynamics, which are: the Navier-Stokes equations in dimension $d=2,3$; the Surface Quasi-Geostrophic equations in dimension $d=2$; the Primitive equations in dimension $d=2,3$.
However, each of these models requires \emph{ad hoc} analysis that takes into consideration peculiar features of the dynamics, and minor modifications in the arguments are needed to properly deal with each of them. 
Thus, for the sake of a clear and effective presentation, we decide to present our results first for the 3D Navier-Stokes equations; in the last section, we illustrate the differences arising when considering Surface Quasi-Geostrophic and Primitive equations, and only discuss how to adapt the arguments in the main body of the paper to the changed framework.

Throughout the paper, we use the notation $a \lesssim b$ if there exists an unimportant constant $c\in(0,\infty)$ such that $a \leq cb$.

\subsection{Coupled Navier-Stokes equations}
As a main example of application of the theory here developed, recall the Navier-Stokes system in velocity form, with additive noise and large dissipation at small scales:
\begin{align} \label{eq:system_NS}
\begin{cases}
du^\epsilon_t 
= 
\nu \Delta u^\epsilon_t dt
-
(u^\epsilon_t \cdot \nabla) u^\epsilon_t dt
-
(v^\epsilon_t \cdot \nabla) u^\epsilon_t dt
+
\nabla p^\epsilon_t dt,
\\
\mbox{div}\, u^\epsilon_t = 0,
\\
dv^\epsilon_t 
= 
\epsilon^{-1} C v^\epsilon_t dt
+
\nu \Delta v^\epsilon_t dt
-
(u^\epsilon_t \cdot \nabla) v^\epsilon_t dt
-
(v^\epsilon_t \cdot \nabla) v^\epsilon_t dt
+
\epsilon^{-1}
d\mathcal{W}_t
+
\nabla q^\epsilon_t dt,
\\
\mbox{div}\, v^\epsilon_t = 0,
\end{cases}
\end{align}
where $t \in [0,T]$, $u^\epsilon_t,v^\epsilon_t$ are unknown velocity fields belonging to the space $[L^2(\mathbb{T}^3)]^3$ of zero-mean square integrable velocity fields of the three-dimensional torus $\mathbb{T}^3 = (\R/\mathbb{Z})^3$; $p^\epsilon_t,q^\epsilon_t$ are unknown pressure fields in $L^2(\mathbb{T}^3)$ ensuring the validity of the divergence-free conditions $\mbox{div}\, u^\epsilon_t=\mbox{div}\, v^\epsilon_t =0$; $\mathcal{W}$ is a Wiener process on $[L^2(\mathbb{T}^3)]^3$; $\nu>0$ is the viscosity coefficient; and $\epsilon\in(0,1)$ is a small scaling parameter.
In dimension two a physically relevant choice for $C$ is $C=-Id$ (the identity operator on $[L^2(\mathbb{T}^2)]^2$), see also \cite{FlPa21,FlPa22+} for a justification of the model; in three dimensions we can not deal with the pure friction case and we need a more regularizing operator $C$ (cf. assumption (C2) below).

Denote $H \coloneqq \{ u \in [L^2(\mathbb{T}^3)]^3 , \mbox{div}\,u=0 \}$ the space of periodic, zero-mean, square integrable velocity fields $u$ with null divergence in the sense of distributions.  
Projecting \eqref{eq:system_NS} on $H$ via the Helmhotz projector $\Pi$ we get the equivalent system, without pressure terms:
\begin{align*} 
\begin{cases}
du^\epsilon_t 
= 
\nu \Delta u^\epsilon_t dt
-
\Pi(u^\epsilon_t \cdot \nabla) u^\epsilon_t dt
-
\Pi(v^\epsilon_t \cdot \nabla) u^\epsilon_t dt,
\\
dv^\epsilon_t 
= 
\epsilon^{-1} C v^\epsilon_t dt 
+
\nu \Delta v^\epsilon_t dt
-
\Pi(u^\epsilon_t \cdot \nabla) v^\epsilon_t dt
-
\Pi(v^\epsilon_t \cdot \nabla) v^\epsilon_t dt
+
\epsilon^{-1}
d\Pi \mathcal{W}_t.
\end{cases}
\end{align*}

Therefore, we can recast the system \eqref{eq:system_NS} in the more abstract setting \eqref{eq:system} defining:
\begin{align*}
Au = \nu\Delta u,
\quad
b(u,v)=-\Pi(u \cdot \nabla) v,
\quad
Q^{1/2} W = \Pi\mathcal{W}_t,
\end{align*}
with $W$ being a cylindrical Wiener process on $H$ and $Q$ a covariance operator.
Hereafter, \eqref{eq:system} will be accompanied by initial conditions $(u_0,y_0) \in H \times H$. In order to keep our analysis as simple as possible, we assume $u_0,y_0$ to be deterministic.

\subsection{Abstract spaces and operators}
\subsubsection*{The linear operator $A$ and Sobolev spaces}
The operator $A:D(A) \subset H \to H$ is unbounded, self-adjoint and negative definite.
For $s \in \R$, let us define the Sobolev space $H^s$ by the relation $H^s \coloneqq D((-A)^{s/2})$.
Sobolev spaces form a Hilbert scale in the sense of Krein-Petunin \cite{KrPe66}, with respect to the operator $(-A)^{1/2}$: 
\begin{align*}
\langle f, g\rangle_{H^s} = 
\langle (-A)^{s/2} f,(-A)^{s/2} g \rangle. 
\end{align*}
In particular, the map $(-A)^{s/2}: H^{r+s} \to H^r$ is an isomorphism for every $s,r \in \R$ and the following interpolation inequality holds between $H^{s_1}$ and $H^{s_2}$, for $s_1, s_2 \in \R$ and $\lambda \in (0,1)$:
\begin{align*}
\|f\|_{H^{s_\lambda}}
\leq
\|f\|_{H^{s_1}}^{\lambda}
\|f\|_{H^{s_2}}^{1-\lambda}, \quad
s_\lambda = \lambda s_1 + (1-\lambda) s_2.
\end{align*}
Sobolev spaces $H^s$ embed continuously into $L^p = L^p(\mathbb{T}^d)$ spaces provided $s>d/2$ and $p \in [1,\infty]$ or $s \in (0,d/2)$ and $p \leq 2^\star = \frac{2d}{d-2s}$.

For $\alpha \in (0,1)$, $p \geq 1$ and $s \in \R$, we define $W^{\alpha,p}([0,T],H^s)$ as the Sobolev-Slobodeckij space of all $u \in L^p([0,T],H^s)$ such that
\begin{align*}
\int_0^T \int_0^T \frac{\|u_t-u_s\|_{H^s}^p}{|t-s|^{1+\alpha p}} dt ds < \infty,
\end{align*}
endowed with the norm
\begin{align*}
\|u\|_{W^{\alpha,p}([0,T],H^s)}^p
&\coloneqq
\int_0^T \|u_t\|^p_{H^s} dt
+
\int_0^T \int_0^T \frac{\|u_t-u_s\|_{H^s}^p}{|t-s|^{1+\alpha p}} dt ds.
\end{align*}
We recall the following compactness criterium from \cite{Si86} due to Simon.
\begin{lem} \label{lem:Simon}
For $\sigma>0$, $\alpha > 1/p$ and $\beta \in (0,\sigma)$ we have the compact embeddings:
\begin{align*}
L^2([0,T],H^1) \cap W^{\alpha,p}([0,T],H^{-\sigma})
\subset
L^2([0,T],H);
\\
L^\infty([0,T],H) \cap W^{\alpha,p}([0,T],H^{-\sigma})
\subset
C([0,T],H^{-\beta}).
\end{align*}
\end{lem}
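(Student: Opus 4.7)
The plan is to invoke the classical Aubin--Lions--Simon compactness framework: both embeddings are instances of general abstract results once we identify the three Hilbert spaces $X_0 \hookrightarrow X \hookrightarrow X_1$ with $X_0 \hookrightarrow X$ compact and $X \hookrightarrow X_1$ continuous. For the first embedding we take $X_0 = H^1$, $X = H$, $X_1 = H^{-\sigma}$; since $H^1 \hookrightarrow H$ is compact by Rellich--Kondrachov and $H \hookrightarrow H^{-\sigma}$ is obviously continuous (the Hilbert scale is decreasing), the setup is in place. For the second embedding we take $X_0 = H$, $X = H^{-\beta}$, $X_1 = H^{-\sigma}$; compactness of $H \hookrightarrow H^{-\beta}$ follows from $\beta > 0$ (the resolvent of $A$ is compact in our PDE examples, hence $(-A)^{\beta/2}$ has compact inverse), and continuity of $H^{-\beta}\hookrightarrow H^{-\sigma}$ from $\beta<\sigma$.

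The engine in both cases is the Ehrling-type interpolation inequality: for every $\eta>0$ there exists $C_\eta$ with $\|f\|_X \le \eta \|f\|_{X_0} + C_\eta \|f\|_{X_1}$, which follows from compactness of $X_0\hookrightarrow X$ by a standard contradiction argument.

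For the first embedding I would take a bounded sequence $\{u_n\}$ in $L^2([0,T],H^1)\cap W^{\alpha,p}([0,T],H^{-\sigma})$ and verify the Fréchet--Kolmogorov criterion in $L^2([0,T],H)$. The Ehrling inequality, integrated in time, turns a smallness-of-time-translations estimate in $X_1=H^{-\sigma}$ into one in $X=H$ up to an arbitrarily small error controlled by the $L^2(H^1)$ bound. Time translations in $H^{-\sigma}$ are controlled directly by the Sobolev--Slobodeckij seminorm, giving a modulus of continuity of order $|h|^{\alpha}$ in $L^p([0,T-h],H^{-\sigma})$. Combining these yields equi-small time-translations in $L^2([0,T],H)$; together with the $L^2(H^1)$ bound (which gives pointwise-in-$t$ compactness via Rellich) this delivers relative compactness in $L^2([0,T],H)$.

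For the second embedding the key observation is that $\alpha > 1/p$ is precisely the Sobolev embedding threshold $W^{\alpha,p}([0,T],X_1) \hookrightarrow C^{0,\alpha-1/p}([0,T],X_1)$, so a bounded sequence in $W^{\alpha,p}([0,T],H^{-\sigma})$ is equicontinuous as curves into $H^{-\sigma}$. Combined with the $L^\infty([0,T],H)$ bound and compactness of $H\hookrightarrow H^{-\beta}$, the Arzelà--Ascoli theorem would give relative compactness in $C([0,T],H^{-\beta})$ \emph{provided} equicontinuity can be upgraded from $H^{-\sigma}$ to $H^{-\beta}$. This upgrade is again a matter of Ehrling: $\|u_t-u_s\|_{H^{-\beta}} \le \eta \|u_t-u_s\|_{H} + C_\eta \|u_t-u_s\|_{H^{-\sigma}}$, where the first term is uniformly bounded by $2\|u\|_{L^\infty H}$ and hence absorbed by choosing $\eta$ small, while the second term is made small by the Hölder-in-time control.

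The main obstacle, and really the only nontrivial piece, is the upgrade step in the second part: one must verify that interpolating between $H$ and $H^{-\sigma}$ indeed gives uniform modulus of continuity in $H^{-\beta}$ (it does, using the interpolation inequality stated in the paper with $s_\lambda = -\beta$, $s_1 = 0$, $s_2 = -\sigma$, and $\lambda = 1-\beta/\sigma \in (0,1)$ since $\beta\in(0,\sigma)$). Everything else is either standard Fréchet--Kolmogorov or Arzelà--Ascoli. A cleaner alternative, which I would actually adopt in the write-up, is simply to cite the relevant theorems in \cite{Si86} (Theorem 5 for the first embedding, Corollary 9 for the second) after verifying the hypotheses.
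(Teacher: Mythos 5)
The paper offers no proof of this lemma at all: it is stated verbatim as a known compactness criterium and attributed to Simon \cite{Si86}, which is precisely the ``cleaner alternative'' you propose at the end. Your sketch is a correct account of the standard proof of Simon's theorems (Ehrling/interpolation to upgrade the space, Fr\'echet--Kolmogorov for the $L^2$ embedding, the Sobolev embedding $W^{\alpha,p}\hookrightarrow C^{0,\alpha-1/p}$ with Arzel\`a--Ascoli for the $C$ embedding, and the interpolation exponent $\lambda=1-\beta/\sigma$ is right), so there is nothing to reconcile with the paper beyond the citation.
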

Denote $\mathscr{S} \coloneqq \cap_{s \in \R} H^s$ the class of smooth elements $h \in H$, and define
\begin{align*}
F &\coloneqq \left\{ \varphi : H \to \mathbb{R},
\quad \exists h \in \mathscr{S} \mbox{ such that } \varphi(u) = \langle u,h \rangle \right\}.
\end{align*}
Distributions on $H$ are elements of the space $\mathscr{S}' \coloneqq \cup_{s \in \R} H^s$. Every $\varphi \in F$ is continuous from $\mathscr{S}'$ to $\R$.

\subsubsection*{The linear operator $C$}

We assume
\begin{itemize}
\item [({\bf C1})]
The operator $C:D(C) \subset H \to H$ is self-adjoint and negative definite, with principal eigenvalue $-\lambda_0 < 0$;
\item [({\bf C2})]
There exist $\Gamma \geq \gamma>1/4$ such that $\|x\|_{H^{s+\beta\gamma}}^2 \lesssim \|(-C)^{\beta/2}x\|_{H^s}^2 \lesssim \|x\|_{H^{s+\beta\Gamma}}^2$ for every $s\in \R$, $\beta >0$. 
\end{itemize}
The previous assumptions imply that the operators $C$ and $C_\epsilon \coloneqq C + \epsilon A$ generate $C_0$-semigroups on $H$, that we denote respectively $e^{Ct}$ and $e^{C_\epsilon t}$, $t > 0$. 
Moreover, for every $s \in \R$ and $\beta_1>0$ it holds uniformly in $t>0$ and $\epsilon\in(0,1)$:
\begin{align*}
\|(-C_\epsilon)^{\beta_1}e^{C_\epsilon t}\|_{H^s \to H^s} 
&\lesssim \frac{e^{-\lambda_0 t/2}}{t^{\beta_1}};
\end{align*}
by interpolation, since the operators $(-C_\epsilon)^{-1} C$ and $(-C_\epsilon)^{-1} \epsilon A$ are bounded, we also have for every $\theta \in [\gamma,1]$, $\lambda = \frac{\lambda_0 (1-\theta)}{2(1-\gamma)}$: 
\begin{align*}
\|e^{C_\epsilon t}\|_{H^s \to H^{s+2\theta\beta_1}} 
&\lesssim 
\|(-C)^{\beta_1}e^{C_\epsilon t}\|_{H^s \to H^s}^{\frac{1-\theta}{1-\gamma}}
\|(-A)^{\beta_1}e^{C_\epsilon t}\|_{H^s \to H^s}^{\frac{\theta-\gamma}{1-\gamma}}
\lesssim
\epsilon^{-\beta_1 \frac{\theta-\gamma}{1-\gamma}} \frac{e^{-\lambda t}}{t^{\beta_1}}.
\end{align*}

In addition, for every $s \in \R$ and $\beta_2 \in [0,1]$ they hold:
\begin{align*}
\|(-C_\epsilon)^{-{\beta_2}}(e^{C_\epsilon t}-1)\|_{H^s \to H^s} &\lesssim t^{\beta_2},
\quad
\|e^{C_\epsilon t}-1\|_{H^s \to H^{s-2\Gamma\beta_2 }}
\lesssim
t^{\beta_2}
\end{align*}
uniformly in $t>0$ and $\epsilon\in(0,1)$, and moreover the difference of the semigroups $e^{C_\epsilon t} - e^{C t}$ satisfies\footnote{To see this, one can define $y_t \coloneqq e^{C_\epsilon t}x - e^{C t}x$, $x \in H$ and notice that $y_t=\int_0^t C y_s ds + \epsilon \int_0^t A e^{C_\epsilon s}x ds$; since $y_0=0$, Duhamel's Formula gives $y_t=\epsilon\int_0^t e^{C(t-s)}A e^{C_\epsilon s}x ds$, and using $\|\epsilon^{1-\beta_2}(-A)^{1-\beta_2}e^{C_\epsilon t}\|_{H^s \to H^{s}} \lesssim e^{-\lambda_0 t/2}t^{\beta_2-1}$ produces the desired inequality.} $\|e^{C_\epsilon t} - e^{C t}\|_{H^{\theta+2\beta_2} \to H^\theta} \lesssim  \epsilon^{\beta_2}$ uniformly in $t >0$. 

Finally, for every $\beta_2 \in [0,1]$ the operator $G_\epsilon \coloneqq (-C_\epsilon)^{-1}-(-C)^{-1} = \epsilon (-C)^{-1}A(-C_\epsilon)^{-1}$ satisfies $\|G_\epsilon\|_{H^s \to H^{s+2\gamma(1+\beta_2)-2\beta_2}} \lesssim \epsilon^{\beta_2}$.

\subsubsection*{The bilinear operator $b$}
Concerning the nonlinearity $b$ of the Navier-Stokes equations in velocity form, we have the following properties\footnote{That will involve different regularity when dealing with Primitive equations.} (take $d=3$ below): 
\begin{itemize}
\item[({\bf B1})]
$b:H^s \times H^{\theta_0} \to H^s$ is bilinear and continuous for every $s \in \R$, $s<d/2$, $\theta_0>1+d/2$;
\item[({\bf B2})]
$b:H^s \times H^{\theta_1} \to H^s$ is bilinear and continuous for every $s \in \R$, $s \geq d/2$ and $\theta_1>1+s$;
\item[({\bf B3})]
$b:H^s \times H^r \to H^{s+r-1-d/2}$ is bilinear and continuous if $s,r-1 \in (-d/2,d/2)$, $s+r>1$.
\end{itemize}
For every smooth $x_i \in \mathscr{S}$, $i=1,2,3$ it holds $\langle b(x_1,x_2),x_3 \rangle= -\langle b(x_1,x_3),x_2 \rangle$, by integration by parts. Therefore, if there exist $s_i \in \R$, $i=1,2,3$, such that either $|\langle b(x_1,x_2),x_3 \rangle| \lesssim \Pi_{i=1,2,3}\|x_i\|_{H^{s_i}}$ or $|\langle b(x_1,x_3),x_2 \rangle| \lesssim \Pi_{i=1,2,3}\|x_i\|_{H^{s_i}}$, we can extend the other one by continuity preserving the same bounds. We summarise this property with:
\begin{itemize}
\item[({\bf B4})]
$\langle b(x_1,x_2),x_3 \rangle= -\langle b(x_1,x_3),x_2 \rangle$ for every $x_i \in \mathscr{S}'$, $i=1,2,3$ such that either one of the scalar products is well-defined.
\end{itemize}
In the following, we will denote without explicit mention $\theta_0,\theta_1=\theta_1(s)$ any constants such that (B1) and (B2) hold.

\subsubsection*{The covariance operator $Q$}
We assume that the covariance operator $Q:H\to H$ satisfies the following properties: 
\begin{itemize}
\item[(\textbf{Q1})]
$Q$ is symmetric, positive semidefinite and commutes with $C$. The following operators on $H$ are trace-class for every $t \geq 0$:
\begin{align*}
e^{Ct}Q e^{Ct}, \quad
Q_\infty \coloneqq \int_0^\infty e^{Ct}Qe^{Ct} dt
=
\frac12 (-C)^{-1} Q;
\end{align*}
\item[(\textbf{Q2})]
Denoting $\mathcal{N}(0,Q_\infty)$ the Gaussian measure on $H$ with covariance $Q_\infty$ and $s_0 = \max \{\theta_0,2\Gamma \}$, it holds $\int_H \|w\|_{H^{s_0}}^2 d\mathcal{N}(0,Q_\infty)(w) < \infty$.  
\end{itemize}
In (Q2) above, $\theta_0$ can be any real number such that (B1) holds true and $\Gamma$ is as in (C2).
It is easy to see that under (Q1)-(Q2) the following hold true: $e^{C_\epsilon t}Q e^{C_\epsilon t}$, $Q_\infty^\epsilon \coloneqq \int_0^\infty e^{C_\epsilon t}Qe^{C_\epsilon t} dt$ are trace-class (although in general $Q^\epsilon_\infty \neq \frac12 (-C_\epsilon )^{-1} Q$ since we do not assume $A$ and $Q$ commuting) and $\int_H \|w\|_{H^{\theta_0}}^2 d\mathcal{N}(0,Q^\epsilon_\infty)(w) \leq 1+\int_H \|w\|_{H^{\theta_0}}^2 d\mathcal{N}(0,Q_\infty)(w) < \infty$ for every $\epsilon\in(0,1)$.

\subsection{Ornstein-Uhlenbeck semigroup} \label{ssec:OU} 
Assume (Q1)-(Q2).
For every $\epsilon\in(0,1)$ and $y \in H$ there exists a unique solution $Y^y=Y^y(\epsilon)$ of the Ornstein-Uhlenbeck equation
\begin{align*} 
dY^y_t = C_\epsilon Y^y_t dt + Q^{1/2}dW_t, \quad Y^y_0 = y,
\end{align*} 
that is explicitly given by the formula 
\begin{align*}
Y^y_t = e^{C_\epsilon t}y + W^{C_\epsilon,Q}_t,
\quad
W^{C_\epsilon,Q}_t = \int_0^t e^{C_\epsilon(t-s)} Q^{1/2} dW_s.
\end{align*}
The Ornstein-Uhlenbeck semigroup $P_t^\epsilon:C_b(H) \to C_b(H)$ is defined by
\begin{align*}
P_t^\epsilon \psi (y) \coloneqq \mathbb{E}\left[ \psi (Y_{t}^y)\right], 
\quad \psi \in C_b(H), \,\,\, y \in H,
\end{align*} 
and it is a semigroup by Markov property.
It can be extended uniquely to a strongly continuous semigroup of $1$-Lipschitz maps on $L^2(H,\mu^\epsilon)$, $\mu^\epsilon \coloneqq \mathcal{N}(0,Q^\epsilon_\infty)$, see \cite[Theorem 10.1.5]{DPZa02}.
The Gaussian measure $\mu^\epsilon$ is concentrated on $H^{\theta_0} \subset H$, and $\mu^\epsilon$ is invariant for $P_t^\epsilon$, i.e.
\begin{align*} 
\int_H P_t^\epsilon\psi(y) d\mu^\epsilon(y) = \int_H \psi(y) d\mu^\epsilon(y), \quad \forall \psi \in L^2(H,\mu^\epsilon). 
\end{align*}

The domain $D(\mathscr{L}_y^\epsilon)$ of the generator $\mathscr{L}_y^\epsilon : D(\mathscr{L}_y^\epsilon) \to L^2(H,\mu^\epsilon)$ is defined as the set
\begin{align*}
D(\mathscr{L}_y^\epsilon) 
\coloneqq
\left\{\psi \in L^2(H,\mu^\epsilon) : \exists \lim_{t \to 0^+} \frac{P_t^\epsilon \psi - \psi}{t} \in L^2(H,\mu^\epsilon)  \right\},
\end{align*} 
and $\mathscr{L}_y^\epsilon$ acts on $\psi \in D(\mathscr{L}_y^\epsilon)$ as $\mathscr{L}_y^\epsilon \psi \coloneqq \lim_{t \to 0^+} \frac{P_t^\epsilon \psi - \psi}{t}$. 
The generator $\mathscr{L}_y^\epsilon$ is a closed operator on $L^2(H,\mu^\epsilon)$.

\subsection{Stochastic framework}
Let us recall some basic notions and notations from stochastic analysis. For a more in-depth review of the general theory of stochastic equations we refer to \cite{DPZa14}.

Let $(\Omega, \mathcal{F}, \mathbb{P})$ be a probability space supporting a sequence of standard Wiener processes $\{W^k\}_{k\in \N}$ adapted to a common filtration $\{\mathcal{F}_t \}_{t \geq 0}$, assumed complete and right continuous.
Given any complete orthonormal system $\{ e_k \}_{k \in \N}$ of $H$, we define a \emph{cylindrical} Wiener process $W$ on $H$ to be equal to the formal series $W = \sum_{k \in\N} W^k e_k$. For every $t\geq 0$, $W_t$ is well-defined as a random variables taking values in a space of distribution $H^{-s}$, for some $s$ sufficiently large so that the embedding $H \subset H^{-s}$ is Hilbert-Schmidt.
We call any such $(\Omega, \mathcal{F}, \{\mathcal{F}_t \}_{t \geq 0}, \mathbb{P}, W)$ a \emph{stochastic basis}.

\subsection{Notion of solution and energy estimates}
In the following, $H_w$ denotes the space $H$ endowed with the weak topology { and $\mathcal{B}([0,T], H)$ the space of $H$-valued bounded functions (not necessarily continuous) endowed with the supremum norm. A similar notation is used for bounded functions on $[0,T]$ taking values in a general Banach space}.
\begin{defn} \label{def:sol}
We say that the family $\{(u^\epsilon,y^\epsilon)\}_{\epsilon\in(0,1)}$ is a \emph{bounded-energy family of weak martingale solutions} to \eqref{eq:system_2} if for every $\epsilon\in(0,1)$ there exists a stochastic basis $(\Omega, \mathcal{F}, \{\mathcal{F}_t \}_{t \geq 0}, \mathbb{P}, W)$ such that the following hold :
\begin{itemize}
\item[(\textbf{S1})]
$(u^\epsilon,y^\epsilon) : \Omega \times [0,T] \to  H \times H$ is $\{\mathcal{F}_t\}$-progressively measurable, with paths
$u^\epsilon, y^\epsilon \in C([0,T],H_w) \,\cap\, L^2([0,T],H^1)$, $\mathbb{P}$-almost surely;
\item[(\textbf{S2})]
for every $h \in \mathscr{S}$, the following equalities hold $\mathbb{P}$-almost surely for every $t \in [0,T]$:
\begin{align*}
\langle u^\epsilon_t, h\rangle 
&= 
\langle u_0, h\rangle
+
\int_0^t
\langle u^\epsilon_s, Ah \rangle
+ 
\int_0^t
\langle b(u^\epsilon_s,u^\epsilon_s) , h \rangle ds 
+ \epsilon^{-1/2} \int_0^t
\langle b(y^\epsilon_s,u^\epsilon_s) , h \rangle ds,
\\
\langle y^\epsilon_t, h\rangle 
&= 
\langle y_0, h\rangle
+
\epsilon^{-1} \int_0^t 
\langle y^\epsilon_s , C h \rangle ds 
+
\int_0^t 
\langle y^\epsilon_s , A h \rangle ds
+ 
\int_0^t
\langle b(u^\epsilon_s,y^\epsilon_s) , h \rangle ds
\\
&\quad 
+ \epsilon^{-1/2}\int_0^t
\langle b(y^\epsilon_s,y^\epsilon_s) , h \rangle ds + 
\epsilon^{-1/2} \langle Q^{1/2} W_t , h \rangle;
\end{align*}
\item[(\textbf{S3})]
the family $\{u^\epsilon\}_{\epsilon\in(0,1)}$ is uniformly bounded in
\begin{align*}
\mathcal{U} 
\coloneqq 
L^\infty(\Omega,{\mathcal{B}([0,T],H)}) \cap L^\infty(\Omega,L^2([0,T],H^1));
\end{align*}
\item[(\textbf{S4})]
for every fixed $p<\infty$, the family $\{y^\epsilon\}_{\epsilon\in(0,1)}$ is uniformly bounded in 
\begin{align*}
\mathcal{Y}
\coloneqq
{
\mathcal{B}([0,T],L^p(\Omega,H)) \cap L^p(\Omega,L^2([0,T],H^1))}.
\end{align*}
\end{itemize}
\end{defn}

It is worth to comment on the previous definition. 

First of all, since we are working on the intersection of two fields and to avoid any confusion with the terminology, let us specify that here we are working with \emph{analytically weak, probabilistically martingale} solutions.
Solutions are analytically weak since they solve \eqref{eq:system_2} only when tested against smooth test functions $h \in \mathscr{S}$. 
They are martingale solutions (sometimes referred to also as probabilistically weak solutions) since the stochastic basis in not given a priori (that would be called pathwise or probabilistically strong solutions). 
To avoid any misunderstanding we point out that hereafter the stochastic basis $\{(\Omega^\epsilon, \mathcal{F}^\epsilon, \{\mathcal{F}^\epsilon_t \}_{t \geq 0}, \mathbb{P}^\epsilon, W^\epsilon)\}_{\epsilon\in(0,1)}$ will be always dependent on $\epsilon$, but we shall drop the indices for notational simplicity.

Second, our solutions form a \emph{bounded-energy family} since in (S3)-(S4) we require suitable energy bounds to hold uniformly in $\epsilon\in(0,1)$.

In the classical theory of deterministic Navier-Stokes equations subject to external forcing $f \in L^1([0,T],H)$:
\begin{align*}
\begin{cases}
du_t + (u_t \cdot \nabla) u_t dt = \nu\Delta u_t dt + \nabla p_t dt + f_t dt,\\
\mbox{div}\, u_t = 0,
\end{cases}
\end{align*}
a very fundamental concept is that of \emph{Leray-Hopf} weak solutions, namely (analytically) weak solutions $u$ enjoying the energy inequality
\begin{align*}
\frac12 \|u_t \|_{H}^2 
+ 
\int_0^t \|u_s \|_{H^1}^2 ds 
\leq 
\frac12 \|u_0\|_H^2 
+
\int_0^t \langle u_s,f_s \rangle ds.
\end{align*}
In the stochastic setting the picture is more complicated since, when the external forcing $f = Q^{1/2} W$ is a stochastic process: \emph{i}) sensible bounds can only be obtained in expected value; and \emph{ii}) formally applying It\=o Formula to $\|u_t \|_{H}^2$ introduces an additional term $Tr(Q)dt$ on the right hand side of the estimate.

In \cite{FlRo08}, the authors propose a notion of solution which encodes the energy inequality in the requirement that the process
\begin{align*}
E^p_t
&\coloneqq
\frac12\|u_t\|_H^{2p}
+
p \int_0^t \|u_s\|_H^{2p-2} \|u_s \|_{H^1}^2 ds 
-
\frac12 \|u_0\|_H^{2p}
\\
&\quad-
\frac{p(2p-1)}{2} Tr(Q) \int_0^t \|u_s\|_H^{2p-2} ds
\end{align*}
be an \emph{almost sure super martingale} for every positive integer $p$, namely $\mathbb{E}\left[ E^p_t\right] < \infty$ for all $t \in [0,T]$ and there exists a Lebesgue measurable set $\mathcal{T} \subset (0,T]$, with null Lebesgue measure, such that $\mathbb{E}\left[ E^p_t \mathbf{1}_A\right] \leq \mathbb{E}\left[ E^p_s \mathbf{1}_A\right]$ for every $s \in  \mathcal{T}$, every $t \geq s$ and every $A \in \mathcal{F}_s$.

However, for our purposes there are some limitations in considering solutions satisfying some kind of energy inequality, since: \emph{i}) it does not seem immediate to recover uniform bounds in $\epsilon\in(0,1)$, and \emph{ii}) we do not need energy inequality but just energy bounds, and recents developments in convex integration suggest that the class of weak solutions to Navier-Stokes equations with bounded energy may be stricly larger than the class of Leray-Hopf weak solutions, see \cite{BuVi19} for a deterministic result and \cite{HoZhZh21+} for a stochastic one (even though the solutions constructed there are not known to satisfy $H^1$ bounds in the space variable).

In order to construct a bounded-energy family of weak martingale solutions to \eqref{eq:system_2}, one can make use of classical compactness arguments involving the Galerkin approximation scheme:
\begin{align} \label{eq:system_Gal}
\begin{cases}
du^{\epsilon,n}_t 
= 
Au^{\epsilon,n}_t dt 
+ 
\Pi_n b(u^{\epsilon,n}_t,u^{\epsilon,n}_t) dt 
+ 
\epsilon^{-1/2} \Pi_n b(y^{\epsilon,n}_t,u^{\epsilon,n}_t) dt,
\\
dy^{\epsilon,n}_t 
= 
\epsilon^{-1} Cy^{\epsilon,n}_t dt 
+
Ay^{\epsilon,n}_t dt 
+ 
\Pi_n b(u^{\epsilon,n}_t,y^{\epsilon,n}_t) dt 
+ 
\epsilon^{-1/2}\Pi_n b(y^{\epsilon,n}_t,y^{\epsilon,n}_t) dt 
\\
\qquad\qquad+ \epsilon^{-1/2} \Pi_n Q^{1/2} dW_t,
\end{cases}
\end{align}
where $\{\Pi_n\}_{n \in \N}$ is a family of Galerkin projectors and the initial condition is $(u^{\epsilon,n}_0,y^{\epsilon,n}_0)=(\Pi_n u_0,\Pi_n y_0)$.
{Indeed, since solutions of \eqref{eq:system_Gal} above are smooth in space, uniform energy estimates (S3)-(S4) can be rigorously proved for $(u^{\epsilon,n},y^{\epsilon,n})$ making use of It\=o Formula; then, for every fixed $\epsilon\in(0,1)$, it is possible to prove via Ascoli-Arzelà Theorem that there exist $u^\epsilon,y^\epsilon$ such that $u^{\epsilon,n} \to u^\epsilon$ and $y^{\epsilon,n} \to y^\epsilon$ with respect to a topology that permits to take the limit in the energy estimates (S3)-(S4)}, on the one hand, and in the weak formulation of the equation (S2), on the other (up to a possible change in the underlying stochastic basis, in order to gain adaptedness of the processes $u^\epsilon,y^\epsilon$).
\begin{prop} \label{prop:existence}
There exists at least one bounded-energy family $\{(u^\epsilon,y^\epsilon)\}_{\epsilon\in(0,1)}$ of weak martingale solutions to \eqref{eq:system_2}.
\end{prop}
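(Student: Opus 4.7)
My plan is to construct $(u^\epsilon,y^\epsilon)$ as a weak accumulation point of the finite-dimensional Galerkin approximations \eqref{eq:system_Gal}. For each fixed $\epsilon \in (0,1)$ and $n \in \N$, system \eqref{eq:system_Gal} is a finite-dimensional It\=o SDE on $\Pi_n H \times \Pi_n H$ with locally Lipschitz, polynomially bounded coefficients, so global well-posedness reduces to a priori energy bounds. I would produce these by applying It\=o's formula: testing the $u$-equation against $u^{\epsilon,n}$ kills both the self-advection $\langle b(u,u),u\rangle$ and, crucially, the singular cross term $\epsilon^{-1/2}\langle b(y,u),u\rangle$ by the antisymmetry (B4), yielding the pathwise identity
\begin{align*}
\|u^{\epsilon,n}_t\|_H^2 + 2\nu \int_0^t \|u^{\epsilon,n}_s\|_{H^1}^2 ds = \|\Pi_n u_0\|_H^2,
\end{align*}
which is exactly (S3) with constants independent of $n$ and $\epsilon$. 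For $y^{\epsilon,n}$, It\=o's formula applied to $\|y\|_H^{2p}$ similarly annihilates $\epsilon^{-1/2}\langle b(y,y),y\rangle$ and $\langle b(u,y),y\rangle$; the surviving dissipations from $\epsilon^{-1}C$ and $A$, together with the noise trace $\epsilon^{-1}Tr(\Pi_n Q)$, produce the uniform bound (S4) after invoking the spectral gap $\lambda_0$ from (C1) and Young's inequality. The $L^p(\Omega,L^2([0,T],H^1))$ part of (S4) comes from the $Ay$ dissipation, which carries no $\epsilon^{-1}$ prefactor.

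To pass to the limit in $n$, the next step is uniform fractional-time regularity. Using the Galerkin weak formulation together with (B1)--(B3) and the Burkholder-Davis-Gundy inequality on the stochastic integral, one obtains bounds on $u^{\epsilon,n},y^{\epsilon,n}$ in $W^{\alpha,p}([0,T],H^{-\sigma})$ for some $\alpha>1/p$ and $\sigma>0$ large enough to absorb every drift summand. Combined with the energy bounds, \autoref{lem:Simon} gives tightness of $\{(u^{\epsilon,n},y^{\epsilon,n})\}_n$ in $L^2([0,T],H) \cap C([0,T],H^{-\beta})$; the bounds from (S3)--(S4) additionally give tightness in the weak topologies of $\mathcal{U}$ and $\mathcal{Y}$. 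Prokhorov plus a Skorohod--Jakubowski representation yield, on a new stochastic basis, almost sure convergence of a subsequence to some $(u^\epsilon,y^\epsilon)$. Strong $L^2([0,T],H)$ convergence handles the quadratic nonlinearities in (S2); the stochastic integral is identified by the classical martingale / L\'evy characterisation argument. The energy bounds (S3)--(S4) transfer to $(u^\epsilon,y^\epsilon)$ by lower semicontinuity of the norms.

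The main obstacle, in my view, is the uniform-in-$\epsilon$ control of $y^{\epsilon,n}$: the noise has amplitude $\epsilon^{-1/2}$, which formally blows up as $\epsilon \to 0$. The saving grace is that it is matched by the ultrafast dissipation $\epsilon^{-1}C$; in the It\=o balance for $\|y\|_H^{2p}$ the $\epsilon^{-1}$ prefactors on dissipation and noise trace cancel, consistently with the fact that the stationary law of the linearised equation $dy = \epsilon^{-1}Cy\,dt + \epsilon^{-1/2}Q^{1/2}dW$ is the $\epsilon$-independent Gaussian $\mathcal{N}(0,\tfrac{1}{2}(-C)^{-1}Q)$ already appearing in (Q1). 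Once this balance is captured and exploited via Gr\"onwall at rate $\epsilon^{-1}$ against a source of size $\epsilon^{-1}$, every subsequent step is a standard Galerkin/martingale-solution manipulation.
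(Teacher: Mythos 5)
Your overall architecture (Galerkin approximation, cancellation of the singular nonlinear terms via (B4), compactness plus a Skorokhod representation, identification of the limit, transfer of the bounds by lower semicontinuity) coincides with the paper's, and your treatment of (S3) --- the pathwise identity $\|u^{\epsilon,n}_t\|_H^2+2\nu\int_0^t\|u^{\epsilon,n}_s\|_{H^1}^2ds=\|\Pi_nu_0\|_H^2$ --- is exactly what the paper does. The gap is in (S4), specifically in the claim that ``the $L^p(\Omega,L^2([0,T],H^1))$ part of (S4) comes from the $Ay$ dissipation''. In the It\=o balance for $\|y^{\epsilon,n}\|_H^{2p}$ the $A$-dissipation indeed carries no $\epsilon^{-1}$, but the noise trace $\epsilon^{-1}\tfrac{p(2p-1)}{2}Tr(\Pi_nQ\Pi_n)\int_0^t\|y^{\epsilon,n}_s\|_H^{2p-2}ds$ does. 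Your spectral-gap-plus-Young absorption leaves an additive remainder of size $\epsilon^{-1}Ct$; the exponential Gr\"onwall at rate $\epsilon^{-1}$ does convert this into a uniform bound on $\sup_t\mathbb{E}[\|y^{\epsilon,n}_t\|_H^{2p}]$, but when you return to the same identity to extract $\int_0^t\mathbb{E}[\|y^{\epsilon,n}_s\|_H^{2p-2}\|y^{\epsilon,n}_s\|_{H^1}^2]ds$ you are left with the difference of two $O(\epsilon^{-1})$ quantities (the $\epsilon^{-1}C$-dissipation and the $\epsilon^{-1}$ noise input), and nothing in your argument shows this difference is $O(1)$. It is $O(1)$ only because the fast variable equilibrates, on a time scale $\epsilon$, to a state in which $\mathbb{E}\|(-C)^{1/2}y\|_H^2$ matches $\tfrac12 Tr(Q)$ to leading order --- a cancellation that Young's inequality cannot see.

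The paper's \autoref{lem:E_y} closes this by subtracting the linear Ornstein--Uhlenbeck process $Y^{\epsilon,n}$ and applying It\=o's formula to $\|\zeta^{\epsilon,n}\|_H^{p}$ with $\zeta^{\epsilon,n}=y^{\epsilon,n}-Y^{\epsilon,n}$: the noise cancels exactly in the difference, so the It\=o trace term (the sole $\epsilon^{-1}$ source) disappears, the only remaining sources are the cross terms $b(u^{\epsilon,n},Y^{\epsilon,n})$ and $\epsilon^{-1/2}b(y^{\epsilon,n},Y^{\epsilon,n})$, and these are absorbed by Young's inequality into the $\epsilon^{-1}$ $C$-dissipation of $\zeta^{\epsilon,n}$, while the moments of $Y^{\epsilon,n}$ are controlled directly from (Q1)--(Q2). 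You should adopt this decomposition: besides repairing (S4), it yields the stronger estimate $\epsilon^{-1}\int_0^T\mathbb{E}[\|\zeta^{\epsilon,n}_s\|_H^{p-2}\|\zeta^{\epsilon,n}_s\|_{H^\gamma}^2]ds\lesssim1$, which survives the limit $n\to\infty$ and becomes \autoref{prop:y-Y}, an ingredient the rest of the paper cannot do without. A minor further point: for the $\mathcal{B}([0,T],L^p(\Omega,H))$ half of (S4) the paper passes to the limit through an Ascoli--Arzel\`a argument in $C([0,T],(L^p(\Omega,H))_w)$ rather than bare lower semicontinuity, since the Skorokhod convergence is not pointwise in time in $L^p(\Omega,H)$; your Fatou-type transfer can be made to work but needs to be spelled out.
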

Existence of a weak martingale solution for fixed $\epsilon\in(0,1)$ is known since \cite{FlGa95}. The only difference here is uniform in $\epsilon$ energy bounds, which require suitable estimates at the level of Galerkin truncations (cf. \autoref{lem:int_y} and \autoref{lem:E_y} below) and compactness arguments well-suited for the passage to the limit $n \to \infty$.
For the sake of completeness, we present a proof of \autoref{prop:existence} in the \autoref{sec:existence_sol}.
Here we limit ourselves to show the needed energy bounds.

\begin{rmk}
Notice that if \eqref{eq:system_2} admits pathwise uniqueness then one has existence of probabilistically strong solution, namely the stochastic basis can be taken independent of $\epsilon$. 
\end{rmk}

\begin{lem} \label{lem:int_y}
For every positive integer $p$ it holds
\begin{align*}
\sup_{\substack{\epsilon\in(0,1),\\ n \in \N}}\,
\int_0^T\mathbb{E}\left[\|y^{\epsilon,n}_s\|^{2p-2}_H \|y^{\epsilon,n}_s\|_{H^\gamma}^2 \right] ds
\lesssim 1.
\end{align*}
\end{lem}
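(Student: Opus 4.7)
The plan is to apply It\^o's formula to $\|y^{\epsilon,n}_t\|_H^{2p}$, exploit the antisymmetry of $b$ to kill the nonlinear terms, and use the $\epsilon^{-1}$-strong dissipation coming from $C$ to absorb the $\epsilon^{-1}$-singular It\^o correction. Since the Galerkin system lives in a finite-dimensional subspace and $f(y)=\|y\|_H^{2p}$ is smooth there, the application of It\^o is fully rigorous.

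Carrying out the computation, the drift terms of $d\|y^{\epsilon,n}_t\|_H^{2p}$ read
\begin{align*}
2p\|y^{\epsilon,n}_t\|_H^{2p-2}\bigl[\epsilon^{-1}\langle Cy^{\epsilon,n}_t, y^{\epsilon,n}_t\rangle + \langle Ay^{\epsilon,n}_t, y^{\epsilon,n}_t\rangle + \langle b(u^{\epsilon,n}_t,y^{\epsilon,n}_t),y^{\epsilon,n}_t\rangle + \epsilon^{-1/2}\langle b(y^{\epsilon,n}_t,y^{\epsilon,n}_t),y^{\epsilon,n}_t\rangle\bigr],
\end{align*}
where I have already used that $y^{\epsilon,n}_t\in\Pi_n H$ to drop $\Pi_n$ in the inner products. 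By (B4) the two trilinear terms vanish; by (C2) (with $s=0$, $\beta=1$) and negativity of $A$, the linear terms are bounded above by $-2pc_\gamma\epsilon^{-1}\|y^{\epsilon,n}_t\|_H^{2p-2}\|y^{\epsilon,n}_t\|_{H^\gamma}^2$. The It\^o correction $\frac12\text{Tr}(\epsilon^{-1}\Pi_nQ\Pi_n\,D^2f(y^{\epsilon,n}_t))$, using $\text{Tr}(\Pi_n Q\Pi_n)\leq \text{Tr}(Q)<\infty$ and $\langle \Pi_n Q\Pi_n y,y\rangle\leq\text{Tr}(Q)\|y\|_H^2$, is dominated by $C_p\epsilon^{-1}\|y^{\epsilon,n}_t\|_H^{2p-2}$. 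Taking expectation (the finite-dimensional stochastic integral is a genuine martingale) yields
\begin{align*}
\mathbb{E}\|y^{\epsilon,n}_t\|_H^{2p} + 2pc_\gamma\epsilon^{-1}\int_0^t\mathbb{E}\bigl[\|y^{\epsilon,n}_s\|_H^{2p-2}\|y^{\epsilon,n}_s\|_{H^\gamma}^2\bigr]ds \leq \|y_0\|_H^{2p} + C_p\epsilon^{-1}\int_0^t\mathbb{E}\bigl[\|y^{\epsilon,n}_s\|_H^{2p-2}\bigr]ds.
\end{align*}

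To close the estimate I would use that Poincar\'e gives $\|y\|_H \lesssim \|y\|_{H^\gamma}$ (the spectrum of $-A$ is bounded away from zero on zero-mean torus functions), hence $\|y\|_H^{2p}\lesssim \|y\|_H^{2p-2}\|y\|_{H^\gamma}^2$. A truncation/Young argument (splitting according to whether $\|y\|_H \leq R$ or not) then yields, for any $\delta>0$, a constant $C_\delta$ such that $\|y\|_H^{2p-2}\leq \delta\|y\|_H^{2p-2}\|y\|_{H^\gamma}^2 + C_\delta$. Choosing $\delta$ small enough that $C_p\delta < pc_\gamma$, the first term can be absorbed into the left-hand side; multiplying through by $\epsilon$ to kill the remaining $\epsilon^{-1}T$ on the right, one obtains
\begin{align*}
pc_\gamma\int_0^T\mathbb{E}\bigl[\|y^{\epsilon,n}_s\|_H^{2p-2}\|y^{\epsilon,n}_s\|_{H^\gamma}^2\bigr]ds \leq \epsilon\|y_0\|_H^{2p} + C_{p,\delta}T,
\end{align*}
which is bounded uniformly in $\epsilon\in(0,1)$ and $n\in\mathbb{N}$ since $\epsilon<1$.

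The main obstacle is the $\epsilon^{-1}$ prefactor in the It\^o correction: naively, this makes the right-hand side blow up as $\epsilon\to 0$. The key observation that resolves this is that the dissipation from $C$ carries the same $\epsilon^{-1}$ scaling while controlling the stronger norm $\|y\|_{H^\gamma}^2$, so a Poincar\'e--Young trade-off converts the singular correction into the desired $\epsilon$-uniform bound after multiplication by $\epsilon$. Everything else (vanishing of trilinear terms, trace-class property of $Q$) is standard given assumptions (B4) and (Q1)-(Q2).
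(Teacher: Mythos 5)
Your proof is correct and follows essentially the same route as the paper: It\^o's formula for $\|y^{\epsilon,n}_t\|_H^{2p}$ on the finite-dimensional Galerkin system, cancellation of the trilinear terms by (B4), lower-bounding $-\langle Cy,y\rangle$ by the $H^\gamma$ norm via (C2), and absorption of the $\epsilon^{-1}$ It\^o correction into the $\epsilon^{-1}$ dissipation before multiplying through by $\epsilon$. The only (cosmetic) difference is how the lower-order term $\mathbb{E}[\|y^{\epsilon,n}_s\|_H^{2p-2}]$ is closed: you use a Poincar\'e--Young splitting $\|y\|_H^{2p-2}\leq \delta\|y\|_H^{2p-2}\|y\|_{H^\gamma}^2+C_\delta$ and absorb, whereas the paper bounds $\|y\|_H^{2p-2}\lesssim \|y\|_H^{2p-4}\|y\|_{H^\gamma}^2$ and inducts on $p$; both are valid.
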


\begin{proof}
Let $\epsilon\in(0,1)$ and $n \in \N$ be fixed, and take an arbitrary $t \in [0,T]$. 
Applying It\=o Formula to $\frac12 \|y^{\epsilon,n}_t\|_H^{2p}$ we get
\begin{align*} 
\frac12 \|y^{\epsilon,n}_t\|_H^{2p} 
&
+ 
\epsilon^{-1} p \int_0^t\|y^{\epsilon,n}_s\|_H^{2p-2} \|(-C)^{1/2}y^{\epsilon,n}_s\|_H^2 ds
+
p \int_0^t\|y^{\epsilon,n}_s\|_H^{2p-2} \|y^{\epsilon,n}_s\|_{H^1}^2 ds
\\
&= \nonumber
\frac12 \|\Pi_ny_0\|_H^{2p} 
+ 
\epsilon^{-1/2} p \int_0^t \|\Pi_n y^\epsilon_s\|_H^{2p-2} \langle y^{\epsilon,n}_s,\Pi_n Q^{1/2}dW_s \rangle
\\
&\quad+ 
\epsilon^{-1} \frac{p(2p-1)}{2} Tr(\Pi_n Q \Pi_n) \int_0^t \|y^{\epsilon,n}_s\|_H^{2p-2} ds.
\end{align*} 
Taking expectations in the expression above with $p=1$  we obtain
\begin{align*} 
\epsilon^{-1}
\int_0^t\mathbb{E}\left[\|y^{\epsilon,n}_s\|_{H^\gamma}^2 \right] ds
&\leq
\frac1{2M} \|y_0\|^2 
+
\epsilon^{-1} \frac{Tr(Q)}{2M} t,
\end{align*}
where we have used $\|(-C)^{1/2}y^{\epsilon,n}_s\|_H^2 \geq M \|y^{\epsilon,n}_s\|_{H^\gamma}^2$ for some unimportant constant $M$; thus we deduce
\begin{align*}
\sup_{\substack{\epsilon\in(0,1),\\ n \in \N}}\,
\int_0^T\mathbb{E}\left[\|y^{\epsilon,n}_s\|_{H^\gamma}^2 \right] ds
\leq
\frac{\epsilon}{2M} 
\|y_0\|_H^2
+
\frac{Tr(Q)T}{2M}
\lesssim 1.
\end{align*}
For $p>1$, we argue as follows: first, recalling $\|y\|^2_{H^\gamma} \geq \nu_1^\gamma \|y\|_H^2$ for some $\nu_1>0$ (the principal eigenvalue of the operator $-A$), for every $t \in [0,T]$ we have
\begin{align*} 
\int_0^t&\mathbb{E}\left[\|y^{\epsilon,n}_s\|_H^{2p-2}\|y^{\epsilon,n}_s\|_{H^\gamma}^2  \right] ds
\\
&\leq
\frac{\epsilon}{2pM}\|y_0\|_H^{2p} 
+
\frac{2p-1}{2M}Tr(Q) \int_0^t\mathbb{E}\left[\|y^{\epsilon,n}_s\|_H^{2p-2} \right] ds
\\
&\leq
\frac{\epsilon}{2pM}\|y_0\|_H^{2p} 
+
\frac{2p-1}{2M}\nu_1^\gamma Tr(Q) \int_0^t\mathbb{E}\left[\|y^{\epsilon,n}_s\|_H^{2p-4} \|y^{\epsilon,n}_s\|_{H^\gamma}^2\right] ds;
\end{align*}
then, since $p-1$ is a positive integer, by induction we have the desired inequality uniformly in $\epsilon\in(0,1)$ and $n \in \N$.
\end{proof}

\begin{lem} \label{lem:E_y}
For every $p \geq 2$ it holds
\begin{align*}
\sup_{\substack{\epsilon\in(0,1),\\ n \in \N}}\,
\sup_{t \in [0,T]}
\left(
\mathbb{E}\left[\|y^{\epsilon,n}_t\|_H^p \right]
+
\int_0^t
\mathbb{E}\left[\|y^{\epsilon,n}_s\|_H^{p-2}\|y^{\epsilon,n}_s\|_{H^1}^2\right] ds \right)
\lesssim 1.
\end{align*}
\end{lem}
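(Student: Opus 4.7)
The plan is to apply It\^o's formula to $\tfrac{1}{p}\|y^{\epsilon,n}_t\|_H^p$, which is rigorous at the Galerkin level. The essential structural input is the antisymmetry (B4): both $\|y\|^{p-2}\langle y, \Pi_n b(u^{\epsilon,n},y^{\epsilon,n})\rangle$ and $\epsilon^{-1/2}\|y\|^{p-2}\langle y, \Pi_n b(y^{\epsilon,n},y^{\epsilon,n})\rangle$ vanish (using $\Pi_n y^{\epsilon,n}=y^{\epsilon,n}$), so the nonlinearity contributes nothing. Taking expectation kills the martingale and produces the identity
\begin{align*}
\tfrac{1}{p}\mathbb{E}\|y^{\epsilon,n}_t\|_H^p &+ \epsilon^{-1}\int_0^t \mathbb{E}\bigl[\|y^{\epsilon,n}_s\|_H^{p-2}\|(-C)^{1/2}y^{\epsilon,n}_s\|_H^2\bigr] ds + \int_0^t \mathbb{E}\bigl[\|y^{\epsilon,n}_s\|_H^{p-2}\|y^{\epsilon,n}_s\|_{H^1}^2\bigr] ds \\
&= \tfrac{1}{p}\|y_0\|_H^p + \epsilon^{-1}\int_0^t \mathbb{E}\bigl[G_p(y^{\epsilon,n}_s)\bigr] ds,
\end{align*}
where $G_p(y):=\tfrac{1}{2}Tr(\Pi_n Q\Pi_n)\|y\|^{p-2} + \tfrac{p-2}{2}\|y\|^{p-4}\langle \Pi_n Q\Pi_n y, y\rangle \le \tfrac{p-1}{2}Tr(Q)\|y\|^{p-2}$.

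For the uniform moment bound $\sup_t \mathbb{E}\|y^{\epsilon,n}_t\|_H^p \lesssim 1$ I argue by induction on even $p\geq 2$. Differentiating the identity in $t$, invoking (C1) to get $\mathbb{E}[\|y\|^{p-2}\|(-C)^{1/2}y\|^2]\geq \lambda_0\mathbb{E}\|y\|^p$, and applying Young's inequality $\tfrac{p-1}{2}Tr(Q)\|y\|^{p-2}\leq \lambda_0\|y\|^p + K_p$, I obtain $\tfrac{d}{dt}\mathbb{E}\|y_t\|^p + c\epsilon^{-1}\mathbb{E}\|y_t\|^p\leq \epsilon^{-1}K'_p$; Gr\"onwall then yields the bound uniformly in $\epsilon$, $n$ and $t$.

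For the integral estimate I rearrange the identity as
\begin{align*}
\int_0^t \mathbb{E}\bigl[\|y^{\epsilon,n}_s\|_H^{p-2}\|y^{\epsilon,n}_s\|_{H^1}^2\bigr] ds = \tfrac{1}{p}\bigl(\|y_0\|_H^p - \mathbb{E}\|y^{\epsilon,n}_t\|_H^p\bigr) + \epsilon^{-1}\int_0^t \mathbb{E}\bigl[\mathscr{L}_y\phi(y^{\epsilon,n}_s)\bigr] ds,
\end{align*}
with $\phi(y):=\tfrac{1}{p}\|y\|_H^p$ and $\mathscr{L}_y := \langle Cy, D_y\cdot\rangle + \tfrac{1}{2}Tr(QD_y^2\cdot)$ the unscaled Ornstein--Uhlenbeck generator, using the algebraic identity $G_p(y)-\|y\|^{p-2}\|(-C)^{1/2}y\|^2 = \mathscr{L}_y\phi(y)$. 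The first bracket is $O(1)$ by the moment bound, so everything hinges on bounding the $\epsilon^{-1}$-prefactored residual. The key idea is that $\int \mathscr{L}_y\phi\, d\mu = 0$ by invariance of the OU stationary measure $\mu$, and since the fast dynamics drives $y^{\epsilon,n}_s$ toward $\mu^\epsilon$ on the time scale $\epsilon$, the residual $s$-integral should be $O(\epsilon)$, cancelling the $\epsilon^{-1}$ prefactor.

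The hardest step is making this ``$O(\epsilon)$ relaxation'' rigorous for the \emph{nonlinear} $y^{\epsilon,n}$ (not just pure OU). One clean route is the linearisation $y^{\epsilon,n} = Y^{\epsilon,n} + z^{\epsilon,n}$, with $Y^{\epsilon,n}$ the Galerkin OU issued from $y_0$ and $z^{\epsilon,n}_0 = 0$: the $O(\epsilon)$ bound for the OU part follows by direct computation with the explicit covariance of $Y^{\epsilon,n}$ and a semigroup identity $\int_0^t P_{s/\epsilon}^{OU}(\mathscr{L}_y\phi)(y_0) ds = \epsilon(P_{t/\epsilon}^{OU}\phi-\phi)(y_0)$, while the perturbation $z^{\epsilon,n}$ is controlled via an $L^2$ energy estimate yielding $\mathbb{E}\|z^{\epsilon,n}_t\|_H^2 \lesssim \epsilon$. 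The dangerous terms $\epsilon^{-1/2}\langle z, \Pi_n b(Y,Y)\rangle$ and $\epsilon^{-1/2}\langle z, \Pi_n b(z,Y)\rangle$ arising in that energy estimate are controlled using the uniform bound on $u^{\epsilon,n}$ from (S3), the high-regularity moment bounds on $Y^{\epsilon,n}$ guaranteed by (Q2), and systematic use of (B4) to expose cancellations. The passage from $p=2$ to general even $p\geq 2$ uses the same It\^o identity, with the extra weight $\|y\|^{p-2}$ treated via the previously established moment bound.
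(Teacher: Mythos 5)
Your first half---the uniform moment bound $\sup_{t}\mathbb{E}[\|y^{\epsilon,n}_t\|_H^p]\lesssim 1$ via It\^o's formula for $\|y\|_H^p$, cancellation of the nonlinearity by (B4), Young's inequality between the $\epsilon^{-1}$-weighted dissipation and the $\epsilon^{-1}$-weighted It\^o correction, and Gr\"onwall---is correct, and is in fact more direct than the paper's treatment of that sub-claim. The gap is in the integral estimate $\int_0^t\mathbb{E}[\|y^{\epsilon,n}_s\|_H^{p-2}\|y^{\epsilon,n}_s\|_{H^1}^2]\,ds\lesssim 1$, which is the real content of the lemma. Your rearrangement reduces everything to showing $\epsilon^{-1}\int_0^t\mathbb{E}[\mathscr{L}_y\phi(y^{\epsilon,n}_s)]\,ds=O(1)$, i.e.\ a cancellation to order $\epsilon$ between $\epsilon^{-1}\int\mathbb{E}[G_p(y_s)]\,ds$ and $\epsilon^{-1}\int\mathbb{E}[\|y_s\|_H^{p-2}\|(-C)^{1/2}y_s\|_H^2]\,ds$, each of which is genuinely of order $\epsilon^{-1}$ (cf.\ the proof of \autoref{lem:int_y}). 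Your sketch does not deliver this. The observable $\mathscr{L}_y\phi$ contains $\|(-C)^{1/2}y\|_H^2$, which by (C2) sits between $\|y\|_{H^\gamma}^2$ and $\|y\|_{H^\Gamma}^2$; it is not continuous on $H$, so a bound $\mathbb{E}\|z^{\epsilon,n}_t\|_H^2\lesssim\epsilon$ on the perturbation gives no control on $\mathbb{E}[\mathscr{L}_y\phi(Y+z)-\mathscr{L}_y\phi(Y)]$. Writing the difference out for $p=2$ produces the cross term $\epsilon^{-1}\int_0^t\mathbb{E}\bigl[\langle(-C)^{1/2}z_s,(-C)^{1/2}Y_s\rangle\bigr]ds$; with the best available input, namely $\epsilon^{-1}\int_0^T\mathbb{E}\|(-C)^{1/2}z_s\|_H^2\,ds\lesssim 1$ from the dissipation in the $z$-energy estimate, Cauchy--Schwarz only yields $O(\epsilon^{-1/2})$, which diverges. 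Quantifying the proximity of the law of the \emph{nonlinear} process $y^{\epsilon,n}_s$ to the OU statistics against this unbounded, $H^\Gamma$-dependent observable to order $\epsilon$ is essentially a full corrector argument, i.e.\ harder than the lemma you are trying to prove.

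The paper's proof sidesteps the cancellation entirely: it runs the $p$-th power energy estimate not on $y^{\epsilon,n}$ but on the difference $\zeta^{\epsilon,n}=y^{\epsilon,n}-Y^{\epsilon,n}$, whose equation carries no stochastic integral and hence no It\^o correction term at all. After the (B4) cancellation the only source terms are $\langle b(u^{\epsilon,n},Y^{\epsilon,n}),\zeta^{\epsilon,n}\rangle$ and $\epsilon^{-1/2}\langle b(y^{\epsilon,n},Y^{\epsilon,n}),\zeta^{\epsilon,n}\rangle$, and the factor $\epsilon^{-1/2}$ is weak enough to be absorbed into the $\epsilon^{-1}$-weighted dissipation by Young's inequality with exponents $p$ and $p/(p-1)$ (this is the role of the constant $c$ chosen before \eqref{eq:zeta_pwr_p}); the right-hand side is then closed using (Q2) for $Y^{\epsilon,n}$ and \autoref{lem:int_y} for $\int\mathbb{E}\|y^{\epsilon,n}_s\|_H^{2p}\,ds$, and the claim for $y^{\epsilon,n}=Y^{\epsilon,n}+\zeta^{\epsilon,n}$ follows. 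I recommend you restructure your proof around $\zeta^{\epsilon,n}$; note that this route also yields \eqref{eq:bound_zeta_H_p2}, which is needed later for the linearisation trick and which your approach would not produce.
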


\begin{proof}
As in the previous \autoref{lem:int_y}, it is sufficient to prove the result for every positive even integer $p$.
Let us introduce the auxiliary process $Y^{\epsilon,n}_t$ solution of
\begin{align*}
dY^{\epsilon,n}_t
&=
\epsilon^{-1}C_\epsilon Y^{\epsilon,n}_t dt
+
\epsilon^{-1/2} \Pi_nQ^{1/2}dW_s,
\qquad
Y^{\epsilon,n}_0=0,
\end{align*}
so that, by It\=o Formula, the difference process $\zeta^{\epsilon,n}_t \coloneqq y^{\epsilon,n}_t-Y^{\epsilon,n}_t$ satisfies, for every $t \in [0,T]$ and $p \geq 2$
\begin{align} \label{eq:energy_zeta_p}
\|\zeta^{\epsilon,n}_t\|_H^{p}
&+
\epsilon^{-1}pM \int_0^t\|\zeta^{\epsilon,n}_s\|_H^{p-2}\|\zeta^{\epsilon,n}_s\|_{H^\gamma}^2 ds
+
p \int_0^t\|\zeta^{\epsilon,n}_s\|_H^{p-2}\|\zeta^{\epsilon,n}_s\|_{H^1}^2 ds
\\
&\leq \nonumber
\|y_0\|_H^p
+
p \int_0^t
\|\zeta^{\epsilon,n}_s\|_H^{p-2}
\langle b(u^{\epsilon,n}_s , Y^{\epsilon,n}_s),\zeta^{\epsilon,n}_s \rangle ds
\\
&\quad+ \nonumber
\epsilon^{-1/2} p
\int_0^t
\|\zeta^{\epsilon,n}_s\|_H^{p-2} 
\langle b(y^{\epsilon,n}_s,Y^{\epsilon,n}_s),\zeta^{\epsilon,n}_s \rangle ds
\\
&\leq\|y_0\|^p
+ \nonumber
M_1\int_0^t
\|\zeta^{\epsilon,n}_s\|_H^{p-1} \|u^{\epsilon,n}_s\|_H \|Y^{\epsilon,n}_s\|_{H^{\theta_0}}ds
\\
&\quad+ \nonumber
\epsilon^{-1/2}
M_1\int_0^t
\|\zeta^{\epsilon,n}_s\|_H^{p-1} \|y^{\epsilon,n}_s\|_H \|Y^{\epsilon,n}_s\|_{H^{\theta_0}} ds,
\end{align}
where $M_1$ is another unimportant constant.
By Young inequality
\begin{align*}
\|Y^{\epsilon,n}_s\|_{H^{\theta_0}}
\|\zeta^{\epsilon,n}_s\|_H^{p-1}
\leq
\frac{\|Y^{\epsilon,n}_s\|_{H^{\theta_0}}^p}{p}
+
\frac{p-1}{p}\|\zeta^{\epsilon,n}_s\|_H^p,
\end{align*}
and, for every positive constant $c$:
\begin{align*}
\epsilon^{-1/2}
\|y^{\epsilon,n}_s\|_H
\|Y^{\epsilon,n}_s\|_{H^{\theta_0}}
\|\zeta^{\epsilon,n}_s\|_H^{p-1}
&\leq
\frac{c^{-p}}{2p}\|y^{\epsilon,n}_s\|_H^{2p}
+
\frac{c^{-p}}{2p}\|Y^{\epsilon,n}_s\|_{H^{\theta_0}}^{2p}
\\
&\quad+
\epsilon^{-\frac{p}{2(p-1)}}
\frac{(p-1)c^{\frac{p}{p-1}}}{p}\|\zeta^{\epsilon,n}_s\|_H^p.
\end{align*}
Choosing $c=\left(\frac{p^2 M}{2(p-1)\nu_1^\gamma M_1} \right)^{\frac{p-1}{p}}$, the previous inequalities can be plugged into \eqref{eq:energy_zeta_p} to get
\begin{align}  \label{eq:zeta_pwr_p}
\|\zeta^{\epsilon,n}_t\|_H^p
&+
\epsilon^{-1}\int_0^t\|\zeta^{\epsilon,n}_s\|_H^{p-2}\|\zeta^{\epsilon,n}_s\|_{H^\gamma}^2 ds
+
\int_0^t\|\zeta^{\epsilon,n}_s\|_H^{p-2}\|\zeta^{\epsilon,n}_s\|_{H^1}^2 ds
\\
&\lesssim \nonumber
\|y_0\|_H^p
+
\int_0^t
\|Y^{\epsilon,n}_s\|^p_{H^{\theta_0}}ds
+
\int_0^t \|y^{\epsilon,n}_s\|_H^{2p} ds
+
\int_0^t \|Y^{\epsilon,n}_s\|_{H^{\theta_0}}^{2p} ds. \nonumber
\end{align}
Since $\mathbb{E}\left[\|Y^{\epsilon,n}_s\|_{H^{\theta_0}}^{2p}\right]$ is bounded uniformly in $\epsilon,n$ and $s$ by assumption (Q2), and invoking previous \autoref{lem:int_y}, the previous inequality produces the bounds for $\zeta^{\epsilon,n}$:
\begin{gather} \label{eq:bound_zeta_H_p1}
\sup_{\substack{\epsilon\in(0,1),\\ n \in \N}}\,
\sup_{t \in [0,T]}
\left(
\mathbb{E}\left[\|\zeta^{\epsilon,n}_t\|_H^p \right]
+
\int_0^t
\mathbb{E}\left[\|\zeta^{\epsilon,n}_s\|_H^{p-2}\|\zeta^{\epsilon,n}_s\|_{H^1}^2\right] ds \right)
\lesssim 1,
\\
\sup_{\substack{\epsilon\in(0,1),\\ n \in \N}} \label{eq:bound_zeta_H_p2}
\epsilon^{-1}
\int_0^T\mathbb{E}\left[\|\zeta^{\epsilon,n}_s\|_H^{p-2}\|\zeta^{\epsilon,n}_s\|_{H^\gamma}^2 \right] ds
\lesssim 1.
\end{gather}
Since $y^{\epsilon,n} = Y^{\epsilon,n} + \zeta^{\epsilon,n}$, from \eqref{eq:bound_zeta_H_p1} we deduce the thesis.
\end{proof}

The previous \autoref{lem:E_y} gives the uniform-in-$\epsilon$ bounds necessary for the proof of \autoref{prop:existence}.
As a by-product of the previous proof we have also obtained \eqref{eq:bound_zeta_H_p2}, that permits to control the difference between the Galerkin approximation of the small scale process $y^{\epsilon,n}$ and its linearised counterpart $Y^{\epsilon,n}$ in the Sobolev space $H^\gamma$. Recall that we have assumed $\gamma>1/4$, and we can assume $\gamma \in (1/4,1]$ without loss of generality.
A close inspection of the proof of \autoref{prop:existence} in the \autoref{sec:existence_sol} shows that the bound \eqref{eq:bound_zeta_H_p2} is stable under passage to the limit $n \to \infty$; therefore, we can deduce the following
\begin{prop} \label{prop:y-Y}
Let $\{(u^\epsilon,y^\epsilon)\}_{\epsilon\in(0,1)}$ be a bounded-energy family of weak martingale solutions to \eqref{eq:system_2}.
For every $\epsilon\in(0,1)$ let $Y^\epsilon$ be the unique strong solution of
\begin{align} \label{eq:Yeps}
dY^\epsilon_t
&=
\epsilon^{-1}C_\epsilon Y^\epsilon_t dt
+
\epsilon^{-1/2} Q^{1/2}dW_s,
\qquad
Y^\epsilon_0=0.
\end{align}
Then
\begin{align*}
\sup_{\epsilon\in(0,1)}
\epsilon^{-1} 
\int_0^T\mathbb{E}\left[\|y^\epsilon_s-Y^\epsilon_s\|_{H^\gamma}^2 \right] ds
\lesssim 1.
\end{align*}
\end{prop}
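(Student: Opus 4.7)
The plan is to derive the estimate by passing to the limit $n \to \infty$ in the Galerkin analogue \eqref{eq:bound_zeta_H_p2} taken with $p = 2$, which reads
\[
\sup_{\epsilon\in(0,1),\, n \in \N} \epsilon^{-1} \int_0^T \E{\|\zeta^{\epsilon,n}_s\|_{H^\gamma}^2} ds \lesssim 1,
\]
where $\zeta^{\epsilon,n} = y^{\epsilon,n} - Y^{\epsilon,n}$ and $Y^{\epsilon,n}$ is the Galerkin projection of the OU process \eqref{eq:Yeps}. In particular, for each fixed $\epsilon$, the sequence $\{\zeta^{\epsilon,n}\}_n$ lies in a ball of the Hilbert space $L^2(\Omega \times [0,T], H^\gamma)$ of radius of order $\epsilon^{1/2}$.

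First I would invoke the compactness scheme underlying \autoref{prop:existence} (spelled out in \autoref{sec:existence_sol}) to obtain, along a subsequence $n_k$ and possibly on a new stochastic basis provided by Skorokhod's representation theorem, almost sure convergence $y^{\epsilon, n_k} \to y^\epsilon$ in a topology at least as strong as $L^2([0,T], H)$. In parallel, the explicit representation $Y^{\epsilon,n}_t = \epsilon^{-1/2}\int_0^t e^{C_\epsilon (t-s)/\epsilon}\Pi_n Q^{1/2} dW_s$, combined with the It\^o isometry, dominated convergence, and assumption (Q2), gives $Y^{\epsilon,n} \to Y^\epsilon$ in $L^2(\Omega; L^2([0,T], H^\gamma))$. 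Hence $\zeta^{\epsilon, n_k} \to \zeta^\epsilon := y^\epsilon - Y^\epsilon$ at least almost surely in $L^2([0,T], H)$.

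Second, the uniform $L^2(\Omega \times [0,T], H^\gamma)$ bound above allows one to extract a further subsequence of $\{\zeta^{\epsilon,n_k}\}$ that converges weakly in this Hilbert space to some limit; testing against functionals that are also continuous in the coarser $L^2([0,T],H)$ topology forces this weak limit to coincide with $\zeta^\epsilon$. Weak lower semicontinuity of the Hilbertian norm then produces
\[
\epsilon^{-1} \int_0^T \E{\|\zeta^\epsilon_s\|_{H^\gamma}^2} ds \leq \liminf_{k \to \infty} \epsilon^{-1} \int_0^T \E{\|\zeta^{\epsilon, n_k}_s\|_{H^\gamma}^2} ds \lesssim 1
\]
uniformly in $\epsilon \in (0,1)$, which is the claim. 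The main obstacle is keeping track of the possible change of stochastic basis arising from the Skorokhod construction of $(u^\epsilon, y^\epsilon)$: one must ensure that the Galerkin bound, originally derived on the basis carrying the noise $W$, transfers without loss to the basis on which the martingale solution is eventually realised. This is ultimately harmless because the quantity to be estimated depends only on the joint law of $(y^\epsilon, Y^\epsilon)$, which the Skorokhod construction preserves.
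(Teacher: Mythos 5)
Your proposal is correct and follows essentially the same route as the paper, which simply asserts that the Galerkin bound \eqref{eq:bound_zeta_H_p2} (with $p=2$) is stable under the passage to the limit $n\to\infty$ carried out in the proof of \autoref{prop:existence}. Your write-up in fact supplies more detail than the paper does — the a.s.\ convergence of $y^{\epsilon,n}$ from the Skorokhod construction, the strong convergence of $Y^{\epsilon,n}$ via It\=o isometry, the identification of the weak $L^2(\Omega\times[0,T],H^\gamma)$ limit, and weak lower semicontinuity of the norm — and the observation that the estimate only depends on the joint law of $(y^\epsilon,Y^\epsilon)$ correctly disposes of the change of stochastic basis.
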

The previous result will be fundamental in performing the linearisation trick presented in \autoref{ssec:perturbed_test_function}; cf. also \autoref{prop:linearisation}.

\section{Quadratic functions and solution to the Poisson equation} \label{sec:poisson}

Recall that, as already discussed in \autoref{ssec:perturbed_test_function}, we shall define correctors $\varphi_1^\epsilon$, $\varphi_2^\epsilon$ as solutions to certain Poisson equations $\mathscr{L}_y^\epsilon \phi = -\psi$.
In this section we develop the technology needed to solve the Poisson equation for a class of functions $\psi$ that is large enough for our purposes, namely the class of quadratic functions on Sobolev spaces $H^\theta$.
Moreover, we also provide partial regularity estimates for the so obtained solution $\phi$ in terms of analogous bounds on $\psi$, showing improved regularity (see \autoref{cor:regularity_phi}).
To avoid any confusion for the reader, we point out that all the estimates in the present section are uniform in $\epsilon\in(0,1)$. 

\subsection{Quadratic functions}
Denote $\mathcal{E}_\theta \subset L^2(H,\mu^\epsilon)$, $\theta 
\in (-\infty,\theta_0]$ the space of quadratic functions $\psi:H^\theta \to \mathbb{R}$, namely $\psi \in \mathcal{E}_\theta$ if there exist $a_0 \in \mathbb{R}$, $a_1 : H^\theta \to \mathbb{R}$ linear and bounded, and $a_2 : H^\theta \times H^\theta \to \mathbb{R}$ bilinear, symmetric and bounded such that $\psi(y) = a_0 + a_1(y) + a_2(y,y)$ for every $y \in H^\theta$.
The inclusion in $L^2(H,\mu^\epsilon)$ holds true by (Q2).
Notice that every $\psi \in \mathcal{E}_\theta$ admits a unique rewriting as $\psi = a_0 + a_1 + a_2$: indeed $\psi(ry)=a_0 + r a_1(y) + r^2 a_2(y,y)$, and therefore
\begin{align*}
a_0 = \psi(0),
\quad
a_1(y) = \left. \frac{d}{dr} \psi(ry) \right|_{r=0},
\end{align*}
and by taking the difference $a_2(y,y) = \psi(y)-a_1(y)-a_0$, defining uniquely the quadratic form $a_2(y,y)$ and also its associated symmetric bilinear map, via polarization formula.

For future purposes define
\begin{align*}
\|\psi\|_{\mathcal{E}_\theta}
&\coloneqq
|a_0|
+
\|a_1\|_{H^\theta \to \R}
+
\|a_2\|_{H^\theta \times H^\theta \to \R}
\\
&=
|a_0|
+
\sup_{\substack{y \in H^\theta,\\ \|y\|_{H^\theta} = 1}} |a_1(y)|
+
\sup_{\substack{y,y' \in H^\theta,\\ \|y\|_{H^\theta} = \|y'\|_{H^\theta} = 1}} |a_2(y,y')|.
\end{align*}
The space $\mathcal{E}_\theta$ is Banach when endowed with the norm $\|\cdot\|_{\mathcal{E}_\theta}$, and $\mathcal{E}_{\theta} \subset \mathcal{E}_{\theta'}$ with continuous embedding if $\theta \leq \theta'$. As a notational convention, denote $\mathcal{E} \coloneqq \mathcal{E}_0$.

\begin{lem} \label{lem:commuting}
Let $\psi \in \mathcal{E}$, then for every $t \geq 0$ it holds $P_t^\epsilon \psi \in D(\mathscr{L}_y^\epsilon)$ and $\mathscr{L}_y^\epsilon P_t^\epsilon \psi = P_t^\epsilon \mathscr{L}_y^\epsilon \psi$.
\end{lem}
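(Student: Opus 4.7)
The plan is two-fold. First, establish that $\mathcal{E} \subset D(\mathscr{L}_y^\epsilon)$. Then invoke the standard semigroup principle that a $C_0$-semigroup commutes with its generator on the generator's domain: assuming $\psi \in D(\mathscr{L}_y^\epsilon)$, the semigroup property $P_s^\epsilon P_t^\epsilon = P_t^\epsilon P_s^\epsilon$ together with the fact that $P_t^\epsilon$ is $1$-Lipschitz on $L^2(H, \mu^\epsilon)$ yields
\[
\lim_{s \to 0^+} \frac{P_s^\epsilon P_t^\epsilon \psi - P_t^\epsilon \psi}{s} = \lim_{s \to 0^+} P_t^\epsilon \frac{P_s^\epsilon \psi - \psi}{s} = P_t^\epsilon \mathscr{L}_y^\epsilon \psi
\]
in $L^2(H, \mu^\epsilon)$, which simultaneously shows $P_t^\epsilon \psi \in D(\mathscr{L}_y^\epsilon)$ and gives the commutation identity.

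For the first step, I will fix $\psi = a_0 + a_1 + a_2 \in \mathcal{E}$ and represent $a_1(y) = \langle h_1, y\rangle$ with $h_1 \in H$, $a_2(y, z) = \langle A_2 y, z\rangle$ with $A_2$ a bounded symmetric operator on $H$ (by the Riesz representation, given that $a_1, a_2$ are bounded on $H$, $H \times H$). Since $Y_t^y = e^{C_\epsilon t} y + W_t^{C_\epsilon, Q}$ and $W_t^{C_\epsilon, Q}$ is a centered Gaussian, Mehler's formula gives
\[
P_t^\epsilon \psi(y) = a_0 + Tr(Q_t^\epsilon A_2) + \langle e^{C_\epsilon t} h_1, y\rangle + \langle e^{C_\epsilon t} A_2 e^{C_\epsilon t} y, y\rangle,
\]
where $Q_t^\epsilon = \int_0^t e^{C_\epsilon s} Q e^{C_\epsilon s} ds$ is trace class by (Q1). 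In particular, $P_t^\epsilon$ preserves the class $\mathcal{E}$, which will be useful later in the paper as well.

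Next I will analyse the difference quotient $(P_t^\epsilon \psi - \psi)/t$ term by term: the deterministic trace contribution converges to $Tr(Q A_2)$; the linear and quadratic parts formally tend to $\langle h_1, C_\epsilon y\rangle$ and $2\langle A_2 y, C_\epsilon y\rangle$, respectively. To make this rigorous in $L^2(H, \mu^\epsilon)$, I will use the Gaussian identity $\int_H \langle f, y\rangle \langle g, y\rangle d\mu^\epsilon(y) = \langle Q_\infty^\epsilon f, g\rangle$ together with the analogous fourth-moment Wick formula; these reduce the required convergence to trace-class bounds for operators built from $Q$, $C_\epsilon$, and $e^{C_\epsilon t}$, which are secured by (Q1)-(Q2) because $Q_\infty^\epsilon$ is regularizing enough to absorb the unboundedness of $C_\epsilon$. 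The main obstacle is precisely this $L^2(H, \mu^\epsilon)$-convergence step, since $\mathscr{L}_y^\epsilon \psi$ is generally not pointwise defined (one cannot expect $h_1 \in D(C_\epsilon)$ nor $A_2$ to preserve $D(C_\epsilon)$) and the limit must be produced directly as an element of $L^2(H, \mu^\epsilon)$; the semigroup argument of the first paragraph then closes the proof.
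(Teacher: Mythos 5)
Your proposal is correct but organized differently from the paper's proof. The paper does not first prove $\mathcal{E}\subset D(\mathscr{L}_y^\epsilon)$ and then invoke the abstract fact that a linear $C_0$-semigroup commutes with its generator on its domain; instead it applies It\=o's formula to $\psi(Y^y_r)$ to obtain the Dynkin-type representation $P_s^\epsilon P_t^\epsilon\psi-P_t^\epsilon\psi=\int_t^{t+s}P_r^\epsilon\bar\psi\,dr$, with $\bar\psi(y)=a_1(C_\epsilon y)+2a_2(C_\epsilon y,y)+\frac12 Tr(QD_y^2a_2)$, so that after dividing by $s$ the convergence of the difference quotient follows from the mere right-continuity of $r\mapsto P_r^\epsilon\bar\psi$ in $L^2(H,\mu^\epsilon)$ once $\bar\psi\in L^2(H,\mu^\epsilon)$ is checked; membership of $P_t^\epsilon\psi$ in the domain for every $t\geq 0$ and the identity $\mathscr{L}_y^\epsilon P_t^\epsilon\psi=P_t^\epsilon\bar\psi$ come out simultaneously, and setting $t=0$ identifies $\bar\psi=\mathscr{L}_y^\epsilon\psi$. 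Your route --- differentiate the Mehler formula at $t=0$ and then apply the standard commutation lemma, which is indeed valid here because $P_t^\epsilon$ is a \emph{linear} contraction on $L^2(H,\mu^\epsilon)$ --- is equally legitimate, and your Mehler expression and candidate limit coincide with the paper's $\bar\psi$. What it costs you is that the convergence of $t^{-1}(e^{C_\epsilon t}-1)$ applied to $h_1$ (and the analogous quadratic statement, via the Wick formula) must be established in the $(Q_\infty^\epsilon)^{1/2}$-weighted norm, i.e.\ you need the closure of $(Q_\infty^\epsilon)^{1/2}C_\epsilon$ to be a bounded operator; this is essentially the same integrability input that the paper compresses into the one-line claim $\bar\psi\in L^2(H,\mu^\epsilon)$, so neither argument is more demanding in substance, but the It\=o/Dynkin representation converts ``convergence of a derivative'' into ``continuity of an integrand,'' which is why the paper's write-up is shorter. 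If you pursue your version, the one step you currently only gesture at and must actually carry out is the verification, from (Q1)--(Q2), that $C_\epsilon(Q_\infty^\epsilon)^{1/2}$ is bounded.
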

\begin{proof}
By Markov property $P_s^\epsilon P_t^\epsilon \psi = P_{t+s}^\epsilon \psi$. Recalling that $Y^y$ is a strong solution of $dY^y_t = C_\epsilon Y^y_t dt + Q^{1/2}dW_t$, $Y^y_0 = y$, by It\=o Formula we have
\begin{align*}
P_s^\epsilon P_t^\epsilon \psi - P_t^\epsilon \psi
&=
\mathbb{E}\left[ \psi (Y_{t+s}^\cdot)\right]-\mathbb{E}\left[ \psi (Y_{t}^\cdot)\right]
\\
&=
\int_t^{t+s}
\mathbb{E} \left[ \langle C_\epsilon Y_r^\cdot , D_y \psi(Y_r^\cdot) \rangle + \frac12 Tr(Q D^2_y \psi(Y_r^\cdot)) \right] dr.
\end{align*}
Let $\psi(y)=a_0 + a_1(y) + a_2(y,y)$ be the canonical decomposition of $\psi \in \mathcal{E}$. Since $\int_H \|C_\epsilon
y\|_H\|y\|_H d\mu^\epsilon(y) < \infty$ uniformly in $\epsilon$ by our assumptions on $C$ and $Q$, we have
\begin{align*}
\bar{\psi}
&\coloneqq 
\langle C_\epsilon
y , D_y \psi(y) \rangle + \frac12 Tr(Q D^2_y \psi) 
\\
&=
a_1(C_\epsilon y) + 2a_2(C_\epsilon y,y) + \frac12 Tr(Q D^2_y a_2)
\in L^2(H,\mu^\epsilon).
\end{align*}
In addition, the semigroup $P_\cdot \bar{\psi}$ is right continuous at time $t$, with respect to the $L^2(H,\mu^\epsilon)$ topology, and therefore we have in the limit $s \to 0^+$
\begin{align*}
\left\| \frac{P_s^\epsilon P_t^\epsilon \psi - P_t^\epsilon \psi}{s} - P_t^\epsilon \bar{\psi} \right\|_{L^2(H,\mu^\epsilon)}
&=
\left\| \frac1s \int_t^{t+s}
P_r^\epsilon \bar{\psi} dr - P_t^\epsilon \bar{\psi} \right\|_{L^2(H,\mu^\epsilon)}
\\
&\leq
\frac1s \int_t^{t+s} 
\left\| P_r^\epsilon \bar{\psi}-P_t^\epsilon \bar{\psi} \right\|_{L^2(H,\mu^\epsilon)} dr \to 0.
\end{align*}
In particular, this means $P_t^\epsilon \psi \in D(\mathscr{L}_y^\epsilon)$ and $\mathscr{L}_y^\epsilon P_t^\epsilon \psi =  P_t^\epsilon \bar{\psi}$. Finally, taking $t=0$ in the previous formula we deduce $\mathscr{L}_y^\epsilon \psi = \bar{\psi}$, that yields $\mathscr{L}_y^\epsilon P_t^\epsilon \psi =  P_t^\epsilon \bar{\psi} = P_t^\epsilon \mathscr{L}_y^\epsilon \psi$.
\end{proof}

\begin{lem} \label{lem:exp_mixing}
The semigroup $P_t^\epsilon$ is exponentially mixing at zero when restricted to $\mathcal{E}_\theta$, $\theta \in (-\infty,\theta_0]$, namely for every $\psi \in \mathcal{E}_\theta$ and $t\geq 0$
\begin{align*}
\left| P_t^\epsilon \psi (0) - \int_H \psi(w) d\mu^\epsilon(w)\right| \lesssim \| \psi \|_{\mathcal{E}_\theta} e^{-\lambda_0 t}. 
\end{align*}
\end{lem}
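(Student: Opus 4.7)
The approach is to combine invariance of $\mu^\epsilon$ under $P_t^\epsilon$ with the explicit Gaussian form $Y_t^y = e^{C_\epsilon t}y + W_t^{C_\epsilon,Q}$ recalled in \autoref{ssec:OU}. Write the canonical decomposition $\psi = a_0 + a_1 + a_2$. Invariance of $\mu^\epsilon$ (which applies since quadratic functions are in $L^2(H,\mu^\epsilon)$ by (Q2)) lets me rewrite
\begin{align*}
P_t^\epsilon \psi(0) - \int_H \psi(w)\, d\mu^\epsilon(w)
= \int_H \bigl[ P_t^\epsilon \psi(0) - P_t^\epsilon \psi(w) \bigr] d\mu^\epsilon(w),
\end{align*}
so everything reduces to estimating the integrand pointwise in $w$.

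Next, I expand $P_t^\epsilon \psi(y) = \mathbb{E}[\psi(e^{C_\epsilon t}y + W_t^{C_\epsilon,Q})]$ using the three parts of $\psi$. The constant $a_0$ trivially cancels. The cross term $\mathbb{E}[a_2(e^{C_\epsilon t}y,W_t^{C_\epsilon,Q})]$ vanishes by bilinearity of $a_2$ and the fact that $W_t^{C_\epsilon,Q}$ is centred, while the variance term $\mathbb{E}[a_2(W_t^{C_\epsilon,Q},W_t^{C_\epsilon,Q})]$ does not depend on $y$ and so disappears after differencing. This yields the clean identity
\begin{align*}
P_t^\epsilon \psi(0) - P_t^\epsilon \psi(w) = -a_1(e^{C_\epsilon t}w) - a_2(e^{C_\epsilon t}w,e^{C_\epsilon t}w).
\end{align*}
Integration against $\mu^\epsilon$ kills the linear contribution because $\mu^\epsilon$ has zero mean and $a_1$ is linear continuous on $H^\theta$.

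The remaining task is to estimate
\begin{align*}
\left| \int_H a_2(e^{C_\epsilon t}w,e^{C_\epsilon t}w)\, d\mu^\epsilon(w) \right|
\leq \|\psi\|_{\mathcal{E}_\theta} \int_H \|e^{C_\epsilon t}w\|_{H^\theta}^2\, d\mu^\epsilon(w).
\end{align*}
I would conclude by combining the semigroup estimate $\|e^{C_\epsilon t}\|_{H^s \to H^s} \lesssim e^{-\lambda_0 t/2}$ (uniform in $\epsilon$, extracted from the bounds collected in \autoref{sec:prelim} via (C1)--(C2)) applied at $s=\theta$, together with the uniform moment bound $\int_H \|w\|_{H^\theta}^2 d\mu^\epsilon(w) \leq \int_H \|w\|_{H^{\theta_0}}^2 d\mu^\epsilon(w) \lesssim 1$ furnished by (Q2). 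Squaring the semigroup bound produces the desired factor $e^{-\lambda_0 t}$.

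The main subtlety is verifying that $\psi(Y_t^y)$ is well defined (so the Gaussian computations above are rigorous), which requires $Y_t^y \in H^\theta$ almost surely; this follows from $\theta \leq \theta_0$ together with $Q_t^\epsilon \leq Q_\infty^\epsilon$ and the integrability in (Q2). Apart from that, the argument is essentially bookkeeping.
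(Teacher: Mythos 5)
Your argument is correct and follows essentially the same route as the paper: decompose $\psi=a_0+a_1+a_2$, use invariance of $\mu^\epsilon$ to reduce to bounding $P_t^\epsilon\psi(0)-P_t^\epsilon\psi(w)$, obtain the identity $P_t^\epsilon\psi(w)=P_t^\epsilon\psi(0)+a_1(e^{C_\epsilon t}w)+a_2(e^{C_\epsilon t}w,e^{C_\epsilon t}w)$ from the Gaussian formula, and conclude via the semigroup decay and the second-moment bound from (Q2). The only cosmetic difference is that you note the linear term vanishes exactly after integration against the centred measure $\mu^\epsilon$, whereas the paper simply bounds it by $\|\psi\|_{\mathcal{E}_\theta}e^{-\lambda_0 t}\int_H\|w\|_{H^\theta}\,d\mu^\epsilon(w)$; both yield the stated estimate.
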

\begin{proof}
Let $\psi \in \mathcal{E}_\theta$ be given by $\psi(y) = a_0 + a_1(y) + a_2(y,y)$, $y \in H^\theta$. 
Recall
\begin{align*}
P_t^\epsilon \psi (y) 
&= 
a_0 + \mathbb{E} \left[ a_2(W^{C_\epsilon,Q}_t,W^{C_\epsilon,Q}_t)\right] + a_1(e^{C_\epsilon t}y) + a_2(e^{C_\epsilon t}y,e^{C_\epsilon t}y)
\\
&=
P_t^\epsilon \psi (0) + a_1(e^{C_\epsilon t}y) + a_2(e^{C_\epsilon t}y,e^{C_\epsilon t}y),
\end{align*}
and therefore 
\begin{align*}
\left| P_t^\epsilon \psi (0)-P_t^\epsilon \psi (y)\right|
&\leq
\left| a_1(e^{C_\epsilon t}y)\right|
+
\left| a_2(e^{C_\epsilon t}y,e^{C_\epsilon t}y)\right|
\\
&\lesssim
\|\psi\|_{\mathcal{E}_\theta} e^{-\lambda_0 t} \|y\|_{H^\theta}
+
\|\psi\|_{\mathcal{E}_\theta} e^{-2\lambda_0 t} \|y\|_{H^\theta}^2.
\end{align*}
Since $\mu^\epsilon$ is invariant for $P_t^\epsilon$,
\begin{align*}
\left| P_t^\epsilon \psi (0) - \int_H \psi(w) d\mu^\epsilon(w)\right| 
&=
\left| P_t^\epsilon \psi (0) - \int_H P_t^\epsilon\psi(w) d\mu^\epsilon(w)\right| 
\\
&\leq
\int_H \left| P_t^\epsilon \psi (0) - P_t^\epsilon\psi(w)\right| d\mu^\epsilon(w) 
\\
&\lesssim
\|\psi\|_{\mathcal{E}_\theta} e^{-\lambda_0 t}\int_H \|w\|_{H^\theta}d\mu^\epsilon(w) 
\\
&\quad+
\|\psi\|_{\mathcal{E}_\theta} e^{-2\lambda_0 t}\int_H\|w\|_{H^\theta}^2d\mu^\epsilon(w) 
\\
&\lesssim
\|\psi\|_{\mathcal{E}_\theta} e^{-\lambda_0 t}.
\end{align*}
\end{proof}

\subsection{Solution to the Poisson equation}

Recall that the operator $\mathscr{L}_y^\epsilon$ is closed under our assumptions, and therefore the space $D(\mathscr{L}_y^\epsilon)$ is complete when endowed with the graph norm
\begin{align*}
\| \psi \|_{D(\mathscr{L}_y^\epsilon)}
&\coloneqq
\| \psi \|_{L^2(H,\mu^\epsilon)}
+
\| \mathscr{L}_y^\epsilon \psi \|_{L^2(H,\mu^\epsilon)}.
\end{align*}
Also, by \autoref{lem:commuting} we have $\mathcal{E} \subset D(\mathscr{L}_y^\epsilon)$ with continuous embedding, in virtue of the following inequality:
\begin{align*}
\| \psi \|_{D(\mathscr{L}_y^\epsilon)}^2
&\lesssim
\int_H |\psi(w)|^2 d\mu^\epsilon(w)
+
\int_H \left| \langle C_\epsilon w, D_y \psi (w) \rangle \right|^2 d\mu^\epsilon(w) + Tr(QD^2_y \psi)^2
\\
&\lesssim \|\psi\|_\mathcal{E}^2 \left(
\int_H \|w\|_H^4 d\mu^\epsilon(w)
+
\int_H \|w\|_{H^{2\Gamma}}^2 (1+\|w\|_H)^2 d\mu^\epsilon(w) + Tr(Q)^2 \right)
\\
&\lesssim \|\psi\|_\mathcal{E}^2 \left(
\int_H \|w\|_H^2 d\mu^\epsilon(w)
+
\int_H \|w\|_{H^{2\Gamma}}^2 d\mu^\epsilon(w) + Tr(Q) \right)^2
\\
&\lesssim \|\psi\|_\mathcal{E}^2.
\end{align*}
{In the third inequality above we have used that, for Gaussian measures on $H$, the fourth moment is controlled with the square of the second moment (up to unimportant multiplicative constants).}

\begin{lem} \label{lem:time_integral_theta1}
Let $\psi \in \mathcal{E}_\theta$, $\theta \in [0,\theta_0)$, be given by $\psi(y) = a_1(y)$. Then for every $T>0$ we have $\psi(e^{C_\epsilon T}\cdot) \in \mathcal{E}$, $\int_{1/T}^T \psi(e^{C_\epsilon t}\cdot) dt \in \mathcal{E}$, and
\begin{align*}
\lim_{T \to \infty}
\int_{1/T}^T \psi(e^{C_\epsilon t}\cdot) dt
=
\psi((-C_\epsilon)^{-1}\cdot),
\end{align*}
the limit being understood with respect to the $D(\mathscr{L}_y^\epsilon)$ topology.
\end{lem}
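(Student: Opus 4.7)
The plan is to exploit the linearity of $\psi$: writing $\psi(y)=a_1(y)$ with $a_1:H^\theta\to\R$ bounded, each $\psi(e^{C_\epsilon t}\cdot)$ is again a linear functional in $y$, so to conclude it belongs to $\mathcal{E}$ one needs $\|e^{C_\epsilon t}\|_{H\to H^\theta}<\infty$, which follows from the smoothing bound $\|e^{C_\epsilon t}\|_{H^s\to H^{s+2\theta'\beta_1'}}\lesssim \epsilon^{-\beta_1'(\theta'-\gamma)/(1-\gamma)}e^{-\lambda t}/t^{\beta_1'}$ (stated after (C2)) after choosing $\theta'\in[\gamma,1]$ and $\beta_1'<1$ with $2\theta'\beta_1'=\theta$. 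By Fubini
\begin{align*}
\int_{1/T}^T \psi(e^{C_\epsilon t}y)\,dt = a_1\!\left(\int_{1/T}^T e^{C_\epsilon t}y\,dt\right),
\end{align*}
and the same estimate makes the inner Bochner integral a bounded operator $H\to H^\theta$, giving the second $\mathcal{E}$-membership.

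For the limit, the candidate identity is $\int_0^\infty e^{C_\epsilon t}\,dt=(-C_\epsilon)^{-1}$, so $\psi((-C_\epsilon)^{-1}y)=\int_0^\infty\psi(e^{C_\epsilon t}y)\,dt$. To upgrade this to $L^2(H,\mu^\epsilon)$-convergence I would split the error into the low-time and high-time tails and bound
\begin{align*}
\left|\int_{1/T}^T\psi(e^{C_\epsilon t}y)\,dt-\psi((-C_\epsilon)^{-1}y)\right|
\leq \|a_1\|_{H^\theta\to\R}\left(\int_0^{1/T}+\int_T^\infty\right)\|e^{C_\epsilon t}y\|_{H^\theta}\,dt
\lesssim \bigl(T^{-1}+e^{-\lambda_0 T/2}\bigr)\|y\|_{H^\theta},
\end{align*}
using the uniform boundedness of $e^{C_\epsilon t}$ on $H^\theta$ for small $t$ and its exponential decay there for large $t$. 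Since $\theta<\theta_0$, assumption (Q2) gives $\int\|y\|_{H^\theta}^2\,d\mu^\epsilon<\infty$, whence $f_T\coloneqq\int_{1/T}^T\psi(e^{C_\epsilon t}\cdot)\,dt$ converges in $L^2(\mu^\epsilon)$ to $f_\infty\coloneqq\psi((-C_\epsilon)^{-1}\cdot)$.

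For the generator part of the graph norm I would observe that, by the computation in the proof of \autoref{lem:commuting}, any linear functional of the form $\phi(y)=a_1(Ly)$ with $L$ bounded satisfies $\mathscr{L}_y^\epsilon\phi(y)=a_1(LC_\epsilon y)$ (the trace term vanishes since $D_y^2\phi=0$). Applied to $L=e^{C_\epsilon t}$ this yields $\mathscr{L}_y^\epsilon \psi(e^{C_\epsilon t}\cdot)(y)=a_1(C_\epsilon e^{C_\epsilon t}y)=\tfrac{d}{dt}\psi(e^{C_\epsilon t}y)$. Closedness of $\mathscr{L}_y^\epsilon$ and an $L^2(\mu^\epsilon)$-valued Fubini then permit the interchange with the time integral, giving the telescoping identity
\begin{align*}
\mathscr{L}_y^\epsilon f_T(y) = \psi(e^{C_\epsilon T}y)-\psi(e^{C_\epsilon/T}y),
\end{align*}
which tends in $L^2(\mu^\epsilon)$ to $-\psi$ thanks to exponential decay of the first term on $H^\theta$ and strong continuity of $e^{C_\epsilon t}$ on $H^\theta$ at $t=0$ for the second (dominated by $\|y\|_{H^\theta}\in L^2(\mu^\epsilon)$). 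On the other hand $\mathscr{L}_y^\epsilon f_\infty(y)=a_1((-C_\epsilon)^{-1}C_\epsilon y)=-\psi(y)$, so the two limits match and convergence holds in the graph norm.

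The main obstacle will be justifying the interchange of $\mathscr{L}_y^\epsilon$ with the Bochner integral and controlling the $t\to 0^+$ singularity of $\|e^{C_\epsilon t}\|_{H\to H^\theta}$; both are routine once the parameters in the semigroup bounds from \autoref{sec:prelim} are chosen so that $\beta_1'<1$, and once strong continuity of the semigroup on $H^\theta$ is deduced from functional calculus for the self-adjoint operator $C_\epsilon$ (or, equivalently, from the $H^s\to H^{s-2\Gamma\beta_2}$ bound $\|e^{C_\epsilon t}-1\|\lesssim t^{\beta_2}$ combined with interpolation).
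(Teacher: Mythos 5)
Your proposal is correct and follows essentially the same route as the paper: both reduce to the operator identity $\int_{1/T}^T e^{C_\epsilon t}\,dt=(e^{C_\epsilon/T}-e^{C_\epsilon T})(-C_\epsilon)^{-1}$, verify the $L^2(H,\mu^\epsilon)$ convergence via the exponential decay and smoothing bounds on $e^{C_\epsilon t}$ together with (Q2), and verify convergence of the generators via the telescoping identity $\mathscr{L}_y^\epsilon f_T=\psi(e^{C_\epsilon T}\cdot)-\psi(e^{C_\epsilon/T}\cdot)$ (which is exactly what the paper's condition $\langle(e^{C_\epsilon/T}-e^{C_\epsilon T}-1)\cdot,\mathbf{a}_1\rangle\to0$ encodes), with the $t\to0^+$ term handled by $\|e^{C_\epsilon t}-1\|_{H^{\theta_0}\to H^{\theta}}\lesssim t^{(\theta_0-\theta)/2\Gamma}$. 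Your closing worry about $\beta_1'<1$ is moot since the integral starts at $1/T>0$, but this does not affect the argument.
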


\begin{proof}
First of all, there exists a vector $\mathbf{a}_1 \in H^{-\theta}$ such that $a_1(y)=\langle y, \mathbf{a}_1 \rangle$ for every $y \in H^\theta$. 
Hence, for every $T>0$ we have $\psi(e^{C_\epsilon T}\cdot) \in \mathcal{E}$ since
$|\psi(e^{C_\epsilon T}y)|=|\langle e^{C_\epsilon T}y, \mathbf{a}_1 \rangle| \leq \|e^{C_\epsilon T}y\|_{H^\theta} \|\mathbf{a}_1 \|_{H^{-\theta}} \lesssim e^{-\lambda_0 T/2\gamma} T^{-\theta/2} \|y\|_H \|\mathbf{a}_1 \|_{H^{-\theta}}$. 
As a consequence, $\int_{1/T}^T \psi(e^{C_\epsilon t}\cdot)dt \in \mathcal{E} \subset D(\mathscr{L}_y^\epsilon)$ as well, since
\begin{align*}
\left\| \int_{1/T}^T \psi(e^{C_\epsilon t}\cdot)dt \right\|_{\mathcal{E}}
&\leq
\int_{1/T}^T \|\psi(e^{C_\epsilon t}\cdot)\|_{\mathcal{E}} dt 
< \infty.
\end{align*}

Let us finally check $\int_{1/T}^T \psi(e^{C_\epsilon t}\cdot) dt \to \psi((-C_\epsilon)^{-1}\cdot)$ in $D(\mathscr{L}_y^\epsilon)$ as $T \to \infty$.
For every $T>0$ and $y \in H$ we have
\begin{align*}
\int_{1/T}^T \psi(e^{C_\epsilon t}y) dt
&=
\int_{1/T}^T \langle e^{C_\epsilon t}y, \mathbf{a}_1 \rangle  dt
=
\langle \left(\int_{1/T}^T e^{C_\epsilon t}  dt \right) y, \mathbf{a}_1\rangle
\\
&=
\langle (e^{C_\epsilon/T}-e^{C_\epsilon T})(-C_\epsilon)^{-1} y , \mathbf{a}_1  \rangle,
\end{align*} 
and therefore we only have to check $\langle (e^{C_\epsilon/T}-e^{C_\epsilon T}-1)(-C_\epsilon)^{-1} \cdot , \mathbf{a}_1  \rangle \to 0$ and $\langle (e^{C_\epsilon/T}-e^{C_\epsilon T}-1) \cdot , \mathbf{a}_1  \rangle \to 0$ in $L^2(H,\mu^\epsilon)$ (recall that $D^2_y \psi = 0$ since $\psi$ is linear).
We only prove the second convergence, the former being easier.
\begin{align*}
\int_H \left| \langle (e^{C_\epsilon/T}-e^{C_\epsilon T}-1) w , \mathbf{a}_1  \rangle \right|^2 d\mu^\epsilon(w)
&\lesssim
\int_H \left| \langle (e^{C_\epsilon/T}-1) w , \mathbf{a}_1  \rangle \right|^2 d\mu^\epsilon(w)
\\
&\quad+
\int_H \left| \langle e^{C_\epsilon T} w , \mathbf{a}_1  \rangle \right|^2 d\mu^\epsilon(w)
\\
&\leq
\int_H \| (e^{C_\epsilon/T}-1) w \|_{H^\theta}^2 \|\mathbf{a}_1\|_{H^{-\theta}}^2 d\mu^\epsilon(w)
\\
&\quad+
\int_H \| e^{C_\epsilon T} w \|_{H^\theta}^2 \| \mathbf{a}_1 \|_{H^{-\theta}}^2 d\mu^\epsilon(w)
\\
&\lesssim T^{(\theta-\theta_0)/\Gamma} \|\mathbf{a}_1\|_{H^{-\theta}}^2 \int_H \| w \|_{H^{\theta_0}}^2 d\mu^\epsilon(w)
\\
&\quad+
e^{-2\lambda_0T} \| \mathbf{a}_1 \|_{H^{-\theta}}^2 \int_H \|  w \|_{H^\theta}^2  d\mu^\epsilon(w) \to 0.
\end{align*}
\end{proof}

\begin{lem} \label{lem:time_integral_theta2}
Let $\psi \in \mathcal{E}_\theta$, $\theta \in [0,\theta_0)$, be given by $\psi(y) = a_2(y,y)$. Then for every $T>0$ we have $\psi(e^{C_\epsilon T}\cdot) \in \mathcal{E}$, $\int_{1/T}^T \psi(e^{C_\epsilon t}\cdot) dt \in \mathcal{E}$, and there exists $\phi \in \mathcal{E}_{\theta-\delta} \cap D(\mathscr{L}_y^\epsilon)$ for every $\delta\in(0,\gamma)$ satisfying $\| \phi \|_{\mathcal{E}_{\theta-\delta}} \lesssim \|\psi\|_{\mathcal{E}_\theta}$, such that
\begin{align*}
\lim_{T \to \infty}
\int_{1/T}^T \psi(e^{C_\epsilon t}\cdot) dt
=
\phi,
\end{align*}
the limit being understood with respect to the $D(\mathscr{L}_y^\epsilon)$ topology. 
\end{lem}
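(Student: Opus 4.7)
I would follow the template of the previous lemma, but with the bilinear form replaced by its representing operator. Write $a_2(y,z) = \langle y, \mathbf{A}_2 z\rangle$, where $\mathbf{A}_2 : H^\theta \to H^{-\theta}$ is bounded, symmetric, with $\|\mathbf{A}_2\| \lesssim \|\psi\|_{\mathcal{E}_\theta}$. Set $\psi_t(y) \coloneqq \psi(e^{C_\epsilon t}y) = \langle y, \mathbf{A}_{2,t} y\rangle$ with $\mathbf{A}_{2,t} \coloneqq e^{C_\epsilon t}\mathbf{A}_2 e^{C_\epsilon t}$. Since $e^{C_\epsilon t}$ is regularizing from $H$ into $H^\theta$ for $t > 0$, $\psi_t \in \mathcal{E}$ and hence $\int_{1/T}^T \psi_t\, dt \in \mathcal{E}$ for every $T > 0$. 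The natural candidate limit is
\[
\phi(y) \coloneqq \int_0^\infty \psi_t(y)\, dt = \langle y, \Lambda y\rangle, \qquad \Lambda \coloneqq \int_0^\infty e^{C_\epsilon t}\mathbf{A}_2\, e^{C_\epsilon t}\, dt.
\]
Membership $\phi \in \mathcal{E}_{\theta-\delta}$ will follow from the $\epsilon$-uniform bound $\|e^{C_\epsilon t}\|_{H^{\theta-\delta} \to H^\theta} \lesssim e^{-\lambda t}\, t^{-\delta/(2\gamma)}$, which is obtained by taking the interpolation parameter in the estimates of Section~\ref{sec:prelim} at the endpoint $\gamma$ (thereby killing the $\epsilon$-dependent prefactor). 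Squaring and integrating in time gives $\|\phi\|_{\mathcal{E}_{\theta-\delta}} \lesssim \|\psi\|_{\mathcal{E}_\theta}$, where the integral $\int_0^\infty e^{-2\lambda t} t^{-\delta/\gamma}\, dt$ is finite precisely because $\delta < \gamma$.

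For the convergence $u_T \coloneqq \int_{1/T}^T \psi_t\, dt \to \phi$ in $L^2(H,\mu^\epsilon)$, I would split the error into the two tails $\int_0^{1/T}$ and $\int_T^\infty$. For the large-$t$ tail, functional calculus for the self-adjoint $C_\epsilon$ gives $\|e^{C_\epsilon t}\|_{H^{s_0} \to H^{s_0}} \lesssim e^{-\lambda_0 t/2}$; combined with the Gaussian fourth moment $\int \|y\|_{H^{s_0}}^4\, d\mu^\epsilon(y) < \infty$ ensured by (Q2), this yields $\|\psi_t\|_{L^2(\mu^\epsilon)} \lesssim e^{-\lambda_0 t}$. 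For the small-$t$ tail, the uniform bound $|\psi_t(y)| \lesssim \|y\|_{H^\theta}^2$ (contractivity on $H^\theta$) together with $\|y\|_{H^\theta}^2 \in L^2(\mu^\epsilon)$ makes that portion $O(1/T)$.

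The harder half is the convergence in the $D(\mathscr{L}_y^\epsilon)$-topology. Since $\psi_t \in \mathcal{E} \subset D(\mathscr{L}_y^\epsilon)$, \autoref{lem:commuting} gives $\mathscr{L}_y^\epsilon \psi_t(y) = 2\langle C_\epsilon y, \mathbf{A}_{2,t} y\rangle + Tr(Q \mathbf{A}_{2,t})$. The clean observation driving everything is that $\tfrac{d}{dt}\mathbf{A}_{2,t} = C_\epsilon \mathbf{A}_{2,t} + \mathbf{A}_{2,t} C_\epsilon$ combined with symmetry of $\mathbf{A}_{2,t}$ and self-adjointness of $C_\epsilon$ gives $\tfrac{d}{dt}\psi_t(y) = 2\langle C_\epsilon y, \mathbf{A}_{2,t} y\rangle$. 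Hence $\int_{1/T}^T 2\langle C_\epsilon y, \mathbf{A}_{2,t} y\rangle\, dt$ telescopes to $\psi(e^{C_\epsilon T}y) - \psi(e^{C_\epsilon /T}y)$, which converges to $-\psi(y)$ in $L^2(\mu^\epsilon)$ by the dominated-convergence-type arguments of the previous lemma. Similarly, $\tfrac{d}{dt}\mathbb{E}[\psi(W^{C_\epsilon,Q}_t)] = Tr(Q\mathbf{A}_{2,t})$, so the deterministic part telescopes as $\int_{1/T}^T Tr(Q\mathbf{A}_{2,t})\, dt = \mathbb{E}[\psi(W^{C_\epsilon,Q}_T)] - \mathbb{E}[\psi(W^{C_\epsilon,Q}_{1/T})] \to \int_H \psi\, d\mu^\epsilon$, using that $W^{C_\epsilon,Q}_t$ converges in law to $\mu^\epsilon$ as $t\to\infty$. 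Therefore $\mathscr{L}_y^\epsilon u_T$ converges in $L^2(H,\mu^\epsilon)$ to $-\psi + \int_H \psi\, d\mu^\epsilon$, and closedness of $\mathscr{L}_y^\epsilon$ promotes the $L^2$-convergence of $u_T$ to $\phi$ into graph-norm convergence, with $\mathscr{L}_y^\epsilon \phi = -\psi + \int_H \psi\, d\mu^\epsilon$.

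The main technical obstacle is ensuring all bounds are uniform in $\epsilon$: one must verify that the $\epsilon$-dependent prefactor in the interpolation estimates of Section~\ref{sec:prelim} vanishes exactly at the $\gamma$ endpoint, and that the constraint $\delta < \gamma$ is precisely what makes the singularity at $t = 0$ integrable after squaring. A secondary point is that the traces $Tr(Q\mathbf{A}_{2,t})$ must be interpreted carefully, since $\mathbf{A}_2$ is only bounded between $H^\theta$ and $H^{-\theta}$ rather than on $H$; this is absorbed by the double smoothing $e^{C_\epsilon t}\cdots e^{C_\epsilon t}$ and the trace-class property of $e^{C_\epsilon t} Q e^{C_\epsilon t}$ from (Q1), which makes the cycled trace $Tr(\mathbf{A}_2\, e^{C_\epsilon t} Q\, e^{C_\epsilon t}) = Tr(Q\mathbf{A}_{2,t})$ well-defined.
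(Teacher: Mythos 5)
Your proposal is correct and follows essentially the same route as the paper's proof: the same operator representation $A_2^T=\int_{1/T}^T e^{C_\epsilon t}A_2e^{C_\epsilon t}\,dt$ with strong limit $A_2^\infty$, the same integrability bound $\int_0^\infty e^{-\lambda_0 t}t^{-\delta/\gamma}\,dt<\infty$ forcing $\delta<\gamma$, and the same telescoping identity turning the drift part of $\mathscr{L}_y^\epsilon\int_{1/T}^T\psi(e^{C_\epsilon t}\cdot)\,dt$ into $\psi(e^{C_\epsilon T}\cdot)-\psi(e^{C_\epsilon/T}\cdot)$. The only cosmetic difference is that you obtain $\phi\in D(\mathscr{L}_y^\epsilon)$ from closedness of the generator after proving convergence of $\mathscr{L}_y^\epsilon u_T$, whereas the paper verifies $\phi\in D(\mathscr{L}_y^\epsilon)$ directly and then estimates the difference of the images; both rest on the same computations.
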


\begin{proof}
First of all, there exists a linear bounded operator $A_2:H^\theta \to H^{-\theta}$ such that $a_2(y,v)=\langle y, A_2 v \rangle = \langle  A_2 y,v \rangle$ for every $y,v \in H^\theta$, and $\|\psi\|_{H^\theta} = \|A_2\|_{H^\theta \to H^{-\theta}}$. 
Then, for every $T>0$ we have $\psi(e^{C_\epsilon T}\cdot) \in \mathcal{E}$ since
$|\psi(e^{C_\epsilon T}y)| \leq \|\psi\|_{H^\theta} \|e^{C_\epsilon T}y\|^2_{H^\theta} \lesssim \|\psi\|_{H^\theta} e^{-2\lambda_0 T} T^{-\theta/\gamma} \|y\|_H^2$, and similarly $\int_{1/T}^T \psi(e^{C_\epsilon t}\cdot) dt \in \mathcal{E}$.
In addition, for every $T>0$ and $y \in H^\theta$
\begin{align*}
\int_{1/T}^T \psi(e^{C_\epsilon t}y) dt
&=
\int_{1/T}^T \langle e^{C_\epsilon t}y, A_2 e^{C_\epsilon t}y \rangle  dt
=
\langle y , \left(\int_{1/T}^T e^{C_\epsilon t} A_2 e^{C_\epsilon t} dt \right) y \rangle,
\end{align*} 
and since for every $\delta_1,\delta_2 \geq 0$ satisfying $\delta_1+\delta_2=\delta<\gamma$ it holds
\begin{align*}
\int_0^\infty 
\left\| e^{C_\epsilon t} A_2 e^{C_\epsilon t} \right\|_{H^{\theta-2\delta_1} \to H^{2\delta_2-\theta}} dt 
&\lesssim  
\|A_2\|_{H^{\theta} \to H^{-\theta}} \int_0^\infty \frac{e^{-\lambda_0 t}}{t^{\delta/\gamma}} dt  
< \infty,
\end{align*}
there exists a linear bounded operator $A_2^\infty:H^{\theta-2\delta_1} \to H^{2\delta_2-\theta}$ such that
\begin{align*}
\int_{1/T}^T e^{C_\epsilon t} A_2 e^{C_\epsilon t} dt \to A_2^\infty
\end{align*}
strongly as $T \to \infty$ for every $\delta_1+\delta_2=\delta<\gamma$, and $\langle y, A_2^\infty v \rangle = \langle  A_2^\infty y,v \rangle$ for every $y,v \in H^{\theta-\delta}$.
In particular, using $\delta_1=\delta_2=\delta/2$, we can define $\phi \in \mathcal{E}_{\theta-\delta}$ given by 
\begin{align*}
\phi(y) = \langle y , A_2^\infty y \rangle, 
\quad
y \in H^{\theta-\delta},
\end{align*}
which of course satisfies $\|\phi\|_{\mathcal{E}_{\theta-\delta}} = \|A_2^\infty\|_{H^{\theta-\delta} \to H^{\delta-\theta}} \lesssim \|A_2\|_{H^{\theta} \to H^{-\theta}}= \|\psi\|_{\mathcal{E}_\theta}$.
Let us now check $\phi \in D(\mathscr{L}_y^\epsilon)$: we have for every $y \in H^{\theta}$
\begin{align*}
\langle C_\epsilon y , D_y \phi (y) \rangle 
&=
2
\langle C_\epsilon y , A_2^\infty y \rangle
=
-\langle y , A_2 y \rangle,
\end{align*}
where the last equality comes from an integration by parts.
Also, given a complete orthonormal system $\{e_k\}_{k \in\N}$ of $H$ and choosing $\delta \in (0,\gamma)$ such that $\theta-\delta \leq \theta_0-\gamma$, by (Q2) it holds
\begin{align*}
\frac12 Tr(QD^2_y \phi) 
&= 
\sum_{k \in \N} \langle Q^{1/2} e_k , A_2^\infty Q^{1/2}e_k \rangle
\\
&\leq
\|A_2^\infty\|_{H^{\theta-\delta} \to H^{\delta-\theta}}\sum_{k \in \N} \|Q^{1/2} e_k\|^2_{H^{\theta-\delta}}
<
\infty.
\end{align*}
Putting all together,
\begin{align*}
\|\phi\|_{D(\mathscr{L}_y)}^2 
&\lesssim
\int_H |\phi(w)|^2 d\mu^\epsilon(w)
+
\int_H |\langle w , A_2 w \rangle|^2 d\mu^\epsilon(w)
+
Tr(QD^2_y \phi)^2
\\
&\lesssim
\|A_2^\infty\|_{H^{\theta-\delta} \to H^{\delta-\theta}}^2
\left(
\int_H \|w\|^2_{H^{\theta-\delta}} d\mu^\epsilon(w) \right)^2
\\
&\quad+
\|A_2\|_{H^\theta \to H^{-\theta}}^2
\left( 
\int_H \|w\|^2_{H^\theta} d\mu^\epsilon(w)
\right)^2
\\
&\quad+
Tr(QD^2_y \phi)^2
< \infty.
\end{align*}

Let us finally prove $\lim_{T \to \infty} \int_{1/T}^T \psi(e^{C_\epsilon t}\cdot) dt = \phi$ in the $D(\mathscr{L}_y^\epsilon)$ topology. To ease the notation, denote $A_2^T = \int_{1/T}^T e^{C_\epsilon t} A_2 e^{C_\epsilon t} dt$, so that $\int_{1/T}^T \psi (e^{C_\epsilon t}y) dt = \langle y, A_2^T y \rangle$ for every $y \in H$.
First, we have
\begin{align*}
\left\|\int_{1/T}^T \psi (e^{C_\epsilon t}\cdot) dt - \phi\right\|_{L^2(H,\mu^\epsilon)}^2
&=
\int_H |\langle w, (A_2^T-A_2^\infty) w \rangle|^2 d\mu^\epsilon(w)
\\
&\lesssim
\|A_2^T-A_2^\infty\|_{H^{\theta-\delta} \to H^{\delta-\theta}}^2 \left(  \int_H \|w\|_{H^{\theta-\delta}}^2 d\mu^\epsilon(w) \right)^2 \to 0,
\end{align*}
as $T \to \infty$, since $A_2^T \to A_2^\infty$ strongly. 
Second, the following identities hold true:
\begin{align*}
\mathscr{L}_y^\epsilon \left( \int_{1/T}^T \psi(e^{C_\epsilon t}\cdot) dt\right)(y)
&=
2 \langle C_\epsilon y, A_2^T y \rangle
+
Tr(Q A_2^T)
\\
&=
\langle y, (e^{C_\epsilon T} A_2 e^{C_\epsilon T} - e^{C_\epsilon /T} A_2 e^{C_\epsilon /T}) y \rangle 
+
Tr(Q A_2^T);
\\
\mathscr{L}_y^\epsilon \phi(y)
&=
- \langle y, A_2 y \rangle
+
Tr(Q A_2^\infty),
\end{align*}
from which we get
\begin{align*}
&\left\| \mathscr{L}_y^\epsilon \left( \int_{1/T}^T \psi(e^{C_\epsilon t}\cdot) dt\right)- \mathscr{L}^\epsilon_y \phi \right\|_{L^2(H,\mu^\epsilon)}^2
\\
&\quad
\lesssim
\int_H
|\langle w, (e^{C_\epsilon T} A_2 e^{C_\epsilon T} + A_2- e^{C_\epsilon/T} A_2 e^{C_\epsilon/T}) w \rangle|^2 d\mu^\epsilon(w)
\\
&\quad\quad
+
|Tr(Q A_2^T)- Tr(Q A_2^\infty)|^2
\\
&\quad\lesssim \int_H
|\langle w, e^{C_\epsilon T} A_2 e^{C_\epsilon T} w \rangle|^2 d\mu^\epsilon(w)
+
\int_H
|\langle w, A_2(1-e^{C_\epsilon/T}) w \rangle|^2 d\mu^\epsilon(w)
\\
&\quad\quad
+
\int_H
|\langle w, (1-e^{C_\epsilon/T}) A_2 e^{C_\epsilon/T} w \rangle|^2 d\mu^\epsilon(w)
\\
&\quad\quad
+
\left|\sum_{k \in \N}\langle Q^{1/2} e_k , (A_2^T-A_2^\infty) Q^{1/2}e_k \rangle \right|^2
\\
&\lesssim e^{-\lambda_0 T} \|A_2\|_{H^\theta \to H^{-\theta}}
\int_H  \|w\|^2_{H^\theta} d\mu^\epsilon(w)
\\
&\quad
+
T^{(\theta-\theta_0)/2\Gamma} \|A_2\|_{H^\theta \to H^{-\theta}} \left(1+e^{-\lambda_0 T} \right)
\int_H \|w\|_{H^\theta} \|w\|_{H^{\theta_0}} d\mu^\epsilon(w)
\\
&\quad
+
\|A_2^T-A_2^\infty\|^2_{H^{\theta-\delta} \to H^{\delta-\theta}}
\left|\sum_{k \in \N}\|Q^{1/2} e_k\|^2_{H^{\theta-\delta}}\right|^2
\to 0.
\end{align*}
\end{proof}

\begin{cor} \label{cor:regularity_phi}
Under the hypotheses \autoref{lem:time_integral_theta2}, let $\phi=\lim_{T \to \infty} \int_{1/T}^T \psi(e^{C_\epsilon t}\cdot) dt$. Then for every $\delta_1,\delta_2 \geq 0$, $\delta_1+\delta_2<\gamma$ it holds $\langle D_y \phi(\cdot) , v \rangle \in \mathcal{E}_{\theta-2\delta_1}$ for every $v \in H^{\theta-2\delta_2}$, with $\|\langle D_y \phi(\cdot) , v \rangle\|_{\mathcal{E}_{\theta-2\delta_1}} \lesssim \|\psi\|_{\mathcal{E}_\theta} \|v\|_{H^{\theta-2\delta_2}}$. 
\end{cor}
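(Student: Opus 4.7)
The plan is to unfold the definition of $\phi$ from the proof of \autoref{lem:time_integral_theta2} and exploit the regularity that was already established there for the operator $A_2^\infty$. Recall that in that proof one shows the existence of a bounded linear operator $A_2^\infty : H^{\theta-2\tilde\delta_1} \to H^{2\tilde\delta_2-\theta}$ whenever $\tilde\delta_1,\tilde\delta_2\geq 0$ and $\tilde\delta_1+\tilde\delta_2<\gamma$, with
\begin{align*}
\|A_2^\infty\|_{H^{\theta-2\tilde\delta_1}\to H^{2\tilde\delta_2-\theta}}
\;\lesssim\;
\|A_2\|_{H^\theta\to H^{-\theta}}
\;=\;
\|\psi\|_{\mathcal{E}_\theta},
\end{align*}
and such that $\phi(y)=\langle y,A_2^\infty y\rangle$. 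Moreover $A_2^\infty$ is symmetric on the scale in the sense that $\langle y,A_2^\infty v\rangle=\langle A_2^\infty y,v\rangle$ whenever the pairings are well-defined, which follows by passing to the limit from the obvious symmetry of the finite-time operators $\int_{1/T}^T e^{C_\epsilon t}A_2 e^{C_\epsilon t}dt$.

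The key computation is then immediate. Since $\phi$ is the quadratic form associated to the symmetric operator $A_2^\infty$, its Fréchet derivative is simply
\begin{align*}
\langle D_y\phi(y),v\rangle \;=\; 2\,\langle A_2^\infty y,v\rangle \;=\; 2\,\langle y, A_2^\infty v\rangle,
\end{align*}
so as a function of $y$ it is linear, i.e.\ it falls in the purely linear part $a_1$ of the decomposition defining $\mathcal{E}_{\theta-2\delta_1}$. To estimate its $\mathcal{E}_{\theta-2\delta_1}$-norm we apply the Cauchy--Schwarz/duality inequality
\begin{align*}
|\langle D_y\phi(y),v\rangle|
\;\leq\;
2\,\|y\|_{H^{\theta-2\delta_1}}\,\|A_2^\infty v\|_{H^{2\delta_1-\theta}},
\end{align*}
and then invoke the bound above for $A_2^\infty$ with the choice $\tilde\delta_1=\delta_2$, $\tilde\delta_2=\delta_1$, which is admissible precisely because $\delta_1+\delta_2<\gamma$. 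This gives
\begin{align*}
\|A_2^\infty v\|_{H^{2\delta_1-\theta}}
\;\lesssim\;
\|\psi\|_{\mathcal{E}_\theta}\,\|v\|_{H^{\theta-2\delta_2}},
\end{align*}
and combining the two displays yields exactly the claimed estimate
\begin{align*}
\|\langle D_y\phi(\cdot),v\rangle\|_{\mathcal{E}_{\theta-2\delta_1}}
\;\lesssim\;
\|\psi\|_{\mathcal{E}_\theta}\,\|v\|_{H^{\theta-2\delta_2}}.
\end{align*}

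There is essentially no genuine obstacle in the argument, since all the analytical work has already been absorbed into \autoref{lem:time_integral_theta2}: the only point requiring a little care is the verification that the roles of $\delta_1$ and $\delta_2$ are swapped when we want to put the extra regularity on $y$ rather than on $v$, which is allowed exactly because the admissibility condition $\tilde\delta_1+\tilde\delta_2<\gamma$ is symmetric in its two entries. Note also that the bound is uniform in $\epsilon\in(0,1)$, as announced at the beginning of \autoref{sec:poisson}, because the constant hidden in $\lesssim$ only depends on $\int_H \|w\|_{H^\theta}^2\,d\mu^\epsilon(w)$, which is uniformly bounded by assumption (Q2).
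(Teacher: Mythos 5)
Your proof is correct and follows essentially the same route as the paper: identify $\langle D_y\phi(y),v\rangle$ as $2\langle A_2^\infty y,v\rangle$ and invoke the mapping properties of $A_2^\infty$ established in \autoref{lem:time_integral_theta2}. The only cosmetic difference is that you pass through the symmetry of $A_2^\infty$ to put the operator on $v$ instead of $y$ (swapping the roles of $\delta_1,\delta_2$ in the mapping bound), whereas the paper pairs $A_2^\infty y\in H^{2\delta_2-\theta}$ directly against $v\in H^{\theta-2\delta_2}$ — both yield the same estimate.
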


\begin{proof}
It is sufficient to recall the expression $\phi(y)=\langle y, A^\infty_2 y \rangle$, valid for $y \in \mathscr{S}$, and notice that for every $v \in H^{\theta-2\delta_2}$ it holds $\langle v, D_y \phi(y) \rangle = 2 \langle v , A_2^\infty y \rangle$.
To conclude, notice that $|\langle v, D_y \phi(y) \rangle|\lesssim \|y\|_{H^{\theta-2\delta_1}} \|\psi\|_{\mathcal{E}_\theta} \|v\|_{H^{\theta-2\delta_2}}$ because $A^\infty_2 : H^{\theta-2\delta_1} \to H^{2\delta_2-\theta}$ is continuous with $\|A^\infty_2\|_{H^{\theta-2\delta_1} \to H^{2\delta_2-\theta}} \lesssim \|\psi\|_{\mathcal{E}_\theta}$, and therefore the identity $\langle v, D_y \phi(y) \rangle = 2 \langle v , A_2^\infty y \rangle$ extends to every $y \in H^{\theta-2\delta_1}$. 
\end{proof}

The next proposition permits to solve the Poisson equation $\mathscr{L}_y^\epsilon \phi = -\psi$ in the unknown $\phi$, under suitable assumptions on the datum $\psi$. We need, in particular, $\psi$ to be a quadratic function on some Sobolev space $H^\theta$, $\theta \in [0,\theta_0)$, with zero average with respect to the invariant measure $\mu^\epsilon$, namely $\int_H \psi(w) d\mu^\epsilon(w) = 0$. This latter condition being necessary is clear from invariance of $\mu^\epsilon$ under the Ornstein-Uhlenbeck semigroup $P_t^\epsilon$ and 
\begin{align*}
\left|\int_H \mathscr{L}_y^\epsilon \phi(w) d\mu^\epsilon(w)\right|
&\leq
\left|\int_H \left( \mathscr{L}_y^\epsilon \phi(w) - \frac{1}{t} \left(P_t^\epsilon \phi - \phi\right)(w) \right) d\mu^\epsilon(w)\right|
\to 0
\end{align*}
as $t \to 0^+$; by the proposition, the zero-average condition on $\psi$ is also sufficient, at least when we restrict ourselves to $\psi \in \mathcal{E}_\theta$.
Finally, notice that the solution of the Poisson equation is more regular than the datum, namely: if $\psi \in \mathcal{E}_\theta$, then $\phi \in \mathcal{E}_{\theta-\delta}$ for every $\delta\in(0,\gamma)$, and $\langle D_y \phi(\cdot) , v \rangle \in \mathcal{E}_{\theta-2\delta_1}$ for every $v \in H^{\theta-2\delta_2}$, $\delta_1,\delta_2 \geq 0$, $\delta_1+\delta_2<\gamma$.

\begin{prop} \label{prop:inverse_theta}
Let $\psi \in \mathcal{E}_\theta$, $\theta \in [0,\theta_0)$ be such that $\int_H \psi(w) d\mu^\epsilon(w) = 0$. 
Then there exists $\phi \in \mathcal{E}_{\theta-\delta} \cap D(\mathscr{L}_y^\epsilon)$ for every $\delta\in(0,\gamma)$ satisfying $\| \phi \|_{\mathcal{E}_{\theta-\delta}} \lesssim \|\psi\|_{\mathcal{E}_\theta}$, such that
\begin{align*}
\phi = \lim_{T \to \infty} \int_{1/T}^T P_t^\epsilon \psi dt
\end{align*}
with respect to topology of $D(\mathscr{L}_y^\epsilon)$, and $\mathscr{L}_y^\epsilon \phi = -\psi$. 
Moreover, $\langle D_y \phi(\cdot) , v \rangle \in \mathcal{E}_{\theta-2\delta_1}$ for every $v \in H^{\theta-2\delta_2}$, $\delta_1,\delta_2 \geq 0$, $\delta_1+\delta_2<\gamma$, with $\|\langle D_y \phi(\cdot) , v \rangle\|_{\mathcal{E}_{\theta-2\delta_1}} \lesssim \|\psi\|_{\mathcal{E}_\theta} \|v\|_{H^{\theta-2\delta_2}}$.
\end{prop}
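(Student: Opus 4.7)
The plan is to split $\psi$ into its canonical pieces $\psi(y) = a_0 + a_1(y) + a_2(y,y)$, solve the Poisson equation for each component separately using the two preceding lemmas, and then assemble the answer. The zero-mean assumption will be what makes the constant part of the integral converge.

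First I would use the invariance of $\mu^\epsilon$ under $P_t^\epsilon$ and the fact that $\mu^\epsilon$ is a centered Gaussian (so $\int a_1(w)\,d\mu^\epsilon(w)=0$) to read the zero-mean condition $\int_H \psi\, d\mu^\epsilon = 0$ as
\begin{align*}
a_0 = -\int_H a_2(w,w)\, d\mu^\epsilon(w) = -\mathbb{E}\bigl[a_2(W^{C_\epsilon,Q}_\infty, W^{C_\epsilon,Q}_\infty)\bigr].
\end{align*}
Using the explicit representation $Y^y_t = e^{C_\epsilon t}y + W^{C_\epsilon,Q}_t$ of the Ornstein-Uhlenbeck process, a direct computation yields
\begin{align*}
P_t^\epsilon \psi(y) = a_1(e^{C_\epsilon t}y) + a_2(e^{C_\epsilon t}y, e^{C_\epsilon t}y) + \mathrm{Tr}\bigl((Q^\epsilon_t - Q^\epsilon_\infty)A_2\bigr),
\end{align*}
where $Q^\epsilon_t = \int_0^t e^{C_\epsilon s} Q e^{C_\epsilon s}\, ds$ and $A_2$ is the symmetric operator representing $a_2$. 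The key point is that $Q^\epsilon_\infty - Q^\epsilon_t = \int_t^\infty e^{C_\epsilon s} Q e^{C_\epsilon s}\, ds$ has trace norm decaying as $e^{-\lambda_0 t}$, so the scalar correction is integrable in $t$ on $(0,\infty)$.

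Next I would integrate termwise over $t \in [1/T,T]$. The linear piece $\int_{1/T}^T a_1(e^{C_\epsilon t}\cdot)\, dt$ converges in $D(\mathscr{L}^\epsilon_y)$ to $a_1((-C_\epsilon)^{-1}\cdot) \in \mathcal{E} \subset \mathcal{E}_{\theta-\delta}$ by \autoref{lem:time_integral_theta1}, with norm controlled by $\|a_1\|_{H^\theta \to \R} \le \|\psi\|_{\mathcal{E}_\theta}$. The quadratic piece $\int_{1/T}^T a_2(e^{C_\epsilon t}\cdot, e^{C_\epsilon t}\cdot)\,dt$ converges in $D(\mathscr{L}^\epsilon_y)$ to some $\phi_2 \in \mathcal{E}_{\theta-\delta}$ with $\|\phi_2\|_{\mathcal{E}_{\theta-\delta}} \lesssim \|\psi\|_{\mathcal{E}_\theta}$ by \autoref{lem:time_integral_theta2}, and by \autoref{cor:regularity_phi} the derivative $\langle D_y\phi_2(\cdot),v\rangle$ has precisely the regularity claimed. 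Finally, the scalar term $\int_{1/T}^T \mathrm{Tr}((Q^\epsilon_t - Q^\epsilon_\infty)A_2)\, dt$ converges to a finite real number bounded by $\|\psi\|_{\mathcal{E}_\theta}$, which we absorb into the $a_0$-component of $\phi$. Adding the three contributions gives the desired $\phi = \lim_{T\to\infty} \int_{1/T}^T P_t^\epsilon\psi\, dt$, together with the norm bound $\|\phi\|_{\mathcal{E}_{\theta-\delta}} \lesssim \|\psi\|_{\mathcal{E}_\theta}$.

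It remains to verify the equation $\mathscr{L}^\epsilon_y \phi = -\psi$. By \autoref{lem:commuting} together with the semigroup identity $\frac{d}{dt}P_t^\epsilon \psi = P_t^\epsilon \mathscr{L}_y^\epsilon \psi = \mathscr{L}_y^\epsilon P_t^\epsilon \psi$, an application of Fubini and the fundamental theorem of calculus gives
\begin{align*}
\mathscr{L}_y^\epsilon \int_{1/T}^T P_t^\epsilon \psi\, dt = P_T^\epsilon \psi - P_{1/T}^\epsilon\psi.
\end{align*}
The right-hand side converges in $L^2(H,\mu^\epsilon)$ to $-\psi$: the strong continuity of $P_\cdot^\epsilon$ on $L^2(H,\mu^\epsilon)$ handles $P_{1/T}^\epsilon \psi \to \psi$, while the explicit formula for $P_T^\epsilon \psi$ above, combined with $\int \psi\, d\mu^\epsilon = 0$, shows $\|P_T^\epsilon \psi\|_{L^2(\mu^\epsilon)} \lesssim \|\psi\|_{\mathcal{E}_\theta}\, e^{-\lambda_0 T}$ (this is the $L^2$ analogue of \autoref{lem:exp_mixing} and follows from the same explicit computation). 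The closedness of $\mathscr{L}^\epsilon_y$ then yields $\phi \in D(\mathscr{L}^\epsilon_y)$ with $\mathscr{L}^\epsilon_y \phi = -\psi$.

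The main technical issue I would anticipate is bookkeeping the loss of regularity: the integrand $P_t^\epsilon \psi$ lives in $\mathcal{E}_\theta$, but to make the integral converge near $t=0$ one must allow the limit to lose $\delta \in (0,\gamma)$ derivatives, and the splitting into $\delta_1 + \delta_2 < \gamma$ in the derivative estimate must be tracked carefully through \autoref{cor:regularity_phi}. Everything else is routine once the explicit formula for $P_t^\epsilon \psi$ and the zero-mean condition are exploited to cancel the otherwise divergent constant contribution.
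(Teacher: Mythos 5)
Your proposal is correct and follows essentially the same route as the paper: decompose $\psi$ into its constant, linear and quadratic parts, handle the latter two via \autoref{lem:time_integral_theta1}, \autoref{lem:time_integral_theta2} and \autoref{cor:regularity_phi}, control the constant part through the zero-mean condition and exponential mixing (your explicit $\mathrm{Tr}((Q^\epsilon_t-Q^\epsilon_\infty)A_2)$ computation is exactly the content of \autoref{lem:exp_mixing} applied at $y=0$), and then identify $\mathscr{L}_y^\epsilon\phi=-\psi$ via \autoref{lem:commuting}, the identity $\mathscr{L}_y^\epsilon\int_{1/T}^T P_t^\epsilon\psi\,dt=P_T^\epsilon\psi-P_{1/T}^\epsilon\psi$, and closedness of the generator. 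No gaps.
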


\begin{proof}
First we prove that the limit exists. Let $\psi(y) = a_0 + a_1(y) + a_2(y,y)$. We have
\begin{align*}
P_t \psi (y) = P_t \psi (0) + a_1(e^{C_\epsilon t}y) + a_2(e^{C_\epsilon t}y,e^{C_\epsilon t}y),
\end{align*}
and by \autoref{lem:time_integral_theta1} and \autoref{lem:time_integral_theta2}, the quantity $\int_{1/T}^T \left( a_1(e^{C_\epsilon t}y) + a_2(e^{C_\epsilon t}y,e^{C_\epsilon t}y) \right)dt$ converges with respect to the $D(\mathscr{L}_y^\epsilon)$  topology to some $\phi_\star \in \mathcal{E}_{\theta-\delta} \cap D(\mathscr{L}_y^\epsilon)$ for every $\delta\in(0,\gamma)$. 
Moreover, by \autoref{lem:exp_mixing}
\begin{align*}
|P_t^\epsilon \psi (0)| = |P_t^\epsilon \psi (0)-\int_H \psi(w) d\mu^\epsilon(w)| \lesssim
\|\psi\|_{\mathcal{E}_\theta} e^{-\lambda_0 t}
\end{align*}
is integrable with respect to time, and so it converges with respect to the $D(\mathscr{L}_y^\epsilon)$ topology to a constant $\phi_0$. Putting all together,
\begin{align*}
\lim_{T \to \infty} \int_{1/T}^T P_t^\epsilon \psi dt = 
\phi_0 + \phi_\star \eqqcolon \phi \in \mathcal{E}_{\theta-\delta} \cap D(\mathscr{L}_y^\epsilon).
\end{align*}

Let us show that $\phi$ is indeed a solution of the Poisson equation $\mathscr{L}_y^\epsilon \phi = -\psi$.
Notice that $\mathscr{L}_y^\epsilon:D(\mathscr{L}_y^\epsilon) \to L^2(H,\mu^\epsilon)$ is bounded, and therefore by continuity we have 
\begin{align*}
\mathscr{L}_y^\epsilon\phi 
=
\mathscr{L}_y^\epsilon \left( \lim_{T \to \infty}\int_{1/T}^T P_t^\epsilon \psi dt \right) 
= 
\lim_{T \to \infty} \mathscr{L}_y^\epsilon \left( \int_{1/T}^T P_t^\epsilon \psi dt \right),
\end{align*}
where the first limit is understood with respect to the $D(\mathscr{L}_y^\epsilon)$ topology, and the second one with respect to the $L^2(H,\mu^\epsilon)$ topology. 
Since $\int_{1/T}^T \|P_t^\epsilon \psi\|_\mathcal{E} dt < \infty$ for every $T>0$ we have $\int_{1/T}^T P_t^\epsilon \psi dt \in \mathcal{E} \subset D(\mathscr{L}_y^\epsilon)$ for every $T>0$, and by \autoref{lem:commuting} we have
\begin{align*}
\mathscr{L}_y^\epsilon \left( \int_{1/T}^T P_t^\epsilon \psi dt \right)
&=
\int_{1/T}^T \mathscr{L}_y^\epsilon  P_t^\epsilon \psi dt
=
\int_{1/T}^T P_t^\epsilon  \mathscr{L}_y  \psi dt
=
P_T^\epsilon \psi - P_{1/T}^\epsilon \psi.
\end{align*}
In particular,
\begin{align*}
\mathscr{L}_y^\epsilon \phi
=
\lim_{T \to \infty} \left( P_T^\epsilon \psi - P_{1/T}^\epsilon \psi \right).
\end{align*}
Since we already know that $P_{1/T}^\epsilon \psi \to \psi \in L^2(H,\mu^\epsilon)$ as $T \to \infty$ by continuity of the semigroup, we are left to check $\lim_{T \to \infty} P_T^\epsilon \psi = 0 \in L^2(H,\mu^\epsilon)$. We have:
\begin{align*}
|P_T^\epsilon \psi (y)| 
&\leq
|P_T^\epsilon \psi (0)| + |a_1(e^{C_\epsilon T}y)| + |a_2(e^{C_\epsilon T}y,e^{C_\epsilon T}y)|
\\
&\leq
|P_T^\epsilon \psi (0)| +
\|\psi\|_{\mathcal{E}_\theta} (\|e^{C_\epsilon T}y\|_{H^\theta}+\|e^{C_\epsilon T}y\|_{H^\theta}^2) 
\\
&\lesssim 
\|\psi\|_{\mathcal{E}_\theta} e^{-\lambda_0 T}
+
\|\psi\|_{\mathcal{E}_\theta} (e^{-\lambda_0 T}\|y\|_{H^\theta}+e^{-2\lambda_0 T}\|y\|_{H^\theta}^2)
\to 0
\end{align*}
as $T \to \infty$ in $L^2(H,\mu^\epsilon)$. Finally, the assertion about the derivative $D_y \phi$ follows by the explicit construction of \autoref{lem:time_integral_theta1} and \autoref{cor:regularity_phi}.
\end{proof}

\section{Perturbed test function method} \label{sec:test}

Let us move back to the problem of identifying $\varphi_1^\epsilon$, $\varphi_2^\epsilon$ in the expression of the test function $\varphi^\epsilon$. 
Recall we are looking for a perturbation of $\varphi = \varphi(u)$ of the following form
\begin{align*}
\varphi^\epsilon(u,y) = \varphi(u) + \epsilon^{1/2} \varphi_1^\epsilon(u,y) + \epsilon \varphi_2^\epsilon(u,Y).
\end{align*}
For our purposes it is sufficient to consider $\varphi \in F$, namely $\varphi(u) = \langle u ,h \rangle$ for some given smooth test function $h \in \mathscr{S}$.
With this choice of $\varphi$, we have in particular
\begin{align*}
D_u \varphi = h,
\quad
D^2_u \varphi = 0.
\end{align*}

\subsection{Finding $\varphi_1$}
Recalling \eqref{1.4bis}, the first corrector $\varphi_1^\epsilon$ needs to solve the Poisson equation
\begin{align*}
\mathscr{L}_y^\epsilon \varphi_1^\epsilon(u,y) = - \langle b(y,u),h \rangle.
\end{align*}
For every fixed $u \in H$, we can apply \autoref{prop:inverse_theta} to the function $\psi_u = \langle b(\cdot,u) , h \rangle \in \mathcal{E}$. Indeed $\int_H \psi_u(w) d\mu^\epsilon(w)=0$, therefore there exists $\phi_u^\epsilon \in \mathcal{E}$ such that $\mathscr{L}_y^\epsilon \phi_u^\epsilon = -\psi_u$. 
Moreover, since $\psi_u$ is linear in $y$, following the construction of \autoref{lem:time_integral_theta1} it is easy to check
\begin{align*}
\phi_u^\epsilon 
=
\langle b((-C_\epsilon)^{-1} \cdot, u ) , h \rangle.
\end{align*}
Finally, we define:
\begin{align} \label{eq:varphi1}
\varphi_1^\epsilon(u,y)=\phi_u^\epsilon(y) &= \langle b((-C_\epsilon)^{-1}y,u),h\rangle.
\end{align}
Notice that for every $v \in \mathscr{S}$:
\begin{align*}
\langle D_y \varphi_1^\epsilon(u) , v \rangle &= \langle b((-C_\epsilon)^{-1}v,u),h\rangle
\\ \nonumber
\langle D_u \varphi_1^\epsilon(y) , v \rangle &= \langle b((-C_\epsilon)^{-1}y,v),h\rangle.
\end{align*}

\begin{prop} \label{prop:regularity_varphi1}
For every $u,y \in H^s$, $s\in \R$, we have $D_y \varphi_1^\epsilon(u),D_u \varphi_1^\epsilon(y) \in H^{2\theta+s}$ for every $\theta \in [\gamma,1]$, with
\begin{align*}
\|D_y \varphi_1^\epsilon(u)\|_{H^{2\theta+s}} 
\lesssim 
\epsilon^{-\frac{\theta-\gamma}{1-\gamma}}\|h\|_{H^{\theta_1}} \|u\|_{H^s},
\quad
\|D_u \varphi_1^\epsilon(y)\|_{H^{2\theta+s}} \lesssim \epsilon^{-\frac{\theta-\gamma}{1-\gamma}}\|h\|_{H^{\theta_1}} \|y\|_{H^s},
\end{align*}
for some $\theta_1=\theta_1(s)$ sufficiently large.
\end{prop}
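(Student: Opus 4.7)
The plan is to start from the explicit representation \eqref{eq:varphi1} and differentiate: for $v\in\mathscr{S}$, one has
\begin{align*}
\langle D_y\varphi_1^\epsilon(u),v\rangle=\langle b((-C_\epsilon)^{-1}v,u),h\rangle,\qquad \langle D_u\varphi_1^\epsilon(y),v\rangle=\langle b((-C_\epsilon)^{-1}y,v),h\rangle.
\end{align*}
I would then invoke the antisymmetry (B4) to move $h$ into the second slot of $b$, so that these pairings read $-\langle b((-C_\epsilon)^{-1}v,h),u\rangle$ and $-\langle b((-C_\epsilon)^{-1}y,h),v\rangle$, respectively. The second formula already identifies $D_u\varphi_1^\epsilon(y)=-b((-C_\epsilon)^{-1}y,h)$ as a distribution, while the first exhibits $D_y\varphi_1^\epsilon(u)$ through its pairing against arbitrary smooth test elements.

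The next step is to reduce both $H^{s+2\theta}$-norm estimates to a single operator bound on $(-C_\epsilon)^{-1}$. For $D_u\varphi_1^\epsilon(y)$ I would apply (B1) when $s+2\theta<d/2$ and (B2) otherwise, obtaining $\|D_u\varphi_1^\epsilon(y)\|_{H^{s+2\theta}}\lesssim\|(-C_\epsilon)^{-1}y\|_{H^{s+2\theta}}\|h\|_{H^{\theta_1}}$ for a sufficiently large $\theta_1=\theta_1(s)$. For $D_y\varphi_1^\epsilon(u)$ I would identify $H^{s+2\theta}$ with the dual of $H^{-s-2\theta}$ under the $H$-pairing, test against $v$ with $\|v\|_{H^{-s-2\theta}}\leq 1$, and bound $\langle b((-C_\epsilon)^{-1}v,h),u\rangle$ by $\|b((-C_\epsilon)^{-1}v,h)\|_{H^{-s}}\|u\|_{H^s}$ via (B1)/(B2) in the $H^{-s}/H^s$-duality. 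In both cases the problem collapses to the single operator estimate
\begin{align*}
\|(-C_\epsilon)^{-1}\|_{H^r\to H^{r+2\theta}}\lesssim\epsilon^{-\frac{\theta-\gamma}{1-\gamma}},\qquad\theta\in[\gamma,1],
\end{align*}
for the appropriate value of $r\in\R$.

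I would finally prove this operator bound by interpolating between the two endpoints $\theta=\gamma$ and $\theta=1$. At $\theta=\gamma$ the estimate is uniform in $\epsilon$: it follows from (C2) together with the decomposition $(-C_\epsilon)^{-1}=(-C)^{-1}-G_\epsilon$ and the inequality $\|G_\epsilon\|_{H^r\to H^{r+2\gamma}}\lesssim 1$ (the case $\beta_2=0$) already recorded in \autoref{sec:prelim}. At $\theta=1$, the form coercivity $\langle(-C_\epsilon)w,w\rangle\geq\epsilon\|w\|_{H^1}^2$, combined with the uniform boundedness of $\epsilon(-A)(-C_\epsilon)^{-1}$ used for the semigroup bounds in \autoref{sec:prelim}, yields $\|(-C_\epsilon)^{-1}\|_{H^r\to H^{r+2}}\lesssim\epsilon^{-1}$. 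Complex interpolation of these two endpoint operator bounds at level $\lambda=\frac{\theta-\gamma}{1-\gamma}$ then delivers the required $\epsilon$-exponent. I expect the hardest step to be the $\theta=1$ endpoint: since $A$ and $C$ are not assumed to commute, a direct diagonalization is unavailable, and the argument genuinely relies on quadratic-form coercivity together with the fractional-power interpolation machinery set up in \autoref{sec:prelim}; once this ingredient is granted, collecting the two worst-case exponents into a single $\theta_1=\theta_1(s)$ closes the proof.
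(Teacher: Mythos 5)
Your proposal is correct and follows essentially the same route as the paper: the same explicit formulas for $D_y\varphi_1^\epsilon(u)$ and $D_u\varphi_1^\epsilon(y)$, reduction via (B2)/(B4) to a smoothing bound for $(-C_\epsilon)^{-1}$, the two endpoint estimates at $\theta=\gamma$ (uniform in $\epsilon$, via comparison with $(-C)^{-1}$ and (C2)) and $\theta=1$ (cost $\epsilon^{-1}$, via comparison with $(\epsilon(-A))^{-1}$), and interpolation at level $\frac{\theta-\gamma}{1-\gamma}$. The only cosmetic difference is that the paper interpolates the $H^{2\gamma+s}$ and $H^{2+s}$ norms of the fixed element $D_y\varphi_1^\epsilon(u)$ in the Hilbert scale rather than phrasing it as operator interpolation of $(-C_\epsilon)^{-1}$; in this scale the two are interchangeable.
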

\begin{proof}
Take $\theta_1=\theta_1(s)$ such that (B2) holds true.
For every $v \in H^{2\gamma+s}$ we have
\begin{align*}
|\langle D_y \varphi_1^\epsilon(u), (-A)^{\gamma+s/2} v \rangle|
&=
|\langle b((-C_\epsilon)^{-1}(-A)^{\gamma+s/2}v,u),h \rangle|
\\
&\lesssim
\|(-C_\epsilon)^{-1}(-A)^{\gamma+s/2}v\|_{H^{-s}}
\|h\|_{H^{\theta_1}} \|u\|_{H^s} 
\\
&\lesssim
\|(-C)^{-1}(-A)^{\gamma+s/2}v\|_{H^{-s}}
\|h\|_{H^{\theta_1}} \|u\|_{H^s}
\\
&\lesssim 
\|v\|_H \|h\|_{H^{\theta_1}} \|u\|_{H^s} ,
\end{align*}
and similarly for every $v \in H^{2+s}$
\begin{align*}
|\langle D_y \varphi_1^\epsilon(u), (-A)^{1+s/2} v \rangle|
&=
|\langle b((-C_\epsilon)^{-1}(-A)^{1+s/2}v,u),h \rangle|
\\
&\lesssim
\|(-C_\epsilon)^{-1}(-A)^{1+s/2}v\|_{H^{-s}}
\|h\|_{H^{\theta_1}} \|u\|_{H^s} 
\\
&\lesssim
\|(-\epsilon A)^{-1}(-A)^{1+s/2}v\|_{H^{-s}}
\|h\|_{H^{\theta_1}} \|u\|_{H^s} 
\\
&\lesssim 
\epsilon^{-1}
\|v\|_H \|h\|_{H^{\theta_1}} \|u\|_{H^s}.
\end{align*} 
Since $v$ is arbitrary, by interpolation we deduce
\begin{align*}
\|D_y \varphi_1^\epsilon(u)\|_{H^{2\theta+s}} 
\lesssim
\|D_y \varphi_1^\epsilon(u)\|_{H^{2\gamma+s}}^{\frac{1-\theta}{1-\gamma}}
\|D_y \varphi_1^\epsilon(u)\|_{H^{2+s}}^{\frac{\theta-\gamma}{1-\gamma}}
\lesssim
\epsilon^{-\frac{\theta-\gamma}{1-\gamma}}\|h\|_{H^{\theta_1}} \|u\|_{H^s}.
\end{align*}
The argument is similar for the term involving $D_u \varphi_1^\epsilon$: first, for every $v \in H^{2\gamma+s}$
\begin{align*}
|\langle D_u \varphi_1^\epsilon(y), (-A)^{\gamma+s/2}v \rangle|
&\lesssim
|\langle b((-C_\epsilon)^{-1}y,(-A)^{\gamma+s/2}v),h \rangle|
\\
&\lesssim
\|(-C_\epsilon)^{-1}y\|_{H^{2\gamma+s}} \|h\|_{H^{\theta_1}} \|(-A)^{\gamma+s/2}v \|_{H^{-2\gamma-s}}
\\
&\lesssim
\|(-C)^{-1}y\|_{H^{2\gamma+s}} \|h\|_{H^{\theta_1}} \|(-A)^{\gamma+s/2}v \|_{H^{-2\gamma-s}}
\\
&\lesssim 
\|y\|_{H^s} \|h\|_{H^{\theta_1}} \|v\|_H,
\end{align*}
whereas for every $v \in H^{2+s}$
\begin{align*}
|\langle D_u \varphi_1^\epsilon(y), (-A)^{1+s/2}v \rangle|
&\lesssim
|\langle b((-C_\epsilon)^{-1}y,(-A)^{1+s/2}v),h \rangle|
\\
&\lesssim
\|(-C_\epsilon)^{-1}y\|_{H^{2+s}} \|h\|_{H^{\theta_1}} \|(-A)^{1+s/2}v \|_{H^{-2-s}}
\\
&\lesssim
\|(-\epsilon A)^{-1}y\|_{H^{2+s}} \|h\|_{H^{\theta_1}} \|(-A)^{1+s/2}v \|_{H^{-2-s}}
\\
&\lesssim 
\epsilon^{-1}
\|y\|_{H^s} \|h\|_{H^{\theta_1}} \|v\|_H.
\end{align*}
The thesis follows by interpolation.
\end{proof}

\subsection{Finding $\varphi_2$}
Let us move to equation \eqref{1.4ter} for the second corrector:
\begin{align*}
\langle Au + b(u,u), h \rangle 
+ 
\langle b(y,u), D_u \varphi_1^\epsilon(y) \rangle
+
\langle b(y,y), D_y \varphi_1^\epsilon(u) \rangle
+
\mathscr{L}_y^\epsilon \varphi_2^\epsilon (u,Y).
\end{align*}
As discussed in \autoref{ssec:perturbed_test_function}, as a preliminary step to our analysis it is useful to manipulate the previous expression to replace the small-scale process $y$ with its linearised counterpart $Y$. We point out that at this level of generality the variables $y$ and $Y$ only represent variables in the Hilbert space $H$, but we prefer to keep the notational difference for the sake of clarity, since we intend to eventually evaluate the previous expressions at $y=y^\epsilon$, $Y=Y^\epsilon$.

Having said this, let $\zeta \in H$ indicate the difference $\zeta=y-Y$. We can prove the following:
\begin{prop} \label{prop:linearisation}
For every $\delta \in (0, \gamma-1/4)$, $u \in H^{1-\delta}$, $y \in H^\gamma$ and $Y \in H^{\theta_0-\gamma}$ it holds:
\begin{align*}
|\langle b(y,u), D_u \varphi_1^\epsilon(y) \rangle - \langle b(Y,u), D_u \varphi_1^\epsilon(Y) \rangle|
&\lesssim
\|\zeta\|_H \|h\|_{H^{\theta_1}} \|Y\|_{H^{\theta_0-\gamma}} \|u\|_H
\\
&\quad+ 
\epsilon^{-\frac{1-\gamma-\delta}{2(1-\gamma)}} 
\|\zeta\|_{H^\gamma}\|h\|_{H^{\theta_1}} \|y\|_{H}\|u\|_{H^{1-\delta}},
\\
|\langle b(y,y) , D_y \varphi_1^\epsilon(u) \rangle-\langle b(Y,Y) , D_y \varphi_1^\epsilon(u) \rangle|
&\lesssim
\|\zeta\|_H \|h\|_{H^{\theta_1}} \|Y\|_{H^{\theta_0-\gamma}} \|u\|_H
\\
&\quad+ 
\epsilon^{-\frac{1-\gamma-\delta}{2(1-\gamma)}} 
\|\zeta\|_{H^\gamma}\|h\|_{H^{\theta_1}} \|y\|_{H}\|u\|_{H^{1-\delta}}. 
\end{align*}
\end{prop}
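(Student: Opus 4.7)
The plan is to exploit bilinearity of $b$ together with the fact that $\varphi_1^\epsilon(u,\cdot)$ is linear in its second argument (cf.\ \eqref{eq:varphi1}, so $D_u\varphi_1^\epsilon(y)$ is also linear in $y$) to decompose each difference into two remainder terms in which $\zeta = y-Y$ appears once, and then estimate each using the trilinear estimates (B1)--(B4) combined with \autoref{prop:regularity_varphi1}. A direct check shows
\begin{align*}
\langle b(y,u), D_u\varphi_1^\epsilon(y) \rangle - \langle b(Y,u), D_u\varphi_1^\epsilon(Y) \rangle
&= \langle b(\zeta,u), D_u\varphi_1^\epsilon(Y) \rangle + \langle b(y,u), D_u\varphi_1^\epsilon(\zeta) \rangle,
\\
\langle b(y,y), D_y\varphi_1^\epsilon(u) \rangle - \langle b(Y,Y), D_y\varphi_1^\epsilon(u) \rangle
&= \langle b(\zeta,Y), D_y\varphi_1^\epsilon(u) \rangle + \langle b(y,\zeta), D_y\varphi_1^\epsilon(u) \rangle.
\end{align*}

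For the two \emph{first} pieces in each line, where $\zeta$ appears unpaired with a derivative, I would apply (B4) to swap $\zeta$ into the position paired against $u$ or $Y$, then use (B1) in the form $\|b(\zeta,w)\|_H \lesssim \|\zeta\|_H\|w\|_{H^{\theta_0}}$. The regularity of the corrector factor in $H^{\theta_0}$ can be obtained from \autoref{prop:regularity_varphi1} with the cheap choice $\theta = \gamma$ (which carries no $\epsilon^{-1}$ penalty), provided the argument of the corrector is regular enough: $Y \in H^{\theta_0-\gamma}$ yields $\|D_u\varphi_1^\epsilon(Y)\|_{H^{\theta_0+\gamma}} \lesssim \|h\|_{H^{\theta_1}}\|Y\|_{H^{\theta_0-\gamma}}$, and similarly $\|D_y\varphi_1^\epsilon(u)\|_{H^{2\gamma}}\lesssim\|h\|_{H^{\theta_1}}\|u\|_H$ combined with a sharper pairing via (B3) absorbs the $H^{\theta_0-\gamma}$ norm of $Y$. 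This produces the first term $\|\zeta\|_H \|h\|_{H^{\theta_1}} \|Y\|_{H^{\theta_0-\gamma}} \|u\|_H$ of the bound.

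For the two \emph{second} pieces, $\zeta$ now occupies the corrector slot (or the second $b$-slot), and is only available at the $H^\gamma$ level, so we cannot afford to put the unbounded derivative directly on $\zeta$. Here I would use (B4) to integrate by parts and arrive at a bracket of the shape $\langle b(y,D_u\varphi_1^\epsilon(\zeta)),u\rangle$ (resp.\ analogous for the second quantity), then invoke (B3) with the three-dimensional exponent count: pairing $y\in H$ against $u\in H^{1-\delta}$ forces $D_u\varphi_1^\epsilon(\zeta) \in H^{3/2+\delta}$ in dimension $d=3$. Applying \autoref{prop:regularity_varphi1} with $s=\gamma$ and $\theta$ chosen so that $2\theta+\gamma = 3/2+\delta$, i.e.\ $\theta = (1+\gamma-\delta)/2$, gives the penalty $\epsilon^{-(\theta-\gamma)/(1-\gamma)} = \epsilon^{-\frac{1-\gamma-\delta}{2(1-\gamma)}}$, and the admissibility condition $\theta \in [\gamma,1]$ together with the (B3) constraints $s,r-1 \in (-3/2,3/2)$ and $s+r>1$ are exactly what forces the restriction $\delta \in (0,\gamma-1/4)$.

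The main obstacle I expect is the careful bookkeeping of Sobolev exponents in step two, in particular verifying that the choice of $\theta$ dictated by matching the target $\epsilon$-exponent is simultaneously compatible with the admissibility range $[\gamma,1]$ of \autoref{prop:regularity_varphi1} and with the (B3) constraints in $d=3$; this is precisely where the hypothesis $\delta<\gamma-1/4$ enters, and it is the reason we need the additional space regularity $u\in H^{1-\delta}$ rather than just $u\in H$. Once the indices are matched, summing the two bounds in each line yields the claimed estimates.
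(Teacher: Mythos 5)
Your proposal follows the paper's proof essentially verbatim: the same two-term decomposition via bilinearity and linearity of the corrector, (B1)/(B4) for the terms where $\zeta$ sits in the first slot of $b$, and \autoref{prop:regularity_varphi1} with $\theta=(1+\gamma-\delta)/2$ combined with (B3) for the terms where $\zeta$ enters the corrector, yielding exactly the penalty $\epsilon^{-\frac{1-\gamma-\delta}{2(1-\gamma)}}$. The only blemish is the line ``$2\theta+\gamma=3/2+\delta$, i.e.\ $\theta=(1+\gamma-\delta)/2$'': with that $\theta$ one actually gets $2\theta+\gamma=1+2\gamma-\delta$, which \emph{exceeds} the minimal regularity $3/2+\delta$ demanded by (B3) precisely when $\delta<\gamma-1/4$ --- the correct reading of where the hypothesis enters --- and the conclusion is unaffected.
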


\begin{proof}
Recall
\begin{align*}
\langle b(y,u) , D_u \varphi_1^\epsilon(y) \rangle
&-
\langle b(Y,u) , D_u \varphi_1^\epsilon(Y) \rangle
=
\langle b(\zeta,u) , D_u \varphi_1^\epsilon(Y) \rangle
+
\langle b(y,u) , D_u \varphi_1^\epsilon(\zeta) \rangle,
\\
\langle b(y,y) , D_y \varphi_1^\epsilon(u) \rangle
&-
\langle b(Y,Y) , D_y \varphi_1^\epsilon(u) \rangle
=
\langle b(\zeta,Y) , D_y \varphi_1^\epsilon(u) \rangle
+
\langle b(y,\zeta) , D_y \varphi_1^\epsilon(u) \rangle,
\end{align*} 
and by (B3) and \autoref{prop:regularity_varphi1} with $\theta = \frac{1+\gamma-\delta}{2}$ the following estimates hold:
\begin{align*}
|\langle b(\zeta,u), D_u \varphi_1^\epsilon(Y) \rangle|
&\lesssim
\|\zeta\|_H \|h\|_{H^{\theta_1}} \|Y\|_{H^{\theta_0-\gamma}} \|u\|_H,
\\
|\langle b(y,u) , D_u \varphi_1^\epsilon(\zeta) \rangle|
&\lesssim
\epsilon^{-\frac{1-\gamma-\delta}{2(1-\gamma)}} 
\|\zeta\|_{H^\gamma}\|h\|_{H^{\theta_1}} \|y\|_{H}\|u\|_{H^{1-\delta}},
\\
|\langle b(\zeta,Y) , D_y \varphi_1^\epsilon(u) \rangle|
&\lesssim
\|\zeta\|_H \|h\|_{H^{\theta_1}}\|Y\|_{H^{\theta_0-\gamma}}   \|u\|_H,
\\
|\langle b(y,\zeta) , D_y \varphi_1^\epsilon(u) \rangle|
&\lesssim
\epsilon^{-\frac{1-\gamma-\delta}{2(1-\gamma)}}
\|\zeta\|_{H^\gamma} \|h\|_{H^{\theta_1}} \|y\|_H \|u\|_{H^{1-\delta}} ,
\end{align*}
where we have used $b:H \times H^{2\gamma+1-\delta} \to H^{\delta-1}$ and $b:H \times H^{2+\gamma-2\delta} \to H^{-\gamma}$ continuous by (B3) and our choice of $\delta$.
\end{proof}

\begin{rmk}
The previous proposition will be used in \autoref{sec:conv} to check rigorously that we can actually replace the small-scale process $y^\epsilon$ with $Y^\epsilon$, up to a correction which is infinitesimal as $\epsilon$. Indeed, since $\frac{1-\gamma-\delta}{2(1-\gamma)} < \frac12$ we can compensate diverging factors in $\epsilon$ in the previous proposition with a factor $\epsilon^{1/2}$ coming from 
\autoref{prop:y-Y}, having taken expectation and time integral.
\end{rmk}

Thus, in view of the discussion above and as already described in \autoref{ssec:perturbed_test_function}, the terms of order one in the expression of $\mathscr{L}^\epsilon \varphi$ are actually given by
\begin{align} \label{eq:order1}
\langle Au + b(u,u), h \rangle 
+ 
\langle b(Y,u), D_u \varphi_1^\epsilon(Y) \rangle
+
\langle b(Y,Y), D_y \varphi_1^\epsilon(u) \rangle
+
\mathscr{L}_y^\epsilon \varphi_2^\epsilon (u,Y),
\end{align}
and thus our goal is to find $\varphi_2^\epsilon = \varphi_2^\epsilon (u,Y)$ such that the previous quantity is independent of $Y$.

Let then $u \in H$ be fixed. The idea is again to apply \autoref{prop:inverse_theta} to 
\begin{align*}
\psi_u ^\epsilon
= \langle b(\cdot,u), D_u \varphi_1^\epsilon(\cdot) \rangle + \langle b(\cdot,\cdot), D_y \varphi_1^\epsilon(u) \rangle.
\end{align*}
For every $\theta > \frac{5}{4} - \gamma$ it holds $\psi_u^\epsilon \in \mathcal{E}_{\theta}$, with $\|\psi_u^\epsilon\|_{\mathcal{E}_{\theta}} \lesssim \|h\|_{H^{\theta_1}} \|u\|_{H}$.
However, $\psi_u^\epsilon$ does not satisfy the hypotheses of that proposition: indeed, it has not necessarily zero average with respect to the invariant measure $\mu^\epsilon$.
To deal with this issue, let us consider instead
\begin{align*}
\Psi_u^\epsilon = \psi_u^\epsilon - \int_H \psi_u^\epsilon (w) d\mu^\epsilon(w).
\end{align*}
With this choice of $\Psi_u^\epsilon$ we have $\Psi_u^\epsilon \in \mathcal{E}_{\theta}$ and $\int_H \Psi_u^\epsilon(w) d\mu^\epsilon(w)=0$, thus \autoref{prop:inverse_theta} applies.
Given $u \in H$ and $\Phi_u^\epsilon \in \mathcal{E}_{\theta'} \cap D(\mathscr{L}^\epsilon_y)$, $\theta'> \frac54 -2\gamma$ such that $\mathscr{L}_y^\epsilon \Phi_u^\epsilon = -\Psi_u^\epsilon$, we finally define 
\begin{align} \label{eq:varphi2}
\varphi_2^\epsilon(u,Y)=\Phi_u^\epsilon(Y),
\qquad
\|\varphi_2^\epsilon(u,\cdot)\|_{\mathcal{E}_{\theta'}} 
\lesssim 
\|\Psi_u^\epsilon\|_{\mathcal{E}_{\theta}} 
\lesssim \|h\|_{H^{\theta_1}} \|u\|_H.
\end{align}

With this choice of $\varphi_2^\epsilon$, \eqref{eq:order1} can be rewritten as
\begin{align} \label{eq:L0}
\langle Au + b(u,u), h \rangle 
+
\int_H \psi_u^\epsilon (w) d\mu^\epsilon(w)
\eqqcolon
\mathscr{L}^{0,\epsilon} \varphi(u),
\end{align}
which is indeed a function of the sole variable $u$.

In the following, specifically when computing $\mathscr{L}^\epsilon \varphi^\epsilon = \mathscr{L}^\epsilon \left( \varphi + \epsilon^{1/2}\varphi_1^\epsilon + \epsilon \varphi_2^\epsilon \right)$, we will need control over the derivatives $D_u \varphi_2^\epsilon$, $D_Y \varphi_2^\epsilon$ to check that the corrections we impose on the test function $\varphi$ do not change the underlying dynamics in the limit $\epsilon \to 0$, i.e. $\mathscr{L}^\epsilon \varphi^\epsilon$ is close to $\mathscr{L}^{0,\epsilon} \varphi$ in a suitable sense. 
Control over $D_Y \varphi_2^\epsilon$ is already encoded in the statement of \autoref{prop:inverse_theta}, as a straightforward consequence of \autoref{cor:regularity_phi}: indeed, for every $\theta > \frac{5}{4}-\gamma$, $\delta_1,\delta_2 \geq 0$, $\delta_1 + \delta_2 < \gamma$, and $v \in H^{\theta-2\delta_2}$ it holds $\langle D_Y \varphi_2^\epsilon(u,\cdot) , v \rangle \in \mathcal{E}_{\theta-2\delta_1}$, with uniform-in-$\epsilon$ bound:
\begin{align} \label{eq:bound_DY_varphi2}
\|\langle D_Y \varphi_2^\epsilon(u,\cdot) , v \rangle\|_{\mathcal{E}_{\theta-2\delta_1}}
\lesssim
\|\Psi_u^\epsilon\|_{\mathcal{E}_\theta} \|v\|_{H^{\theta-2\delta_2}}
\lesssim
\|h\|_{H^{\theta_1}} \|u\|_H \|v\|_{H^{\theta-2\delta_2}}.
\end{align}

On the other hand, to control $D_u \varphi_2^\epsilon$ we need the following preliminary lemma:
\begin{lem}
For every $\theta < \theta_0+2\gamma-1$ there exists $\theta_1$ sufficiently large such that $\langle v , D_u \Psi_u^\epsilon(\cdot) \rangle \in \mathcal{E}_{\theta_0}$ for every $v \in H^{-\theta}$ with
\begin{align*}
\|\langle v , D_u \Psi_u^\epsilon(\cdot) \rangle\|_{\mathcal{E}_{\theta_0}}
&\lesssim 
\|h\|_{H^{\theta_1}} \|v\|_{H^{-\theta}}.
\end{align*}
\end{lem}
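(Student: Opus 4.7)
Since $\Psi_u^\epsilon(Y) = \psi_u^\epsilon(Y) - \int_H \psi_u^\epsilon(w)\,d\mu^\epsilon(w)$ and the integral is linear in $u$, I have
\[
\langle v, D_u \Psi_u^\epsilon(Y) \rangle = \langle v, D_u \psi_u^\epsilon(Y) \rangle - \int_H \langle v, D_u \psi_u^\epsilon(w) \rangle\,d\mu^\epsilon(w),
\]
the second term being constant in $Y$. It therefore suffices to establish the bound for $\langle v, D_u \psi_u^\epsilon(\cdot) \rangle$: the constant part is then controlled uniformly in $\epsilon$ by integrating the quadratic-in-$w$ bound against $\mu^\epsilon$ and invoking (Q2).

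Next, I observe that $u \mapsto \psi_u^\epsilon(Y)$ is linear in $u$: in the first summand of $\psi_u^\epsilon$ the factor $b(Y, u)$ is linear in $u$ while $D_u \varphi_1^\epsilon(Y)$ depends only on $Y$, and in the second summand $D_y \varphi_1^\epsilon(u)$ is linear in $u$ by \eqref{eq:varphi1}. Differentiating in $u$ along $v$ thus just swaps $u$ for $v$:
\[
\langle v, D_u \psi_u^\epsilon(Y) \rangle = \langle b(Y, v), D_u \varphi_1^\epsilon(Y) \rangle + \langle b(Y, Y), D_y \varphi_1^\epsilon(v) \rangle.
\]
Both summands are quadratic in $Y$, so membership in $\mathcal{E}_{\theta_0}$ amounts to the single quadratic bound $|\cdot| \lesssim \|h\|_{H^{\theta_1}} \|v\|_{H^{-\theta}} \|Y\|_{H^{\theta_0}}^2$.

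For the second summand I bound by duality $|\langle b(Y,Y), D_y\varphi_1^\epsilon(v)\rangle| \leq \|b(Y,Y)\|_{H^{\theta-2\gamma}} \|D_y\varphi_1^\epsilon(v)\|_{H^{2\gamma-\theta}}$, use \autoref{prop:regularity_varphi1} at its endpoint $\theta=\gamma$ applied to $v\in H^{-\theta}$ to get $\|D_y\varphi_1^\epsilon(v)\|_{H^{2\gamma-\theta}} \lesssim \|h\|_{H^{\theta_1}} \|v\|_{H^{-\theta}}$ (uniformly in $\epsilon$), and estimate $\|b(Y,Y)\|_{H^{\theta-2\gamma}} \lesssim \|Y\|_{H^{\theta_0}}^2$ by (B1) if $\theta-2\gamma<d/2$ or by (B2) otherwise; the compatibility of the condition $\theta_1(\theta-2\gamma)>1+\theta-2\gamma$ with $Y\in H^{\theta_0}$ is exactly $\theta<\theta_0+2\gamma-1$, which is our hypothesis. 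For the first summand I substitute $D_u\varphi_1^\epsilon(Y) = -b((-C_\epsilon)^{-1}Y, h)$ (which follows from \eqref{eq:varphi1} together with (B4)) and apply (B4) once more to rewrite
\[
\langle b(Y, v), D_u \varphi_1^\epsilon(Y) \rangle = \langle b(Y, b((-C_\epsilon)^{-1}Y, h)), v \rangle.
\]
The uniform-in-$\epsilon$ resolvent bound $\|(-C_\epsilon)^{-1}Y\|_{H^{\theta_0+2\gamma}} \lesssim \|Y\|_{H^{\theta_0}}$ together with (B2) at $s=\theta_0+2\gamma$ (taking $\theta_1 > 1+\theta_0+2\gamma$) yields $\|b((-C_\epsilon)^{-1}Y, h)\|_{H^{\theta_0+2\gamma}} \lesssim \|Y\|_{H^{\theta_0}} \|h\|_{H^{\theta_1}}$. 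Dualizing against $v$ reduces the first summand to estimating $\|b(Y, b((-C_\epsilon)^{-1}Y,h))\|_{H^{\theta}}$, which is handled via (B1) when $\theta<d/2$ or (B2) when $d/2\leq \theta \leq \theta_0$; the compatibility condition $\theta_1(\theta) < \theta_0+2\gamma$ is again the hypothesis $\theta<\theta_0+2\gamma-1$. Collecting all imposed regularities, taking $\theta_1$ larger than $1 + \theta_0 + 2\gamma$ suffices.

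The main bookkeeping difficulty is tracking which of (B1), (B2), (B3) applies in each regime of $\theta$ and $\theta-2\gamma$ relative to $d/2$, and verifying that one common $\theta_1$ accommodates all the intermediate applications of (B2). The delicate case is $\theta_0 < \theta < \theta_0+2\gamma-1$ (only possible if $\gamma>1/2$): estimating $\|b(Y,\xi)\|_{H^\theta}$ with $Y\in H^{\theta_0}$ and $\xi\in H^{\theta_0+2\gamma}$ falls outside the direct reach of (B2) with $s=\theta$; one instead interpolates between (B1) applied at $s<d/2$ and (B2) applied at $s=\theta_0$, exploiting the fact that the second factor carries $2\gamma$ extra derivatives, so that the output retains a fraction of that regularity. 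Everything else is routine duality and applications of the already-established resolvent and nonlinearity estimates.
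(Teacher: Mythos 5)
Your proof is correct and follows essentially the same route as the paper's: exploit linearity of $\psi_u^\epsilon$ in $u$ to replace $u$ by $v$, move $v$ out of the trilinear forms via (B4), and bound the resulting quadratic-in-$Y$ expressions using \autoref{prop:regularity_varphi1} at the $\epsilon$-uniform endpoint $\theta=\gamma$ together with (B1)/(B2), handling the subtracted mean by integrating the same quadratic bound against $\mu^\epsilon$ and invoking (Q2). Your extra care about the regime $\theta_0<\theta<\theta_0+2\gamma-1$ addresses a point the paper's proof passes over silently, but it does not change the substance of the argument.
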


\begin{proof}
By (B4) and (B2), for every $\theta < \theta_0+2\gamma-1$ we have $b:H^{\theta_0} \times H^{\theta_0+2\gamma} \to H^{\theta}$ and $b:H^{\theta_0} \times H^{\theta_0} \to H^{\theta-2\gamma}$ continuous, hence by \autoref{prop:regularity_varphi1} it holds for every $v \in H^{-\theta}$
\begin{align*}
|\langle b(Y,v), D_u \varphi_1^\epsilon(Y) \rangle|
&\lesssim
\|b(Y,D_u \varphi_1^\epsilon(Y))\|_{H^\theta} \|v\|_{H^{-\theta}}
\lesssim
\|Y\|_{H^{\theta_0}}^2 \|h\|_{H^{\theta_1}} \|v\|_{H^{-\theta}},
\end{align*} 
and,
\begin{align*}
|\langle v , \langle b(Y,Y), D_u D_y \varphi_1^\epsilon(u) \rangle\rangle|
&=
|\langle b((-C_\epsilon)^{-1}b(Y,Y),v),h\rangle|
\\
&\lesssim
\|b(Y,Y)\|_{H^{\theta-2\gamma}} \|v\|_{H^{-\theta}} \|h\|_{\theta_1}
\\
&\lesssim
\|h\|_{\theta_1} \|Y\|_{H^{\theta_0}}^2 \|v\|_{H^{-\theta}}.
\end{align*} 
Thus, the desired result is true for if we replace $\Psi_u^\epsilon$ by $\psi_u^\epsilon$. To conclude the proof, just notice that by the same computation as above,
\begin{align*}
\int_H |\langle v , D_u \Psi_u^\epsilon(w)| d\mu^\epsilon(w)
\lesssim
\|h\|_{\theta_1} \|v\|_{H^{-\theta}} \int_H \|w\|_{H^{\theta_0}}^2 d\mu^\epsilon(w)
\lesssim
\|h\|_{\theta_1} \|v\|_{H^{-\theta}},
\end{align*}
since the integral is finite by (Q2).
\end{proof}

\begin{prop} \label{prop:regularity_varphi2}
For every $\theta <\theta_0+2\gamma-1$ and $v \in H^{-\theta}$ it holds $\langle v , D_u \varphi_2^\epsilon(\cdot) \rangle \in \mathcal{E}_{\theta_0-\delta}$ for every $\delta \in (0,\gamma)$ with
\begin{align*}
\|\langle v , D_u\varphi_2^\epsilon(\cdot) \rangle\|_{\mathcal{E}_{\theta_0-\delta}}
&\lesssim 
\|h\|_{H^{\theta_1}} \|v\|_{H^{-\theta}}.
\end{align*}
\end{prop}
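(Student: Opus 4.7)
My plan is to prove the bound by differentiating the Poisson equation $\mathscr{L}_y^\epsilon \Phi_u^\epsilon = -\Psi_u^\epsilon$ in the parameter $u$ and identifying $\langle v, D_u \varphi_2^\epsilon(u,\cdot)\rangle$ with the solution of a companion Poisson equation whose datum is $\langle v, D_u \Psi_u^\epsilon(\cdot)\rangle$. The preceding lemma already controls this datum in $\mathcal{E}_{\theta_0}$, so that after checking the compatibility (zero-average) condition I may invoke \autoref{prop:inverse_theta} directly and read off the desired bound in $\mathcal{E}_{\theta_0-\delta}$.

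First, I will verify the zero-average condition. By the very definition of $\Psi_u^\epsilon$, it holds $\int_H \Psi_u^\epsilon(w)\,d\mu^\epsilon(w)=0$ for every $u \in H$. Inspection of the formulas \eqref{eq:varphi1} for $\varphi_1^\epsilon$ and of $\psi_u^\epsilon=\langle b(\cdot,u),D_u\varphi_1^\epsilon(\cdot)\rangle+\langle b(\cdot,\cdot),D_y\varphi_1^\epsilon(u)\rangle$ shows that both pieces of $\psi_u^\epsilon$ depend \emph{linearly} on $u$; hence $\Psi_u^\epsilon$ is linear in $u$ as well, and differentiating the identity $\int_H \Psi_u^\epsilon(w)\,d\mu^\epsilon(w)=0$ in the direction $v$ gives $\int_H \langle v, D_u \Psi_u^\epsilon(w)\rangle\,d\mu^\epsilon(w)=0$.

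Second, by the previous lemma, $\langle v, D_u \Psi_u^\epsilon(\cdot)\rangle \in \mathcal{E}_{\theta_0}$ with $\|\langle v, D_u \Psi_u^\epsilon(\cdot)\rangle\|_{\mathcal{E}_{\theta_0}}\lesssim \|h\|_{H^{\theta_1}}\|v\|_{H^{-\theta}}$. Combined with the zero-average property, \autoref{prop:inverse_theta} (applied with $\theta$ replaced by $\theta_0$) produces an element $\chi_{u,v}^\epsilon \in \mathcal{E}_{\theta_0-\delta}\cap D(\mathscr{L}_y^\epsilon)$ satisfying $\mathscr{L}_y^\epsilon \chi_{u,v}^\epsilon = -\langle v, D_u \Psi_u^\epsilon(\cdot)\rangle$ and obeying precisely the bound claimed in the proposition. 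It remains to identify $\chi_{u,v}^\epsilon$ with $\langle v, D_u \varphi_2^\epsilon(u,\cdot)\rangle$. Since the operators $P_t^\epsilon$ and $\mathscr{L}_y^\epsilon$ act solely in the $Y$-variable while $\Psi_u^\epsilon$ is linear in $u$ with $u$-independent quadratic $Y$-kernel, both $D_u$ and $\langle v,\cdot\rangle$ commute with the semigroup, with the time integral $\int_{1/T}^T \cdot\,dt$, and with the $T\to\infty$ limit used in \autoref{prop:inverse_theta}; this gives
\begin{equation*}
\langle v, D_u \varphi_2^\epsilon(u,\cdot)\rangle
=\lim_{T\to\infty}\int_{1/T}^T P_t^\epsilon \langle v, D_u \Psi_u^\epsilon(\cdot)\rangle\,dt
=\chi_{u,v}^\epsilon.
\end{equation*}

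The main technical point I expect to grind through is the commutation step, namely the rigorous justification that $D_u$ passes through the $D(\mathscr{L}_y^\epsilon)$-limit defining $\Phi_u^\epsilon=\varphi_2^\epsilon(u,\cdot)$. Linearity of $\Psi_u^\epsilon$ in $u$ reduces this to an elementary observation about difference quotients of $u$-affine integrands, but one must check the uniformity of the $L^2(H,\mu^\epsilon)$-bounds on $P_t^\epsilon \Psi_{u+sv}^\epsilon$ and on $\mathscr{L}_y^\epsilon P_t^\epsilon \Psi_{u+sv}^\epsilon$ in the small parameter $s$, exactly as in the proofs of \autoref{lem:time_integral_theta1}-\autoref{lem:time_integral_theta2}. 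Once this commutation is in place, the bound $\|\langle v, D_u\varphi_2^\epsilon(\cdot)\rangle\|_{\mathcal{E}_{\theta_0-\delta}}\lesssim \|h\|_{H^{\theta_1}}\|v\|_{H^{-\theta}}$ follows immediately from the norm estimate in \autoref{prop:inverse_theta}.
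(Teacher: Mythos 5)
Your proposal is correct and follows essentially the same route as the paper: both exploit the linearity of $\Psi_u^\epsilon$ in $u$ to commute the directional derivative $\langle v, D_u\,\cdot\,\rangle$ with the semigroup, the time integral and the $T\to\infty$ limit defining $\varphi_2^\epsilon(u,\cdot)$, and then apply \autoref{prop:inverse_theta} to the datum $\langle v, D_u\Psi_u^\epsilon(\cdot)\rangle\in\mathcal{E}_{\theta_0}$ controlled by the preceding lemma. Your explicit verification of the zero-average condition for the differentiated datum is a detail the paper leaves implicit, but it is the right check and does not change the argument.
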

\begin{proof}
Recalling 
\begin{align*}
\psi_u^\epsilon(Y) 
&= 
\langle b(Y,u),D_u \varphi_1^\epsilon(Y) \rangle
+
\langle b(Y,Y),D_y \varphi_1^\epsilon(u) \rangle,
\\
\Psi_u^\epsilon(Y) 
&= 
\psi_u^\epsilon(Y) - \int_H \psi_u^\epsilon(w) d\mu^\epsilon(w),
\end{align*}
we have for every $v \in \mathscr{S}$
\begin{align*} 
\langle D_u\psi_u^\epsilon(Y),v \rangle 
&=
\langle b(Y,v),D_u \varphi_1^\epsilon(Y) \rangle
+
\langle b(Y,Y),\langle D_uD_y \varphi_1^\epsilon,v \rangle \rangle,
\\
\langle D_u\Psi_u^\epsilon(Y),v \rangle 
&=
\langle D_u\psi_u^\epsilon(Y),v \rangle 
-
\int_H \langle D_u\psi_u^\epsilon(w),v \rangle d\mu(w),
\end{align*}
and the previous quantity is independent of $u \in H$.
Recall that we have defined $\varphi_2^\epsilon(u,\cdot) = \Phi_u^\epsilon = \lim_{T \to \infty} \int_{1/T}^T P_t^\epsilon \Psi_u^\epsilon dt $, where the limit is taken in $D(\mathscr{L}_y^\epsilon)$; we prove now that for every $v \in H$ we have
\begin{align*}
\langle D_u \varphi_2^\epsilon(\cdot) , v \rangle
&=
\lim_{T \to \infty} \int_{1/T}^T P_t^\epsilon \langle D_u\Psi_u^\epsilon,v \rangle dt.
\end{align*}
Denote $\varphi_2^{\epsilon, T}(u,\cdot)  = \int_{1/T}^T P^\epsilon_t \Psi_u^\epsilon dt \in \mathcal{E}$, and consider for $r \in \mathbb{R}$
\begin{align*}
\varphi_2^{\epsilon, T}(u+rv,\cdot)-\varphi_2^{\epsilon, T}(u,\cdot)
&=
\int_{1/T}^T P_t^\epsilon \Psi_{u+rv}^\epsilon dt - 
\int_{1/T}^T P_t^\epsilon \Psi_u^\epsilon dt 
\\
&=
\int_{1/T}^T P_t^\epsilon  \left(
 \Psi_{u+rv}^\epsilon - \Psi_u^\epsilon \right) dt
\\
&=
\int_{1/T}^T P_t^\epsilon  \left(
r\langle D_u \Psi_u^\epsilon , v \rangle \right) dt
\\
&=
r\int_{1/T}^T P_t^\epsilon 
\langle D_u \Psi_u^\epsilon , v \rangle dt,
\end{align*}
where we have used linearity of $P_t^\epsilon$ and the fact that $\Psi_u^\epsilon$ is linear in $u$.
The map $y \mapsto \langle D_u \Psi_u^\epsilon(y) , v \rangle$ satisfies the assumptions of \autoref{prop:inverse_theta}, and therefore we can take the limit in $D(\mathscr{L}_y^\epsilon)$ of the previous expression, as $T \to \infty$, to obtain
\begin{align*}
\varphi_2^\epsilon(u+rv,\cdot)-\varphi_2^\epsilon(u,\cdot)
&=
r \lim_{T \to \infty}\int_{1/T}^T P_t^\epsilon 
\langle D_u \Psi_u^\epsilon , v \rangle dt.
\end{align*}
Finally, rearranging and taking the limit as $r \to 0$ we get
\begin{align*}
\langle D_u \varphi_2^\epsilon(\cdot) , v \rangle
&=
\lim_{r \to 0} \frac1r \left(
\varphi_2^\epsilon(u+rv,\cdot)-\varphi_2^\epsilon(u,\cdot) \right)
=
\lim_{T \to \infty} \int_{1/T}^T P_t^\epsilon \langle D_u\Psi_u^\epsilon,v \rangle dt.
\end{align*}

Since it holds $\langle D_u \Psi_u^\epsilon(\cdot),v \rangle \in \mathcal{E}_{\theta_0}$ with $\|\langle v , D_u \Psi_u^\epsilon(\cdot) \rangle\|_{\mathcal{E}_{\theta_0}} \lesssim \|h\|_{H^{\theta_1}} \|v\|_{H^{-\theta}}$ for every $v \in H^{-\theta}$ and $\theta <\theta_0+2\gamma-1$, by \autoref{prop:inverse_theta} we have $\langle D_u \varphi_2^\epsilon(\cdot) , v  \rangle \in \mathcal{E}_{\theta_0-\delta}$ for every $\delta \in (0,\gamma)$ with 
\begin{align*}
\|\langle D_u \varphi_2^\epsilon(\cdot) , v  \rangle\|_{\mathcal{E}_{\theta_0-\delta}}
\lesssim
\|\langle v , D_u \Psi_u^\epsilon(\cdot) \rangle\|_{\mathcal{E}_{\theta_0}} \lesssim
\|h\|_{H^{\theta_1}} \|v\|_{H^{-\theta}}.
\end{align*}
Finally, the previous bound extends to all $v \in H^{-\theta}$ by continuity. The proof is complete. 
\end{proof}

\section{Convergence to transport noise} \label{sec:conv}
In this section we state and prove convergence of $u^\epsilon$. We first give the precise formulation of our result:
\begin{thm} \label{thm:main}
Assume (Q1)-(Q2) and $u_0 , y_0 \in H$ be given. 
Let $\{u^\epsilon\}_{\epsilon\in(0,1)}$ be a family of solutions to \eqref{eq:system} in the sense of \autoref{def:sol}, which exists on a family of stochastic basis $\{(\Omega^\epsilon, \mathcal{F}^\epsilon, \{\mathcal{F}^\epsilon_t \}_{t \geq 0}, \mathbb{P}^\epsilon, W^\epsilon)\}_{\epsilon\in(0,1)}$ by \autoref{prop:existence}. 
Then for every $\beta>0$, the laws of the processes $\{u^\epsilon\}_{\epsilon\in(0,1)}$ are tight as probability measures on the space $L^2([0,T],H) \cap C([0,T],H^{-\beta})$, and every weak accumulation point $(u,Q^{1/2} W)$ of $(u^\epsilon,Q^{1/2} W^\epsilon)$, $\epsilon \to 0$, is an analytically weak solution of the equation with transport noise and It\=o-Stokes drift velocity $r = \int_H (-C)^{-1} b(w,w) d\mu(w)$:
\begin{align*} 
du_t = Au_t dt + b(u_t,u_t) dt + b((-C)^{-1}Q^{1/2}\circ dW_t,u_t) dt +  b(r,u_t) dt. 
\end{align*} 
If in addition pathwise uniqueness holds for the limit equation then the whole sequence converges in law; moreover, convergence in $\mathbb{P}$-probability holds true if solutions to \eqref{eq:system_2} are probabilistically strong.
\end{thm}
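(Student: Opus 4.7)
The proof naturally splits into four steps: tightness of the laws of $u^\epsilon$; passage to the limit via the perturbed test function $\varphi^\epsilon = \varphi + \epsilon^{1/2}\varphi_1^\epsilon + \epsilon\varphi_2^\epsilon$; identification of the limit drift as transport noise plus It\=o--Stokes drift; and the Gy\"ongy--Krylov type upgrade given uniqueness.

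For the first step I would fix $h \in \mathscr{S}$, take $\varphi(u)=\langle u,h\rangle$, and apply the It\=o formula (the one asserted in the introduction and presumably Lemma~\ref{lem:ito}) to $\varphi^\epsilon(u^\epsilon_t,y^\epsilon_t,Y^\epsilon_t)$. By construction of $\varphi_1^\epsilon$ and $\varphi_2^\epsilon$ the $\epsilon^{-1}$ and $\epsilon^{-1/2}$ singular terms cancel, giving
\begin{align*}
\langle u^\epsilon_t,h\rangle
&= \langle u_0,h\rangle
+ \int_0^t \mathscr{L}^{0,\epsilon}\varphi(u^\epsilon_s)\,ds
+ \int_0^t \langle b((-C_\epsilon)^{-1} Q^{1/2}dW_s,u^\epsilon_s),h\rangle \\
&\quad + \epsilon^{1/2} R_1^\epsilon(t) + \epsilon R_2^\epsilon(t) + \mathrm{Err}^\epsilon(t),
\end{align*}
where $R_i^\epsilon$ collect the correctors evaluated at the endpoints and the noise contribution of $\varphi_2^\epsilon$, and $\mathrm{Err}^\epsilon$ gathers the linearisation remainders controlled by \autoref{prop:linearisation} together with \autoref{prop:y-Y}. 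Using the bounds on $\varphi_1^\epsilon,\varphi_2^\epsilon$ from \autoref{prop:regularity_varphi1}, \eqref{eq:bound_DY_varphi2} and \autoref{prop:regularity_varphi2}, together with the uniform estimates (S3)--(S4) and the assumptions on $Q$, every summand on the right is uniformly bounded in $L^p(\Omega;W^{\alpha,p}([0,T],\R))$ for suitable $\alpha,p$; hence $\langle u^\epsilon_\cdot,h\rangle$ is uniformly bounded there, and a density argument in $h$ yields a uniform bound of $u^\epsilon$ in $L^p(\Omega; W^{\alpha,p}([0,T],H^{-\sigma}))$ for some $\sigma$ large. Combined with (S3) and the Simon criterion \autoref{lem:Simon}, this gives tightness in $L^2([0,T],H)\cap C([0,T],H^{-\beta})$. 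Simultaneously $Q^{1/2}W^\epsilon$ is tight in $C([0,T],H^{-s})$ for $s$ large.

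In the second step I pass to the limit on a tight subsequence. Skorokhod's representation theorem furnishes a new probability space on which $(u^{\epsilon_k},Q^{1/2}W^{\epsilon_k}) \to (u,Q^{1/2}W)$ almost surely in the ambient topology; after checking by L\'evy's characterisation that the limit $W$ is again a cylindrical Wiener process adapted to the appropriate filtration, I test the displayed equation against arbitrary $h\in\mathscr{S}$. The corrector terms $\epsilon^{1/2}\varphi_1^\epsilon$ and $\epsilon\varphi_2^\epsilon$ tend to zero in $L^1(\Omega)$ uniformly in $t$ thanks to the $\mathcal{E}_\theta$-bounds, $\mathrm{Err}^\epsilon(t)\to 0$ by \autoref{prop:linearisation}, \autoref{prop:y-Y} and the uniform energy estimates, and $\int_0^\cdot \langle Au^\epsilon_s+b(u^\epsilon_s,u^\epsilon_s),h\rangle ds$ passes to the limit by the $L^2([0,T],H)\cap L^2([0,T],H^1)$ convergence (since $b$ is weakly continuous in the relevant topology for the 3D Navier--Stokes case). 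The martingale term $\int_0^t \langle b((-C_{\epsilon_k})^{-1}Q^{1/2}dW^{\epsilon_k}_s,u^{\epsilon_k}_s),h\rangle$ converges in law to $\int_0^t \langle b((-C)^{-1}Q^{1/2}dW_s,u_s),h\rangle$ by a standard stochastic-integral convergence lemma, using $\|(-C_\epsilon)^{-1}-(-C)^{-1}\|\to 0$ from (C2).

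The third step is the identification of the drift. Using the expressions \eqref{eq:varphi1}--\eqref{eq:varphi2},
\begin{align*}
\int_H \psi^\epsilon_u(w)\,d\mu^\epsilon(w)
&= \int_H \langle b((-C_\epsilon)^{-1}w,\,b(w,u)),h\rangle\,d\mu^\epsilon(w) \\
&\quad + \left\langle b\Bigl(\int_H (-C_\epsilon)^{-1}b(w,w)\,d\mu^\epsilon(w),\,u\Bigr),h\right\rangle.
\end{align*}
As $\epsilon\to 0$, $\mu^\epsilon\to\mu$ weakly with uniform second-moment control in $H^{s_0}$, and $(-C_\epsilon)^{-1}\to(-C)^{-1}$; the second term converges to $\langle b(r,u),h\rangle$ with $r$ exactly as in the statement. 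The first term, using the Gaussian identity $\int_H w\otimes w\,d\mu(w) = Q_\infty = \tfrac12(-C)^{-1}Q$ together with (Q1) (which lets $Q^{1/2}$ and $(-C)^{-1/2}$ commute), equals $\tfrac12 \sum_k \langle b((-C)^{-1}Q^{1/2}e_k, b(Q^{1/2}e_k,u)), h\rangle$ in the limit, which is precisely the It\=o--Stratonovich corrector of the transport integral. Adding it to the It\=o integral obtained in step two yields the Stratonovich form $\langle b((-C)^{-1}Q^{1/2}\circ dW_t,u_t),h\rangle$ appearing in \eqref{eq:u_trans}, concluding that every accumulation point is an analytically weak solution.

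For the final step, pathwise uniqueness for the limit equation and the equation \eqref{eq:system_2} allows the standard Gy\"ongy--Krylov trick: any joint subsequential limit of $(u^{\epsilon_k},u^{\epsilon_l})$ is supported on the diagonal, which upgrades the tightness argument to convergence in law of the whole sequence $\{u^\epsilon\}$; if in addition solutions to \eqref{eq:system_2} are probabilistically strong (so the stochastic basis can be fixed, independent of $\epsilon$), convergence in $\mathbb{P}$-probability follows. The step I expect to be the hardest, and the one I would devote the most care to, is the identification in step three: controlling the two pieces of $\int_H \psi^\epsilon_u(w)\,d\mu^\epsilon(w)$ with the necessary uniformity in $\epsilon$ (using (C2), (Q1)--(Q2), and the interpolation bounds on $\varphi_1^\epsilon$), and then extracting from the first piece precisely the Stratonovich corrector needed to produce \eqref{eq:u_trans} rather than an anomalous extra drift, is delicate and is really where the physical interpretation of the whole argument is justified.
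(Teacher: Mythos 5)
Your proposal follows essentially the same route as the paper: a perturbed test function plus Simon's criterion for tightness, Skorokhod representation together with the vanishing of the corrector and linearisation remainders for the passage to the limit, the Gaussian identity $Q_\infty=\tfrac12(-C)^{-1}Q$ combined with $CQ=QC$ for the Stratonovich/It\=o--Stokes decomposition, and Gy\"ongy--Krylov for the final upgrade; the only methodological divergence is that the paper identifies the martingale term through the martingale-problem formulation and the representation theorem of Da Prato--Zabczyk rather than by L\'evy characterisation and direct convergence of the stochastic integrals (and it runs the increment estimates with the first-order corrector only, summing over an eigenbasis of $-A$ rather than invoking a density argument in $h$). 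One small slip in Step 3: the corrector extracted from the generator is $\sum_k\langle b((-C)^{-1}Q_\infty^{1/2}e_k,\,b(Q_\infty^{1/2}e_k,u)),h\rangle=\sum_k\tfrac{q_k}{2\lambda_k^2}\langle b(e_k,b(e_k,u)),h\rangle$, not $\tfrac12\sum_k\langle b((-C)^{-1}Q^{1/2}e_k,b(Q^{1/2}e_k,u)),h\rangle$, which is off by a factor $(-C)^{-1}$ in one slot; with the trace identity applied correctly the generator term does coincide with the It\=o--Stratonovich correction of the transport integral, as you assert.
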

With analytically weak solution we refer to the fact that the identity holds almost surely when tested against smooth functions $h \in \mathscr{S}$ and integrated with respect to time.
Also, the weak convergence $Q^{1/2}W^\epsilon \to Q^{1/2}W$ is meant as random variables taking values in $C([0,T],H)$, although many other spaces would do since $\{Q^{1/2}W^\epsilon\}_{\epsilon\in(0,1)}$ are identically distributed.

The proof is split in three parts. 
In the first place, invoking Simon compactness criterium (\autoref{lem:Simon}), we prove that the laws of the processes $\{u^\epsilon\}_{\epsilon \in (0,1)}$ are tight as probability measures on the space $L^2([0,T],H) \cap C([0,T],H^{-\beta})$ for every $\beta>0$ - cf. \autoref{prop:tightness} below; next, in subsequent \autoref{prop:identification} we show that every weak accumulation point $(u,Q^{1/2}W)$ is an analytically weak solution of the equation with effective generator $\mathscr{L}^0$ and It\=o transport noise (cf. \eqref{eq:u} below); finally, we check that the generator $\mathscr{L}^0$ can be split into the sum of the deterministic Navier-Stokes dynamics, the It\=o-to-Stratonovich corrector (which together with the It\=o integral gives the Stratonovich transport noise) and the It\=o-Stokes drift, as made explicit by \eqref{eq:ito_integral}, \eqref{eq:corrector1}, \eqref{e5.14}.

The last statement of Theorem \ref{thm:main} is classical and follows from Lemma 1.1 in \cite{GyKr96}. We omit the proof of this point.

\begin{rmk} \label{rmk:selection,energy bounds}
When the limit equation does not admit uniqueness, we do not know whether or not different subsequences can converge towards different solutions of the limit equation.
It might well be that, notwithstanding the fact that the limit equation admits multiple solutions, this approximating procedure only selects some special solution which enjoys additional properties. 
However, we are not able to answer this question at the moment: we can only provide a partial selection criterium based on the fact that every selected solution $u$ must satisfy the same energy bounds (S3) of the approximating sequence $\{u^\epsilon\}_{\epsilon\in(0,1)}$ (this latter property can be deduced first on Galerkin projections $\{\Pi_m u\}_{m \in \N}$, and then checked to be uniform in $m \in \N$).
This is of particular interest if we start with solutions satisfying the energy inequality as in \cite{FlRo08}.
\end{rmk}

As a preliminary step towards the proof of \autoref{thm:main}, we need a version of the celebrated It\=o Formula suited for our solution processes $(u^\epsilon,y^\epsilon)$. Indeed, since \eqref{eq:system_2} only holds in analytically weak sense (S2), the classical It\=o Formula \cite[Theorem 4.32]{DPZa02} is not a priori applicable to the process $\Phi(u^\epsilon_t,y^\epsilon_t)$ unless $\Phi$ only depends on a finite number of projections $\langle u^\epsilon_t, h_i \rangle$, $\langle y^\epsilon_t , k_i \rangle$, for some $h_i, k_i \in \mathscr{S}$.
Thus, our approach consists in applying first the classical It\=o Formula to Galerkin projections $\Pi_n u^\epsilon$, $\Pi_n y^\epsilon$, and then pass to the limit as $n \to \infty$, under suitable controls over $D_u \Phi, D_y \Phi$.

\begin{lem}[It\=o Formula] \label{lem:ito}
Let $\Phi: H \times H \to \R$ be such that, for every fixed  $u,y \in H$, it holds $\Phi(u,\cdot),\Phi(\cdot,y) \in \mathcal{E}$ { with $\|\Phi(u,\cdot)\|_\mathcal{E} \lesssim \|u\|_H$ and $\|\Phi(\cdot,y)\|_\mathcal{E} \lesssim \|y\|_H$}, and moreover
\begin{align} \label{eq:assumptions_Ito}
\|D_u \Phi(u,y) \|_{H^1} \lesssim 1+\|u\|_H+\|y\|_H,
\quad
\|D_y \Phi(u,y)\|_{H^1} \lesssim 1+\|u\|_H+\|y\|_H.
\end{align}
Let $(u^\epsilon,y^\epsilon)$ be a solution to \eqref{eq:system_2} in the sense of \autoref{def:sol}.
{
Then for every fixed $\epsilon\in(0,1)$ and $t \in [0,T]$ the following It\=o Formula holds $\PP$-a.s.}
\begin{align*}
\Phi(u^\epsilon_t,y^\epsilon_t)
&=
\Phi(u_0,y_0)
+
\int_0^t \mathscr{L}^{\epsilon}\Phi(u^\epsilon_s,y^\epsilon_s) ds
+
\int_0^t \langle D_y \Phi(u^\epsilon_s,y^\epsilon_s) , Q^{1/2}dW_s \rangle.
\end{align*}
\end{lem}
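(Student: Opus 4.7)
The plan is to apply the classical finite-dimensional It\=o formula to Galerkin projections of $(u^\epsilon_t,y^\epsilon_t)$ and then pass to the limit. Fix an orthonormal basis $\{e_k\}_{k\in\N}\subset\mathscr{S}$ consisting of common eigenvectors of $A$, $C$ and $Q$---available in our setting by (Q1) together with the explicit structure of the examples (e.g.\ a divergence-free Fourier basis of $\mathbb{T}^d$)---and let $\Pi_n$ denote the orthogonal projection of $H$ onto $\operatorname{span}\{e_1,\dots,e_n\}$. Testing the weak formulation (S2) against $h=e_1,\dots,e_n$, each coordinate $\langle u^\epsilon_t,e_k\rangle,\langle y^\epsilon_t,e_k\rangle$ becomes a real-valued It\=o semimartingale with explicit drift; the only non-trivial joint quadratic covariation is $d[\langle y^\epsilon,e_k\rangle,\langle y^\epsilon,e_l\rangle]_t=\epsilon^{-1}\langle Q e_k,e_l\rangle\,dt$, while $[\langle u^\epsilon,e_k\rangle,\cdot]=0$.

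Let $F_n:\R^{2n}\to\R$ be given by $F_n(x,z)=\Phi\bigl(\sum_{k\le n}x_k e_k,\sum_{k\le n} z_k e_k\bigr)$. Since $\Phi$ is (at most) quadratic separately in $u$ and in $y$, the map $F_n$ is a polynomial of degree at most four, hence smooth, and the finite-dimensional It\=o formula applies. Collecting terms and using self-adjointness of $A$ and $C$ (which commute with $\Pi_n$ in our basis), one arrives at the Galerkin identity
\begin{align*}
\Phi(\Pi_n u^\epsilon_t,\Pi_n y^\epsilon_t)
&=
\Phi(\Pi_n u_0,\Pi_n y_0)
+
\int_0^t \mathscr{L}^\epsilon_n\Phi(u^\epsilon_s,y^\epsilon_s)\,ds
\\
&\quad +
\epsilon^{-1/2}\int_0^t \bigl\langle D_y\Phi(\Pi_n u^\epsilon_s,\Pi_n y^\epsilon_s),\Pi_n Q^{1/2}\,dW_s\bigr\rangle,
\end{align*}
where $\mathscr{L}^\epsilon_n\Phi$ is the natural Galerkin analogue of $\mathscr{L}^\epsilon\Phi$: derivatives of $\Phi$ evaluated at $(\Pi_n u^\epsilon_s,\Pi_n y^\epsilon_s)$, nonlinearities projected by $\Pi_n$, and trace replaced by $\tfrac{1}{2}\epsilon^{-1}Tr(\Pi_n Q D_y^2\Phi(\Pi_n u^\epsilon_s,\Pi_n y^\epsilon_s))$.

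It then remains to pass to the limit $n\to\infty$ for fixed $t\in[0,T]$ and $\PP$-a.s.\ (along a subsequence, which is enough since the identity is pointwise in $t$). Joint continuity of $\Phi$---readily deduced from its separate quadratic structure together with $\|\Phi(u,\cdot)\|_{\mathcal{E}}\lesssim\|u\|_H$ and $\|\Phi(\cdot,y)\|_{\mathcal{E}}\lesssim\|y\|_H$---combined with $\Pi_n u^\epsilon_\cdot\to u^\epsilon_\cdot$, $\Pi_n y^\epsilon_\cdot\to y^\epsilon_\cdot$ in $H$ yields the convergence of the boundary terms. Density of $\bigcup_n\operatorname{span}\{e_k\}_{k\le n}$ in every $H^s$ gives $\Pi_n D_u\Phi,\Pi_n D_y\Phi\to D_u\Phi,D_y\Phi$ in $H^1$, and this, combined with the bounds \eqref{eq:assumptions_Ito} and the energy estimates (S3)--(S4) on $u^\epsilon,y^\epsilon\in L^2([0,T];H^1)$, provides $\PP\otimes ds$-integrable dominants for the linear drift terms, enabling dominated convergence via self-adjointness of $A,C$. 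The trace term is controlled by $\|\Phi(u,\cdot)\|_{\mathcal{E}}\lesssim\|u\|_H$ (which identifies $D_y^2\Phi$ as a bounded bilinear form on $H$) together with $Tr(Q)<\infty$ from (Q1). Finally, the stochastic integral converges in $L^2(\Omega)$ by It\=o isometry, since $\Pi_n Q^{1/2}D_y\Phi(\Pi_n u^\epsilon,\Pi_n y^\epsilon)\to Q^{1/2}D_y\Phi(u^\epsilon,y^\epsilon)$ in $L^2(\Omega\times[0,T];H)$ by dominated convergence, using $\|D_y\Phi\|_H\le \|D_y\Phi\|_{H^1}\lesssim 1+\|u^\epsilon\|_H+\|y^\epsilon\|_H$.

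The main technical obstacle is handling the nonlinear drift contributions $\langle D_u\Phi,\Pi_n b(\cdot,\cdot)\rangle$ and $\langle D_y\Phi,\Pi_n b(\cdot,\cdot)\rangle$ with only the weak regularity $D_u\Phi,D_y\Phi\in H^1$---naively, $b(u^\epsilon,u^\epsilon)$ need not belong to $H^{-1}$ when $d=3$. I would systematically invoke (B4) to transfer $b$ onto the smoother argument $D_\cdot\Phi$, for instance
\begin{align*}
\langle D_u\Phi,\Pi_n b(y^\epsilon,u^\epsilon)\rangle=-\langle b(y^\epsilon,\Pi_n D_u\Phi),u^\epsilon\rangle,
\end{align*}
and then apply (B3) together with the embedding $H^1\hookrightarrow L^6$ (in $d=3$) to obtain pointwise-in-time bounds of the form $\lesssim\|D_u\Phi\|_{H^1}\|u^\epsilon\|_{H^1}\|y^\epsilon\|_{H^1}$, which are $\PP\otimes ds$-integrable by (S3)--(S4). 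Analogous manipulations handle the $b(u,u)$, $b(u,y)$ and $b(y,y)$ contributions, thereby completing the passage to the limit and the proof.
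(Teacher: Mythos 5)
Your proposal is correct and follows essentially the same route as the paper: apply the classical It\=o formula to the Galerkin projections $(\Pi_n u^\epsilon,\Pi_n y^\epsilon)$ obtained by testing (S2) against projected test functions, then pass to the limit $n\to\infty$ using the bounds \eqref{eq:assumptions_Ito}, the energy estimates (S3)--(S4), dominated convergence for the drift, and It\=o isometry for the stochastic integral; your explicit use of (B4) to shift $b$ onto the smoother argument $D_\cdot\Phi$ is exactly how the paper's low-regularity pairings are meant to be read. The only cosmetic difference is that you assume a common eigenbasis of $A$, $C$ and $Q$, which is not needed (the paper only assumes $CQ=QC$, not $AQ=QA$) and can simply be dropped, since the Galerkin argument works with the eigenbasis of $A$ alone and your covariation formula $\epsilon^{-1}\langle Qe_k,e_l\rangle\,dt$ is already stated in general form.
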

\begin{proof}
Let $\{\Pi_n\}_{n \in \N}$ be a family of Galerkin projectors and let $h \in H$ be fixed. Since $\Pi_n h \in \mathscr{S}$ for every $n \in \N$, by (S2) we have $\PP$-a.s. for every $t \in [0,T]$ : 
\begin{align*}
\langle u^\epsilon_t, \Pi_n h\rangle 
&= 
\langle u_0, \Pi_n h\rangle
+
\int_0^t
\langle u^\epsilon_s, A \Pi_n h \rangle
+ 
\int_0^t
\langle b(u^\epsilon_s,u^\epsilon_s) ,\Pi_n h \rangle ds 
\\
&\quad+ 
\epsilon^{-1/2} \int_0^t
\langle b(y^\epsilon_s,u^\epsilon_s) ,\Pi_n h \rangle ds,
\\
\langle y^\epsilon_t,\Pi_n h\rangle 
&= 
\langle y_0,\Pi_n h\rangle
+
\epsilon^{-1} \int_0^t 
\langle y^\epsilon_s , C_\epsilon\Pi_n h \rangle ds 
+ 
\int_0^t
\langle b(u^\epsilon_s,y^\epsilon_s) ,\Pi_n h \rangle ds
\\
&\quad 
+ \epsilon^{-1/2}\int_0^t
\langle b(y^\epsilon_s,y^\epsilon_s) ,\Pi_n h \rangle ds + 
\epsilon^{-1/2} \langle Q^{1/2} W_t ,\Pi_n h \rangle.
\end{align*}
Letting $h$ freely vary in $H$ in the previous expression, we deduce that the process $(\Pi_n u^\epsilon, \Pi_n y^\epsilon)$ is an It\=o process satisfying
\begin{align*}
\Pi_n u^\epsilon_t
&= 
\Pi_n u_0
+
\int_0^t
\Pi_n A  u^\epsilon_s ds
+ 
\int_0^t
\Pi_n b(u^\epsilon_s,u^\epsilon_s) ds 
+ 
\epsilon^{-1/2} \int_0^t
\Pi_n b(y^\epsilon_s,u^\epsilon_s) ds,
\\
\Pi_n y^\epsilon_t
&= 
\Pi_n y_0 
+
\epsilon^{-1} \int_0^t 
\Pi_n C_\epsilon y^\epsilon_s ds 
+ 
\int_0^t
\Pi_n b(u^\epsilon_s,y^\epsilon_s) ds
\\
&\quad 
+ \epsilon^{-1/2}\int_0^t
\Pi_n b(y^\epsilon_s,y^\epsilon_s) ds + 
\epsilon^{-1/2} \Pi_n Q^{1/2} W_t
\end{align*}
in strong analytical sense. In particular, by (S1) and classical It\=o Formula, the following a.s. identity holds for every $t \in [0,T]$:
\begin{align*} 
\Phi(\Pi_n u^\epsilon_t,\Pi_n y^\epsilon_t)
&=
\Phi(\Pi_n u_0,\Pi_n y_0)
+
\int_0^t \mathscr{L}^{\epsilon,n}_{u^\epsilon_s,y^\epsilon_s}\Phi(\Pi_n u^\epsilon_s,\Pi_n y^\epsilon_s) ds
\\
&\quad+
\int_0^t \langle D_y \Phi(\Pi_n u^\epsilon_s,\Pi_n y^\epsilon_s) , \Pi_n Q^{1/2}dW_s \rangle,
\end{align*}
where $\mathscr{L}^{\epsilon,n}_{u^\epsilon_s,y^\epsilon_s}\Phi(\Pi_n u^\epsilon_s,\Pi_n y^\epsilon_s)$ is given by
\begin{align*}
\mathscr{L}^{\epsilon,n}_{u^\epsilon_s,y^\epsilon_s} \Phi (\Pi_n u^\epsilon_s,\Pi_n y^\epsilon_s)
&=
\langle \Pi_n Au^\epsilon_s + \Pi_n b(u^\epsilon_s,u^\epsilon_s) , D_u \Phi(\Pi_n u^\epsilon_s,\Pi_n y^\epsilon_s) \rangle
\\
&\quad+
\epsilon^{-1/2} \langle \Pi_n b(y^\epsilon_s,u^\epsilon_s) , D_u \Phi (\Pi_n u^\epsilon_s,\Pi_n y^\epsilon_s) \rangle
\\
&\quad+
\langle \Pi_n b(u^\epsilon_s,y^\epsilon_s) , D_y \Phi (\Pi_n u^\epsilon_s,\Pi_n y^\epsilon_s) \rangle
\\
&\quad+
\epsilon^{-1/2} \langle \Pi_n b(y^\epsilon_s,y^\epsilon_s) , D_y \Phi (\Pi_n u^\epsilon_s,\Pi_n y^\epsilon_s) \rangle
\\
&\quad+
\epsilon^{-1} \langle \Pi_n C_\epsilon y^\epsilon_s , D_y \Phi (\Pi_n u^\epsilon_s,\Pi_n y^\epsilon_s) \rangle
\\
&\quad+
\frac{\epsilon^{-1}}{2} Tr (\Pi_n Q \Pi_n D^2_y \Phi (\Pi_n u^\epsilon_s)).
\end{align*}
Because by assumption { $u^\epsilon,y^\epsilon \in C([0,T],H_w)$ for every fixed $\epsilon$ and} $\Phi \in \mathcal{E}$ whenever any of its two argument is fixed, it is easy to check {that for every fixed $t \in [0,T]$ the convergences $\Phi(\Pi_n u^\epsilon_t,\Pi_n y^\epsilon_t) \to \Phi(u^\epsilon_t,y^\epsilon_t)$ and $\Phi(\Pi_n u_0,\Pi_n y_0) \to \Phi(u_0,y_0)$ hold true $\PP$-a.s. as $n \to \infty$.}
{ Indeed, with probability one $u^\epsilon_t , y^\epsilon_t \in H$ for every $t \in [0,T]$, and thus $\Pi_n u^\epsilon_t \to u^\epsilon_t$ and $\Pi_n y^\epsilon_t \to y^\epsilon_t$ strongly in $H$.}
By (S3), \eqref{eq:assumptions_Ito} and Lebesgue dominated convergence, we have, up to subsequences, { for every $t \in [0,T]$ fixed $\PP$-a.s.} 
\begin{align*}
\int_0^t
\mathscr{L}^{\epsilon,n} \Phi (\Pi_n u^\epsilon_s,\Pi_n y^\epsilon_s) ds
\to
\int_0^t
\mathscr{L}^\epsilon \Phi (u^\epsilon_s,y^\epsilon_s) ds.
\end{align*}
Similarly, since $\int_0^t \|\Pi_n  D_y \Phi(\Pi_n u^\epsilon_s,\Pi_n y^\epsilon_s)\|^2 ds \to \int_0^t \|D_y \Phi(u^\epsilon_s,y^\epsilon_s)\|^2 ds $ a.s. as $n \to \infty$, the following convergence in probability holds true
\begin{align*}
\int_0^t \langle D_y \Phi(\Pi_n u^\epsilon_s,\Pi_n y^\epsilon_s) , \Pi_n Q^{1/2}dW_s \rangle
\to
\int_0^t \langle D_y \Phi(u^\epsilon_s,y^\epsilon_s) ,Q^{1/2}dW_s \rangle,
\end{align*}
and the convergence is almost sure up to extracting a subsequence, concluding the proof.
\end{proof}

\begin{rmk}
\emph{i})
As a consequence of \autoref{lem:ito} and the definition of corrector $\varphi_1^\epsilon$ from the previous section, we immediately deduce that $\varphi_1^\epsilon$ belongs to the domain of the generator $\mathscr{L}^\epsilon$ and the It\=o Formula holds for the process $\varphi_1^\epsilon(u^\epsilon,y^\epsilon)$ for every $\epsilon>0$.
However, strictly speaking we do not actually need such a strong result.
For instance, it would have been sufficient to show the existence of a \emph{generalized corrector} $\tilde{\varphi}_1^\epsilon$ and an adapted process $H^{\epsilon}$ such that for every $t \in [0,T]$:
\begin{align*}
\tilde{\varphi}_1^\epsilon(u^\epsilon_t,y^\epsilon_t)
&=
\tilde{\varphi}_1^\epsilon(u_0,y_0)
+
\int_0^t H^{\epsilon}_s ds
+
\int_0^t \langle D_y \tilde{\varphi}_1^\epsilon(u^\epsilon_s,y^\epsilon_s) , Q^{1/2}dW_s \rangle,
\end{align*}
$\PP$-a.s., with some suitable additional requirements that allow to identify a limit equation for $u^\epsilon$ as done here in the following.
In particular, it is not necessary that the generalized corrector $\tilde{\varphi}_1^\epsilon$ is in the domain of $\mathscr{L}^\epsilon$.

On the other hand, whenever the arguments of previous sections work and produce a corrector $\varphi_1^\epsilon$ within the domain of $\mathscr{L}^\epsilon$, it is natural to choose it to apply the perturbed function method. 
Moreover, we can not avoid proving the validity of \emph{some} It\=o Formula for the process $\varphi_1^\epsilon(u^\epsilon,y^\epsilon)$, since it does not descend directly from our notion of solution (whereas an It\=o Formula for $\varphi(u^\epsilon)$ does); thus, proving previous \autoref{lem:ito} is a fully justified effort.

\emph{ii})
Since $Y^\epsilon$ is regular, it is possible to consider functions $\Phi_1(u,y,Y) = \Phi_1(\Phi(u,y),Y)$ and prove an analogous It\=o Formula for the process $\Phi_1(u^\epsilon_t,y^\epsilon_t,Y^\epsilon_t)$. 
\end{rmk}

\subsection{Tightness}
In this subsection we prove that the laws of the processes $\{u^\epsilon\}_{\epsilon \in (0,1)}$ are tight as probability measures on the space $L^2([0,T],H) \cap C([0,T],H^{-\beta})$ for every $\beta>0$.
The idea is to apply Simon compactness criterium \autoref{lem:Simon}. 
In order to do so, we need estimates on the increments $\mathbb{E} \left[\|u^\epsilon_t-u^\epsilon_s\|^p_{H^{-\sigma}} \right]$, $s,t \in [0,T]$, where $p>2$ and $\sigma>0$ are suitable parameters. 
Making use of the formula
\begin{align} \label{eq:sobolev_norm}
\|u^\epsilon_t-u^\epsilon_s\|^2_{H^{-\sigma}}
&=
\sum_{k \in \N}
\frac{\left(\varphi^k(u^\epsilon_t) - \varphi^k(u^\epsilon_s)\right)^2}{\nu_k^{2\sigma}},
\quad
\varphi^k(u) = \langle u,e_k\rangle,
\end{align}
for $\{e_k\}_{k \in \N}$ a complete orthonormal system in $H$ of eigenfunctions of $-A$ with associated eigenvalues $\{ \nu_k \}_{k \in \N}$, we reduce the problem to providing suitable estimates for the quantity $\mathbb{E} \left[ (\varphi^k(u^\epsilon_t)-\varphi^k(u^\epsilon_s))^p \right]$, which can be obtained applying It\=o Formula \autoref{lem:ito} to the test function $\varphi^{k,\epsilon}(u^\epsilon,y^\epsilon) = \varphi^k(u^\epsilon) + \epsilon^{1/2} \varphi_1^{k,\epsilon}(u^\epsilon,y^\epsilon)$, with $\varphi_1^{k,\epsilon}$ being given by \eqref{eq:varphi1} with $h=e_k$.

We prove first the following auxiliary lemma, consisting of an estimate on some negative Sobolev norm of the time increments $u^\epsilon_t-u^\epsilon_s$ and $y^\epsilon_t-y^\epsilon_s$.
\begin{lem} \label{lem:increments}
Let $\{(u^\epsilon,y^\epsilon)\}_{\epsilon\in(0,1)}$ be a bounded-energy family of weak martingale solutions to \eqref{eq:system_2}, and for every $\epsilon\in(0,1)$ let $Y^\epsilon$ be the unique strong solution of \eqref{eq:Yeps}.
Then for every $p \geq 1$ and $\theta=\max\{\theta_0,\Gamma\}$ the following estimates hold:
\begin{align*}
\mathbb{E}\left[\|u^\epsilon_t - u^\epsilon_s \|_{H^{-{\theta_0}}}^p\right]
&\lesssim 
\epsilon^{-p/2}|t-s|^p;
\\
\mathbb{E}\left[\|Y^\epsilon_t - Y^\epsilon_s \|_{H^{-{\theta_0}}}^p\right]
&\lesssim 
\epsilon^{-p/2}|t-s|^{p/2};
\\
\mathbb{E}\left[ \|y^\epsilon_t - y^\epsilon_s - (Y^\epsilon_t - Y^\epsilon_s )\|^p_{H^{-\theta}}\right]
&\lesssim 
\epsilon^{-p}|t-s|^{p}.
\end{align*}
\end{lem}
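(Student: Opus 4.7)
The plan is to treat the three estimates by routes tailored to the structure of each process.

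For the first estimate, I start from the weak equation (S2) for $u^\epsilon$ tested against $h \in \mathscr{S}$. Using (B1)+(B4) and $\theta_0 \geq 2$, each integrand is bounded by $\|h\|_{H^{\theta_0}}$ times $\|u^\epsilon_r\|_H$, $\|u^\epsilon_r\|^2_H$, or $\|y^\epsilon_r\|_H \|u^\epsilon_r\|_H$. Dividing by $\|h\|_{H^{\theta_0}}$ and taking the supremum yields
\begin{align*}
\|u^\epsilon_t - u^\epsilon_s\|_{H^{-\theta_0}} \lesssim \int_s^t \left(\|u^\epsilon_r\|_H + \|u^\epsilon_r\|_H^2 + \epsilon^{-1/2}\|y^\epsilon_r\|_H \|u^\epsilon_r\|_H\right) dr,
\end{align*}
after which H\"older's inequality combined with (S3) and \autoref{lem:E_y} gives the desired $\epsilon^{-p/2}|t-s|^p$; the $\epsilon^{-1/2}$ originates solely from the third term in (S2).

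For the second estimate, $Y^\epsilon_t - Y^\epsilon_s$ is centred Gaussian in $H^{-\theta_0}$, so by Gaussian moment equivalence it is enough to handle $p=2$. Using the mild formulation
\begin{align*}
Y^\epsilon_t - Y^\epsilon_s = (e^{(t-s)C_\epsilon/\epsilon} - I) Y^\epsilon_s + \epsilon^{-1/2} \int_s^t e^{(t-r)C_\epsilon/\epsilon} Q^{1/2} dW_r,
\end{align*}
I control the first summand via $\|e^{C_\epsilon \tau} - I\|_{H^{-\theta_0+\Gamma} \to H^{-\theta_0}} \lesssim \tau^{1/2}$ together with the uniform-in-$\epsilon$ bound $\mathbb{E}[\|Y^\epsilon_s\|^2_H] \lesssim 1$, which follows from the fact that the covariance of $Y^\epsilon_s$ is dominated as an operator by $Q^\epsilon_\infty$ and from (Q2). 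The stochastic integral is estimated by It\^o isometry in $H^{-\theta_0}$ combined with contractivity of $e^{\cdot C_\epsilon}$, reducing matters to $\|Q^{1/2}\|^2_{HS(H, H^{-\theta_0})} < \infty$; on the joint eigenbasis of $A$ and $C$, this trace is $\sum_k q_k \nu_k^{-\theta_0}$, which is finite thanks to (Q1) (giving $\sum_k q_k |c_k|^{-1} < \infty$) and the spectral comparison $|c_k|^{-1} \gtrsim \nu_k^{-2\gamma} \gtrsim \nu_k^{-\theta_0}$ supplied by (C2) and $\theta_0 \geq 2\gamma$. Each summand contributes $\epsilon^{-1}|t-s|$.

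The third estimate exploits the crucial observation that $z^\epsilon := y^\epsilon - Y^\epsilon$ has no martingale part: subtracting the equations satisfied by $y^\epsilon$ and $Y^\epsilon$ one finds
\begin{align*}
\tfrac{d}{dt} z^\epsilon_t = \epsilon^{-1} C z^\epsilon_t + A z^\epsilon_t + b(u^\epsilon_t, y^\epsilon_t) + \epsilon^{-1/2} b(y^\epsilon_t, y^\epsilon_t),
\end{align*}
so $z^\epsilon_t - z^\epsilon_s$ is a Bochner integral in $H^{-\theta}$. Bounding each term in $H^{-\theta}$, the dominant contribution comes from $\epsilon^{-1} C z^\epsilon_r$, which I control via $\|Cz\|_{H^{-\theta}} \lesssim \|z\|_{H^{-\theta + 2\Gamma}} \lesssim \|z\|_H$ using $\theta \geq 2\Gamma$ (which holds in the regime $\theta_0 \geq 2\Gamma$ relevant here); the other three terms contribute only $O(1)$ or $O(\epsilon^{-1/2})$ factors thanks to $\theta \geq 2$, (B1), and (B4). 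H\"older's inequality and \autoref{lem:E_y}, applied to both $y^\epsilon$ and $Y^\epsilon$ to control $\mathbb{E}[\|z^\epsilon_r\|^p_H]$, then yield $\epsilon^{-p}|t-s|^p$. The main technical hurdle is the second estimate, specifically checking the Hilbert--Schmidt property of $Q^{1/2}:H \to H^{-\theta_0}$ and the uniform-in-$\epsilon$ contractivity of $e^{\cdot C_\epsilon/\epsilon}$ on $H^{-\theta_0}$; both reduce, through the spectral comparison in (C2), to the structural assumptions (Q1)--(Q2).
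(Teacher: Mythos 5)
Your proof follows essentially the same route as the paper's: the weak formulation (S2) tested against $h\in H^{\theta_0}$ for the $u^\epsilon$ increment, the mild formulation of \eqref{eq:Yeps} split into a semigroup-increment term plus a stochastic convolution for $Y^\epsilon$, and the cancellation of the martingale parts in $y^\epsilon-Y^\epsilon$ so that only (Bochner/time) integrals remain, each estimated with (B1), (C2) and the energy bounds. The one cosmetic deviation is your verification that $Q^{1/2}:H\to H^{-\theta_0}$ is Hilbert--Schmidt via a joint eigenbasis of $A$ and $C$ --- such a basis is not granted by the hypotheses (only $Q$ and $C$ are assumed to commute) and the detour is unnecessary, since $Q$ is trace-class on $H$ by (Q1) at $t=0$ and the embedding $H\hookrightarrow H^{-\theta_0}$ is bounded, so the required trace is finite at once.
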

\begin{proof}
Let us start from the estimate on $u^\epsilon$.
We have for every $h \in H^{\theta_0}$
\begin{align*}
\langle u^\epsilon_t - u^\epsilon_s , h\rangle
&=
\int_s^t
\langle u^\epsilon_r, Ah \rangle dr
+ 
\int_s^t
\langle b(u^\epsilon_r,u^\epsilon_r) , h \rangle dr 
+
\epsilon^{-1/2} \int_s^t
\langle b(y^\epsilon_r,u^\epsilon_r) , h \rangle dr,
\end{align*}
hence, using (B1)
\begin{align*}
|\langle u^\epsilon_t - u^\epsilon_s , h\rangle|
&\lesssim
\int_s^t
\| u^\epsilon_r\|_{H} \|h\|_{H^2} dr
+ 
\int_s^t
\|u^\epsilon_r\|_{H}^2 \|h\|_{H^{\theta_0}} dr 
+ 
\epsilon^{-1/2} \int_s^t
\|y^\epsilon_r\|_{H} \|u^\epsilon_r\|_{H} \|h\|_{H^{\theta_0}} dr.
\end{align*}
Therefore, taking the supremum over $h \in H^{\theta_0}$ with $\|h\|_{H^{\theta_0}}=1$, elevating to the $p$-th power and taking expectations:
\begin{align*}
\mathbb{E}\left[\|u^\epsilon_t - u^\epsilon_s \|_{H^{-{\theta_0}}}^p\right]
&\lesssim 
\epsilon^{-p/2}|t-s|^{p}.
\end{align*}

In order to get the estimate on $Y^\epsilon$, we preliminarily rewrite the increment $Y^\epsilon_t - Y^\epsilon_s$ using the mild formulation of \eqref{eq:Yeps}:
\begin{align*}
Y^\epsilon_t - Y^\epsilon_s
&=
\left( e^{\epsilon^{-1}C_\epsilon(t-s)} - 1 \right) Y^\epsilon_s
+
\epsilon^{-1/2}\int_s^t e^{\epsilon^{-1}C_\epsilon(t-r)} Q^{1/2}dW_r,
\end{align*}
from which we are able to deduce, on the one hand
\begin{align*}
\mathbb{E} \left[ \left\|\left( e^{\epsilon^{-1}C_{\epsilon}(t-s)} - 1 \right) Y^\epsilon_s \right\|_{H^{-\theta_0}}^p\right]
&\lesssim
\epsilon^{-p/2} |t-s|^{p/2} \mathbb{E} \left[ \|Y^\epsilon_s\|_{H^{\Gamma-{\theta_0}}}^p\right]
\lesssim
\epsilon^{-p/2}|t-s|^{p/2},
\end{align*}
and, applying It\=o Isometry:
\begin{align*}
\mathbb{E} \left[ \left\| \epsilon^{-1/2}\int_s^t e^{\epsilon^{-1}C_\epsilon(t-r)} Q^{1/2}dW_r \right\|_{H^{-{\theta_0}}}^p \right]
\lesssim
|t-s|^{p/2},
\end{align*}
on the other.

Let us move to the estimate on $y^\epsilon$.
First, since $Y^\epsilon$ is a strong solution of \eqref{eq:Yeps}, for every fixed $h \in H^{\theta_0}$ and $s,t \in [0,T]$, $s<t$, we have the following weak reformulation of \eqref{eq:Yeps}:
\begin{align*}
\langle Y^\epsilon_t - Y^\epsilon_s , h\rangle
&=
\epsilon^{-1} \int_s^t \langle Y^\epsilon_r, C_\epsilon h \rangle dr
+
\epsilon^{-1/2} \langle Q^{1/2} (W_t-W_s) , h \rangle,
\end{align*}
so that putting the previous expression together with (S2) we get
\begin{align*}
\langle y^\epsilon_t - y^\epsilon_s- (Y^\epsilon_t - Y^\epsilon_s) , h \rangle
&=
\epsilon^{-1} \int_s^t \langle y^\epsilon_r - Y^\epsilon_r, C_\epsilon h \rangle dr
+ 
\int_s^t
\langle b(u^\epsilon_r,y^\epsilon_r) , h \rangle dr 
\\
&\quad+
\epsilon^{-1/2} \int_s^t
\langle b(y^\epsilon_r,y^\epsilon_r) ,h \rangle dr.
\end{align*}
Hence, arguing as with $u^\epsilon_t-u^\epsilon_s$ we obtain 
\begin{align*}
\mathbb{E}\left[ \|y^\epsilon_t - y^\epsilon_s - (Y^\epsilon_t - Y^\epsilon_s )\|^p_{H^{-\theta}}\right]
\lesssim
\epsilon^{-p}|t-s|^{p} .
\end{align*}
\end{proof}

We move now to the main computation of this subsection. We have:

\begin{lem} \label{lem:Sobolev_increments}
There exists $\alpha>0$ depending only on $\gamma$, $\Gamma$ and $\theta_0$ such that the following holds. For every $p>2$ there exists $\sigma>0$ such that, for every $s,t \in [0,T]$: 
\begin{align*}
\mathbb{E} \left[
\|u^\epsilon_t-u^\epsilon_s\|^p_{H^{-\sigma}} \right]
&\lesssim |t-s|^{\alpha p}.
\end{align*}
\end{lem}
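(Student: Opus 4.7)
The plan is to fix an index $k$ and apply It\^o Formula (\autoref{lem:ito}) to the perturbed test function
\[
\varphi^{k,\epsilon}(u,y)=\langle u,e_k\rangle+\epsilon^{1/2}\varphi_1^{k,\epsilon}(u,y),
\]
where $\varphi_1^{k,\epsilon}$ is the first corrector \eqref{eq:varphi1} associated to $h=e_k$. By construction $\mathscr{L}_y^\epsilon\varphi_1^{k,\epsilon}=-\langle b(y,u),e_k\rangle$, so the singular $\epsilon^{-1/2}$ contribution in $\mathscr{L}^\epsilon\varphi^k$ is exactly cancelled and
\[
\mathscr{L}^\epsilon\varphi^{k,\epsilon}(u,y)=\langle Au+b(u,u),e_k\rangle+\langle b(y,u),D_u\varphi_1^{k,\epsilon}(y)\rangle+\langle b(y,y),D_y\varphi_1^{k,\epsilon}(u)\rangle+O(\epsilon^{1/2})
\]
is pointwise bounded, uniformly in $\epsilon$, by a polynomial in $\|u\|_H,\|y\|_H$ with a multiplicative prefactor controlled by $\|e_k\|_{H^{\theta_1}}$, using \autoref{prop:regularity_varphi1} at $\theta=\gamma$. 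It\^o Formula then yields the decomposition
\[
\varphi^k(u^\epsilon_t)-\varphi^k(u^\epsilon_s)=\int_s^t\mathscr{L}^\epsilon\varphi^{k,\epsilon}(u^\epsilon_r,y^\epsilon_r)\,dr+\epsilon^{1/2}\!\int_s^t\!\langle D_y\varphi_1^{k,\epsilon}(u^\epsilon_r),Q^{1/2}dW_r\rangle-\epsilon^{1/2}\Delta_{s,t}\varphi_1^{k,\epsilon},
\]
with $\Delta_{s,t}\varphi_1^{k,\epsilon}\coloneqq\varphi_1^{k,\epsilon}(u^\epsilon_t,y^\epsilon_t)-\varphi_1^{k,\epsilon}(u^\epsilon_s,y^\epsilon_s)$.

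I would next estimate the three pieces in $L^p(\Omega)$. The drift integral is handled by H\"older together with the energy bounds (S3)--(S4), yielding a $|t-s|^p$ contribution. The stochastic integral is controlled by Burkholder-Davis-Gundy and the trace estimate following from assumption (Q2) and the bound $\|D_y\varphi_1^{k,\epsilon}(u)\|_{H^{2\gamma}}\lesssim\|e_k\|_{H^{\theta_1}}\|u\|_H$, producing $\epsilon^{p/2}|t-s|^{p/2}$. The sensitive term is the boundary corrector: exploiting the bilinearity of $\varphi_1^{k,\epsilon}$ I split
\[
\Delta_{s,t}\varphi_1^{k,\epsilon}=\varphi_1^{k,\epsilon}(u^\epsilon_t-u^\epsilon_s,y^\epsilon_t)+\varphi_1^{k,\epsilon}(u^\epsilon_s,y^\epsilon_t-y^\epsilon_s),
\]
pair the first summand against $\|u^\epsilon_t-u^\epsilon_s\|_{H^{-\theta_0}}$ via duality and \autoref{prop:regularity_varphi1}, and for the second summand further decompose $y^\epsilon=Y^\epsilon+(y^\epsilon-Y^\epsilon)$ so as to apply the three estimates of \autoref{lem:increments}. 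Summing and invoking \eqref{eq:sobolev_norm} and Minkowski, one obtains
\[
\mathbb E\bigl[\|u^\epsilon_t-u^\epsilon_s\|_{H^{-\sigma}}^p\bigr]^{2/p}\lesssim\sum_{k}\nu_k^{-2\sigma}\mathbb E\bigl[|\varphi^k(u^\epsilon_t)-\varphi^k(u^\epsilon_s)|^p\bigr]^{2/p},
\]
which converges provided $\sigma$ is large enough so that $\sum_k\nu_k^{\theta_1-2\sigma}<\infty$ (Weyl's law makes this explicit in $d,\theta_1,p$).

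The main obstacle is that \autoref{lem:increments} supplies negative powers of $\epsilon$ ($\epsilon^{-p/2}$ for $u^\epsilon$ and $Y^\epsilon$, and $\epsilon^{-p}$ for the nonlinear remainder $y^\epsilon-Y^\epsilon$) that must be absorbed by the $\epsilon^{1/2}$ prefactor. For the $u^\epsilon$- and $Y^\epsilon$-increments the two factors cancel exactly, leaving a clean $|t-s|^p$ or $|t-s|^{p/2}$. For the nonlinear remainder, whose increment is only controlled in $H^{-\theta}$ with $\theta=\max\{\theta_0,\Gamma\}$, one must interpolate the estimate of \autoref{lem:increments} against the uniform bound $\mathbb E[\|y^\epsilon_t-y^\epsilon_s\|_H^p]\lesssim 1$ from (S4): the resulting bound is $\epsilon^{-\lambda p}|t-s|^{\lambda p}$ with $\lambda=2\gamma/\theta\in(0,1/2]$, and multiplication by $\epsilon^{p/2}$ leaves a non-negative power of $\epsilon$ together with the required $|t-s|^{\alpha p}$ for some $\alpha>0$ depending only on $\gamma,\Gamma,\theta_0$. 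A parallel interpolation argument handles the regularity trade-off in \autoref{prop:regularity_varphi1} whenever $\theta_0>2$, by shifting one derivative of $\varphi_1^{k,\epsilon}$ onto $e_k$ through the duality (B4) at the cost of a larger $\theta_1$.
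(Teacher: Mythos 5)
Your strategy coincides with the paper's: apply the It\=o Formula of \autoref{lem:ito} to $\varphi^k+\epsilon^{1/2}\varphi_1^{k,\epsilon}$, estimate the drift, the martingale part and the boundary term $\epsilon^{1/2}\Delta_{s,t}\varphi_1^{k,\epsilon}$ separately, and resum over $k$ with weights $\nu_k^{-2\sigma}$. Two of your intermediate claims, however, are false as written, and both are repaired by the very interpolation you reserve for $y^\epsilon-Y^\epsilon$. First, $\mathscr{L}^\epsilon\varphi^{k,\epsilon}$ is \emph{not} pointwise bounded by a polynomial in $\|u\|_H,\|y\|_H$: \autoref{prop:regularity_varphi1} gives $D_u\varphi_1^{k,\epsilon}(y),D_y\varphi_1^{k,\epsilon}(u)\in H^{2\gamma}$ uniformly in $\epsilon$ and nothing more, so pairing $b(y,u)$ or $b(y,y)$ against these via (B3) unavoidably brings in $\|y\|_{H^{3/2-2\gamma}}$ and $\|u\|_{H^1}$ (one cannot have $s+r\geq 5/2-2\gamma$ with $s=r=0$). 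These norms are controlled only in $L^2_t$ by (S3)--(S4), so H\"older in time yields $|t-s|^{(\gamma-1/4)p}$ rather than $|t-s|^p$; this is precisely where $\gamma>1/4$ is used and it is one of the three exponents that determine $\alpha$.

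Second, the claim that for the $u^\epsilon$- and $Y^\epsilon$-increments ``the two factors cancel exactly'' presupposes pairing $\varphi_1^{k,\epsilon}(u^\epsilon_t-u^\epsilon_s,y^\epsilon_s)$ against $\|u^\epsilon_t-u^\epsilon_s\|_{H^{-\theta_0}}$ at zero cost in $\epsilon$. That is not available: putting $D_u\varphi_1^{k,\epsilon}(y)$ in $H^{\theta_0}$ would require $\theta=\theta_0/2>1$ in \autoref{prop:regularity_varphi1} (inadmissible, and in any case divergent in $\epsilon$). The pairing must be performed in $H^{-2\gamma}$, after moving the derivative onto $e_k$ via (B4) as you sketch at the very end, and then $\|u^\epsilon_t-u^\epsilon_s\|_{H^{-2\gamma}}$ (resp. $\|y^\epsilon_t-y^\epsilon_s\|_{H^{-2\gamma}}$) interpolated between $H^{-\theta_0}$ (resp. $H^{-\theta}$) and $H$ before invoking \autoref{lem:increments} --- exactly the step you carry out for $y^\epsilon-Y^\epsilon$. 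The surviving power of $\epsilon$ is then nonnegative (using $\gamma\leq\theta_0/4$ without loss of generality) but the H\"older exponent degrades to $2\gamma/\theta_0$ and $\gamma/\theta$, giving $\alpha=\min\{2\gamma/\theta_0,\gamma/\theta,\gamma-1/4\}$ rather than anything close to $1/2$. (A minor bookkeeping point: the martingale term carries no $\epsilon^{1/2}$ prefactor, since the $\epsilon^{-1/2}$ in front of the noise in the $y^\epsilon$-equation cancels the $\epsilon^{1/2}$ of the corrector; the resulting $|t-s|^{p/2}$ is still harmless.) None of this changes the viability of your route, but the exponent claims need the corrections above to compile into a proof.
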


\begin{proof}
Let us consider as test function $\varphi^{k,\epsilon} = \varphi^k + \epsilon^{1/2} \varphi_1^{k,\epsilon}$ as above, namely $\varphi^k(u)=\langle u,e_k \rangle$ and $\{e_k\}_{k \in \N}$ an eigenbasis of $-A$, and $\varphi_1^{k,\epsilon}$ given by \eqref{eq:varphi1} with $h=e_k$.
With this choice of $\varphi^k$ we have $D_u \varphi^k = e_k$ and
\begin{align*}
\varphi_1^{k,\epsilon}(u^\epsilon,y^\epsilon) &= \langle b((-C_\epsilon)^{-1}y^\epsilon,u^\epsilon),e_k \rangle,
\\
D_u \varphi_1^{k,\epsilon}(y^\epsilon) &= -b((-C_\epsilon)^{-1}y^\epsilon,e_k),
\\
D_y \varphi_1^{k,\epsilon}(u^\epsilon) &= \langle b((-C_\epsilon)^{-1}\cdot,u^\epsilon),e_k \rangle.
\end{align*}
Applying It\=o Formula to $\varphi^{k,\epsilon}(u^\epsilon,y^\epsilon)$ we get almost surely for any given $s,t \in [0,T]$, $s<t$:
\begin{align} \label{eq:increments_k}
\varphi^k(u^\epsilon_t) - \varphi^k(u^\epsilon_s)
&=
\epsilon^{1/2} (\varphi_1^{k,\epsilon}(u^\epsilon_s,y^\epsilon_s) - \varphi_1^{k,\epsilon}(u^\epsilon_t,y^\epsilon_t))
\\
&\quad + \nonumber
\int_s^t \mathscr{L}^\epsilon\varphi^{k,\epsilon}(u^\epsilon_r,y^\epsilon_r) dr
+
\int_s^t \langle D_y \varphi_1^{k,\epsilon} (u^\epsilon_r), Q^{1/2}dW_r \rangle.
\end{align} 
Therefore, using \eqref{eq:sobolev_norm} and H\"older inequality, for every $\sigma>0$ satisfying $\sum_{k \in \N} \nu_k^{-2\sigma}< \infty$ we get the following inequality
\begin{align} \label{eq:aux003}
\mathbb{E} \left[
\|u^\epsilon_t-u^\epsilon_s\|^p_{H^{-\sigma}} \right]
&=
\mathbb{E} \left[
\left( \sum_{k\in\mathbb{N}} \frac{\left(\varphi^k(u^\epsilon_t)-\varphi^k(u^\epsilon_s)\right)^2}{\nu_k^{2\sigma}} \right)^{p/2} \right]
\\
&\leq \nonumber
\left( \sum_{k\in\mathbb{N}} \frac{1}{\nu_k^{2\sigma}} \right)^{\frac{p-2}{p}}
\mathbb{E} \left[
\sum_{k\in\mathbb{N}} \frac{\left(\varphi^k(u^\epsilon_t)-\varphi^k(u^\epsilon_s)\right)^p}{\nu_k^{2\sigma}} \right]
.
\end{align}

Let us estimate the summands on the right-hand-side of \eqref{eq:increments_k} to complete the proof of the proposition.
We start from the terms involving the time increment $\varphi_1^{k,\epsilon}(u^\epsilon_s,y^\epsilon_s)-\varphi_1^{k,\epsilon}(u^\epsilon_t,y^\epsilon_t)$: for every $s,t \in [0,T]$, $s<t$ it holds
\begin{align*}
|\varphi_1^{k,\epsilon}(u^\epsilon_s,y^\epsilon_s)-\varphi_1^{k,\epsilon}(u^\epsilon_t,y^\epsilon_t)|
&\leq
|\varphi_1^{k,\epsilon}(u^\epsilon_s-u^\epsilon_t,y^\epsilon_s)|
+
|\varphi_1^{k,\epsilon}(u^\epsilon_t,y^\epsilon_s-y^\epsilon_t)|
\\
&=
|\langle b((-C_\epsilon)^{-1}y^\epsilon_s,e_k),u^\epsilon_s-u^\epsilon_t \rangle|
+
|\langle b((-C_\epsilon)^{-1}(y^\epsilon_s-y^\epsilon_t),e_k),u^\epsilon_t \rangle|
\\
&\lesssim
\|u^\epsilon_s-u^\epsilon_t\|_{H^{-2\gamma}} \|y^\epsilon_s\|_H \|e_k\|_{H^{\theta_1}}
+
\|y^\epsilon_s-y^\epsilon_t\|_{H^{-2\gamma}}\|u^\epsilon_t\|_H\|e_k\|_{H^{\theta_0}}.
\end{align*}
We can invoke \autoref{lem:increments} and interpolation inequality to estimate the $H^{-2\gamma}$ norm of the time increments $u^\epsilon_s-u^\epsilon_t$ and $y^\epsilon_s-y^\epsilon_t$, and get (without loss of generality we assume $\gamma \leq \theta_0/4$)
\begin{align*}
\epsilon^{p/2} &\, \mathbb{E} \left[
\left( \varphi_1^{k,\epsilon}(u^\epsilon_s,y^\epsilon_s) - \varphi_1^{k,\epsilon}(u^\epsilon_t,y^\epsilon_t) 
\right)^p \right]
\\
&\lesssim
\epsilon^{p/2}\|e_k\|_{H^{\theta_1}}^p\, 
\mathbb{E} \left[
\|u^\epsilon_t-u^\epsilon_s\|_{H^{-\theta_0}}^{2 \gamma p / \theta_0} 
\|u^\epsilon_t-u^\epsilon_s\|_{H}^{(1-2\gamma/\theta_0) p} 
\|y^\epsilon_t\|_H^p
\right]
\\
&\quad+
\epsilon^{p/2} \|e_k\|_{H^{\theta_0}}^p \, 
\mathbb{E} \left[
\|y^\epsilon_t-y^\epsilon_s\|_{H^{-\theta}}^{2 \gamma p / \theta} 
\|y^\epsilon_t-y^\epsilon_s\|_{H}^{(1-2\gamma/\theta) p}
\|u^\epsilon_s\|^p_H \right]
\\
&\lesssim
\|e_k\|_{H^{\theta_1}}^p
\epsilon^{p/2}\epsilon^{-2\gamma p/\theta_0}
|t-s|^{2\gamma p/\theta_0}
+
\|e_k\|_{H^{\theta_0}}^p
\epsilon^{p/2}\epsilon^{-2\gamma p/\theta}
|t-s|^{\gamma p/\theta}
\\
&\lesssim
\|e_k\|_{H^{\theta_1}}^p
|t-s|^{2\gamma p/\theta_0}
+
\|e_k\|_{H^{\theta_0}}^p
|t-s|^{\gamma p/\theta},
\end{align*}
where we recall $\theta=\max\{\theta_0,\Gamma\}$.
Let us move now the term with the time integral of $\mathscr{L}^\epsilon \varphi^{k,\epsilon} (u^\epsilon_r,y^\epsilon_r)$. We conveniently rewrite this term as $\mathscr{L}^\epsilon \varphi^{k,\epsilon}=\Phi^{k,\epsilon} + \epsilon^{1/2}\Phi^{k,\epsilon}_1$, where for $r \in [0,T]$, $\Phi^{k,\epsilon},\Phi^{k,\epsilon}_1$ are implicitly given by
\begin{align*}
\mathscr{L}^\epsilon \varphi^{k,\epsilon} (u^\epsilon_r,y^\epsilon_r)
&=
\langle Au^\epsilon_r + b(u^\epsilon_r,u^\epsilon_r) , D_u \varphi^k \rangle
+
\langle b(y^\epsilon_r,u^\epsilon_r) , D_u \varphi_1^{k,\epsilon}(y^\epsilon_r) \rangle
+
\langle b(y^\epsilon_r,y^\epsilon_r) , D_y \varphi_1^{k,\epsilon}(u^\epsilon_r) \rangle
\\
&\quad+
\epsilon^{1/2} \left( 
\langle Au^\epsilon_r + b(u^\epsilon_r,u^\epsilon_r), D_u \varphi_1^{k,\epsilon}(y^\epsilon_r) \rangle 
+ 
\langle b(u^\epsilon_r,y^\epsilon_r), D_y \varphi_1^{k,\epsilon}(u^\epsilon_r) \rangle \right)
\\
&\eqqcolon \Phi^{k,\epsilon}(u^\epsilon_r,y^\epsilon_r) + \epsilon^{1/2} \Phi^{k,\epsilon}_1(u^\epsilon_r,y^\epsilon_r).
\end{align*}
We have the inequalities
\begin{align*}
|\langle Au^\epsilon_r + b(u^\epsilon_r,u^\epsilon_r) , D_u \varphi^k \rangle|
&=
|\langle Au^\epsilon_r + b(u^\epsilon_r,u^\epsilon_r) , e_k \rangle|
\\
&\lesssim
\|u^\epsilon_r\|_H \|e_k\|_{H^2} + \|u^\epsilon_r\|_{H}^2 \|e_k\|_{H^{\theta_0}},
\\
|\langle b(y^\epsilon_r,u^\epsilon_r) , D_u \varphi_1^{k,\epsilon}(y^\epsilon_r) \rangle|
&\lesssim 
\|e_k\|_{H^{\theta_1}}\|y^\epsilon_r\|_H \|y^\epsilon_r\|_{H^{3/2-2\gamma}} \|u^\epsilon_r\|_{H^1},
\\ 
|\langle b(y^\epsilon_r,y^\epsilon_r) , D_y \varphi_1^{k,\epsilon}(u^\epsilon_r) \rangle|
&\lesssim 
\|e_k\|_{H^{\theta_1}}\|y^\epsilon_r\|_{H^1} \|y^\epsilon_r\|_{H^{3/2-2\gamma}} \|u^\epsilon_r\|_H,
\end{align*}
so that the time integral of $\Phi^{k,\epsilon}(u^\epsilon_r,y^\epsilon_r)$ satisfies:
\begin{align*}
\mathbb{E} \left[ \left( \int_s^t \Phi^{k,\epsilon}(u^\epsilon_r,y^\epsilon_r) dr \right)^p\right]
&\lesssim
\|e_k\|_{H^{\theta_1}}^p |t-s|^{(\gamma-1/4)p}
\end{align*}
uniformly in $\epsilon$.
Similarly, 
\begin{align*}
|\langle Au^\epsilon_r , D_u \varphi_1^{k,\epsilon}(y^\epsilon_r) \rangle |
&\lesssim 
\|e_k\|_{H^{\theta_1}} \|u^\epsilon_r\|_{H^1} \|y^\epsilon_r\|_{H^{1-2\gamma}},
\\
|\langle b(u^\epsilon_r,u^\epsilon_r) , D_u \varphi_1^{k,\epsilon}(y^\epsilon_r) \rangle |
&\lesssim
\|e_k\|_{H^{\theta_1}} \|u^\epsilon_r\|_{H^1} \|u^\epsilon_r\|_{H^{3/2-2\gamma}} \|y^\epsilon_r\|_H,
\\
|\langle b(u^\epsilon_r,y^\epsilon_r), D_y \varphi_1^{k,\epsilon}(u^\epsilon_r) \rangle |
&\lesssim
\|e_k\|_{H^{\theta_1}} \|y^\epsilon_r\|_{H^1} \|u^\epsilon_r\|_{H^{3/2-2\gamma}} \|u^\epsilon_r\|_H,
\end{align*}
and we can bound the time integral of $\Phi^{k,\epsilon}_1(u^\epsilon_r,y^\epsilon_r)$  with
\begin{align*}
\epsilon^{p/2}
\mathbb{E} \left[ \left( \int_s^t \Phi^{k,\epsilon}_1(u^\epsilon_r,y^\epsilon_r) dr \right)^p\right]
&\lesssim
\epsilon^{p/2}
\|e_k\|_{H^{\theta_1}}^p |t-s|^{(\gamma-1/4)p}.
\end{align*}

The last term remaining is the stochastic integral; we have by It\=o Isometry
\begin{align*}
\mathbb{E}&\left[ \left(
\int_s^t \langle D_y \varphi_1^{k,\epsilon} (u^\epsilon_r), Q^{1/2}dW_r \rangle \right)^p \right]
\\
&=
\mathbb{E}\left[ \left(
\int_s^t \langle b((-C_\epsilon)^{-1}Q^{1/2}dW_r,u^\epsilon_r), e_k \rangle \right)^p \right]
\lesssim \|e_k\|^p_{H^{\theta_0}} |t-s|^{p/2}.
\end{align*}

Putting all together, we finally arrive to the following bound, uniform in $\epsilon$ and valid for every $k \in \N$, $s,t \in [0,T]$, $s<t$ and for every $p>2$:
\begin{align} \label{eq:aux002}
\mathbb{E} \left[ 
\left( \varphi^k(u^\epsilon_t) - \varphi^k(u^\epsilon_s)\right)^p \right]
&\lesssim
\|e_k\|_{H^{\theta_1}}^p |t-s|^{\alpha p},
\quad
\alpha = \min \left\{ 2\gamma/\theta_0 , \gamma/\theta, \gamma-1/4 \right\}.
\end{align}

Recall that in order to estimate the $H^{-\sigma}$ norm of $u^\epsilon_t-u^\epsilon_s$ we have to sum \eqref{eq:aux002} above over all $k \in \N$; for this reason, we further require that $\sigma$ is such that
\begin{gather*}
\sum_{k\in\mathbb{N}} \frac{\|e_k\|^p_{H^{\theta_1}}}{\nu_k^{2\sigma}}<\infty,
\end{gather*}
so that by Fubini-Tonelli Theorem, \eqref{eq:aux003} and \eqref{eq:aux002}
\begin{align*}
\mathbb{E} \left[
\|u^\epsilon_t-u^\epsilon_s\|^p_{H^{-\sigma}} \right]
&=
\mathbb{E} \left[
\left( \sum_{k\in\mathbb{N}} \frac{\left(\varphi^k(u^\epsilon_t)-\varphi^k(u^\epsilon_s)\right)^2}{\nu_k^{2\sigma}} \right)^{p/2} \right]
\\
&\leq
\left( \sum_{k\in\mathbb{N}} \frac{1}{\nu_k^{2\sigma}} \right)^{\frac{p-2}{p}}
\mathbb{E} \left[
\sum_{k\in\mathbb{N}} \frac{\left(\varphi^k(u^\epsilon_t)-\varphi^k(u^\epsilon_s)\right)^p}{\nu_k^{2\sigma}} \right]
\lesssim |t-s|^{\alpha p}.
\end{align*}
\end{proof}

Thus, we are ready to prove the first part of \autoref{thm:main}, that is:
\begin{prop} \label{prop:tightness}
For every $\beta>0$, the laws of the processes $\{u^\epsilon\}_{\epsilon \in (0,1)}$ are tight as probability measures on the space $L^2([0,T],H) \cap C([0,T],H^{-\beta})$.
\end{prop}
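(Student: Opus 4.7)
My plan is to apply Simon's compactness criterion (Lemma~2.1) once uniform-in-$\epsilon$ moment bounds on $u^\epsilon$ have been established in a space $\mathcal{X}$ that compactly embeds into $L^2([0,T], H) \cap C([0,T], H^{-\beta})$.

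First I would assemble the two ingredients. From (S3) one has the uniform almost-sure bound $\|u^\epsilon\|_{L^\infty([0,T], H)} + \|u^\epsilon\|_{L^2([0,T], H^1)} \lesssim 1$. To control temporal oscillations, I would upgrade \autoref{lem:Sobolev_increments} into a Sobolev--Slobodeckij bound in the standard way: fix $p > 1/\alpha$ and choose $\alpha' \in (1/p, \alpha)$; then Fubini--Tonelli and the lemma yield
\begin{align*}
\mathbb{E}\!\left[\int_0^T\!\!\int_0^T \frac{\|u^\epsilon_t - u^\epsilon_s\|_{H^{-\sigma}}^p}{|t-s|^{1 + \alpha' p}}\, dt\, ds\right] \lesssim \int_0^T\!\!\int_0^T |t-s|^{(\alpha - \alpha')p - 1}\, dt\, ds < \infty
\end{align*}
uniformly in $\epsilon$, where $\sigma = \sigma(p) > 0$ is supplied by \autoref{lem:Sobolev_increments}. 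Combined with the previous display, the family is therefore uniformly bounded in $L^p(\Omega; \mathcal{X})$ for
\begin{align*}
\mathcal{X} := L^\infty([0,T], H) \cap L^2([0,T], H^1) \cap W^{\alpha',p}([0,T], H^{-\sigma}).
\end{align*}

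I would conclude with a routine Markov/Prohorov argument: for every $\eta > 0$, pick $M = M(\eta)$ so that $\mathbb{P}(\|u^\epsilon\|_{\mathcal{X}} > M) < \eta$ uniformly in $\epsilon$. For any $\beta \in (0, \sigma)$, \autoref{lem:Simon} makes the closed ball $\{u : \|u\|_{\mathcal{X}} \leq M\}$ relatively compact in both $L^2([0,T], H)$ and $C([0,T], H^{-\beta})$; since each of these embeds continuously into $L^2([0,T], H^{-\beta})$, any sequence from the ball admits a subsequence converging in \emph{both} norms to the same limit, so the ball is relatively compact in the intersection endowed with its natural Banach topology. This delivers the required tight compact sets. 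I do not expect any serious obstacle here, as the real difficulty has already been absorbed into the perturbed test function construction underlying \autoref{lem:Sobolev_increments}, whose corrector $\varphi_1^{k,\epsilon}$ cancels the singular $\epsilon^{-1/2} b(y^\epsilon, u^\epsilon)$ term when estimating the time increments of $\langle u^\epsilon, e_k\rangle$.
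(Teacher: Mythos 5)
Your proposal is correct and follows essentially the same route as the paper: uniform bounds from (S3) together with the increment estimates of \autoref{lem:Sobolev_increments} give a uniform bound in $W^{\alpha',p}([0,T],H^{-\sigma})$ (your Fubini--Tonelli step just makes explicit what the paper leaves implicit), and Simon's criterion plus Markov's inequality then yields the tight compact sets. The only cosmetic difference is your restriction $\beta\in(0,\sigma)$, which covers all $\beta>0$ anyway since relative compactness in $C([0,T],H^{-\beta'})$ for small $\beta'$ passes to $C([0,T],H^{-\beta})$ for larger $\beta$ by continuous embedding.
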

\begin{proof}
Let $\alpha_0$ be given by previous \autoref{lem:Sobolev_increments}, and take $\alpha \in (0,\alpha_0)$, $p>1/\alpha$ and $\sigma>0$ such that the lemma holds.
By the aforementioned lemma, $\mathbb{E}\left[\|u^\epsilon\|_{W^{\alpha,p}([0,T],H^{-\sigma})}\right]$ is bounded uniformly in $\epsilon$; since in addition $\mathbb{E}\left[ \|u^\epsilon\|_{L^\infty([0,T],H)} \right]$ and $\mathbb{E}\left[ \|u^\epsilon\|_{L^2([0,T],H^1)} \right]$ are also bounded uniformly in $\epsilon$ by assumption (S3), Simon compactness criterium \autoref{lem:Simon} yields tightness of the sequence of laws of the processes $\{u^\epsilon\}_{\epsilon \in (0,1)}$ in the space $L^2([0,T],H)$ $\cap$ $C([0,T],H^{-\beta})$ for every $\beta > 0$.
\end{proof}

\subsection{Identification of the limit}
\label{s5.2}
Let $\varphi=\langle \cdot,h\rangle \in F$ be a test function, and denote $\varphi^\epsilon(u,y,Y) = \varphi(u) + \epsilon^{1/2} \varphi_1^\epsilon(u,y) + \epsilon \varphi_2^\epsilon(u,Y)$, where $\varphi_1^\epsilon$ and $\varphi_2^\epsilon$ are given by \eqref{eq:varphi1} and \eqref{eq:varphi2} respectively.
Let us also introduce the homogeneous corrector $\varphi_1$ via the formula
\begin{align} \label{eq:varphi1_lim}
\varphi_1(u,y) 
\coloneqq 
\langle b((-C)^{-1}y,u) , h \rangle,
\end{align}
and the limiting effective generator $\mathscr{L}^0$ by
\begin{align} \label{eq:L0_lim}
\mathscr{L}^0 \varphi (u)
=
\langle Au +b(u,u),h \rangle
+
\int_H \psi_u(w) d\mu(w),
\end{align}
where $\psi_u(w)=\langle b(w,u) , D_u \varphi_1(w)\rangle + \langle b(w,w) , D_y \varphi_1(u) \rangle$ and $\mu = \mathcal{N}(0,Q_\infty)$.

Since $(u^\epsilon,y^\epsilon)$ is a weak solution of system \eqref{eq:system_2} and $Y^\epsilon$ is a strong solution to \eqref{eq:Yeps}, by It\=o Formula \autoref{lem:ito} we have almost surely for every $t \in [0,T]$:
\begin{align*}
\varphi^\epsilon(u^\epsilon_t,y^\epsilon_t,Y^\epsilon_t)
&=
\varphi^\epsilon(u_0,y_0,0)
+
\int_0^t \mathscr{L}^\epsilon \varphi^\epsilon(u^\epsilon_s,y^\epsilon_s,Y^\epsilon_s) ds
\\
&\quad+
\epsilon^{-1/2}\int_0^t \langle D_y \varphi^\epsilon(u^\epsilon_s,y^\epsilon_s,Y^\epsilon_s) , Q^{1/2} dW_s \rangle,
\end{align*}
or equivalently, 
\begin{align} \label{eq:002}
\varphi(u^\epsilon_t) 
&= 
\varphi(u_0)
+
\int_0^t \mathscr{L}^{0} \varphi(u^\epsilon_s) ds
+
\int_0^t \langle b((-C)^{-1}Q^{1/2} dW_s,u^\epsilon_s) , h\rangle
\\
&\quad+ \nonumber
\int_0^t \left( \mathscr{L}^{0,\epsilon} \varphi(u^\epsilon_s) - \mathscr{L}^{0} \varphi(u^\epsilon_s) \right) ds
+ 
\int_0^t \langle b(\left((-C_\epsilon)^{-1}-(-C)^{-1}\right) Q^{1/2} dW_s,u^\epsilon_s) , h\rangle
\\
&\quad \nonumber
+ \epsilon^{1/2} \left( \varphi_1^\epsilon(u_0,y_0)-\varphi_1^\epsilon(u^\epsilon_t,y^\epsilon_t)\right)
+ \epsilon \left( \varphi_2^\epsilon(u_0,0)-\varphi_2^\epsilon(u^\epsilon_t,Y^\epsilon_t)\right)
\\
&\quad \nonumber
+ \int_0^t \Phi_0^\epsilon(u^\epsilon_s,y^\epsilon_s,Y^\epsilon_s)ds
+ \epsilon^{1/2} \int_0^t \Phi_1^\epsilon(u^\epsilon_s,y^\epsilon_s,Y^\epsilon_s)ds
+ \epsilon \int_0^t \Phi_2^\epsilon(u^\epsilon_s,Y^\epsilon_s)ds
\\
&\quad+ \nonumber
\epsilon^{1/2} 
\int_0^t \langle D_Y \varphi_2^\epsilon(u^\epsilon_s,Y^\epsilon_s) ,Q^{1/2} dW_s \rangle, 
\end{align}
where $\mathscr{L}^0$ is the limiting effective generator defined by \eqref{eq:L0_lim}, $\mathscr{L}^{0,\epsilon}$ is the effective generator defined by \eqref{eq:L0} and we have denoted for notational simplicity
\begin{align*}
\Phi_0^\epsilon(u,y,Y)
&=
\langle b(y,u), D_u \varphi_1^\epsilon(y) \rangle 
+
\langle b(y,y), D_y \varphi_1^\epsilon(u) \rangle
\\
&\quad-
\langle b(Y,u), D_u \varphi_1^\epsilon(Y) \rangle 
+
\langle b(Y,Y), D_y \varphi_1^\epsilon(u) \rangle,
\\
\Phi_1^\epsilon (u,y,Y)
&=
\langle Au + b(u,u), D_u \varphi_1^\epsilon(y) \rangle 
+ 
\langle b(u,y), D_y \varphi_1^\epsilon(u) \rangle
\\
&\quad+ 
\langle b(y,u), D_u \varphi_2^\epsilon(Y) \rangle ,
\\
\Phi_2^\epsilon (u,Y)
&=
\langle Au + b(u,u), D_u \varphi_2^\epsilon(Y) \rangle 
.
\end{align*}

Equation \eqref{eq:002} clearly indicates the candidate limit dynamics - first line of the equation - and the remainder terms - lines second to fifth. 
Our aim consists in proving, on the one hand the convergence of the first line to the same quantity evaluated at $u^\epsilon=u$ (for a possibly different Wiener process $W$; recall that at this stage the stochastic basis is still dependent on $\epsilon$), and  on the other hand the convergence of all remainders to zero.

In order to conveniently pass to the limit $\epsilon \to 0$, we invoke a standard combination of Prokhorov Theorem and Skorokhod Theorem. 
Indeed, since the family of laws of the processes $\{u^\epsilon\}_{\epsilon\in(0,1)}$ is tight on the space $L^2([0,T],H)$ $\cap$ $C([0,T],H^{-\beta})$ for every $\beta>0$, and $\{Q^{1/2}W=Q^{1/2}W^\epsilon\}_{\epsilon\in(0,1)}$ is a family of identically distributed $C([0,T],H)$-valued random variables, by Prokhorov Theorem there exists a subsequence $\epsilon_n \to 0$ such that $(u^{\epsilon_n},Q^{1/2}W^{\epsilon_n})$ converges in distribution as $n \to \infty$ towards a process $(u,Q^{1/2}W)$\footnote{Recall that any Wiener process with covariance operator $Q$ can be written as $Q^{1/2} W$ for some cylindrical Wiener process $W$ on $H$.} taking values in the space: 
\begin{align*}
\mathcal{X} \coloneqq \left( L^2([0,T],H) \cap C([0,T],H^{-\beta}) \right) \times C([0,T],H).
\end{align*}

Then, given \emph{any} subsequence such that $(u^{\epsilon_n},Q^{1/2}W^{\epsilon_n}) \to (u,Q^{1/2}W)$ in distribution (not necessarily that one provided by Prokhorov Theorem), in virtue of Skorokhod Theorem there exists a new probability space $(\tilde{\Omega},\tilde{\mathcal{F}},\tilde{\mathbb{P}})$ supporting $\mathcal{X}$-valued random variables $(\tilde{u},Q^{1/2}\tilde{W}) \sim (u,Q^{1/2}W)$ and $(\tilde{u}^n,Q^{1/2}\tilde{W}^n ) \sim (u^{\epsilon_n},Q^{1/2} W^{\epsilon_n})$ for every $n \in \N$ such that $(\tilde{u}^n,Q^{1/2}\tilde{W}^n ) \to (\tilde{u},Q^{1/2}\tilde{W})$ $\tilde{\mathbb{P}}$-almost surely as random variables in $\mathcal{X}$.
Of course, as usually done in these situations we drop the tildes in what follows.

\begin{prop} \label{prop:identification}
Let $(u^n,Q^{1/2}W^n) \to (u,Q^{1/2}W)$ as above.
Then for every $\varphi \in F$ we have the almost sure identity
\begin{align} \label{eq:u}
\varphi(u_t) 
&= 
\varphi(u_0)
+
\int_0^t \mathscr{L}^0 \varphi(u_s) ds
+
\int_0^t \langle b((-C)^{-1} Q^{1/2} dW_s, u_s), h \rangle,
\quad
\forall t \in [0,T].
\end{align}
\end{prop}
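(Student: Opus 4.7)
The starting point is the identity \eqref{eq:002}, which I would rewrite schematically as
\begin{align*}
\varphi(u^{\epsilon_n}_t) - \varphi(u_0) - \int_0^t \mathscr{L}^{0}\varphi(u^{\epsilon_n}_s)\,ds - \int_0^t \langle b((-C)^{-1} Q^{1/2}dW^{\epsilon_n}_s, u^{\epsilon_n}_s), h \rangle = R^{\epsilon_n}_t,
\end{align*}
where $R^{\epsilon_n}_t$ collects all the remainder lines in \eqref{eq:002}. The proof then splits into two independent parts: (a) show $R^{\epsilon_n}_t \to 0$ in probability, and (b) show that the three displayed terms converge to their $(u, W)$-analogues on the Skorokhod probability space.

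For part (a), each remainder is handled by a quantitative bound:\ the integrand of the term $\mathscr{L}^{0,\epsilon}\varphi - \mathscr{L}^0\varphi$ is controlled by $\|G_\epsilon\|\|h\|_{H^{\theta_1}}\int_H \|w\|^2 d\mu^\epsilon$-type quantities, vanishing as $O(\epsilon^{\beta_2})$ via the bound $\|G_\epsilon\|_{H^s \to H^{s+2\gamma(1+\beta_2)-2\beta_2}} \lesssim \epsilon^{\beta_2}$; the stochastic integral with $(-C_\epsilon)^{-1}-(-C)^{-1}$ is estimated similarly by It\=o isometry. The boundary terms $\epsilon^{1/2}\varphi_1^\epsilon$ and $\epsilon \varphi_2^\epsilon$ go to zero thanks to \autoref{prop:regularity_varphi1}, \eqref{eq:varphi2} and the energy bounds (S3)--(S4). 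The key remainder is $\int_0^t \Phi_0^\epsilon(u^\epsilon_s,y^\epsilon_s,Y^\epsilon_s)\,ds$: by \autoref{prop:linearisation} it is bounded by a sum of two expressions, one containing $\|\zeta^\epsilon\|_H\|Y^\epsilon\|_{H^{\theta_0-\gamma}}\|u^\epsilon\|_H$ (vanishing since $\|\zeta^\epsilon\|_H \lesssim \epsilon^{1/2}$ by \autoref{prop:y-Y} after interpolation) and one of the form $\epsilon^{-\frac{1-\gamma-\delta}{2(1-\gamma)}} \|\zeta^\epsilon\|_{H^\gamma}\|y^\epsilon\|_H\|u^\epsilon\|_{H^{1-\delta}}$; combining Cauchy--Schwarz with \autoref{prop:y-Y} gives a net factor $\epsilon^{1/2 - \frac{1-\gamma-\delta}{2(1-\gamma)}} = \epsilon^{\delta/(2(1-\gamma))} \to 0$. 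The $\epsilon^{1/2}\Phi_1^\epsilon$ and $\epsilon \Phi_2^\epsilon$ terms are handled by the regularity bounds of \autoref{prop:regularity_varphi1} and \autoref{prop:regularity_varphi2}, together with (S3)--(S4); the final stochastic integral $\epsilon^{1/2}\int_0^t \langle D_Y \varphi_2^\epsilon, Q^{1/2}dW\rangle$ has expected $L^2$ norm $O(\epsilon)$ by It\=o isometry and \eqref{eq:bound_DY_varphi2}.

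For part (b), on the Skorokhod space we have $u^n \to u$ $\PP$-a.s.\ in $L^2([0,T],H)\cap C([0,T],H^{-\beta})$ and $W^n \to W$ a.s.\ in $C([0,T],H)$. The pointwise term $\varphi(u^n_t)\to \varphi(u_t)$ follows from $u^n\to u$ in $C([0,T],H^{-\beta})$ after choosing $\beta$ small enough that $h \in H^\beta$. For $\int_0^t\mathscr{L}^0\varphi(u^n_s)\,ds$, one splits $\mathscr{L}^0\varphi(u)=\langle Au,h\rangle +\langle b(u,u),h\rangle +\int_H\psi_u(w)d\mu(w)$; the first and third parts are linear in $u$ and pass to the limit by the weak convergence alone, while $\int_0^t \langle b(u^n_s,u^n_s),h\rangle\,ds\to \int_0^t \langle b(u_s,u_s),h\rangle\,ds$ by $L^2([0,T],H)$-convergence combined with (B1) and the uniform $L^2([0,T],H^1)$ bound of (S3).

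\textbf{Main obstacle.} The delicate step is convergence of the It\^o integral $\int_0^t \langle b((-C)^{-1}Q^{1/2}dW^n_s, u^n_s), h\rangle = -\int_0^t \langle b((-C)^{-1}Q^{1/2}dW^n_s, h), u^n_s\rangle$ (using (B4)): here both the Brownian motion and the integrand depend on $n$, so one cannot simply invoke continuity of the stochastic integral. The plan is to expand $W^n$ in the orthonormal basis $\{e_k\}$ as $W^n = \sum_k W^{n,k}e_k$, identify the integrand $\langle b((-C)^{-1}Q^{1/2}e_k,u^n_s),h\rangle$ as continuous in $u^n$ with values in $L^2([0,T])$, and conclude term-by-term via the Skorokhod convergence and a standard stochastic-integral continuity lemma (as in Bensoussan, or Debussche--Glatt-Holtz--Temam); uniform integrability, needed for passage of the martingale property to the limit, follows from the energy bounds (S3) combined with the Burkholder--Davis--Gundy inequality applied to each truncated sum.
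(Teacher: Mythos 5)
Your treatment of the remainders and of the deterministic terms coincides with the paper's Steps 1 and 2: the same decomposition of \eqref{eq:002}, the same bounds via $\|G_{\epsilon_n}\|$, \autoref{prop:regularity_varphi1}, \autoref{prop:regularity_varphi2}, \autoref{prop:linearisation} and \autoref{prop:y-Y} (including the exponent bookkeeping $\epsilon^{1/2-\frac{1-\gamma-\delta}{2(1-\gamma)}}=\epsilon^{\delta/(2(1-\gamma))}$), and essentially the same continuity argument for $\xi\mapsto\int_0^\cdot\mathscr{L}^0\varphi(\xi_s)ds$. Where you genuinely diverge is the identification of the noise term. The paper does not pass to the limit in the It\=o integral at all: it formulates a martingale problem on the path space $\mathcal{X}$ with the pushforward laws $\mathbb{Q}^n,\mathbb{Q}$, shows that $\rho_t=\varphi(\xi_t)-\varphi(\xi_0)-\int_0^t\mathscr{L}^0\varphi(\xi_s)ds$ and the coordinate $\omega$ form a square-integrable martingale pair under $\mathbb{Q}$ with explicit quadratic covariations, and then invokes the martingale representation theorem \cite[Theorem 8.2]{DPZa14} to produce the stochastic integral. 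Your route — expanding $W^n$ in the basis $\{e_k\}$ and passing to the limit term by term via a stochastic-integral convergence lemma of Bensoussan/Debussche--Glatt-Holtz--Temam type — is a legitimate and more constructive alternative (it identifies the driving noise as the given $W$ without enlarging the probability space), but it buys this at the cost of one point you should make explicit: identity \eqref{eq:002} involves $y^{\epsilon_n}$, $Y^{\epsilon_n}$ and several stochastic integrals, while the Skorokhod coupling as set up only represents the pair $(u^{\epsilon_n},Q^{1/2}W^{\epsilon_n})$. You must therefore either enlarge the Skorokhod representation to carry all these processes, or argue that the whole error $R^{\epsilon_n}_t$ is determined in law by $(u^{\epsilon_n},Q^{1/2}W^{\epsilon_n})$ so that $\tilde{\mathbb{E}}[|\tilde R^n_t|^2]\to 0$ transfers, and verify that $\tilde W^n$ remains a Wiener process for the filtration generated by $(\tilde u^n,\tilde W^n)$ so that the transferred stochastic integral is meaningful. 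The martingale-problem formulation is precisely what lets the paper sidestep this, since both the martingale property under $\mathbb{Q}^n$ and the smallness of the remainders are statements about laws and expectations only.
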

\begin{proof}
We divide the proof in three steps. First, we show that the remainder terms are infinitesimal in mean square as $n \to \infty$;
second, we prove that the deterministic effective dynamics is a continuous function of the path $\xi \in C([0,T],H^{-\beta}) \cap L^2([0,T],H)$; finally, we invoke a martingale representation theorem to identify the limit behaviour of the martingale term in \eqref{eq:u}.

\emph{Step 1}. Let us focus on the remainder terms in the right-hand-side of \eqref{eq:002}.
They are of several kinds: \emph{i}) terms involving the differences 
\begin{align*}
\int_0^t &\mathscr{L}^{0,\epsilon_n} \varphi (u^{\epsilon_n}_s) ds 
-
\int_0^t \mathscr{L}^{0} \varphi (u^{\epsilon_n}_s) ds,
\qquad
\int_0^t \langle b(G_{\epsilon_n} Q^{1/2} dW_s,u^{\epsilon_n}_s) , h\rangle,
\end{align*} 
where the operator $G_{\epsilon_n} \coloneqq (-C_{\epsilon_n})^{-1}-(-C)^{-1} = {\epsilon_n} (-C)^{-1}A(-C_\epsilon)^{-1}$, which are controlled using the bounds $\|G_{\epsilon_n}\|_{H^s \to H^{s+2\gamma(1+\beta)-2\beta}} \lesssim \epsilon_n^\beta$ and  $\|e^{C_{\epsilon_n} t} - e^{C t}\|_{H^{\theta+2\beta} \to H^\theta} \lesssim \epsilon_n^\beta $ for every $\beta \in [0,1]$, uniformly in $t \in [0,\infty)$, and go to zero in mean square as $n \to \infty$ (and ${\epsilon_n} \to 0$);
\emph{ii}) terms of the form $\epsilon_n^{1/2} \varphi_1^{\epsilon_n}(u^n_t,y^n_t)$ or $\epsilon_n \varphi_2^{\epsilon_n}(u^n_t,Y^n_t)$, $t \in [0,T]$, $Y^n \coloneqq Y^{\epsilon_n}$, which can be easily shown to converge to zero in mean square as $n \to \infty$ as a consequence of energy bounds for $(u^\epsilon,y^\epsilon)$, the bound $\|\varphi_2^{\epsilon_n}(u^n_t,\cdot)\|_{\mathcal{E}^{\theta'}} \lesssim \|h\|_{H^{\theta_1}} \|u^n_t\|_H$ for some $\theta'>5/4-2\gamma$ and
\begin{align*}
|\varphi_1^{\epsilon_n}(u^n_t,y^n_t)| 
&\lesssim
\|y^n_t\|_H \|u^n_t\|_H \|h\|_{H^{\theta_0}},
\\
|\varphi_2^{\epsilon_n}(u^n_t,Y^n_t)| 
&\lesssim 
(1+\|Y^n_t\|^2_{H^{\theta'}}) \|h\|_{H^{\theta_1}} \|u^n_t\|_H;
\end{align*} 
\emph{iii}) the term $\int_0^t \Phi_0^{\epsilon_n}(u^n_s,y^n_s,Y^n_s) ds$, which is infinitesimal in mean square by \autoref{prop:linearisation} and \autoref{prop:y-Y};
\emph{iv}) the terms involving the time integrals of $\Phi_1^{\epsilon_n}(u^n_s,y^n_s,Y^n_s)$ and $\Phi_2^{\epsilon_n}(u^n_s,Y^n_s)$, which are controlled by \autoref{prop:regularity_varphi1}, \autoref{prop:regularity_varphi2} and the estimates:
\begin{align*}
|\Phi_1^{\epsilon_n} (u^n_s,y^n_s,Y^n_s)|
&\lesssim
|\langle Au^n_s + b(u^n_s,u^n_s), D_u \varphi_1^{\epsilon_n}(y^n_s) \rangle| 
+ 
|\langle b(u^n_s,y^n_s), D_y \varphi_1^{\epsilon_n}(u^n_s) \rangle|
\\
&\quad
+ 
|\langle b(y^n_s,u^n_s),D_u \varphi_2^{\epsilon_n}(Y^n_s) \rangle| 
\\
&\lesssim 
\|u^n_s\|_H (1+\|u^n_s\|_{H^1}) \|y^n_s\|_H \|h\|_{H^{\theta_1}}
\\
&+
\|u^n_s\|_H \|y^n_s\|_H \|Y^n_s\|_{H^{\theta_0}}^2 \|h\|_{H^{\theta_1}},
\\
|\Phi_2^{\epsilon_n} (u^n_s,Y^n_s)|
&\lesssim 
\|u^n_s\|_H(1+\|u^n_s\|_H) \|Y^n_s\|_{H^{\theta_0}}^2 \|h\|_{H^{\theta_1}};
\end{align*}
and finally, \emph{v}) the stochastic integral $\epsilon_n^{1/2} \int_0^t \langle D_Y \varphi_2^{\epsilon_n}(u^n_s,Y^n_s), Q^{1/2} dW^n_s \rangle$, which by It\=o Isometry satisfies
\begin{align*}
\epsilon_n \mathbb{E} \left[ 
\left(\int_0^t \langle D_Y \varphi_2^{\epsilon_n}(u^n_s,Y^n_s), Q^{1/2} dW^n_s \rangle\right)^2
\right]
&=
\epsilon_n \mathbb{E} \left[ 
\int_0^t \| Q^{1/2}  D_Y \varphi_2^{\epsilon_n}(u^n_s,Y^n_s)\|_H^2 ds \right]
\to 0.
\end{align*}
Thus all the remainders converge to zero in mean square.

\emph{Step 2}. 
Let us consider on the path space $\mathcal{X}$ equipped with its Borel sigma field $\mathcal{B}$ the pushforward probability measures
\begin{align*}
\mathbb{Q}^n \coloneqq \mathbb{P} \circ (u^n,Q^{1/2}W^n)^{-1},
\quad
\mathbb{Q} \coloneqq \mathbb{P} \circ (u,Q^{1/2}W)^{-1}.
\end{align*}
Of course $\mathbb{Q}^n$ weakly converges towards $\mathbb{Q}$ as $n \to \infty$.
Let $\mathcal{A}$ be the $\mathbb{Q}$-completion of $\mathcal{B}$, and let $\{\mathcal{A}_t\}_{t \in [0,T]}$ be the smallest filtration of $\mathcal{A}$ that satisfies the usual conditions with respect to $\mathbb{Q}$ and such that the coordinate process $(\xi,\omega)$ on $\mathcal{X}$ is adapted. Introduce $\mathcal{A}^n$ and $\{\mathcal{A}^n_t\}_{t \in [0,T]}$ similarly.
Define the process
\begin{align} \label{eq:rho}
\rho_t 
\coloneqq 
\varphi(\xi_t)-\varphi(\xi_0)-\int_0^t \mathscr{L}^0 \varphi(\xi_s) ds, \quad t \in [0,T].
\end{align}
Let us show that $\rho_t$ is a continuous function of $\xi$.
 
First of all, notice that every $\varphi \in F$, $\varphi(\xi)=\langle \xi,h \rangle$ for some $h \in \mathscr{S}$, is a continuous function from $H^{-\beta}$ to $\mathbb{R}$. 
Therefore if $\xi^n \to \xi$ in $C([0,T],H^{-\beta})$ we have $\varphi(\xi^n) \to \varphi(\xi)$ in $C([0,T])$ as well.
Let us now consider the term involving the effective generator $\mathscr{L}^0$.
Recall
\begin{align*}
\mathscr{L}^0 \varphi (\xi)
=
\langle A\xi + b(\xi,\xi), h \rangle
+
\int_H \psi_\xi (w) d\mu(w).
\end{align*}
Let us show that the map $L^2([0,T],H) \ni \xi \mapsto \int_0^\cdot \mathscr{L}^0 \varphi(\xi_s) ds \in C([0,T])$ is sequentially continuous, or equivalently
\begin{align} \label{eq:conv_L0}
\int_0^t \mathscr{L}^0 \varphi(\xi^n_s) ds
\to
\int_0^t \mathscr{L}^0 \varphi(\xi_s) ds
\end{align}
in $C([0,T])$. 
For every $s \in [0,t]$, rewrite
\begin{align*}
\mathscr{L}^0 \varphi(\xi^n_s) -  \mathscr{L}^0 \varphi(\xi_s)
&=
\langle A\xi^n_s + b(\xi^n_s,\xi^n_s), h \rangle
-
\langle A\xi_s + b(\xi_s,\xi_s), h \rangle
\\
&\quad+
\int_H \psi_{\xi^n_s} (w) d\mu(w)
-
\int_H \psi_{\xi_s} (w) d\mu(w).
\end{align*}
Let us bound the terms in the right-hand-side of the previous expression separately. Making use of the usual estimates on $b$, we have
\begin{align*}
|\langle A(\xi^n_s-\xi_s) , h \rangle|
&\leq
\|\xi^n_s-\xi_s\|_H \|h\|_{H^2};
\\
|\langle b(\xi^n_s,\xi^n_s) - b(\xi_s,\xi_s), h \rangle|
&\leq
|\langle b(\xi^n_s,\xi^n_s-\xi_s), h \rangle|
+
|\langle b(\xi^n_s-\xi_s,\xi_s), h \rangle|
\\
&\lesssim (\|\xi^n_s\|_H+\|\xi_s\|_H) \|\xi^n_s-\xi_s\|_H \|h\|_{H^{\theta_0}} ;
\\
\left| \int_H \psi_{\xi^n_s} (w) d\mu(w)
-
\int_H \psi_{\xi_s} (w) d\mu(w) \right|
&\lesssim
\|h\|_{H^{\theta_1}} \|\xi^n_s-\xi_s\|_H \int_H \|w\|_{H^{\theta_0}}^2 d\mu(w) \\
&\lesssim
\|h\|_{H^{\theta_1}} \|\xi^n_s-\xi_s\|_H .
\end{align*}
Putting all together, we finally obtain the following bound
\begin{align*} 
|\mathscr{L}^0 \varphi(\xi^n_s) -  \mathscr{L}^0 \varphi(\xi_s)|
&\lesssim
(1+\|\xi^n_s\|_H+\|\xi_s\|_H)
\|\xi^n_s-\xi_s\|_H.
\end{align*}
In particular, recalling that $\xi^n \to \xi$ in $L^2([0,T],H)$ we have:
\begin{align*}
&\sup_{t \in [0,T]} \left| \int_0^t \mathscr{L}^0 \varphi(\xi^n_s) ds -  \int_0^t \mathscr{L}^0 \varphi(\xi_s) ds \right|
\leq
\int_0^T |\mathscr{L}^0 \varphi(\xi^n_s) -  \mathscr{L}^0 \varphi(\xi_s)| ds
\\
&\qquad\lesssim 
\left( \int_0^T (1+\|\xi^n_s\|_H+\|\xi_s\|_H)^2 ds \right)^{1/2}
\left( \int_0^T \|\xi^n_s-\xi_s\|_H^2 ds \right)^{1/2}\to 0.
\end{align*}

\emph{Step 3}.
By weak convergence $\mathbb{Q}^n \to \mathbb{Q}$ and previous steps it is easy to show (cf. for instance \cite[Theorem 3.1]{FlGa95} or \cite[Chapter 8.4]{DPZa14}) that 
the couple $(\rho,\omega)$ is a continuous square-integrable martingale on $(\mathcal{X},\mathcal{A},\{\mathcal{A}_t\}_{t \in [0,T]}, \mathbb{Q})$ with quadratic covariations ($\{e_k\}_{k \in \N}$ is a complete orthonormal system of $H$):
\begin{align*}
[\rho,\rho ]_t 
&= 
\int_0^t \|Q^{1/2} D_y \varphi_1(\xi_s) \|_H^2 ds,
\\
[\rho,\langle e_k,\omega\rangle]_t 
&= 
\int_0^t \langle Q^{1/2} D_y \varphi_1(\xi_s) , Q^{1/2}e_k \rangle ds,
\\
[\langle e_k,\omega\rangle , \langle e_k,\omega\rangle]_t 
&= 
\langle Q^{1/2} e_k, Q^{1/2} e_k \rangle t.
\end{align*}
This is basically due to the fact that $\rho$ can be written as the sum of a martingale on $(\mathcal{X},\mathcal{A},\{\mathcal{A}^n_t\}_{t \in [0,T]}, \mathbb{Q}^n)$ plus remainder terms which are infinitesimal in mean square as $n \to \infty$.
By \cite[Theorem 8.2]{DPZa14}, up to a possible enlargement of the underlying probability space, there exists a cylindrical Wiener process $\tilde{\omega}$ on $(\mathcal{X},\mathcal{A},\{\mathcal{A}_t\}_{t \in [0,T]}, \mathbb{Q})$ such that the following martingale representation formulae hold $\mathbb{Q}$-almost surely for every $t \in [0,T]$:
\begin{align*}
\omega_t 
&= 
\int_0^t Q^{1/2} d\tilde{\omega}_s = Q^{1/2} \tilde{\omega}_t
,
\\
\rho_t 
&= 
\int_0^t \langle D_y \varphi_1(\xi_s) , Q^{1/2} d\tilde{\omega}_s \rangle
=
\int_0^t \langle D_y \varphi_1(\xi_s) , d\omega_s \rangle.
\end{align*}
In particular, since the auxiliary Wiener process $\tilde{\omega}$ satisfies $\omega_t = Q^{1/2} \tilde{\omega}_t$, the equation for $\rho_t$ above holds true also in the original probability space, without necessarily taking an enlargement thereof.
Thus, recalling \eqref{eq:rho} we have the following $\mathbb{Q}$-almost sure identity on the path space $\mathcal{X}$:
\begin{align*}
\varphi(\xi_t)
=
\varphi(\xi_0)
+
\int_0^t \mathscr{L}^0 \varphi(\xi_s) ds
+
\int_0^t \langle D_y \varphi_1(\xi_s) , d\omega_s \rangle,
\quad
\forall t \in [0,T],
\end{align*}
that by $\mathbb{Q} = \mathbb{P} \circ (u,Q^{1/2}W)^{-1}$ and the explicit expression of $\varphi_1$ \eqref{eq:varphi1_lim} is equivalent to our thesis.

\end{proof}

\subsection{It\=o-Stokes drift and Stratonovich corrector}

In this subsection, we provide an interpretation of the limiting equation in terms of different contribution to the dynamics. 
Recall that every weak accumulation point $u$ of the family $\{u^\epsilon_t\}_{\epsilon\in(0,1)}$ satisfies, for every $\varphi \in F$, $\varphi(u)=\langle u,h \rangle$ for some $h \in \mathscr{S}$, $t \in [0,T]$ the almost sure identity \eqref{eq:u}: 
\begin{align*} 
\varphi(u_t) 
&= 
\varphi(u_0)
+
\int_0^t \mathscr{L}^0 \varphi(u_s) ds
+
\int_0^t \langle b((-C)^{-1} Q^{1/2} dW_s, u_s), h \rangle,
\end{align*} 
where the limiting effective generator $\mathscr{L}^0$ is given by \eqref{eq:L0_lim}. For the reader's convenience, here we rewrite $\mathscr{L}^0 \varphi$ more explicitly as
\begin{align*}
\mathscr{L}^0 \varphi(u)
&=
\langle Au+b(u,u) , h \rangle
+
\int_H \langle b(w,u),D_u \varphi_1(w) \rangle d\mu(w)
+
\int_H \langle b(w,w), D_y \varphi_1(u) \rangle d\mu(w)
\\
&=
\langle Au+b(u,u) , h \rangle
+
\int_H \langle b((-C)^{-1}w,b(w,u)),h \rangle d\mu(w)
\\
&\quad+
\int_H \langle b((-C)^{-1}b(w,w), u),h \rangle d\mu(w).
\end{align*}

Let us compare \eqref{eq:u} with the dynamics of $u^\epsilon$, $\epsilon\in(0,1)$.
Of course, the term $\langle Au_s + b(u_s,u_s), h \rangle$ reflects the deterministic dynamics $\langle Au^\epsilon_s + b(u^\epsilon_s,u^\epsilon_s), h \rangle$ in the evolution of $u^\epsilon$.

On the other hand, the fast-oscillating term $\epsilon^{-1/2}\langle b(y^\epsilon_s,u^\epsilon_s), h \rangle$ in the equation for $u^\epsilon$ is responsible for the additional terms in the limit.
We can distinguish three different contributions:
\begin{itemize}
\item
The It\=o integral:
\begin{align} \label{eq:ito_integral}
\int_0^t \langle b((-C)^{-1} Q^{1/2} dW_s, u_s), h \rangle;
\end{align}
\item
The Stratonovich corrector:
\begin{align} \label{eq:corrector1}
\int_0^t \int_H \langle b((-C)^{-1}w,b(w,u_s)),h \rangle d\mu(w) ds;
\end{align}
\item
The It\=o-Stokes drift:
\begin{align}\label{e5.14}
\int_0^t \int_H \langle b((-C)^{-1}b(w,w), u_s),h \rangle d\mu(w)ds.
\end{align}
\end{itemize}

Of course the term denoted "It\=o-Stokes drift" equals $\int_0^t \langle b(r,u_s) ,h \rangle ds$ by the very definition of the It\=o-Stokes drift velocity $r = \int_H (-C)^{-1} b(w,w) d\mu(w)$.
Moreover, we shall see that the term called "Stratonovich corrector" \eqref{eq:corrector1} above is indeed the Stratonovich corrector of the term called "Ito integral" \eqref{eq:ito_integral}, namely:
\begin{align} \label{eq:strat}
\int_0^t \langle D_y \varphi_1(u_s) , Q^{1/2} \circ dW_s \rangle
&=
\int_0^t \langle D_y \varphi_1(u_s) , Q^{1/2} dW_s \rangle
\\ \nonumber
&\quad+
\int_0^t \int_H
\langle b(w,u_s) , D_u \varphi_1(w) \rangle d\mu(w)ds.
\end{align}
Therefore, \eqref{eq:u} is exactly the weak formulation of the equation in \autoref{thm:main}, which is then proved.

We are left to check the validity of \eqref{eq:strat}. Let $h \in \mathscr{S}$ be fixed, and let $W = \sum_k e_k W^k$, where $\{e_k\}_{k \in \N}$ is a complete orthonormal system in $H$ and $\{W^k\}_{k \in \N}$ is a family of one-dimensional i.i.d. Wiener processes.
As a matter of fact, one can rewrite the It\=o integral \eqref{eq:ito_integral} as
\begin{align*}
\int_0^t \langle b((-C)^{-1}Q^{1/2} dW_s, u_s) ,h  \rangle
&=
-\sum_{k \in \N}
\int_0^t \langle b((-C)^{-1}Q^{1/2} e_k,h) , u_s  \rangle dW^k_s;
\end{align*}
since for $h \in \mathscr{S}$ it holds $b((-C)^{-1}Q^{1/2} e_k,h) \in \mathscr{S}$ as well, the quadratic variation between the processes $\langle b((-C)^{-1}Q^{1/2} e_k,h) , u \rangle$ and $W^k$ is given by
\begin{align} \label{eq:aux03}
\left[ \langle b((-C)^{-1}Q^{1/2} e_k,h) , u \rangle , W^k \right]_t
&=
- \int_0^t \langle b((-C)^{-1} Q^{1/2} e_k, u_s) , b((-C)^{-1}Q^{1/2} e_k,h) \rangle ds.
\end{align}

On the other hand, \eqref{eq:corrector1} equals
\begin{align} \label{eq:aux04}
\int_0^t \int_H
\langle b((-C)^{-1} w , b(w,u_s)) , h \rangle d\mu(w) ds
&=
\sum_{k \in \N} 
\int_0^t \langle b((-C)^{-1} Q^{1/2}_\infty e_k , b(Q^{1/2}_\infty e_k,u_s)) , h \rangle ds
\\
&=
-\sum_{k \in \N}  \nonumber
\int_0^t \langle b((-C)^{-1} Q^{1/2}_\infty e_k , h) , b(Q^{1/2}_\infty e_k,u_s) \rangle ds.
\end{align} 

Recall that, since $C$ and $Q$ commute, the covariance operator $Q_\infty$ of the invariant measure $\mu=\mathcal{N}(0,Q_\infty)$ can be written as $Q_\infty = \frac{1}{2} (-C)^{-1}Q$.
In particular, there exists a complete orthonormal system $\{ e_k \}_{k \in \N}$ of $H$ that diagonalizes $C$, $Q$ and $Q_\infty$ simultaneously, namely
\begin{align*}
C e_k = -\lambda_k e_k,
\quad
Q e_k = q_k e_k,
\quad
Q_\infty e_k = \frac{q_k}{2\lambda_k} e_k,
\end{align*}
and therefore by \eqref{eq:aux03} and \eqref{eq:aux04} we finally get
\begin{align*}
\frac12 \left[ \langle b((-C)^{-1}Q^{1/2} e_k,h) , u \rangle , W^k \right]_t
=
\int_0^t \int_H
\langle b((-C)^{-1} w , b(w,u_s)) , h \rangle d\mu(w) ds.
\end{align*}

\begin{rmk} \label{rmk:commuting+stokes}
\emph{i})
The attentive reader would have noticed that this is the first time we are actually using that $C$ and $Q$ commute.
In particular, the convergence result described in the previous sections still holds true in the case $CQ \neq QC$. In this case, we believe that \begin{align*}
\sum_{k \in \N} 
\int_0^t &\langle b((-C)^{-1} Q^{1/2} e_k, u_s) , b((-C)^{-1}Q^{1/2} e_k,h) \rangle ds
\\
&\neq
\sum_{k \in \N}  \nonumber
\int_0^t \langle b((-C)^{-1} Q^{1/2}_\infty e_k , h) , b(Q^{1/2}_\infty e_k,u_s) \rangle ds
\end{align*}
and the limit equation cannot be interpreted as an equation with Strotonovich transport noise. 

\emph{ii})
Assume that $Q$ is isotropic, i.e. every basis of eigenvectors of $A$ also diagonalizes $Q$. Under this circumstance, the It\=o-Stokes drift equals zero for the Navier-Stokes system\footnote{Similar conditions on $Q$ can be found for other models, as Primitive Equations and Surface Quasi-Geostrophic equations.}, since for the particular choice 
\begin{align*}
e_{\mathbf{k},i} = a_{\mathbf{k},i} \cos(2 \pi \mathbf{k} \cdot x),
\quad \mbox{or }
e_{\mathbf{k},i} = a_{\mathbf{k},i} \sin(2 \pi \mathbf{k} \cdot x),
\end{align*}
where $\mathbf{k} \in \mathbb{Z}^d \setminus \{\textbf{0}\}$ ,
$i = 1,\dots,d-1$, and $a_{\mathbf{k},i} \in \mathbf{k}^\perp$ for every $\mathbf{k},i$, it holds
\begin{align*}
b(e_{\mathbf{k},i},e_{\mathbf{k},i})
&=
-\Pi \left( (e_{\mathbf{k},i} \cdot \nabla) e_{\mathbf{k},i}\right)
=
\pm \Pi \left( \underbrace{(a_{\mathbf{k},i} \cdot 2 \pi \mathbf{k})}_{=0}  \cos(2 \pi \mathbf{k} \cdot x)\sin(2 \pi \mathbf{k} \cdot x)\right) = 0.
\end{align*} 
\end{rmk}

\section{Eddy viscosity and convergence to deterministic Navier-Stokes equations} \label{sec:eddy}

In this section we restrict our attention to the Navier-Stokes system, and discuss the interesting problem of a varying covariance operator $Q=Q_N$, where $N \in \N$ is a parameter possibly dependent on the scaling parameter $\epsilon\in(0,1)$.
We shall identify conditions under which it is possible to prove the convergence to a deterministic process $u$, that solves the Navier-Stokes equations with an additional dissipative term:
\begin{align} \label{eq:u_bis}
du_t
&= 
A u_t dt + b(u_t,u_t) dt+ \kappa(u_t)dt,
\end{align}
for some (possibly unbounded) linear operator $\kappa : D(\kappa) \subset H \to H$, and the convergence is meant in the sense of \autoref{prop:identification}.

It is well-known from experiments that, in fluids, turbulent regimes manifest a dissipation enhancement with respect to laminar flows. 
Following the literature on the topic we refer to the operator $\kappa$ as \emph{eddy viscosity}. By our knowledge this is the first rigorous result showing that eddy viscosity in 3D Navier-Stokes can be deduced from additive noise, acting on idealized small scales of the fluid itself, in the limit of infinite separation of scales (cf. \cite{Pa21+} for a similar result on transport equation).
In the SPDE community similar results have been proved for equations with Stratonovich transport noise, taking advantage of the explicit  expression for the Stratonovich-to-It\=o corrector and a suitable scaling limit to show that the It\=o integral vanishes in the limit, see \cite{FlLu20,FlGaLu22,FlGaLu21+}. 
However, these results are mostly restricted to the two-dimensional case. 
The only result in dimension three we are aware of is \cite{FlLu21}, where Navier-Stokes equations in vorticity form are investigated: however, by admission of the same authors, the noise used in this paper lacks of physical meaning, since the vortex stretching term is neglected for technical limitations.
Here, we borrow their original ideas of scaling parameters in such a way that, in the limit, the It\=o integral does vanish whereas the Stratonovich corrector does not.

Having said this, we consider a modification of \eqref{eq:system_2} where the covariance operator depends on a parameter $N \in \N$:
\begin{align} \label{eq:system_N}
\begin{cases}
du^{\epsilon,N}_t = Au^{\epsilon,N}_t dt + b(u^{\epsilon,N}_t,u^{\epsilon,N}_t) dt + \epsilon^{-1/2} b(y^{\epsilon,N}_t,u^{\epsilon,N}_t) dt,
\\
dy^{\epsilon,N}_t = \epsilon^{-1} Cy^{\epsilon,N}_t dt +
Ay^{\epsilon,N}_t dt + b(u^{\epsilon,N}_t,y^{\epsilon,N}_t) dt + \epsilon^{-1/2}b(y^{\epsilon,N}_t,y^{\epsilon,N}_t) dt + \epsilon^{-1/2} Q_N^{1/2} dW_t.
\end{cases}
\end{align}

We assume that $Q_N$ satisfies assumptions (Q1)-(Q2) for every fixed $N>0$. 
It is easy to show that for every fixed $N$ there exists a bounded-energy family $\{(u^{\epsilon,N},y^{\epsilon,N})\}_{\epsilon\in(0,1)}$ of weak martingale solutions to \eqref{eq:system_N}. 
As a consequence, results of the previous sections hold true for the families $\{u^{\epsilon,N}\}_{\epsilon\in(0,1)}$, $N$ fixed, so that for every $N \in \N$ there exists a subsequence $\epsilon_n \to 0$ such that $u^{\epsilon_n,N} \to u^N$ in law as $n \to \infty$, where $u^N$ satisfies for every $\varphi \in F$, $t \in [0,T]$ the almost sure identity
\begin{align} \label{eq:003}
\varphi(u^N_t) 
&= 
\varphi(u_0)
+
\int_0^t \mathscr{L}^{0,N} \varphi(u^N_s) ds
+
\int_0^t \langle D_y \varphi_1(u^N_s) , Q^{1/2}_N dW^N_s \rangle,
\end{align} 
with ($\mu^N$ below denotes the invariant measure of $dv=Cv dt + Q_N^{1/2} dW_t$)
\begin{align*}
\mathscr{L}^{0,N} \varphi (u)
&=
\langle Au + b(u,u), h \rangle
+
\int_H \psi_u (w) d\mu^N(w).
\end{align*}
In addition, the family $\{u^{\epsilon,N}\}_{\epsilon\in(0,1),N \in \N}$ can be choosen in such a way that energy bound (S3) holds uniformly in $\epsilon\in(0,1)$ and in $N \in \N$ (energy estimates on $u^{\epsilon,N}$ do not involve the covariance operator $Q_N$).

When $Q_N$ is isotropic, we have seen in part \emph{ii}) of \autoref{rmk:commuting+stokes} that the It\=o-Stokes drift vanishes.
As for the Stratonovich corrector, recall $Q_{N,\infty} = \frac{1}{2} (-C)^{-1}Q_N$ and there exists a simultaneous eigenbasis $\{e_k\}_{k \in\N}$ of $C$ and $Q_N$ such that $C e_k = -\lambda_k e_k$ and $Q_N e_k = q_{N,k} e_k$, and therefore the candidate operator $\kappa$ is formally given by
\begin{align} \label{eq:kappa}
\kappa(u)
&\coloneqq
\lim_{N \to \infty}
\kappa_N(u)
\coloneqq
\lim_{N \to \infty}
\sum_{k \in \N}
\frac{q_{N,k}}{2 \lambda_k^2}
b(e_k,b(e_k,u)) 
\end{align}
and it is easy to check by (B4) that $\kappa$ is symmetric and
\begin{align*}
\langle \kappa(u),u \rangle 
&=
\lim_{N \to \infty}
\sum_{k \in \N}
\frac{q_{N,k}}{2 \lambda_k^2}
\langle b(e_k,b(e_k,u)) , u \rangle
=
-\lim_{N \to \infty}
\sum_{k \in \N}
\frac{q_{N,k}}{2 \lambda_k^2} \|b(e_k,u)\|_H^2.
\end{align*}

\begin{thm} \label{thm:deterministic_limit}
Let $Q_N$ satisfy (Q1)-(Q2) for every fixed $N \in \N$. Suppose $Q_N$ is isotropic, and there exists some $\sigma_0<\infty$ such that for every $N \in \N$ the operator $\kappa_N$ given by \eqref{eq:kappa} is well defined and continuous from $H^{\sigma_0}$ to $H$, and for every fixed $u \in H^{\sigma_0}$ it holds $\kappa_N(u) \to \kappa(u)$ in $H$ as $N \to \infty$.
Finally, assume 
\begin{align*}
\lim_{N \to \infty} \sum_{k \in\N} \|Q_N^{1/2} e_k\|_{H^{-1-2\gamma}}^2 = 0.
\end{align*}
Then for every $\beta > 0$ the laws of the processes $\{u^{\epsilon,N}\}_{\epsilon\in(0,1), N \in \N}$ are tight as probability measures on the space $L^2([0,T],H)$ $\cap$ $C([0,T],H^{-\beta})$, and every weak accumulation point $u$ of the family $\{u^{\epsilon,N}\}_{\epsilon\in(0,1), N \in \N}$ as $\epsilon \to 0$ and then $N \to \infty$ is a weak solution of \eqref{eq:u_bis}, meaning that for every $t \in [0,T]$ and $h \in \mathscr{S}$ it holds almost surely
\begin{align*} 
\langle u_t , h \rangle
=
\langle u_0 , h \rangle
+
\int_0^t \langle u_s , A h\rangle ds 
+ 
\int_0^t \langle b(u_s,u_s) , h \rangle ds 
+ 
\int_0^t \langle u_s , \kappa(h) \rangle ds.
\end{align*}

Assume in addition $u_0 \in H^1$ and the existence of $\kappa_0$ sufficiently large such that $\langle \kappa(u),u \rangle \leq -\kappa_0 \|u\|_{H^1}^2$ for every $u \in H^1$.
Then $u \in L^\infty([0,T],H^1) \cap L^2([0,T],H^2)$ is the unique strong solution of \eqref{eq:u_bis} on the time interval $[0,T]$, and the whole sequence $\{u^{\epsilon,N}\}_{\epsilon\in(0,1), N \in \N}$ converges in law to $u$ as $\epsilon \to 0$ and $N \to \infty$. 
\end{thm}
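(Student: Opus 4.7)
We first adapt \autoref{prop:tightness} to the joint-in-$(\epsilon,N)$ setting. The uniform-in-$(\epsilon,N)$ version of (S3) handles the deterministic contribution to the time increments. The remaining $N$-dependence enters through the It\=o isometry bound on the stochastic term of \eqref{eq:increments_k}, which involves weighted sums of the form $\sum_j \|Q_N^{1/2}e_j\|_{H^{-\sigma}}^2$; these remain bounded uniformly in $N$ thanks to the hypothesis $\sum_j \|Q_N^{1/2}e_j\|_{H^{-1-2\gamma}}^2 \to 0$. With uniform increment bounds in hand, Simon's criterion \autoref{lem:Simon} gives joint tightness in $L^2([0,T],H)\cap C([0,T],H^{-\beta})$.

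\textbf{Identification of the limit equation.} Extract a subsequence $(\epsilon_k,N_k)$ along which $u^{\epsilon_k,N_k}\to u$ in law and realize the convergence $\PP$-a.s.\ via Skorokhod representation. Applying the argument of \autoref{prop:identification} first for each fixed $N_k$ (or jointly, as in \autoref{s5.2}), we obtain for every $h\in\mathscr{S}$ the a.s.\ identity \eqref{eq:003}. By the isotropy hypothesis and \autoref{rmk:commuting+stokes}(ii) the It\=o-Stokes drift in $\mathscr{L}^{0,N_k}$ vanishes, and using symmetry of $\kappa_{N_k}$ the Stratonovich corrector is rewritten as $\int_0^t\langle u^{N_k}_s,\kappa_{N_k}(h)\rangle\,ds$. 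The stochastic term vanishes in $L^2(\PP)$ as $N_k\to\infty$: by It\=o isometry and the bound $|\langle v,D_y\varphi_1(u)\rangle|\lesssim\|h\|_{H^{\theta_1}}\|u\|_H\|v\|_{H^{-1-2\gamma}}$,
\begin{align*}
\mathbb{E}\Bigl[\Bigl|\int_0^t \langle D_y\varphi_1(u^{N_k}_s), Q_{N_k}^{1/2}dW^{N_k}_s\rangle\Bigr|^2\Bigr]
\lesssim \|h\|_{H^{\theta_1}}^2 \Bigl(\sum_j \|Q_{N_k}^{1/2}e_j\|_{H^{-1-2\gamma}}^2\Bigr) \int_0^t \mathbb{E}\|u^{N_k}_s\|_H^2\,ds \longrightarrow 0.
\end{align*}
Since $h\in\mathscr{S}\subset H^{\sigma_0}$, the strong convergence $\kappa_{N_k}(h)\to\kappa(h)$ in $H$, combined with $u^{N_k}\to u$ in $L^2([0,T],H)$, identifies the limit of the Stratonovich corrector with $\int_0^t\langle u_s,\kappa(h)\rangle\,ds$. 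The remaining Navier--Stokes terms pass to the limit via the continuity-on-paths argument of Step 2 of the proof of \autoref{prop:identification}. Hence $u$ solves the announced weak formulation of \eqref{eq:u_bis}.

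\textbf{Strong solution, uniqueness and full convergence.} Under the additional hypothesis, close an $H^1$ a priori estimate for \eqref{eq:u_bis}. Testing with $u$ yields $u\in L^\infty([0,T],H)\cap L^2([0,T],H^1)$. Since $\kappa$ and $A$ are self-adjoint Fourier multipliers on $H$, they share a common eigenbasis, and the spectral translation of $\langle\kappa(u),u\rangle\leq -\kappa_0\|u\|_{H^1}^2$ gives $\langle -Au,\kappa(u)\rangle\leq -c\kappa_0\|u\|_{H^2}^2$; testing with $-Au$ then produces
\begin{align*}
\tfrac12\tfrac{d}{dt}\|u\|_{H^1}^2 + (\nu+c\kappa_0)\|u\|_{H^2}^2 \leq |\langle b(u,u),-Au\rangle| \lesssim \|u\|_{H^1}^{3/2}\|u\|_{H^2}^{3/2}.
\end{align*}
Young's inequality absorbs half of the $\|u\|_{H^2}^2$-term, reducing matters to a differential inequality of the form $\tfrac{d}{dt}f\leq -\alpha f+\beta f^3$ for $f=\|u\|_{H^1}^2$, with $\alpha\sim\kappa_0$ and $\beta\sim\kappa_0^{-3}$; this closes globally once $\kappa_0$ is large enough depending on $\|u_0\|_{H^1}$, giving $u\in L^\infty([0,T],H^1)\cap L^2([0,T],H^2)$. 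Uniqueness in this class is classical by a Gr\"onwall estimate on the difference of two solutions. Since the limit $u$ is deterministic and uniquely determined, every subsequential limit of $\{u^{\epsilon,N}\}$ coincides with $u$; convergence in law---hence in probability---of the whole family follows by the standard argument of \cite[Lemma 1.1]{GyKr96}.

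\textbf{Main obstacle.} The critical step is the closure of the $H^1$ a priori bound in dimension three: the enhanced dissipation provided by $\kappa$ must dominate the supercritical cubic scaling of the Navier--Stokes nonlinearity, which is exactly what the \emph{sufficiently large} $\kappa_0$ hypothesis guarantees. A secondary technical point is the uniform-in-$N$ control of the stochastic contributions in the tightness step, which is handled by the decay of $Q_N$ in the $H^{-1-2\gamma}$ norm.
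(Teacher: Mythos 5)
Your proposal follows essentially the same route as the paper: tightness via Simon's criterion with the $H^{-1-2\gamma}$ decay of $Q_N$ controlling the stochastic contribution, identification of the limit through the effective generator (isotropy killing the It\=o--Stokes drift and the It\=o integral vanishing by the same It\=o-isometry estimate), and then weak--strong uniqueness plus a diagonal/subsequence argument for full convergence. The only divergence is that where the paper simply cites \cite{Te95} for global strong well-posedness under large $\kappa_0$, you sketch the $H^1$ energy estimate explicitly — note that your step $\langle -Au,\kappa(u)\rangle \le -c\kappa_0\|u\|_{H^2}^2$ tacitly assumes $\kappa$ diagonalizes in the eigenbasis of $A$, which holds in the paper's example ($\kappa$ a multiple of $\Delta$) but is not literally among the stated hypotheses.
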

\begin{rmk}
A careful inspection of the convergence proof given in 
\autoref{s5.2} shows that we have the same result for the family $\{u^{\epsilon,N_\epsilon}\}_{\epsilon \in (0,1)}$ when $\epsilon \to 0$ and $N=N_\epsilon$ satisfies the condition that there exists $\delta \in (0,\gamma-1/4)$ such that 
$$
\epsilon^{{\delta}/{2(1-\gamma)}} Tr((-A)^{\theta_0/2} (-C_\epsilon)^{-1} Q_{N_\epsilon}^{1/2}) \to 0.
$$
\end{rmk}
\begin{proof}

By previous \autoref{thm:main} for every $N \in \N$ the family $\{u^{\epsilon,N}\}_{\epsilon\in(0,1)}$ converges in law as processes taking values in $L^2([0,T],H) \cap C([0,T],H^{-\beta})$ towards a weak martingale solution $u^N$ of
\begin{align*}
d u^N_t 
&=
A u^N_t + b(u^N_t,u^N_t) dt 
+
b((-C)^{-1}Q_N^{1/2} \circ dW^N_t,u^N_t)
\\
&= 
A u^N_t + b(u^N_t,u^N_t) dt + \kappa_N(u^N_t) dt
+
b((-C)^{-1}Q_N^{1/2} dW^N_t,u^N_t).
\end{align*}

The It\=o integral above vanishes when tested against smooth functions in the limit $N \to \infty$: indeed, for every given $h \in \mathscr{S}$ and $s,t \in [0,T]$, $s<t$ it holds
\begin{align*}
\mathbb{E} \left[ \left( \int_s^t  \langle b((-C)^{-1}Q_N^{1/2} dW^N_t,u^N_t) , h \rangle \right)^2 \right]
&\lesssim
|t-s|
\| h \|_{H^{\theta_1}}^2 
\sum_{k \in\N} \|Q_N^{1/2} e_k\|_{H^{-1-2\gamma}}^2  
 \to 0.
\end{align*}

In addition, by Simon compactness criterium the family $\{u^N\}_{N \in \N}$ is tight in $L^2([0,T],H)$ $\cap$ $C([0,T],H^{-\beta})$ since, on the one hand, uniform bounds in $N$ for the expectation of the norm of $u^N$ in  $L^\infty([0,T],H)$ $\cap$ $L^2([0,T],H^1)$ are inherited from the approximating family $\{u^{\epsilon,N}\}_{\epsilon\in(0,1),N \in \N}$ (cf. \autoref{rmk:selection,energy bounds}\emph{ii})), whereas by our assumption on $\kappa$ and Banach-Steinhaus Theorem it holds $\sup_{N \in \N}\| \kappa_N \|_{H^{\sigma_0} \to H} < \infty$ and thus
\begin{align*}
\sup_{N \in \N} \mathbb{E} \left[
\|u^N\|_{W^{\alpha,p}([0,T],H^{-\sigma})}^p \right]
\lesssim
1+\sup_{N \in \N}
\int_0^T \int_0^T \frac{
\mathbb{E}\left[\| u^N_t-u^N_s \|_{H^{-\sigma}}^p\right]}
{|t-s|^{1+\alpha p}} < \infty
\end{align*}  
for some $\sigma$ sufficiently large, $p>2$ and $\alpha > 1/p$.

To summarise the previous steps, by a diagonal argument there exists a subsequence $\{(\epsilon_n,N_n)\}_{n \in \N}$, $\epsilon_n \to 0$, $N_n \to \infty$ such that $u^{\epsilon_n,N_n}$ converges weakly to some random variable $u$ taking values in $L^2([0,T],H) \cap C([0,T],H^{-\beta})$. Moreover, arguing as in the proof of \autoref{prop:identification} it is easy to show that $u$ is almost surely a weak solution of the deterministic equation \eqref{eq:u_bis} (notice that $u$ could be random nonetheless, because of the lack of uniqueness in the limit equation).

Finally, under the stronger assumptions on $u_0$ and $\kappa$ it is well-known \cite{Te95} that Navier-Stokes equations admits a unique strong solution on $[0,T]$, where $T=T(u_0,\nu,\kappa_0)$ can be made arbitrary large taking $\kappa_0$ sufficiently large. By weak-strong uniqueness $u$ is the strong solution, and therefore the whole sequence $\{u^{\epsilon,N}\}_{\epsilon\in(0,1),N \in \N}$ converges since its limit does not depend on the subsequence. 
\end{proof}

\begin{rmk}
As an example we recall the scaling limit construction in \cite{FlLu21}. Let $d=3$, $\mathbb{Z}^d_+ \cup \mathbb{Z}^d_-$ be a partition of $\mathbb{Z}^d_0 \coloneqq  \mathbb{Z}^d \setminus \{\textbf{0}\}$ such that $\mathbb{Z}^d_+=-\mathbb{Z}^d_-$, $\textbf{k} \in \mathbb{Z}^d_0 $, $i=1,2$ and $\{a_{\textbf{k},i}\}_{i=1,2}$ be an orthonormal basis of $\mathbf{k}^\perp$ such that $\{a_{\textbf{k},1},a_{\textbf{k},2},\textbf{k}^\perp/|\textbf{k}|\}$ is right-handed and $a_{-\textbf{k},i} = -a_{\textbf{k},i}$ for every $\textbf{k} \in \mathbb{Z}^d_+$.
Introducing the Fourier basis $e_{\textbf{k},i}(x) \coloneqq \sqrt{2} a_{\textbf{k},i}\cos(2\pi \textbf{k}\cdot x)$ for $\textbf{k} \in \mathbb{Z}^d_+$, and $e_{\textbf{k},i}(x) \coloneqq \sqrt{2} a_{\textbf{k},i}\sin(2\pi \textbf{k}\cdot x)$ for $\textbf{k} \in \mathbb{Z}^d_-$, and the covariance operators $Q_N$ given by
\begin{align*}
Q_N^{1/2} e_{\textbf{k},i}
\coloneqq
C_{\kappa_0} \left(\sum_{N \leq |\textbf{k}| \leq 2N} \frac{1}{|\textbf{k}|^{2\delta}}\right)^{-1/2} \frac{e_{\textbf{k},i}}{|\textbf{k}|^\delta} \mathbf{1}_{\{N \leq |\textbf{k}| \leq 2N\}},
\quad
\delta > 0,
\end{align*}
we are in the setting of \autoref{thm:deterministic_limit} with limit eddy viscosity $\kappa$ given by a multiple of the Laplace operator: $\kappa(u) = C_{\kappa_0}' \Delta u$, for every $u \in H^2$, cf. \cite[Theorem 5.1]{FlLu21}. 
Moreover, for every $\kappa_0>0$ the constant $C_{\kappa_0}$ can be chosen in such a way that $\langle \kappa(u),u \rangle \leq -\kappa_0 \|u\|_{H^1}^2$ for every $u \in H^1$, so that the second part of our theorem holds, too.
\end{rmk}

\section{Other models} \label{sec:models}
In the main body of this paper our principal focus was on the system of 3D Navier-Stokes equations.
In this section we present instead other models of interest, namely the 2D Navier-Stokes equations with strong friction and unitary viscosity at small scales, the 2D Surface Quasi-Geostrophic equations and the Primitive equations.
We shall see how the proof of \autoref{thm:main} (as well as the preliminary work needed in the proof) can be adapted to the aforementioned models with little changes in the arguments.
Notice that the results in the previous \autoref{sec:eddy} will not be extended to other models, but remain a peculiarity of the Navier-Stokes equations.

\subsection{2D Navier-Stokes with pure friction} \label{ssec:2DNS}

In the 2D case, we can weaken the assumptions on the operator $C$. 
Unless some additional dissipation mechanism is present at small scales, assumption (C2) is not completely satisfactory and we would therefore consider a pure friction, {\it i.e.} $C=-\chi Id$ for some $\chi>0$.  

For instance, comparing our model with \cite{FlPa22+}, where the authors study $L^\infty$ solutions of 2D Navier-Stokes equations in vorticity form, it could be sensible and physically meaningful to consider instead the system 
\begin{align} \label{eq:2DNS_eps}
\begin{cases}
du^\epsilon_t 
= 
\nu \Delta u^\epsilon_t dt
-
\Pi(u^\epsilon_t \cdot \nabla) u^\epsilon_t dt
-
\epsilon^{-1/2}\Pi(y^\epsilon_t \cdot \nabla) u^\epsilon_t dt,
\\
dy^\epsilon_t 
= 
\nu \Delta y^\epsilon_t dt
-
\chi \epsilon^{-1} y^\epsilon_t dt
-
\Pi(u^\epsilon_t \cdot \nabla) y^\epsilon_t dt
-
\epsilon^{-1/2}\Pi(y^\epsilon_t \cdot \nabla) y^\epsilon_t dt
+
\epsilon^{-1/2}
d\Pi \mathcal{W}_t,
\end{cases}
\end{align}
where the large dissipation has the form of a friction term ($C=-\chi Id$). 
The previous equations are posed in the two-dimensional torus $\mathbb{T}^2$, and take the abstract form
\begin{align*}
\begin{cases}
du^\epsilon_t 
= 
A u^\epsilon_t dt
+
b(u^\epsilon_t,u^\epsilon_t) dt
+
\epsilon^{-1/2}b(y^\epsilon_t,u^\epsilon_t) dt,
\\
dy^\epsilon_t 
= 
A y^\epsilon_t dt
-
\chi \epsilon^{-1} y^\epsilon_t dt
+
b(u^\epsilon_t,y^\epsilon_t) dt
+
\epsilon^{-1/2}b(y^\epsilon_t,y^\epsilon_t) dt
+
\epsilon^{-1/2}
Q^{1/2}dW_t.
\end{cases}
\end{align*}

Therefore, in this section, we restrict to the $2D$ case and prove that our result indeed extend to a pure friction provided a smoother initial data and noise are taken.

In this setting, our analysis revolves around the modified $\epsilon$-dependent Ornstein-Uhlenbeck semigroup associated to the small-scale equation
\begin{align*}
dY^y_t 
=
(\epsilon A - \chi) Y^y_t dt 
+
Q^{1/2} dW_t,
\quad
Y^y_0 = y.
\end{align*}

Assume (Q1), and replace (Q2) with the assumption $\int_H \|w\|^2_{H^4} d\mathcal{N}(0,Q)(w) < \infty$. Moreover, let the initial datum $y_0 \in H^1$ (whereas $u_0 \in H$ as usual).
Then we can prove the anologous of \autoref{thm:main}, namely
\begin{thm}
Under the previous assumptions, let $\{(u^\epsilon,y^\epsilon)\}_{\epsilon \in (0,1)}$ be a bounded-energy family of probabilistically strong solutions to \eqref{eq:2DNS_eps} on a common stochastic basis $(\Omega, \mathcal{F}, \{\mathcal{F}_t \}_{t \geq 0}, \mathbb{P}, W)$, which exists by \autoref{prop:existence} and pathwise uniqueness.
Then, for every $\beta>0$, $u^\epsilon$ converges in $L^2([0,T],H) \cap C([0,T],H^{-\beta})$ in probability as $\epsilon \to 0$ towards the unique solution $u$ of the limiting equation (here the It\=o-Stokes drift velocity $r$ is defined as $r=\chi^{-1}\int b(w,w)d\mu(w)$, and $\mu=\mathcal{N}(0,Q_\infty)$ with $Q_\infty=(2\chi)^{-1}Q$)
\begin{align*}
du_t 
&= 
Au_t dt + b(u_t,u_t) dt +\chi^{-1} b(Q^{1/2}\circ dW_t,u_t) +  b(r,u_t) dt
\\
&=
\nu \Delta u_t dt 
-
\Pi (u_t \cdot \nabla) u_t dt
-
\chi^{-1} \Pi (Q^{1/2}\circ dW_t \cdot \nabla) u_t
-
\Pi (r \cdot \nabla) u_t dt.
\end{align*}
\end{thm}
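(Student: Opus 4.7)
The plan is to mirror the proof of \autoref{thm:main} via the perturbed test function method with the linearisation trick, now adapted to the 2D pure-friction setting. Three features make this adaptation tractable despite the loss of the regularisation property (C2): the operator $(-C)^{-1}=\chi^{-1}Id$ is explicit, so all correctors simplify accordingly; the 2D bilinear estimates on $b$ (notably through Ladyzhenskaya's inequality) are more favourable than in 3D; and the enhanced assumptions $\int_H\|w\|_{H^4}^2 d\mathcal{N}(0,Q)<\infty$ and $y_0\in H^1$ raise the baseline spatial regularity of the small-scale process, compensating for the loss of regularisation from $C$.

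First I would revisit the energy bounds of \autoref{lem:int_y}, \autoref{lem:E_y} and \autoref{prop:y-Y} and upgrade them to $H^1$, by applying It\=o's formula to $\|y^{\epsilon,n}_t\|_{H^1}^{2p}$ at the Galerkin level, obtaining uniform-in-$\epsilon$ bounds $\sup_{t,\epsilon}\mathbb{E}[\|y^\epsilon_t\|_{H^1}^{2p}]<\infty$ and $\sup_\epsilon\epsilon^{-1}\mathbb{E}\int_0^T\|y^\epsilon_s-Y^\epsilon_s\|_{H^1}^2 ds\lesssim 1$; the friction term $-\chi\epsilon^{-1}y^\epsilon$ plays here the role of $\epsilon^{-1}Cy^\epsilon$ in the dissipation estimate. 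The Poisson equation framework of \autoref{sec:poisson} then runs with $\mathscr{L}_y^\epsilon=\langle(\epsilon A-\chi)y,D_y\cdot\rangle+\tfrac12 Tr(QD_y^2\cdot)$: the semigroup $e^{C_\epsilon t}=e^{-\chi t}e^{\epsilon At}$ is exponentially mixing (from $-\chi<0$), the invariant measure $\mu^\epsilon$ converges to $\mu=\mathcal{N}(0,(2\chi)^{-1}Q)$, and the explicit corrector $\varphi_1^\epsilon(u,y)=\langle b((-C_\epsilon)^{-1}y,u),h\rangle$ is uniformly regular in $\epsilon$ since $(-C_\epsilon)^{-1}$ is bounded on every $H^s$ by $\chi^{-1}$. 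The second corrector $\varphi_2^\epsilon$ is constructed analogously, drawing its integrability from the $H^4$-moments of $\mu^\epsilon$. Tightness of $\{u^\epsilon\}$ in $L^2([0,T],H)\cap C([0,T],H^{-\beta})$ (as in \autoref{prop:tightness}) and identification of any weak accumulation point as a weak martingale solution of the limit equation (as in \autoref{prop:identification} and \autoref{s5.2}), including the It\=o-to-Stratonovich correction and the It\=o--Stokes drift $r=\chi^{-1}\int b(w,w)d\mu(w)$, then follow the same lines.

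Finally, since the 2D limit equation admits pathwise uniqueness (it is a 2D Navier--Stokes equation with transport noise and a linear drift, handled e.g.\ by the methods of \cite{BrFlMa16}), the last statement of \autoref{thm:main} applies: the Gy\"ongy--Krylov criterion \cite{GyKr96} upgrades weak convergence of the whole sequence $u^\epsilon$ to convergence in $\mathbb{P}$-probability in $L^2([0,T],H)\cap C([0,T],H^{-\beta})$. The main obstacle I anticipate is the careful bookkeeping needed to bound the derivatives of $\varphi_2^\epsilon$: without (C2), the only regularising term in $C_\epsilon$ is $\epsilon A$, so any gain of spatial smoothness for $e^{C_\epsilon t}$ costs a factor $\epsilon^{-\theta}$; these factors must be absorbed by the higher moments of $\mu^\epsilon$ in $H^4$ together with the improved 2D estimates on $b$, so as to ensure that $\varphi^\epsilon(u^\epsilon,y^\epsilon,Y^\epsilon)-\varphi(u^\epsilon)\to 0$ and $\mathscr{L}^\epsilon\varphi^\epsilon-\mathscr{L}^{0,\epsilon}\varphi\to 0$ in mean square, which is the crux of the convergence argument.
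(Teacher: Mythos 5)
Your overall route — upgrade the small-scale energy estimates to $H^1$, rerun the Poisson-equation/corrector machinery with $C_\epsilon=\epsilon A-\chi Id$, exploit the explicit $(-C_\epsilon)^{-1}$ and the $H^4$ moments of the noise, then conclude by tightness, identification and the Gy\"ongy--Krylov criterion — is exactly the paper's. But there is one concrete step where your sketch, as written, would fail: you claim to obtain the bound $\sup_\epsilon\epsilon^{-1}\mathbb{E}\int_0^T\|y^\epsilon_s-Y^\epsilon_s\|_{H^1}^2\,ds\lesssim 1$ by applying It\=o's formula to $\|y^{\epsilon,n}_t\|_{H^1}^{2p}$. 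That computation concerns $y^\epsilon$ alone and cannot produce an estimate on the difference with the crucial $\epsilon^{-1}$ prefactor: for $y^\epsilon$ itself the It\=o trace term contributes $\epsilon^{-1}Tr((-A)Q)\,t$ on the right-hand side, so the friction dissipation only yields $\int\mathbb{E}\|y^\epsilon_s\|_{H^1}^2\,ds\lesssim 1$, an order of magnitude weaker than what the linearisation trick needs. The paper instead applies It\=o's formula to $\|\zeta^{\epsilon,n}_t\|_{H^1}^2$ with $\zeta^{\epsilon,n}=y^{\epsilon,n}-Y^{\epsilon,n}$, where the noise cancels and the initial datum contributes only $\|y_0\|_{H^1}^2$, so the left-hand side $\epsilon^{-1}\chi\int\|\zeta^{\epsilon,n}_s\|_{H^1}^2\,ds$ is bounded by an $\epsilon$-independent quantity.

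Moreover, in that difference computation the genuinely two-dimensional ingredient — which your sketch does not name — is the identity $\langle b(v,v),Av\rangle=0$. It is what tames the most singular term $\epsilon^{-1/2}\langle b(y^{\epsilon,n},y^{\epsilon,n}),A\zeta^{\epsilon,n}\rangle$ by rewriting it as $-\epsilon^{-1/2}\langle b(y^{\epsilon,n},y^{\epsilon,n}),AY^{\epsilon,n}\rangle$, after which the $H^4$ regularity of $Y^{\epsilon,n}$ and the embedding $H^{1/2}\subset L^4$ close the estimate via Young's inequality. Ladyzhenskaya-type estimates alone, without this cancellation, leave an $\epsilon^{-1/2}$ that cannot be absorbed. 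Since the $\epsilon^{-1}$-weighted $H^1$ bound on $\zeta^\epsilon$ is precisely what compensates the $\epsilon^{-1/2}$ prefactor of $b(y^\epsilon,u^\epsilon)$ in the linearisation remainders (\autoref{prop:linearisation} in the pure-friction version), this is not a cosmetic detail but the linchpin of the whole argument; the rest of your plan is sound once this estimate is established correctly.
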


Here we limit ourselves to point out the major modifications needed to prove the result in the present setting (for simplicity we take $\chi=1$ in the following):
\\
$\bullet$
The thesis of \autoref{lem:int_y} changes into
\begin{align*}
\sup_{\substack{\epsilon\in(0,1),\\n \in\N}} \sup_{t \in [0,T]}
\int_0^t \mathbb{E} \left[ \|y^{\epsilon,n}_s\|^{2p}_H \right] ds
\lesssim 1,
\end{align*} 
with minor modifications in the proof; notice that this is still sufficient to prove the analogous of \autoref{lem:E_y} in the present setting;

$\bullet$ 
Making use of the key identity $\langle b(u,u),Au \rangle = 0$ (cf. \cite[Lemma 3.1]{Te95}), one can improve \autoref{prop:y-Y} in the following way: applying It\=o Formula to $\|\zeta^{\epsilon,n}_t\|_{H^1}^2 = \langle \zeta^{\epsilon,n}_t , (-A)\zeta^{\epsilon,n}_t \rangle$, and using the embedding $H^{1/2} \subset L^4$ and Young inequality, one gets
\begin{align*}
\frac12 \|\zeta^{\epsilon,n}_t\|_{H^1}^2
&-
\frac12 \|y_0\|_{H^1}^2
+
\int_0^t \|\zeta^{\epsilon,n}_s\|_{H^2}^2 ds
+
\epsilon^{-1}
\int_0^t \|\zeta^{\epsilon,n}_t\|_{H^1}^2 ds
\\
&=
\int_0^t \langle b(u^{\epsilon,n}_s, \zeta^{\epsilon,n} + Y^{\epsilon,n}_s) , A \zeta^{\epsilon,n}_s \rangle ds
-
\epsilon^{-1/2}
\langle b(\zeta^{\epsilon,n} + Y^{\epsilon,n}_s, \zeta^{\epsilon,n} + Y^{\epsilon,n}_s) , A Y^{\epsilon,n}_s \rangle ds
\\
&\leq
\int_0^t \|u^{\epsilon,n}_s\|_{H^{1/2}} 
\|\zeta^{\epsilon,n} + Y^{\epsilon,n}_s\|_{H^{3/2}} 
\|\zeta^{\epsilon,n}_s\|_{H^2} ds
\\
&\quad+
\epsilon^{-1/2} \int_0^t
\|\zeta^{\epsilon,n} + Y^{\epsilon,n}_s\|_H
\|\zeta^{\epsilon,n} + Y^{\epsilon,n}_s\|_{H^1} 
\|Y^{\epsilon,n}_s\|_{H^4} ds
\\
&\leq
\frac12 \int_0^t \|\zeta^{\epsilon,n}_s\|_{H^2}^2 ds
+
M \int_0^t \|u^{\epsilon,n}_s\|_{H^1}^2 ds
 +
M \int_0^t \|\zeta^{\epsilon,n}_s\|_H^{16} ds
\\
&\quad
+ 
\frac12 \epsilon^{-1} \int_0^t \|\zeta^{\epsilon,n}_s\|_{H^1}^2 ds
+
M \int_0^t \|Y^{\epsilon,n}_s\|_{H^4}^3 ds,
\end{align*}
for some unimportant finite constant $M$. 
Therefore because $y_0 \in H^1$ it holds
\begin{align} \label{eq:2d_zeta}
\sup_{t \in [0,T]} \left( 
\mathbb{E} \left[ \|\zeta^{\epsilon,n}_t\|_{H^1}^2\right]
+
\int_0^t \mathbb{E} \left[ \|\zeta^{\epsilon,n}_s\|_{H^2}^2\right] ds
+
\epsilon^{-1}
\int_0^t \mathbb{E} \left[ \|\zeta^{\epsilon,n}_t\|_{H^1}^2\right] ds
\right) 
\lesssim 1.
\end{align}
Similarly, for every $p\geq 2$ one can prove
\begin{align*}
\sup_{t \in [0,T]} \left( 
\mathbb{E} \left[ \|\zeta^{\epsilon,n}_t\|_{H^1}^p\right]
+
\int_0^t \mathbb{E} \left[ \|\zeta^{\epsilon,n}_t\|_{H^1}^{p-2}\|\zeta^{\epsilon,n}_s\|_{H^2}^2\right] ds
+
\epsilon^{-1}
\int_0^t \mathbb{E} \left[ \|\zeta^{\epsilon,n}_t\|_{H^1}^p\right] ds
\right) 
\lesssim 1;
\end{align*}

$\bullet$
Thanks to bounds \eqref{eq:2d_zeta}, the following estimates for the time increments of $u^\epsilon$, $y^\epsilon$ hold true (\autoref{lem:increments}).
\begin{align*}
\mathbb{E} \left[ \| u^\epsilon_t-u^\epsilon_s \|^p_{H^{-1}} \right]
&\lesssim \epsilon^{-1/2}|t-s|^{p/2};
\\
\mathbb{E} \left[ \|Y^\epsilon_t-Y^\epsilon_s \|_H^p \right]
&\lesssim \epsilon^{-1/2}|t-s|^{p/2};
\\
\mathbb{E} \left[ \| y^\epsilon_t-y^\epsilon_s-(Y^\epsilon_t-Y^\epsilon_s) \|_H^p \right]
&\lesssim \epsilon^{-1/2}|t-s|^{p/2};
\end{align*}

$\bullet$
The results of \autoref{sec:poisson} can be obtained replacing $C_\epsilon$ with $\epsilon A- Id$. As a consequence, Poisson equation improves regularity of the datum (cf. \autoref{cor:regularity_phi} and \autoref{prop:inverse_theta}) only at the price of factors $\epsilon^{-1}$, whereas preserves regularity uniformly in $\epsilon$;

$\bullet$
The first corrector $\varphi_1^\epsilon$ is given by $\varphi_1^\epsilon(u,y)=\phi_u^\epsilon(y)=\langle b((-\epsilon A + Id)^{-1}y,u),h \rangle$. 
In particular, by interpolation we can prove the following analogous of \autoref{prop:regularity_varphi1}: for every $\theta \in [0,1]$ and $s \in \R$ there exists $\theta_1=\theta_1(s)$ such that
\begin{align*}
\|D_y\varphi_1^\epsilon(u) \|_{H^{2\theta+s}} \lesssim \epsilon^{-\theta} \|h\|_{H^{\theta_1}} \|u\|_{H^s},
\quad
\|D_u\varphi_1^\epsilon(y) \|_{H^{2\theta+s}} \lesssim \epsilon^{-\theta} \|h\|_{H^{\theta_1}} \|y\|_{H^s};
\end{align*}

$\bullet$
The linearisation trick relies on \autoref{prop:linearisation}. As usual, denote $\zeta = y-Y$. Then for every $\theta \in (1,2)$ the following bounds hold true:
\begin{align*}
|\langle b(\zeta,u),D_u\varphi_1(Y) \rangle|
&\lesssim
\|\zeta\|_{H^1} \|u\|_H \|Y\|_{H^\theta} \|h\|_{\theta_1},
\\
|\langle b(y,u),D_u \varphi_1(\zeta) \rangle|
&\lesssim
\| b(y,u)\|_{H^{-1}} \|\zeta\|_{H^1} \|h\|_{\theta_1}
\\
&\lesssim
\|y\|_{H^1} \|u\|_{H^{\theta-1}} 
\|\zeta\|_{H^1} \|h\|_{\theta_1},
\\
|\langle b(\zeta,Y), D_y \varphi_1(u) \rangle
&\lesssim
\|\zeta\|_{H^1} \|Y\|_{H^\theta} \|u\|_H \|h\|_{\theta_1},
\\
|\langle b(y,\zeta),D_y \varphi_1(u)|
&\lesssim
\|y\|_{H^1} \|\zeta\|_{H^1} \|u\|_{H^{\theta-1}}\|h\|_{\theta_1};
\end{align*}

$\bullet$
To find the second corrector $\varphi_2^\epsilon$, we apply the analogous of \autoref{prop:inverse_theta} to $\Psi_u^\epsilon = \psi_u^\epsilon- \int_H \psi_u^\epsilon(w)d\mu(w)$, where
\begin{align*}
\psi_u^\epsilon = \langle b(\cdot,u) , D_u \varphi_1^\epsilon \rangle + \langle b(\cdot,\cdot),D_y \varphi_1^\epsilon \rangle \in \mathcal{E}_\theta,
\quad
\|\psi_u^\epsilon\|_{\mathcal{E}_\theta} \lesssim \|h\|_{H^{\theta_1}} \|u\|_H
\end{align*}
for every $\theta > 1$.
As a consequence, we get $\Phi_u^\epsilon \in \mathcal{E}_1 \cap D(\mathscr{L}_y^\epsilon)$ satisfying $\mathscr{L}_y^\epsilon \Phi_u^\epsilon = -\Psi_u^\epsilon$, and defining $\varphi_2^\epsilon(u,Y)=\Phi_u^\epsilon(Y)$ it holds $\|\varphi_2^\epsilon(u,\cdot)\|_{\mathcal{E}_1} \lesssim \|h\|_{H^{\theta_1}} \|u\|_H$.
Concerning regularity of $D_u\varphi_2^\epsilon(Y)$ (\autoref{prop:regularity_varphi2}), we notice that $\langle D_u^\epsilon \Psi_u , v \rangle \in \mathcal{E}$ for every $v \in H$, with $\|\langle D_u \Psi_u^\epsilon , v \rangle\|_\mathcal{E} \lesssim \|h\|_{H^{\theta_1}}\|v\|_H$, and with arguments similar to those in the proof of \autoref{prop:regularity_varphi2} the same holds true for $\langle D_u \varphi_2^\epsilon , v \rangle$;

$\bullet$
To prove \autoref{lem:Sobolev_increments}, we follow an approach similar to that of the proof in \autoref{sec:conv}.
The main difference in the proof consists in recovering a suitable bound for $|\varphi_1^k(u^\epsilon_s,y^\epsilon_s) - \varphi_1^k(u^\epsilon_t,y^\epsilon_t)|$, $s,t \in [0,T]$.
For given $s,t \in [0,T]$, $s<t$ we control
\begin{align*}
|\varphi_1^k(u^\epsilon_s,y^\epsilon_s) - \varphi_1^k(u^\epsilon_t,y^\epsilon_t)|
&\lesssim
\epsilon^{-1/4}
\|u^\epsilon_s-u^\epsilon_t\|_{H^{-1}}^{1/4} 
\|u^\epsilon_s-u^\epsilon_t\|_{H}^{3/4} \|y^\epsilon_s \|_{H^{1/2}} \|e_k\|_{H^{\theta_0}}
\\
&\quad+
\|y^\epsilon_s-y^\epsilon_t\|_{H^{-1}}^{1/2}
\|y^\epsilon_s-y^\epsilon_t\|_{H^1}^{1/2} \|u^\epsilon_t\|_H \|e_k\|_{H^{\theta_0}},
\end{align*}
where we have used $\|(-\epsilon A+Id)^{-1} y\|_{H^1} \lesssim \epsilon^{-1/4}\|y\|_{H^{1/2}}$ in the first estimate. 
By the analogous of \autoref{lem:increments} showed above, we have for some $\alpha>0$
\begin{align*}
\epsilon^{p/2} \mathbb{E} \left[|\varphi_1^k(u^\epsilon_s,y^\epsilon_s) - \varphi_1^k(u^\epsilon_t,y^\epsilon_t)|^p\right]
&\lesssim
\|e_k\|_{\theta_0}^p |t-s|^{\alpha p}, 
\end{align*}
and using $H^2$ bounds on $y^\epsilon$ we arrive to
\begin{align*}
\mathbb{E} \left[ \|u^\epsilon_t-u^\epsilon_s\|_{H^{-\sigma}}^p \right] \lesssim
|t-s|^{\alpha p};
\end{align*}

$\bullet$
To identify the equation solved by the limiting process $u$, we argue as in the proof of \autoref{prop:identification}. 

\subsection{2D Surface Quasi-Geostrophic equations}

On the space $H=L^2(\mathbb{T}^2)$ of zero-mean square integrable vorticity fields of the two-dimensional torus $\mathbb{T}^2$ we consider the Surface Quasi-Geostrophic system 
\begin{align} \label{eq:SQG}
\begin{cases}
d\xi^\epsilon_t 
= 
-(-\nu\Delta)^{1/2} \xi^\epsilon_t dt
-
u^\epsilon_t \cdot \nabla \xi^\epsilon_t dt
-
\epsilon^{-1/2}
y^\epsilon_t \cdot \nabla \xi^\epsilon_t dt,
\\
d\eta^\epsilon_t 
= 
\epsilon^{-1}C\eta^\epsilon_t + (-\nu\Delta)^{1/2} \eta^\epsilon_t dt
-
(u^\epsilon_t \cdot \nabla) \eta^\epsilon_t dt
-
\epsilon^{-1/2}
(y^\epsilon_t \cdot \nabla) \eta^\epsilon_t dt
+
\epsilon^{-1/2}
d\mathcal{W}_t,
\\
u^\epsilon_t = -\nabla^\perp(-\Delta)^{-1/2} \xi^\epsilon_t,
\\
y^\epsilon_t = -\nabla^\perp(-\Delta)^{-1/2} \eta^\epsilon_t,
\end{cases}
\end{align}
where $\nu>0$, $\nabla^\perp = (-\partial_y,\partial_x)$, the velocity fields $u^\epsilon,y^\epsilon$ are reconstructed from the vorticity fields $\xi^\epsilon,\eta^\epsilon$ via the Riesz transform $\mathcal{R} = -\nabla^\perp(-\Delta)^{-1/2}$, and $\mathcal{W}=Q^{1/2}W$ is a Wiener process on $H$.

We remark that in this setting elements of $H$ are scalar functions, whereas in the context of Navier-Stokes equations they were vector-valued. Sobolev spaces $H^s$, $s \in \R$ are then defined accordingly. 
Notice that the Riesz transform preserves Sobolev regularity component-wise.

We can set the previous system as an abstract equation in $H$ defining $\Lambda\xi = -(-\nu\Delta)^{1/2}\xi$ and the nonlinear operator $b$ as
\begin{align*}
b(\xi,\eta)
=
\nabla^\perp(-\Delta)^{-1/2} \xi \cdot \nabla \eta
=
-\mathcal{R}\xi \cdot \nabla \eta.
\end{align*} 
Seldom in the literature concerning Surface Quasi-Geostrophic equations the operator $\Lambda$ is defined to be equal to $(-\nu\Delta)^{1/2}$; in particular notice that with our convention on the sign, $\Lambda$ is a negative definite operator, and system \eqref{eq:SQG} reads
\begin{align} \label{eq:SQG_abstract}
\begin{cases}
d\xi^\epsilon_t 
= 
\Lambda \xi^\epsilon_t dt
+
b(\xi^\epsilon_t,\xi^\epsilon_t) dt
+
\epsilon^{-1/2}
b(\eta^\epsilon_t,\xi^\epsilon_t) dt,
\\
d\eta^\epsilon_t 
= 
\epsilon^{-1}C\eta^\epsilon_t dt 
+
\Lambda \eta^\epsilon_t dt
+
b(\xi^\epsilon_t,\eta^\epsilon_t) dt
+
\epsilon^{-1/2}
b(\eta^\epsilon_t,\eta^\epsilon_t) dt
+
\epsilon^{-1/2}
Q^{1/2} dW_t.
\end{cases}
\end{align}

Properties (B1)-(B4) hold true for the nonlinear operator $b$ (and $d=2$), so most of the arguments shown in previous sections for Navier-Stokes equations in velocity form can be adapted to the 2D Surface Quasi-Geostrophic equations; the main difference between the two models lies in the dissipative operator $\Lambda$, which is less regularizing in this latter case.
However, there are two main factors that help us dealing with loss of regularity: first, borrowing ideas from \cite{RoZhZh15}, we can prove additional bounds for the $\eta$ component of solutions to 
\eqref{eq:SQG_abstract} in the space $\mathcal{B}([0,T],L^p(\Omega,L^p(\mathbb{T}^2)))$, assuming the initial condition $\eta_0$ to be in $H \cap L^p(\mathbb{T}^2)$ for $p>2$; second, having restricted the Surface Quasi-Geostrophic equations in a two-dimensional space, one can take advantage of better Sobolev embeddings and better estimates for the product of Sobolev functions to circumvent the difficulties coming with a less regularizing operator $\Lambda$. 
As a consequence, we can allow $\Gamma \geq \gamma >1/4$ in (C2), but to exploit $L^p=L^p(\mathbb{T}^2)$ integrability of $\eta$ we need in addition that $C$ generates a semigroup $e^{Ct}$ satisfying $\|e^{Ct}\eta\|_{L^q} \lesssim \|\eta\|_{L^q}$ uniformly in $t>0$ for some $q>p$, so that by the same arguments of \cite[Lemma 5.5]{RoZhZh15} there exists $c_0$ sufficiently small such that
\begin{align} \label{eq:C-c_0}
\int_{\mathbb{T}^2} |\eta(x)|^p \eta(x) \left( C-c_0 Id\right) \eta(x) dx \geq 0.
\end{align}
Notice that, in particular, the choice $C=\Lambda$ is allowed in the present setting.
Finally, assumptions (Q1)-(Q2) must be modified replacing $A$ with $\Lambda$.

Here we state the analogous of \autoref{thm:main}. In the case of Surface Quasi-Geostrophic equations, the limiting equation below has been studied in \cite{ReMeCh17}. 
\begin{thm}
Under the previous assumptions, let $\{(\xi^\epsilon,\eta^\epsilon)\}_{\epsilon \in (0,1)}$ be a bounded-energy family of martingale solutions to \eqref{eq:SQG}.
Then for every $\beta>0$, the laws of the processes $\{\xi^\epsilon\}_{\epsilon\in(0,1)}$ are tight as probability measures on the space $L^2([0,T],H) \cap C([0,T],H^{-\beta})$, and every weak accumulation point $(\xi,Q^{1/2} W)$ of $(\xi^\epsilon,Q^{1/2} W^\epsilon)$, $\epsilon \to 0$, is an analytically weak solution of the equation with transport noise and It\=o-Stokes drift velocity $r = \int_H (-C)^{-1} b(w,w) d\mu(w)$:
\begin{align*} 
d \xi _t
&= 
\Lambda \xi_t dt 
+ 
b(\xi_t, \xi_t) dt
+
b((-C)^{-1} Q^{1/2} \circ dW_t, \xi_t)
+
b(r, \xi_t) dt
\\
&= 
-\nu^{1/2}(-\Delta)^{1/2} \xi_t dt
+
\nabla^\perp (-\Delta)^{-1/2} \xi_t \cdot \nabla \xi_t dt
\\
&\quad+ 
\nabla^\perp (-\Delta)^{-1/2} (-C)^{-1} Q^{1/2} \circ dW_t \cdot \nabla \xi_t
+
\nabla^\perp (-\Delta)^{-1/2} r \cdot \nabla \xi_t dt.
\end{align*}
If in addition pathwise uniqueness holds for the limit equation then the whole sequence converges in law; moreover, convergence in $\mathbb{P}$-probability holds true if solutions to \eqref{eq:SQG} are probabilistically strong.
\end{thm}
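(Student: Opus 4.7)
The plan is to follow the same three-step template used for the 3D Navier–Stokes case (tightness, passage to the limit via the perturbed test function method, identification of the Stratonovich/Itô–Stokes decomposition), checking at each step that the weaker dissipation $\Lambda=-(-\nu\Delta)^{1/2}$ can still accommodate the scaling exponents forced by $\gamma>1/4$, and that the nonlinearity $b(\xi,\eta)=-\mathcal{R}\xi\cdot\nabla\eta$ (with $\mathcal{R}$ the Riesz transform, preserving Sobolev regularity) satisfies (B1)--(B4) with $d=2$. First I would upgrade the energy estimates of \autoref{lem:int_y}--\autoref{lem:E_y} to the SQG context: the argument is essentially identical for the $L^2$-based bounds, but to compensate the loss of half a derivative in $\Lambda$ one also needs uniform $L^p$ bounds on $\eta^\epsilon$, obtained by applying It\^o formula to $\tfrac{1}{p}\|\eta^{\epsilon,n}_t\|_{L^p}^p$ on the Galerkin scheme and exploiting the positivity inequality \eqref{eq:C-c_0} to absorb the $\epsilon^{-1}$ friction-type term together with the fact that $\int b(\xi,\eta)\cdot |\eta|^{p-2}\eta\,dx=0$ by integration by parts (since $\mathcal{R}\xi$ is divergence-free). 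This yields $\eta^\epsilon\in\mathcal{B}([0,T],L^p(\Omega,L^p(\mathbb{T}^2)))$ uniformly in $\epsilon$ for $p>2$ large enough, and the same bound is stable under $n\to\infty$.

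Next I would revisit the Poisson machinery of \autoref{sec:poisson}: since all arguments there are abstract, they carry over with $C_\epsilon=C+\epsilon\Lambda$ replacing $C+\epsilon A$, and the regularity statements in \autoref{prop:regularity_varphi1} and \autoref{prop:regularity_varphi2} hold with the same exponent $\tfrac{\theta-\gamma}{1-\gamma}$, now using the ratio between $\Lambda$ and $C$ as the interpolation parameter. The correctors are defined exactly as in \eqref{eq:varphi1} and \eqref{eq:varphi2}, and the linearisation estimate of \autoref{prop:linearisation} holds with $H^{\theta_0}$ replaced by the corresponding SQG Sobolev threshold for (B1); the analogue of \autoref{prop:y-Y} is proved by the same It\^o-formula argument applied to $\zeta^{\epsilon,n}=\eta^{\epsilon,n}-Y^{\epsilon,n}$, where now the $H^\gamma$-dissipation suffices because 2D Sobolev embeddings improve the product estimates used to control $b(u^{\epsilon,n},Y^{\epsilon,n})$ and $b(\eta^{\epsilon,n},Y^{\epsilon,n})$.

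For tightness I would apply Simon's criterion \autoref{lem:Simon} exactly as in \autoref{prop:tightness}, replacing the test functions $\varphi^{k,\epsilon}$ by their SQG analogues and repeating the fractional-Sobolev-increment estimate of \autoref{lem:Sobolev_increments}. The key modifications are: (i) use interpolation between the new increment bounds of \autoref{lem:increments}, which now read $\mathbb{E}\|\xi^\epsilon_t-\xi^\epsilon_s\|_{H^{-\theta_0}}^p\lesssim\epsilon^{-p/2}|t-s|^p$ with $\theta_0$ adjusted to (B1) in dimension two; (ii) bound the mixed trilinear terms in $\mathscr{L}^\epsilon\varphi^{k,\epsilon}$ through $\|\eta\|_{L^p}$ rather than $\|\eta\|_{H^1}$ when needed. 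Once tightness and Skorokhod are in place, the identification step \autoref{prop:identification} is purely algebraic, the vanishing of the remainder terms being a direct consequence of the uniform estimates above; the Stratonovich/It\^o–Stokes splitting in Subsection on It\^o-Stokes drift requires only (Q1) and the commutation of $C$ with $Q$, both assumed.

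The main obstacle I anticipate is the $L^p$-estimate for $\eta^\epsilon$: one must verify that the nonlinearity $b(\xi,\eta)$ remains orthogonal to $|\eta|^{p-2}\eta$ in $L^2(\mathbb{T}^2)$ (true because $\mathcal{R}\xi$ is divergence-free) \emph{and} that $\eta\mapsto b(\eta,\eta)$, which is not skew-symmetric, still produces a term controllable by $\|\eta\|_{L^p}^p$ alone, for which the analogous computation in \cite{RoZhZh15} is the template. A secondary difficulty is that in several trilinear estimates the index $\theta_0$ required for (B1) is close to the borderline allowed by \eqref{eq:C-c_0}; verifying that the 2D Sobolev embedding $H^s\hookrightarrow L^\infty$ for $s>1$ and the endpoint $L^p$ bound combine so that the remainder terms in the It\^o expansion \eqref{eq:002} still tend to zero in $L^1(\Omega)$ is the delicate computational point. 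Everything else is a routine transcription of the Navier–Stokes argument.
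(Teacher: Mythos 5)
Your overall route is the same as the paper's: uniform $L^p$ bounds on $\eta^\epsilon$ via the Krylov $L^p$ It\=o formula, the cancellation $\int_{\mathbb{T}^2}|\eta|^{p-2}\eta\, b(\xi,\eta)\,dx=0$ and the positivity inequality \eqref{eq:C-c_0}, followed by a transcription of the tightness/identification scheme with the trilinear terms estimated through $\|\eta\|_{L^p}$ and two-dimensional embeddings. (Your worry that $\eta\mapsto b(\eta,\eta)$ is "not skew-symmetric" is unnecessary: the identity above only uses that the advecting field $\mathcal{R}\xi$ is divergence-free, so it applies verbatim with $\xi=\eta$.)

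There is, however, one concrete error in your treatment of the correctors. You assert that the analogue of \autoref{prop:regularity_varphi1} holds ``with the same exponent $\frac{\theta-\gamma}{1-\gamma}$''. It does not: the interpolation there is between the smoothing of $(-C)^{-1}$ (which gains $2\gamma$ derivatives at no cost in $\epsilon$) and that of $\epsilon^{-1}$ times the inverse of the dissipation operator. For Navier--Stokes the latter is $(-\epsilon A)^{-1}$, gaining $2$ derivatives at cost $\epsilon^{-1}$, whence the range $\theta\in[\gamma,1]$ and the exponent $\frac{\theta-\gamma}{1-\gamma}$; for SQG it is $(-\epsilon\Lambda)^{-1}$, gaining only $1$ derivative, so the admissible range shrinks to $\theta\in[\gamma,1/2]$ and the exponent degrades to $\frac{\theta-\gamma}{1/2-\gamma}$. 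Each extra derivative on $D_y\varphi_1^\epsilon$, $D_u\varphi_1^\epsilon$ is therefore strictly more expensive in $\epsilon$, and with your (too optimistic) exponent the remainder estimates in the linearisation step would appear to close when in fact they need additional input. The paper recovers the necessary slack not from the exponent but from the function spaces: it introduces the Bessel potential scale $H^{s,p}$ so that the corrector bounds can be run in $L^p$-based spaces, uses the continuity $b:L^p\times H^{1/2-\delta}\to H^{-1}$, and checks that the resulting divergence rate $\epsilon^{-\frac{1-3\gamma}{1-2\gamma}}$ is strictly below the $\epsilon^{1/2}$ furnished by \autoref{prop:y-Y} precisely because $\gamma>1/4$. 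This quantitative verification, together with the $H^{s,p}$ machinery you omit, is the genuinely nontrivial part of the adaptation; the rest of your outline matches the paper.
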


\begin{rmk}
In \cite{RoZhZh15} the authors prove existence of probabilistically strong solutions to \eqref{eq:SQG_abstract} only in the subcritical case $\Lambda=-(-\nu \Delta)^\alpha$, $\alpha>1/2$.
Concerning the limiting equation, \cite[Theorem 3.3]{SQGpathwise} estiblishes local well-posedness of probabilistically strong solutions under suitable regularity assumptions on the initial datum $\xi_0$.
\end{rmk}

Let us therefore illustrate the main differences in the proof:
\\

$\bullet$ The existence of weak solutions $(\xi^\epsilon,\eta^\epsilon)$ to \eqref{eq:SQG_abstract} with bounded $\eta$ component in the space $\mathcal{B}([0,T],L^p(\Omega,L^p(\mathbb{T}^2)))$, $p>2$ can be proved among the lines of \cite[Theorem 3.3]{RoZhZh15}, under the hypothesis $\eta_0 \in H \cap L^p(\mathbb{T}^2)$. We point out that no $L^p$ bound on the large-scale process at time zero is required, making this assumption not very demanding by the modelling point of view (the small-scale process $\eta^\epsilon$ has a typical decorrelation time of order $\epsilon$, so that we care very little about the initial datum $\eta_0$). 
Let us only discuss why the bound in $\mathcal{B}([0,T],L^p(\Omega,L^p(\mathbb{T}^2)))$ for $\eta^\epsilon$ is indeed uniform in $\epsilon$. 

For the sake of a clear presentation, we present only a formal argument which relies on a $L^p$ It\=o Formula for $\eta^\epsilon$ developed in \cite{Kr09}, and a positivity lemma from \cite{RoZhZh15}.
To make the argument rigorous, one should perform the next computations at the level of a particular smooth approximations of $\eta^\epsilon$ and check that the obtained bounds pass to the limit, as done in full detail in the proof of \cite[Theorem 3.3]{RoZhZh15}. 
It is worth noticing that the aforementioned approximation is different from the Galerkin, since it is necessary that \eqref{eq:IBP} below holds true for the approximation process, too. We refer to \cite{RoZhZh15} for details.

Let us therefore move to the actual computations. Applying formally $L^p$ It\=o Formula \cite[Lemma 5.1]{Kr09} to $\eta^\epsilon$, one has for every $t \in [0,T]$:
\begin{align} \label{eq:Ito_p}
\|\eta^{\epsilon}_t\|_{L^p}^p 
&=
\|\eta_0\|_{L^p}^p
+
p \epsilon^{-1}\int_0^t \int_{\mathbb{T}^2}
|\eta^{\epsilon}_s(x)|^{p-2} \eta^{\epsilon}_s(x) C \eta^{\epsilon}_s(x) dx ds
\\
&\quad+ \nonumber
p \int_0^t \int_{\mathbb{T}^2}
|\eta^{\epsilon}_s(x)|^{p-2} \eta^{\epsilon}_s(x) \Lambda \eta^{\epsilon}_s(x) dx ds
\\
&\quad \nonumber
+
p \int_0^t \int_{\mathbb{T}^2}
|\eta^{\epsilon}_s(x)|^{p-2} \eta^{\epsilon}_s(x) b(\xi^{\epsilon}_s,\eta^{\epsilon}_s)(x) dx ds
\\
&\quad \nonumber
+ 
p \epsilon^{-1/2} \int_0^t \int_{\mathbb{T}^2}
|\eta^{\epsilon}_s(x)|^{p-2} \eta^{\epsilon}_s(x) b(\eta^{\epsilon}_s,\eta^{\epsilon}_s)(x) dx ds
\\
&\quad \nonumber
+
p \epsilon^{-1/2} \int_0^t \int_{\mathbb{T}^2}
|\eta^{\epsilon}_s(x)|^{p-2} \eta^{\epsilon}_s(x) Q^{1/2} dW_s(x) dx
\\
&\quad \nonumber
+
\frac{p(p-1)}{2} \epsilon^{-1} \int_0^t \int_{\mathbb{T}^2}
|\eta^{\epsilon}_s(x)|^{p-2} Tr(Q) dx ds.
\end{align}
Moreover, by integration by parts, it is easy to check that for every $\eta,\xi$ sufficiently regular and $p>2$: 
\begin{align} \label{eq:IBP}
\int_{\mathbb{T}^2} |\eta|^{p-2} \eta b(\xi,\eta) dx 
=
-(p-1)
\int_{\mathbb{T}^2} |\eta|^{p-2} \eta b(\xi,\eta) dx.
\end{align}
Notice that the two quantities above differ only in the factor $(p-1) \neq 1$, therefore 
\begin{align} \label{eq:IBP2}
\int_{\mathbb{T}^2} |\eta|^{p-2} \eta b(\xi,\eta) dx 
= 
0.
\end{align}

Hence, taking into account \eqref{eq:IBP2}, \eqref{eq:C-c_0} and \cite[Lemma 5.5]{RoZhZh15}, arguing as in the proof of \autoref{lem:int_y} we deduce:
\begin{align} \label{eq:E_eta}
\sup_{\substack{\epsilon\in(0,1)}} \,\sup_{t \in [0,T]}
\mathbb{E} \left[ \|\eta^{\epsilon}_t \|_{L^p}^p \right] \lesssim 1;
\end{align}

$\bullet$
\autoref{lem:E_y} becomes: for every $p \geq 2$ it holds
\begin{align*}
\sup_{\substack{\epsilon\in(0,1),\\ n \in \N}}\,
\sup_{t \in [0,T]}
\left(
\mathbb{E}\left[\|\eta^{\epsilon}_t\|_H^p \right]
+
\int_0^t
\mathbb{E}\left[\|\eta^{\epsilon}_s\|_H^{p-2}\|\eta^{\epsilon}_s\|_{H^{1/2}}^2\right] ds \right)
\lesssim 1,
\end{align*}
with identical proof. \autoref{prop:y-Y} remains the same.

$\bullet$
The results contained in \autoref{sec:poisson} need to be suitably modified taking into account the fact that the semigroup generated by $C_\epsilon = C + \epsilon \Lambda$ is less regularizing than the semigroup generated by $C + \epsilon A$, as for the Navier-Stokes system. The precise meaning of the previous phrase is that additional spatial regularity comes at higher price (in terms of factors $\epsilon^{-1}$). In particular, we need to interpolate between Sobolev spaces $H^\gamma$ and $H^{1/2}$ (instead of $H^\gamma$ and $H^1$).

$\bullet$
To find the corrector $\varphi_1^\epsilon$, one simply applies \autoref{prop:inverse_theta} as in the main body of this paper. The resulting corrector is 
\begin{align*}
\varphi_1^\epsilon(\xi,\eta) = \langle b((-C-\epsilon \Lambda)^{-1}\eta,\xi), h \rangle;
\end{align*} 

$\bullet$
In order to fully exploit $L^p$ integrability of the small-scale process $\eta^\epsilon$, we introduce the (homogeneous) Bessel potential spaces on the torus, defined as the spaces
\begin{align*}
H^{s,p} \coloneqq \left\{ \xi \in \mathscr{S}' : (-\Lambda)^s \xi \in L^p \right\},
\quad 
s \in \R,\, p \in (1,\infty),
\end{align*} 
which are Banach when equipped with the norm 
\begin{align*}
\|\xi\|_{H^{s,p}}
\coloneqq
\|(-\Lambda)^s \xi\|_{L^p}.
\end{align*}

Bessel potential spaces are always reflexive, and $(H^{s,p})^* = H^{-s,q}$, where $1/p+1/q=1$. Of course, $H^{0,p} = L^p$ for every $p \in (1,\infty)$, and $H^{s,2} = H^s$ for every $s \in \R$, with equivalence of norms. 

In \autoref{prop:regularity_varphi1}, the same argument yields in fact $D_\eta \varphi_1^\epsilon(\xi),D_\xi \varphi_1^\epsilon(\eta) \in H^{2\theta+s,p}$ for every $\xi,\eta \in H^{s,p}$, $s\in \R$, $p \in (1,\infty)$, $\theta \in [\gamma,1/2]$, with (here $\theta_1 = \theta_1(s,p)$ and $\gamma \leq 1/2$ without loss of generality)
\begin{align*}
\|D_\eta \varphi_1^\epsilon(\xi)\|_{H^{2\theta+s,p}} 
\lesssim 
\epsilon^{-\frac{\theta-\gamma}{1/2-\gamma}}
\|h\|_{H^{\theta_1}} \|\xi\|_{H^{s,p}},
\quad
\|D_\xi \varphi_1^\epsilon(\eta)\|_{H^{2\theta+s,p}} \lesssim 
\epsilon^{-\frac{\theta-\gamma}{1/2-\gamma}}
\|h\|_{H^{\theta_1}} \|\eta\|_{H^{s,p}};
\end{align*}

$\bullet$
The linearisation trick relies on \autoref{prop:linearisation}. 
As usual, denote $\zeta = y - Y$. 
In the present setting, we can prove the following bounds for any $p$ sufficiently large and $\delta$ sufficiently small (we use $b:L^p \times H^{1/2-\delta} \to H^{-1}$ continuous by Sobolev embedding, and assume $\gamma \leq 1/3$ without loss of generality):
\begin{align*}
|\langle b(\zeta,\xi),D_\xi \varphi_1(Y)\rangle|
&\lesssim
\|\zeta\|_{H} \|h\|_{H^{\theta_1}} \|Y\|_{H^{\theta_0-\gamma}} \|\xi\|_{H},
\\
|\langle b(\eta,\xi),D_\xi \varphi_1(\zeta)\rangle|
&\lesssim
\epsilon^{-\frac{1-3\gamma}{1-2\gamma}}
\|\zeta\|_{H^\gamma} \|h\|_{H^{\theta_1}} \|\xi\|_{H^{1/2-\delta}} \|\eta\|_{L^p},
\\
|\langle b(\zeta,Y),D_\eta \varphi_1(\xi)\rangle|
&\lesssim
\|\zeta\|_{H} \|h\|_{H^{\theta_1}} \|Y\|_{H^{\theta_0-\gamma}} \|\xi\|_{H},
\\
|\langle b(\eta,\zeta),D_\eta \varphi_1(\xi)\rangle|
&\lesssim
\epsilon^{-\frac{1-3\gamma}{1-2\gamma}}
\|\zeta\|_{H^\gamma} \|h\|_{H^{\theta_1}} \|\xi\|_{H^{1/2-\delta}} \|\eta\|_{L^p};
\end{align*}

$\bullet$
To find $\varphi_2^\epsilon$, we apply \autoref{prop:inverse_theta} to $\Psi_\xi^\epsilon = \psi_\xi^\epsilon - \int_H \psi_\xi^\epsilon (w) d\mu(w)$, where for every $\theta > 1-\gamma$:
\begin{align*}
\psi_\xi^\epsilon 
=
\langle b(\cdot,\xi), D_\xi \varphi_1^\epsilon \rangle 
+ 
\langle b(\cdot,\cdot), D_\eta \varphi_1^\epsilon \rangle
\in \mathcal{E}_{\theta}, 
\quad
\|\psi_\xi^\epsilon\|_{\mathcal{E}_\theta} \lesssim \|h\|_{H^{\theta_1}} \|\xi\|_{H}.
\end{align*}
As a result, we get $\Phi_\xi^\epsilon \in \mathcal{E}_{\theta-\gamma} \cap D(\mathscr{L}_\eta^\epsilon)$  satisfying $\mathscr{L}_\eta^\epsilon \Phi_\xi^\epsilon = - \Psi_\xi^\epsilon$, and defining $\varphi_2^\epsilon(\xi,Y) = \Phi_\xi^\epsilon(Y)$ we have $\|\varphi_2^\epsilon(\xi,\cdot) \|_{\mathcal{E}_{\theta-\gamma}} \lesssim \|h\|_{\theta_1} \|\xi\|_H$;

$\bullet$
For every $0<\theta <\theta_0+2\gamma-1$, and $v \in H^{-\theta}$ it holds $\langle v, D_\xi \Psi_\xi^\epsilon(\cdot) \rangle \in \mathcal{E}_{\theta_0}$, with
\begin{align*}
\|\langle v, D_\xi \Psi_\xi^\epsilon(\cdot) \rangle\|_{\mathcal{E}_{\theta_0}}
\lesssim
\|h\|_{H^{\theta_1}} \|v\|_{H^{-\theta}}.
\end{align*}
Therefore, \autoref{prop:regularity_varphi2} becomes
\begin{align*}
\|\langle v, D_\xi \varphi_2^\epsilon(\xi,\cdot) \rangle\|_{\mathcal{E}_{\theta_0}}
\lesssim
\|h\|_{H^{\theta_1}} \|v\|_{H^{-\theta}};
\end{align*}

$\bullet$
It\=o Formula (\autoref{lem:ito}) can be proved in a similar fashion, making sure of exploiting $L^p$ integrability of $\eta^\epsilon$ and \autoref{prop:regularity_varphi1}; 

$\bullet$
In order to prove tightness (\autoref{lem:Sobolev_increments}) we estimate 
\begin{align*}
|\varphi_1^{k,\epsilon}(\xi^\epsilon_s,\eta^\epsilon_s)-\varphi_1^{k,\epsilon}(\xi^\epsilon_t,\eta^\epsilon_t)|
&\lesssim
\|\xi^\epsilon_s-\xi^\epsilon_t\|_{H^{-2\gamma}} \|\eta^\epsilon_s\|_H \|e_k\|_{H^{\theta_1}}
+
\|\eta^\epsilon_s-\eta^\epsilon_t\|_{H^{-2\gamma}}\|\xi^\epsilon_t\|_H\|e_k\|_{H^{\theta_0}},
\end{align*}
and by interpolation it is sufficient to prove an easy analogous of \autoref{lem:increments} for the norm $H^{-\theta_0}$.
To control the term involving $\Phi^{k,\epsilon}$, we use \autoref{prop:regularity_varphi1} with $\theta=\gamma \geq 1/4$ and the continuity of $b:L^p \times H^{1/2-\delta} \to H^{1/2+2\gamma}$ for $p$ sufficiently large and $\delta$ sufficiently small.
As for the bound on $\Phi^{k,\epsilon}_1$, and in particular on the term $\langle b(\xi^\epsilon_s,\xi^\epsilon_s), D_\xi \varphi_1^\epsilon(\eta^\epsilon_s) \rangle$, we use Sobolev embedding $H^{1/4} \subset L^{8/3}$ and interpolation between $H^{\gamma+1/2,p}$ and $H^{\gamma+1-\delta,2}$ to control $\|D_\xi \varphi_1^\epsilon(\eta^\epsilon_s) \|_{H^{1,4}}$. Indeed, $1=(1-\theta)(\gamma+1-\delta) + \theta(\gamma+1/2)$ for $\theta=\frac{\gamma-\delta}{1/2-\delta}$, and thus $1/4 = (1-\theta)/2 + \theta/p$ holds for $p$ sufficiently large since $1-\theta<1/2$ for $\gamma>1/4$ and $\delta$ sufficiently small.
The identification of the It\=o-Stokes drift and the Stratonovich corrector goes as for the Navier-Stokes equations.

\subsection{Primitive equations}
Let $\mathbb{T}^d = (\R/\mathbb{Z})^d$ denote the $d$-dimensional torus, identified with points $(\mathbf{x},z) \in [0,1]^{d-1} \times [0,1]$ with periodic boundary conditions, and consider the system of Primitive equations
\begin{align} \label{eq:primitive}
\begin{cases}
du_t^\epsilon
= 
\nu \Delta u_t^\epsilon dt
-
(u_t^\epsilon \cdot \nabla_\mathbf{x}) u_t^\epsilon dt
-
v_t^\epsilon \partial_z u_t^\epsilon dt 
\\
\qquad-
\epsilon^{-1/2}
(y_t^\epsilon \cdot \nabla_\mathbf{x}) u_t^\epsilon dt
-
\epsilon^{-1/2}
w_t^\epsilon\partial_z u_t^\epsilon dt
+
\nabla_\mathbf{x} p_t^\epsilon dt,
\\
dy_t^\epsilon 
= 
\epsilon^{-1}Cy_t^\epsilon dt 
+
\nu \Delta y_t^\epsilon dt
-
(u_t^\epsilon \cdot \nabla_\mathbf{x}) y_t^\epsilon dt
-
v_t^\epsilon \partial_z y_t^\epsilon dt
\\
\qquad-
\epsilon^{-1/2}
(y_t^\epsilon \cdot \nabla_\mathbf{x}) y_t^\epsilon dt
-
\epsilon^{-1/2}
w_t^\epsilon \partial_z y_t^\epsilon dt
+
\epsilon^{-1/2}
d\mathcal{W}_t
+
\nabla_\mathbf{x} q_t^\epsilon dt,
\\
\partial_z p_t^\epsilon = 0, \,
\partial_z q_t^\epsilon = 0, 
\\
\mbox{div}_\mathbf{x} u_t^\epsilon + \partial_z v_t^\epsilon = 0, \,
\mbox{div}_\mathbf{x} y_t^\epsilon + \partial_z w_t^\epsilon = 0,
\end{cases}
\end{align}
where $\nu>0$ and $p^\epsilon,q^\epsilon$ are pressure fields. Conditions $\partial_z p_t^\epsilon = 0$, $\partial_z q_t^\epsilon = 0$ correspond to the so called hydrostatic approximation, whereas $\mbox{div}_\mathbf{x} u_t^\epsilon + \partial_z v_t^\epsilon = 0$ and $\mbox{div}_\mathbf{x} y_t^\epsilon + \partial_z w_t^\epsilon = 0$ prescribe incompressibility of the composite velocity fields $(u^\epsilon,v^\epsilon)$ and $(y^\epsilon,w^\epsilon)$.

Define $H$ as the space of horizontal velocity fields on the torus satisfying
\begin{align*}
H = \left\{ u \in [L^2(\mathbb{T}^d)]^{d-1} : \, \int_{\mathbb{T}^d} u(\mathbf{x},z) d\mathbf{x} dz = 0, \, \int_0^1 \mbox{div}_\mathbf{x} u(\mathbf{x},z) dz = 0  \right\}.
\end{align*}
For any $u \in H$, incompressibility condition $\mbox{div}_\mathbf{x} u_t + \partial_z v_t = 0$ uniquely determines the vertical velocity $v$ via the relation
\begin{align} \label{eq:vertical_vel}
v(\mathbf{x},z) 
= 
- \int_0^z \mbox{div}_\mathbf{x} u_t(\mathbf{x},z')dz',
\end{align}
so that \eqref{eq:primitive} reduces to a system for the unknown horizontal velocities $u^\epsilon,y^\epsilon$ only.

Projecting \eqref{eq:primitive} via the $L^2$ projector $\Pi$, $(\Pi u) (\mathbf{x},z) = u(\mathbf{x},z) - \int_0^1 u(\mathbf{x},z') dz'$, we can eliminate pressures and obtain the equivalent system on $H$:
\begin{align*}
\begin{cases}
du_t^\epsilon
= 
\nu \Delta u_t^\epsilon dt
-
\Pi(u_t^\epsilon \cdot \nabla_\mathbf{x}) u_t^\epsilon dt
-
\Pi v_t^\epsilon \partial_z u_t^\epsilon dt 
\\
\qquad-
\epsilon^{-1/2}
\Pi (y_t^\epsilon \cdot \nabla_\mathbf{x}) u_t^\epsilon dt
-
\epsilon^{-1/2}
\Pi w_t\partial_z u_t^\epsilon dt,
\\
dy_t^\epsilon
= 
\epsilon^{-1}Cy^\epsilon_t dt
+
\nu \Delta y_t^\epsilon dt
-
\Pi (u_t^\epsilon \cdot \nabla_\mathbf{x}) y_t^\epsilon dt
-
\Pi v_t^\epsilon \partial_z y_t^\epsilon dt
\\
\qquad-
\epsilon^{-1/2}
\Pi (y_t^\epsilon \cdot \nabla_\mathbf{x}) y_t^\epsilon dt
-
\epsilon^{-1/2}
\Pi w_t^\epsilon \partial_z y_t^\epsilon dt
+
\epsilon^{-1/2}
\Pi d\mathcal{W}_t
\\
\mbox{div}_\mathbf{x} u_t^\epsilon + \partial_z v_t^\epsilon = 0, \,
\mbox{div}_\mathbf{x} y_t^\epsilon + \partial_z w_t^\epsilon = 0,
\end{cases}
\end{align*}
and thus we can recast Primitive equations in an abstract setting with
\begin{align*}
Au = \nu \Delta u,
\quad
b(u,u')=-\Pi (u \cdot \nabla_\mathbf{x}) u' -\Pi v \partial_z u',
\quad
Q^{1/2}W = \Pi \mathcal{W},
\end{align*}
with $v$ recovered from $u$ by \eqref{eq:vertical_vel}.
It is immediate to verify the antisymmetric property (B4) of the nonlinear term $b$ above. However, $b$ is less regular than the nonlinear term of Navier-Stokes equations since the expression of $b(u,u')$ involves the horizontal gradient of $u$, which is not compensated by the integral in the vertical direction: indeed, it holds
\begin{itemize}
\item[({\bf B1})]
$b:H^s \times H^{\theta_0} \to H^{s-1}$ is bilinear and continuous for every $s \in \R$, $s<d/2$, $\theta_0>1+d/2$.
\item[({\bf B2})]
$b:H^s \times H^{\theta_1} \to H^{s-1}$ is bilinear and continuous for every $s \in \R$, $s \geq d/2$ and $\theta_1>1+s$.
\item[({\bf B3})]
$b:H^s \times H^r \to H^{s+r-2-d/2}$ is bilinear and continuous if $s,r \in (1-d/2,1+d/2)$, $s+r>1$.
\end{itemize}  

Due to the very mild regularity of $b$, to prove our convergence result one could either: \textit{i}) restrict to dimension $d=2$, and include bounds on $\partial_z u^\epsilon, \partial_z v^\epsilon$ in the space $L^p(\Omega, L^\infty([0,T],H) \cap L^2([0,T],H^1))$ in the notion of solution to \eqref{eq:primitive}, relying on a priori estimates as those carried on in \cite{GHZi08}; or \textit{ii)} require a sufficiently strong dissipation $C$ at small scales, satisfying (C1) and (C2) for some $\Gamma \geq \gamma > 9/8$. 
The introduction of the operator $C$ at small scales allows us to bound solutions $y^\epsilon$ uniformly in the space $L^p(\Omega,L^2([0,T],H^{\gamma}))$, with minor modifications in the proof of \autoref{prop:existence}.
We will also retain (Q1)-(Q2) unchanged. 

Under the previous assumptions, \autoref{thm:main} applied to Primitive equations yields:
\begin{thm}
Let $\{(u^\epsilon,y^\epsilon)\}_{\epsilon \in (0,1)}$ be a bounded-energy family of martingale solutions to \eqref{eq:primitive}.
Then for every $\beta>0$, the laws of the processes $\{u^\epsilon\}_{\epsilon\in(0,1)}$ are tight as probability measures on the space $L^2([0,T],H) \cap C([0,T],H^{-\beta})$, and every weak accumulation point $(u,Q^{1/2} W)$ of $(u^\epsilon,Q^{1/2} W^\epsilon)$, $\epsilon \to 0$, is an analytically weak solution of the equation with transport noise and It\=o-Stokes drift velocity $r = \int_H (-C)^{-1} b(w,w) d\mu(w)$:
\begin{align*} 
d u _t
&= 
A u_t dt 
+ 
b(u_t, u_t) dt
+
b((-C)^{-1} Q^{1/2} \circ dW_t, u_t)
+
b(r, u_t) dt
\\
&= 
\nu \Delta u_t dt
- \Pi (u_t \cdot \nabla_\mathbf{x}) u_t dt
- \Pi v_t \partial_z u_t dt
\\
&\quad 
- \Pi ((-C)^{-1} Q^{1/2} \circ dW_t \cdot \nabla_\mathbf{x}) u_t
- \Pi dw_t \partial_z u_t dt
- \Pi (r \cdot \nabla_\mathbf{x}) u_t dt
- \Pi q \partial_z u_t dt,
\end{align*}
where $v,w,q$ are defined implicitly by the incompressibility conditions
\begin{align*}
\textup{div}_\mathbf{x} u_t + \partial_z v_t = 0,
\quad
\textup{div}_\mathbf{x} (-C)^{-1}Q^{1/2}W_t + \partial_z w_t = 0,
\quad
\textup{div}_\mathbf{x} r + \partial_z q = 0.
\end{align*}
If in addition pathwise uniqueness holds for the limit equation then the whole sequence converges in law; moreover, convergence in $\mathbb{P}$-probability holds true if solutions to \eqref{eq:primitive} are probabilistically strong.
\end{thm}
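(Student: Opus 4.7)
The plan is to repeat the five-step scheme of \autoref{thm:main} while keeping careful track of how the weaker regularity (B1)--(B3) for the primitive nonlinearity is compensated by the stronger dissipation assumption $\Gamma \geq \gamma > 9/8$. First, I would establish the analogues of \autoref{lem:int_y}, \autoref{lem:E_y} and \autoref{prop:y-Y} in the present framework: applying It\=o Formula to $\tfrac12 \|y^{\epsilon,n}_t\|_H^{2p}$ still yields the dissipative gain $\epsilon^{-1} \int_0^t \|y^{\epsilon,n}_s\|_{H^\gamma}^2 ds$ from $(-C)^{1/2}$, which (together with (B4) to absorb the cubic $b$-term in the $H^\gamma$ norm thanks to $\gamma>9/8>d/2$) produces uniform $L^p(\Omega; L^2([0,T],H^\gamma))$ estimates on $y^{\epsilon,n}$ and on the difference $\zeta^{\epsilon,n} = y^{\epsilon,n} - Y^{\epsilon,n}$. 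The bound \eqref{eq:bound_zeta_H_p2} transfers verbatim to the limit $n\to\infty$.

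Next, I would revisit \autoref{sec:poisson} keeping the same strategy, since the abstract results there depend only on (C1), (C2), (Q1), (Q2) and on the class $\mathcal{E}_\theta$ of quadratic maps --- none of the bilinear estimates appear. Thus for every $\psi\in\mathcal{E}_\theta$ with $\theta\in[0,\theta_0)$ and zero $\mu^\epsilon$-mean, \autoref{prop:inverse_theta} produces $\phi\in\mathcal{E}_{\theta-\delta}\cap D(\mathscr{L}_y^\epsilon)$ with $\langle D_y \phi(\cdot),v\rangle\in\mathcal{E}_{\theta-2\delta_1}$ for $\delta_1+\delta_2<\gamma$. The corrector $\varphi_1^\epsilon(u,y) = \langle b((-C_\epsilon)^{-1}y,u),h\rangle$ is still well defined, but the analogue of \autoref{prop:regularity_varphi1} now loses one derivative: using (B1)--(B2) for primitive $b$, I would obtain
\begin{align*}
\|D_y\varphi_1^\epsilon(u)\|_{H^{2\theta+s-1}} + \|D_u\varphi_1^\epsilon(y)\|_{H^{2\theta+s-1}} \lesssim \epsilon^{-\frac{\theta-\gamma}{1-\gamma}} \|h\|_{H^{\theta_1}} (\|u\|_{H^s} + \|y\|_{H^s})
\end{align*}
for $\theta\in[\gamma,1]$. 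The condition $\gamma>9/8$ is exactly what is needed so that $D_y\varphi_1^\epsilon(u)$ lies in $H^{2\gamma-1}$ with $2\gamma-1>5/4$, giving enough smoothness to run the linearisation trick and to obtain $\psi_u^\epsilon\in\mathcal{E}_\theta$ for some $\theta<\theta_0$, hence a well-defined second corrector $\varphi_2^\epsilon$ via \autoref{prop:inverse_theta}.

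The linearisation step (\autoref{prop:linearisation}) would be adapted by replacing the exponents of Sobolev norms on $u$ and $\zeta$ using (B3) for primitive $b$: the bound $b:H\times H^{2+\gamma-2\delta}\to H^{-\gamma}$ becomes $b:H\times H^{3+\gamma-2\delta}\to H^{-\gamma}$, and the $H^\gamma$ control of $\zeta$ from \autoref{prop:y-Y} (now with $\gamma>9/8$) is still more than enough to absorb the factor $\epsilon^{-\frac{1-\gamma-\delta}{2(1-\gamma)}}$ against the $\epsilon^{1/2}$ coming from \autoref{prop:y-Y}. Tightness (\autoref{lem:Sobolev_increments}, \autoref{prop:tightness}) is obtained by exactly the same computation on $\varphi^{k,\epsilon}=\varphi^k+\epsilon^{1/2}\varphi_1^{k,\epsilon}$: each term in the It\=o expansion is estimated via (B1)--(B3), with one extra derivative appearing in the bilinear bounds but compensated by the stronger smoothing properties of $e^{C_\epsilon t}$.

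Finally, the identification of the limit proceeds verbatim as in \autoref{prop:identification}: Prokhorov plus Skorokhod produce an a.s. converging subsequence, the remainders are shown to be infinitesimal in mean square thanks to the regularity estimates on $\varphi_1^\epsilon,\varphi_2^\epsilon$ and on $Y^\epsilon,\zeta^\epsilon$, the map $\xi\mapsto\int_0^\cdot \mathscr{L}^0\varphi(\xi_s)ds$ is shown to be sequentially continuous from $L^2([0,T],H)$ to $C([0,T])$ using that $b:H\times H\to H^{-\theta_0}$ is bounded, and a martingale representation theorem identifies the stochastic integral. The decomposition into It\=o integral, Stratonovich corrector and It\=o-Stokes drift from \autoref{s5.2} requires no modification since it only uses (B4), commutativity of $C$ and $Q$, and the explicit form of $\varphi_1$. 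The main obstacle, in my view, is verifying that the specific exponents work out throughout the argument: one has to check that $\gamma>9/8$ leaves enough room simultaneously in (i) the Poisson improvement of \autoref{prop:inverse_theta}, (ii) the linearisation bounds, and (iii) the tightness increments; the proof amounts to a careful bookkeeping of Sobolev exponents rather than any genuinely new idea.
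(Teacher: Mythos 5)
Your overall scheme is the same as the paper's (rerun the five steps of the Navier--Stokes argument, tracking how the extra derivative lost by the hydrostatic nonlinearity is paid for by the stronger dissipation $\gamma>9/8$), and most of the bookkeeping you describe matches. But there is a concrete error in the way you handle the correctors and the linearisation step: you transplant the $\epsilon$-degenerate interpolation estimates of \autoref{prop:regularity_varphi1} and \autoref{prop:linearisation} into a regime where they no longer make sense. Those estimates interpolate between the smoothing of $(-C)^{-1}$ (gain of $2\gamma$ derivatives, no cost) and of $(\epsilon A)^{-1}$ (gain of $2$ derivatives, cost $\epsilon^{-1}$), which requires $\gamma<1$; for $\gamma>9/8$ the interpolation range $\theta\in[\gamma,1]$ is empty and the exponent $\tfrac{\theta-\gamma}{1-\gamma}$ is meaningless. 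Worse, your claim that the factor $\epsilon^{-\frac{1-\gamma-\delta}{2(1-\gamma)}}$ is ``absorbed'' by the $\epsilon^{1/2}$ from \autoref{prop:y-Y} fails outright when $\gamma>1$: one computes $\tfrac{1-\gamma-\delta}{2(1-\gamma)}=\tfrac12+\tfrac{\delta}{2(\gamma-1)}>\tfrac12$, so the product $\epsilon^{1/2}\cdot\epsilon^{-\frac{1-\gamma-\delta}{2(1-\gamma)}}=\epsilon^{-\frac{\delta}{2(\gamma-1)}}$ diverges. As written, your linearisation step does not close.

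The fix --- and what the paper actually does --- is to drop the degenerate factors entirely: since $\gamma>1$, the operator $(-C_\epsilon)^{-1}$ already gains $2\gamma>2$ derivatives uniformly in $\epsilon$, so one only needs the single non-degenerate endpoint estimate $\|D_y\varphi_1^\epsilon(u)\|_{H^{2\gamma+s-1}}+\|D_u\varphi_1^\epsilon(y)\|_{H^{2\gamma+s-1}}\lesssim\|h\|_{H^{\theta_1}}(\|u\|_{H^s}+\|y\|_{H^s})$, with $2\gamma-1>5/4$ supplying the room needed for (B3). The difference terms in the linearisation are then bounded with no negative power of $\epsilon$ at all, e.g. $|\langle b(y,\zeta),D_y\varphi_1^\epsilon(u)\rangle|\lesssim\|\zeta\|_{H^\gamma}\|y\|_{H^{\gamma-\delta}}\|u\|_H\|h\|_{H^{\theta_1}}$, and smallness comes from Cauchy--Schwarz in time: the uniform bound on $y^\epsilon$ in $L^p(\Omega,L^2([0,T],H^\gamma))$ (available precisely because $C$ is strongly dissipative) multiplied by the $O(\epsilon^{1/2})$ bound on $\zeta^\epsilon$ in $L^2(\Omega\times[0,T],H^\gamma)$ from \autoref{prop:y-Y}. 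Note also that the derivative lost by $b$ has to land somewhere: in the terms $\langle b(\zeta,u),D_u\varphi_1^\epsilon(Y)\rangle$ it lands on $u$, giving $\|u\|_{H^1}$ in place of $\|u\|_H$, which is harmless only because of the uniform $L^2([0,T],H^1)$ bound in (S3); your proposal does not track this. The rest of your outline (Poisson equation, tightness, identification, Stratonovich/It\=o--Stokes decomposition) is consistent with the paper once these exponents are repaired.
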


We refer to \cite[Theorem 2.6]{BrSl20+} for conditions ensuring pathwise uniqueness of the limit equations and the approximating system \eqref{eq:primitive}.
{See also \cite{AHHS22,AHHS22+} for additional reference.}
Let us finally take a look at the main modification needed to prove the analogous of \autoref{thm:main} for the solution of Primitive equations.
More specifically, we can prove:
\\

$\bullet$
The same statement of \autoref{lem:E_y}, although the bound \eqref{eq:energy_zeta_p} need to be modified in
\begin{align*}
\|\zeta^{\epsilon,n}_t\|_H^{p}
&+
\epsilon^{-1}pM \int_0^t\|\zeta^{\epsilon,n}_s\|_H^{p-2}\|\zeta^{\epsilon,n}_s\|_{H^\gamma}^2 ds
\\
&\leq \nonumber
\|y_0\|_H^p
+
p \int_0^t
\|\zeta^{\epsilon,n}_s\|_H^{p-2}
\langle b(u^{\epsilon,n}_s , Y^{\epsilon,n}_s),\zeta^{\epsilon,n}_s \rangle ds
\\
&\quad+ \nonumber
\epsilon^{-1/2} p
\int_0^t
\|\zeta^{\epsilon,n}_s\|_H^{p-2} 
\langle b(y^{\epsilon,n}_s,Y^{\epsilon,n}_s),\zeta^{\epsilon,n}_s \rangle ds
\\
&\leq\|y_0\|^p
+ \nonumber
M_1\int_0^t
\|\zeta^{\epsilon,n}_s\|_H^{p-2} 
\|\zeta^{\epsilon,n}_s\|_{H^\gamma}
\|u^{\epsilon,n}_s\|_{H^{1-\gamma}} \|Y^{\epsilon,n}_s\|_{H^{\theta_0}}ds
\\
&\quad+ \nonumber
\epsilon^{-1/2}
M_1\int_0^t
\|\zeta^{\epsilon,n}_s\|_H^{p-2}
\|\zeta^{\epsilon,n}_s\|_{H^\gamma}
 \|y^{\epsilon,n}_s\|_{H^{1-\gamma}} \|Y^{\epsilon,n}_s\|_{H^{\theta_0}} ds,
\end{align*}
but still can be controlled with the same techniques;

$\bullet$
The following variant of \autoref{prop:regularity_varphi1}: for every $u,y \in H^s$, $s\in \R$, we have $D_y \varphi_1^\epsilon(u),D_u \varphi_1^\epsilon(y) \in H^{2\gamma+s-1}$ for every, with
\begin{align*}
\|D_y \varphi_1^\epsilon(u)\|_{H^{2\gamma+s-1}} 
\lesssim 
\|h\|_{H^{\theta_1}} \|u\|_{H^s},
\quad
\|D_u \varphi_1^\epsilon(y)\|_{H^{2\gamma+s-1}} \lesssim \|h\|_{H^{\theta_1}} \|y\|_{H^s};
\end{align*}

$\bullet$
For every $\delta >0$ sufficiently small, $u\in H^1$, $y \in H^\gamma$ and $Y \in H^{\theta_0-\gamma}$ it holds:
\begin{align*}
|\langle b(\zeta,u), D_u \varphi_1^\epsilon(Y) \rangle|
&\lesssim
\|\zeta\|_H \|h\|_{H^{\theta_1}} \|Y\|_{H^{\theta_0-\gamma}} \|u\|_{H^1},
\\
|\langle b(y,u) , D_u \varphi_1^\epsilon(\zeta) \rangle|
&\lesssim 
\|\zeta\|_{H^\gamma}\|h\|_{H^{\theta_1}} \|y\|_{H^{\gamma-\delta}}\|u\|_{H},
\\
|\langle b(\zeta,Y) , D_y \varphi_1^\epsilon(u) \rangle|
&\lesssim
\|\zeta\|_H \|h\|_{H^{\theta_1}}\|Y\|_{H^{\theta_0-\gamma}} \|u\|_{H^1},
\\
|\langle b(y,\zeta) , D_y \varphi_1^\epsilon(u) \rangle|
&\lesssim
\|\zeta\|_{H^\gamma} \|h\|_{H^{\theta_1}} \|y\|_{H^{\gamma-\delta}} \|u\|_H ,
\end{align*}
where we have used $b:H^{\gamma-\delta} \times H^{3\gamma-1} \to H$ and $b:H^{\gamma-\delta} \times H^{2\gamma-1} \to H^{-\gamma}$ continuous by (B3), for $\gamma>9/8$ and our choice of $\delta$.

$\bullet$
The map $\psi_u^\epsilon = \langle b(\cdot,u), D_u \varphi_1^\epsilon \rangle + \langle b(\cdot,\cdot), D_y \varphi_1^\epsilon \rangle \in \mathcal{E}_{7/4-\gamma}$, with
$\|\psi_u^\epsilon\|_{\mathcal{E}_{7/4-\gamma}} \lesssim \|h\|_{H^{\theta_1}} \|u\|_{H}$.
For every $v \in H^{1-\theta}$, $\theta<\theta_0-1$, it holds $\langle v , D_u \Psi_u^\epsilon(\cdot) \rangle \in \mathcal{E}_{\theta_0}$ with $\|\langle v , D_u \Psi_u^\epsilon(\cdot) \rangle\|_{\mathcal{E}_{\theta_0}}\lesssim \|h\|_{H^{\theta_1}} \|v\|_{H^{1-\theta}}$;
As usual, we can construct $\varphi_2^\epsilon$ applying \autoref{prop:inverse_theta};

$\bullet$
The proof of It\=o Formula \autoref{lem:ito} needs to exploit the $H^{2\gamma-1}$ estimate of derivatives of $\Phi$: $\|D_u \Phi(u,y) \|_{H^{2\gamma-1}} ,\|D_y \Phi(u,y) \|_{H^{2\gamma-1}} \lesssim 1+\|u\|_H +\|y\|_H$, coming from \autoref{prop:regularity_varphi1};

$\bullet$
In \autoref{lem:Sobolev_increments}, we rely on suitable bounds on suitable negative Sobolev norms of the time increments $u^\epsilon_s-u^\epsilon_t$, $y^\epsilon_s-y^\epsilon_t$, analogous to those of \autoref{lem:increments}. Here we need to use $b:H^1 \times H^{\theta_0} \to H$ continuous.
To control the terms involving $\Phi^{\epsilon,k}$ and $\Phi^{\epsilon,k}_1$ we use the usual arguments and the continuity of $b:H^{\gamma} \times H^{3\gamma-1-\delta} \to H$ for $\gamma>9/8$ and $\delta$ sufficiently small;

$\bullet$
Finally, the proof of \autoref{prop:identification} is similar, paying attention to estimate
\begin{align*}
|\langle b(u^{\epsilon_n}_s,u^{\epsilon_n}_s) - b(u,u) , h \rangle|
&\lesssim
(\|u^{\epsilon_n}_s\|_{H^1}+\|u_s\|_{H^1})
\| u^{\epsilon_n}_s - u_s \|_{H}
\|h\|_{H^{\theta_0}},
\end{align*}
and introduce bounds on the $L^2([0,T],H^1)$ norm in the definition of the path space $\mathcal{X}$.
The identification of the It\=o-Stokes drift and the Stratonovich corrector goes as for the Navier-Stokes equations. 

\appendix

\section{Proof of \autoref{prop:existence}} \label{sec:existence_sol}
Existence of weak martingale solutions to \eqref{eq:system_2} is by now classical, at least when $\epsilon\in(0,1)$ is fixed, see \cite{FlGa95}.
For the sake of completeness here we provide a brief proof of our existence result for bounded-energy families of weak martingale solutions \autoref{prop:existence}.

We recall the following result, which is an immediate corollary of Ascoli-Arzelà Theorem.
\begin{lem} \label{lem:AA}
Let $E$ be a separable Banach space and let $F \subset E$ be a dense subset. Let $\{f^n\}_{n \in \N}$ be a sequence of measurable functions such that $f^n : [0, T]  \to E^*$. 
Assume that for every $t \in [0,T]$ the sequence $\{f^n_t\}_{n \in \N}$ is equibounded in $E^*$, and for any fixed $h \in F$ the sequence of real-valued functions $\{t \mapsto \langle f^n_t, h \rangle\}_{n \in \N}$ is equicontinuous.
Then, $f^n \in C([0, T]; (E^*)_w)$ for every $n \in \N$, and there exists $f \in C([0, T]; (E^*)_w)$ such that, up to a subsequence,
\begin{align*}
f^n \to f \mbox{ strongly in } C([0, T]; (E^*)_w).
\end{align*}
\end{lem}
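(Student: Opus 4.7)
The plan is to reduce the claim to a classical Ascoli--Arzel\`a compactness argument for real-valued functions, used on a countable dense subset of $E$, and then extend to all of $E$ via a uniform bound and density.

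First, I will invoke the hypothesis of equiboundedness together with equicontinuity on $F$ to deduce that the sequence is uniformly bounded in $E^*$ over $[0,T]$: equicontinuity of $t\mapsto \langle f^n_t,h\rangle$ for $h\in F$ prevents blow-up in $t$, and equiboundedness at each $t$ combined with Banach--Steinhaus (applied fibre-wise over the dense set $F$) yields a constant $R<\infty$ with $\|f^n_t\|_{E^*}\le R$ for all $n$ and all $t\in[0,T]$. Next, I check weak-$*$ continuity of each $f^n$: if $t_k\to t$ in $[0,T]$, then $\{f^n_{t_k}\}_k$ lies in the ball $B_R^*\subset E^*$, which is weak-$*$ compact (Banach--Alaoglu) and, because $E$ is separable, metrizable in the weak-$*$ topology; the equicontinuity assumption on $F$ plus density of $F$ in $E$ and the uniform bound $R$ force $\langle f^n_{t_k},g\rangle\to\langle f^n_t,g\rangle$ for every $g\in E$, so $f^n\in C([0,T];(E^*)_w)$.

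For the extraction of a limit, I pick a countable dense subset $\{h_j\}_{j\in\N}$ of $F$ (it exists since $E$, hence $F$, is separable). For each fixed $j$, the real-valued sequence $\{t\mapsto \langle f^n_t,h_j\rangle\}_n$ is equibounded (by $R\|h_j\|_E$) and equicontinuous on the compact interval $[0,T]$; classical Ascoli--Arzel\`a on $C([0,T];\R)$ delivers a uniformly convergent subsequence. A diagonal extraction then produces a single subsequence, still denoted $\{f^n\}$, such that $\langle f^n_\cdot,h_j\rangle\to \alpha_j(\cdot)$ uniformly on $[0,T]$ for every $j$. The pointwise limit $h_j\mapsto \alpha_j(t)$ is linear and bounded by $R\|h_j\|_E$, and since $\{h_j\}$ is dense in $E$, it extends uniquely to $f_t\in E^*$ with $\|f_t\|_{E^*}\le R$ and $\langle f_t,h_j\rangle=\alpha_j(t)$.

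The remaining task, which I expect to be the main technical point, is upgrading convergence on the dense countable set to convergence in $C([0,T];(E^*)_w)$. Because $\{f^n_t\}$ and $f_t$ all lie in $B_R^*$, and the weak-$*$ topology on $B_R^*$ is induced by the metric $d_{w*}(\xi,\eta)=\sum_j 2^{-j}\min(1,|\langle \xi-\eta,h_j\rangle|/(1+\|h_j\|_E))$, convergence in $C([0,T];(B_R^*,d_{w*}))$ is equivalent to uniform-in-$t$ convergence of each scalar function $\langle f^n_\cdot,h_j\rangle$, which is precisely what the diagonal extraction gives (a routine $\varepsilon/3$-argument using the geometric tail of the series). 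This also shows $t\mapsto f_t$ is weak-$*$ continuous as a uniform limit of weak-$*$ continuous maps, concluding the proof.
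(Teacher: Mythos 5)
Your overall strategy --- scalar Ascoli--Arzel\`a on a countable dense subset of $F$, a diagonal extraction, extension of the limit functional by density and a uniform norm bound, and metrizability of the weak-$*$ topology on bounded balls of $E^*$ --- is exactly the intended argument (the paper gives no proof beyond calling the lemma an immediate corollary of Ascoli--Arzel\`a), and everything from your second paragraph onward is correct \emph{provided} you have the uniform bound $\sup_{n}\sup_{t\in[0,T]}\|f^n_t\|_{E^*}\le R$.

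The gap is in how you obtain that bound. Banach--Steinhaus cannot be ``applied fibre-wise over the dense set $F$'': the uniform boundedness principle requires pointwise boundedness on a set of second category, and a dense subset (even a dense linear subspace) may be of first category, in which case pointwise boundedness on it says nothing about the operator norms. Concretely, take $E=E^*=\ell^2$, let $F$ be the finitely supported sequences, and set $f^n_t=n^2\phi_n(t)e_n$ with $\phi_n$ a continuous bump of height $1$ supported in $[1/(n+1),1/n]$. For each $h\in F$ only finitely many of the functions $t\mapsto\langle f^n_t,h\rangle$ are not identically zero, so this family is equicontinuous; for each fixed $t$ at most finitely many $f^n_t$ are nonzero, so $\sup_n\|f^n_t\|_{E^*}<\infty$; yet $\sup_{n,t}\|f^n_t\|_{E^*}=\infty$, and for $g=\sum_n n^{-1}e_n\in\ell^2$ one has $\sup_t|\langle f^n_t,g\rangle|=n\to\infty$ along every subsequence, while the only candidate limit is $0$ (since $f^n_t\to 0$ weakly for each fixed $t$). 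Hence no subsequence converges in $C([0,T];(E^*)_w)$, and in fact even the claim $f^n\in C([0,T];(E^*)_w)$ can fail for a single $n$ without a bound that is uniform in $t$. So the uniform-in-$t$ bound is not a consequence of the hypotheses as literally stated; it must be read into the word ``equibounded'' (as it is in the paper's application, where the bound \eqref{eq:bound_unif_yen} is uniform in $t$). With that reading your first step becomes unnecessary and the remainder of your proof goes through.
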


We are ready to prove our existence result.
\begin{proof}[Proof of \autoref{prop:existence}] 
Fix a stochastic basis $(\Omega, \mathcal{F}, \{\mathcal{F}_t \}_{t \geq 0}, \mathbb{P}, W)$.
Since the Galerkin system \eqref{eq:system_Gal} is finite-dimensional for every $\epsilon\in(0,1)$ and $n \in \N$, it is classical to show that for every $\epsilon\in(0,1)$ and $n \in \N$ a strong solution to \eqref{eq:system_Gal} exists on $(\Omega, \mathcal{F}, \{\mathcal{F}_t \}_{t \geq 0}, \mathbb{P}, W)$.
 
Hereafter, we fix $\epsilon\in(0,1)$ and we focus on the sequences $\{u^{\epsilon,n}\}_{n \in \N}$ and $\{y^{\epsilon,n}\}_{n \in \N}$.

{
In \cite{FlGa95} it is shown (minor modifications of Theorem 3.1) that for every $\epsilon$ the families of the laws of the processes $\{u^{\epsilon,n}\}_{n \in \N}$ and $\{y^{\epsilon,n}\}_{n \in \N}$ are tight in $L^2([0,T],H) \cap C([0,T],H^{-\theta_0})$, and (up to a change in the underlying probability space) they converge almost surely to progressively measurable processes $u^\epsilon,y^\epsilon \in C([0,T],H_w) \cap L^2([0,T],H^1)$ that satisfy (S1)-(S2).

We need to show that (S3)-(S4) hold true.}
Applying It\=o Formula to the function $\|u^{\epsilon,n}_t\|_H^2$, $t \in [0,T]$, we get for every $t \in [0,T]$ and $\epsilon \in (0,1)$:
\begin{align*} 
\|u^{\epsilon,n}_t\|_H^2 + 2 \int_0^t \|u^{\epsilon,n}_s\|_{H^1}^2 ds 
&= 
\|\Pi_n u_0\|_H^2.
\end{align*}
This implies (S3) since $u^{\epsilon,n} \to u^\epsilon$ in $L^2([0,T],H) \cap C([0,T],H^{-\theta_0})$.
Moreover, recalling \eqref{eq:zeta_pwr_p} and the fact that $\mathbb{E}[\|Y^{\epsilon,n}_t\|_{H^1}^p] \lesssim 1$ uniformly in $\epsilon,n$ and $t \in [0,T]$, we also have for every $p$:
\begin{align*}
\sup_{\substack{\epsilon\in(0,1),\\ n \in \N}}\,
\mathbb{E} \left[ \left(\int_0^T
\|y^{\epsilon,n}_s\|_{H^1}^2 ds\right)^{p/2} \right]
\lesssim 1,
\end{align*}
and the uniform bound on $y^\epsilon$ in $L^p(\Omega,L^2([0,T],H^1))$ descends readily.
{ We are left to check the uniform bound on $y^\epsilon$ in $\mathcal{B}([0,T],L^p(\Omega,H))$. 
By \autoref{lem:E_y}, 
\begin{align} \label{eq:bound_unif_yen}
\sup_{\substack{\epsilon\in(0,1),\\ n \in \N}}\,
\sup_{t \in [0,T]}
\mathbb{E} \left[  \|y^{\epsilon,n}_t\|_{H}^p \right]
\lesssim
1,
\end{align}
thus for every $p<\infty$ the functions $\{y^{\epsilon,n}\}_{n \in \N}$ are measurable maps from $[0,T]$ with values in $L^p(\Omega,H) = (L^q(\Omega,H))^*$, $1/p+1/q=1$, that are equibounded in $L^p(\Omega,H)$ for every fixed $t \in [0,T]$ (actually uniformly in $t \in [0,T]$). 
Moreover, for every fixed $h \in L^\infty(\Omega, \mathscr{S})$ and $s,t \in [0,T]$, $s<t$ we have
\begin{align*}
\left| \mathbb{E}\left[
\langle y^{\epsilon,n}_t-y^{\epsilon,n}_s , h\rangle \right] \right|
&\leq
\epsilon^{-1} \int_s^t
\mathbb{E}\left[ |\langle y^{\epsilon,n}_r , C_\epsilon h \rangle|\right] dr
+
\int_s^t
\mathbb{E}\left[ |\langle b(u^{\epsilon,n}_r,y^{\epsilon,n}_r) , h \rangle|\right] dr
\\
&\quad
\epsilon^{-1/2} \int_s^t
\mathbb{E}\left[ |\langle b(y^{\epsilon,n}_r,y^{\epsilon,n}_r) , h \rangle|\right] dr
\\
&\quad+
\epsilon^{-1/2} \mathbb{E}\left[ \langle \Pi_nQ^{1/2}(W_t-W_s),h \rangle \right]
\\
&\lesssim
|t-s| \left(1+\epsilon^{-1} \right)\|h\|_{L^\infty(\Omega, H^{\theta_0})} + |t-s|^{1/2} \epsilon^{-1/2} \|h\|_{L^\infty(\Omega, H)},
\end{align*}
meaning that the sequence of real-valued functions $\{ t \mapsto \langle y^{\epsilon,n}_t,h \rangle \}_{n \in \N}$ are equicontinuous for every fixed $h \in L^\infty(\Omega, \mathscr{S})$.
Since $L^\infty(\Omega, \mathscr{S})$ is dense in $L^q(\Omega,H)$, by  previous \autoref{lem:AA} we have, up to a subsequence that we still denote $n$: 
\begin{align*} 
y^{\epsilon,n} \to y^\epsilon,
\quad
\mbox{ strongly in } C([0, T], (L^p(\Omega,H))_w).
\end{align*} 
By Banach-Steinhaus theorem, $C([0, T], (L^p(\Omega,H))_w) \subset \mathcal{B} ([0, T], L^p(\Omega,H))$ and therefore $y^\epsilon$ inherits the bound \eqref{eq:bound_unif_yen}, which is uniform in $\epsilon$. 
The proof is complete.}
\end{proof}

\bibliographystyle{abbrv}

\begin{thebibliography}{10}

\bibitem{Ag22}
A. Agresti.
\newblock{
Delayed blow-up and enhanced diffusion by transport noise for systems of reaction-diffusion equations}.
\newblock{arXiv:2207.08293}, 2022.

\bibitem{AHHS22}
A. Agresti, M. Hieber, A. Hussein, and M. Saal. 
\newblock{The stochastic primitive equations with transport noise and turbulent pressure}. 
\newblock{ \em Stoch PDE: Anal Comp}, https://doi.org/10.1007/s40072-022-00277-3, 2022. 

\bibitem{AHHS22+}
A. Agresti, M. Hieber, A. Hussein, and M. Saal. 
\newblock{The stochastic primitive equations with non-isothermal turbulent pressure}. 
\newblock{ arXiv:2210.05973}, 2022. 


\bibitem{AgVe22}
A. Agresti, and M. Veraar.
\newblock{Reaction-diffusion equations with transport noise and critical superlinear diffusion: local well-posedness and positivity}.
\newblock{arXiv:2209.14759}, 2022.


\bibitem{SQGpathwise}
D.~Alonso-Orán, C.~Rohde, and H.~Tang. 
\newblock{A Local-in-Time Theory for Singular SDEs with Applications to Fluid Models with Transport Noise}.
\newblock{\em J. Nonlinear Sci.} 31:98, 2021. 

\bibitem{Zak-NLS22}
G.~Barru\'e, A.~de Bouard, and A.~Debussche.
\newblock{From the stochastic Zakharov system to the stochastic Nonlinear Schr\"odinger equation}.
\newblock {To appear}.

\bibitem{BaChChLiMe20}
W.~Bauer, P.~Chandramouli, B.~Chapron, L.~Li, and E.~M{\'e}min. 
\newblock{Deciphering the role of small-scale inhomogeneity on geophysical flow structuration: a stochastic approach}.
\newblock{\em Journal of Physical Oceanography}, 50(4):983--1003, 2020.

\bibitem{BeBlPu20}
J.~Bedrossian, A.~Blumenthal, and S.~Punshon-Smith.
\newblock Almost-sure enhanced dissipation and uniform-in-diffusivity
  exponential mixing for advection–diffusion by stochastic
  {N}avier–{S}tokes.
\newblock {\em Probab. Theory Relat. Fields}, 2020.

\bibitem{BeBlPu22b}
J.~Bedrossian, A.~Blumenthal, and S.~Punshon-Smith.
\newblock {Almost-sure exponential mixing of passive scalars by the stochastic
  Navier–Stokes equations}.
\newblock {\em Ann. Probab.}, 50(1):241--303, 2022.

\bibitem{BeBlPu22c}
J.~Bedrossian, A.~Blumenthal, and S.~Punshon-Smith.
\newblock The {B}atchelor spectrum of passive scalar turbulence in stochastic
  fluid mechanics at fixed {R}eynolds number.
\newblock {\em Comm. Pure Appl. Math.}, 2022.

\bibitem{BeBlPu22}
J.~Bedrossian, A.~Blumenthal, and S.~Punshon-Smith.
\newblock Lagrangian chaos and scalar advection in stochastic fluid mechanics.
\newblock {\em J. Eur. Math. Soc.}, 2022.

\bibitem{BeFe20}
H.~Bessaih and B.~Ferrario.
\newblock Invariant measures for stochastic damped 2{D} {E}uler equations.
\newblock {\em Commun. Math. Phys.}, 377:531--549, 2020.

\bibitem{BrFlMa16}
Z.~Brzeźniak, F.~Flandoli, and M.~Maurelli.
\newblock Existence and uniqueness for stochastic 2{D} {E}uler flows with
  bounded vorticity.
\newblock {\em Arch. Rational Mech. Anal.}, 221:107--142, 2016.

\bibitem{BrCaFl92}
Z.~Brze\'{z}niak, M.~Capinski, , and F.~Flandoli.
\newblock Stochastic {N}avier-{S}tokes equations with multiplicative noise.
\newblock {\em Stochastic Anal. Appl.}, 10(5):523--532, 1992.

\bibitem{BrSl20+}
Z.~Brze\'{z}niak and J.~Slav\'{i}k.
\newblock Well-posedness of the 3{D} stochastic {P}rimitive equations with
  transport noise, 2020.

\bibitem{BuVi19}
T.~Buckmaster and V.~Vicol.
\newblock {Nonuniqueness of weak solutions to the Navier-Stokes equation}.
\newblock {\em Ann. of Math.}, 189(1):101--144, 2019.

\bibitem{CoIy08}
P.~Constantin and G.~Iyer.
\newblock A stochastic {L}agrangian representation of the three-dimensional
  incompressible {N}avier-{S}tokes equations.
\newblock {\em Comm. Pure Appl. Math.}, 61(3):330--345, 2008.

\bibitem{CoGoHo17}
C.~J. Cotter, G.~A. Gottwald, and D.~D. Holm.
\newblock Stochastic partial differential fluid equations as a diffusive limit
  of deterministic {L}agrangian multi-time dynamics.
\newblock {\em Proc. R. Soc. A.}, 473(2205):20170388, 2017.

\bibitem{DPDe03}
G.~{Da Prato} and A.~Debussche.
\newblock Ergodicity for the 3{D} stochastic {N}avier–{S}tokes equations.
\newblock {\em Journal de Mathématiques Pures et Appliquées}, 82(8):877--947,
  2003.

\bibitem{DPZa02}
G.~Da~Prato and J.~Zabczyk.
\newblock {\em Second order partial differential equations in {H}ilbert
  spaces}.
\newblock Cambridge University Press, 2002.

\bibitem{DPZa14}
G.~Da~Prato and J.~Zabczyk.
\newblock {\em Stochastic equations in infinite dimensions}, volume 152 of {\em
  Encyclopedia of Mathematics and its Applications}.
\newblock Cambridge University Press, Cambridge, second edition, 2014.


\bibitem{dBGa12}
A.~de~Bouard and M.~Gazeau.
\newblock {A diffusion approximation theorem for a nonlinear {P}{D}{E} with
  application to random birefringent optical fibers}.
\newblock {\em Ann. Appl. Probab.}, 22(6):2460--2504, 2012.

\bibitem{DedMVo16}
A.~Debussche, S.~de~Moor, and J.~Vovelle.
\newblock Diffusion limit for the radiative transfer equation perturbed by a
  {M}arkovian process.
\newblock {\em Asymptot. Anal.}, 98:31--58, 2016.

\bibitem{DeVo12}
A.~Debussche and J.~Vovelle.
\newblock Diffusion limit for a stochastic kinetic problem.
\newblock {\em Comm. Pure Appl. Analysis}, 11(6):2305--2326, 2012.

\bibitem{DeVo21}
A.~Debussche and J.~Vovelle.
\newblock Diffusion-approximation in stochastically forced kinetic equations. (English) Zbl 1442.35286
\newblock {\em Tunis. J. Math.} 3, No. 1, 1-53, 2021. 

\bibitem{FlGaLu21c}
F.~Flandoli, L.~Galeati, and D.~Luo.
\newblock Delayed blow-up by transport noise.
\newblock {\em Comm. Partial Differential Equations}, 46, 2021.

\bibitem{FlGaLu21+}
F.~Flandoli, L.~Galeati, and D.~Luo.
\newblock Quantitative convergence rates for scaling limit of {S}{P}{D}{E}s
  with transport noise, 2021.

\bibitem{FlGaLu22}
F.~Flandoli, L.~Galeati, and D.~Luo.
\newblock Eddy heat exchange at the boundary under white noise turbulence.
\newblock {\em Phil. Trans. R. Soc. A.}, 380:20210096, 2022.

\bibitem{FlGa95}
F.~Flandoli and D.~Gatarek.
\newblock Martingale and stationary solutions for stochastic {N}avier-{S}tokes
  equations.
\newblock {\em Probab. Theory Relat. Fields}, 102:367--391, 1995.

\bibitem{FlGuPr10}
F.~Flandoli, M.~Gubinelli, and E.~Priola.
\newblock Well-posedness of the transport equation by stochastic perturbation.
\newblock {\em Invent. math.}, 180:1--53, 2010.

\bibitem{FlLu20}
F.~Flandoli and D.~Luo.
\newblock Convergence of transport noise to {O}rnstein-{U}hlenbeck for 2{D}
  {E}uler equations under the enstrophy measure.
\newblock {\em Ann. Probab.}, 48, 2020.

\bibitem{FlLu21}
F.~Flandoli and D.~Luo.
\newblock High mode transport noise improves vorticity blow-up control in 3{D}
  {N}avier–{S}tokes equations.
\newblock {\em Probab. Theory Relat. Fields}, 180:309--363, 2021.

\bibitem{FLL23}
F.~Flandoli, D. Luo, and E.~Luongo.
\newblock{2D Smagorinsky type large eddy models as limits of stochastic PDEs}.
\newblock{arXiv:2302.13614}, 2023.

\bibitem{FlMaNe14}
F.~Flandoli, M.~Maurelli, , and M.~Neklyudov.
\newblock Noise prevents infinite stretching of the passive field in a
  stochastic vector advection equation.
\newblock {\em J. Math. Fluid Mech.}, 16:805--822, 2014.

\bibitem{FlOl18}
F.~Flandoli and C.~Olivera.
\newblock Well-posedness of the vector advection equations by stochastic
  perturbation.
\newblock {\em J. Evol. Equ.}, 18(2):277--301, 2018.

\bibitem{FlPa21}
F.~Flandoli and U.~Pappalettera.
\newblock 2{D} {E}uler equations with {S}tratonovich transport noise as a
  large-scale stochastic model reduction.
\newblock {\em J. Nonlinear Sci.}, 31:24, 2021.

\bibitem{FlPa22+}
F.~Flandoli and U.~Pappalettera.
\newblock From additive to transport noise in 2{D} fluid dynamics.
\newblock {\em Stoch. PDE: Anal. Comp.}, 2022.

\bibitem{FlRo08}
F.~Flandoli and M.~Romito.
\newblock Markov selections for the 3{D} stochastic navier-stokes equations.
\newblock {\em Probab. Theory Relat. Fields}, 140(3-4):407--458, 2008.

\bibitem{FGPS}
J.P.~Fouque, J.~Garnier, G.~Papanicolaou, and K.~Solna.
\newblock {\em Wave propagation and time reversal in randomly layered media}.
\newblock Springer, 2007.

\bibitem{GaMe22}
J.~Garnier and L.~Mertz.
\newblock A control variate method driven by diffusion approximation.
\newblock {\em Comm. Pure Appl. Math.}, 75(3):455--492, 2022.

\bibitem{GeYa21+}
B.~Gess and I.~Yaroslavtsev.
\newblock Stabilization by transport noise and enhanced dissipation in the
  {K}raichnan model, 2021.

\bibitem{GHZi08}
N.~Glatt-Holtz and M.~Ziane.
\newblock The stochastic primitive equations in two space dimensions with
  multiplicative noise.
\newblock {\em Discrete \& Continuous Dynamical Systems - B}, 10(4):801--822,
  2008.

\bibitem{GyKr96}
I.~Gy{\"o}ngy and N.~Krylov.
\newblock Existence of strong solutions for {I}t{\^o}'s stochastic equations
  via approximations.
\newblock {\em Probab. Theory Relat. Fields}, 105(2):143--158, 1996.

\bibitem{HaMa06}
M.~Hairer and J.~C. Mattingly.
\newblock Ergodicity of the 2{D} {N}avier-{S}tokes equations with degenerate
  stochastic forcing.
\newblock {\em Ann. of Math.}, 164:993--1032, 2006.


\bibitem{HoZhZh21+}
M.~Hofmanová, R.~Zhu, and X.~Zhu.
\newblock Non-uniqueness in law of stochastic 3{D} {N}avier-{S}tokes equations,
  2021.

\bibitem{Ho15}
D.~D. Holm.
\newblock Variational principles for stochastic fluid dynamics.
\newblock {\em Proc. R. Soc. A.}, 471:20140963, 2015.

\bibitem{KrPe66}
S.~G. Krein and Y.~I. Petunin.
\newblock Scales of {B}anach spaces.
\newblock {\em Russian Mathematical Surveys}, 21(2):85--159, apr 1966.

\bibitem{Kr09}
N.~Krylov.
\newblock It\^o's formula for the ${L}^p$-norm of stochastic ${W}^{1,p}$-valued
  processes.
\newblock {\em Probab. Theory Relat. Fields}, 147:583--605, 2009.

\bibitem{Ku73}
T.~G. Kurtz.
\newblock A limit theorem for perturbed operator semigroups with applications
  to random evolutions.
\newblock {\em Journal of Functional Analysis}, 12:55--67, 1973.

\bibitem{Lu21+}
D.~Luo.
\newblock Enhanced dissipation for stochastic {N}avier-{S}tokes equations with
  transport noise.
\newblock{arXiv:2111.12931}, 2021.

\bibitem{LT22}
D.~Luo and B. Tang.
\newblock{Stochastic inviscid Leray-
model with transport noise: convergence rates and CLT}.
\newblock{arXiv:2210.16733}, 2022.

\bibitem{MaKr99}
A.~J. Majda and P.~R. Kramer.
\newblock Simplified models for turbulent diffusion: Theory, numerical
  modelling, and physical phenomena.
\newblock {\em Physics Reports}, 314(4):237--574, 1999.

\bibitem{MaTiVa01}
A.~J. Majda, I.~Timofeyev, and E.~Vanden~Eijnden.
\newblock A mathematical framework for stochastic climate models.
\newblock {\em Comm. Pure Appl. Math.}, 54(8):891--974, 2001.

\bibitem{Memin14}
E.~M\'emin.
\newblock Fluid flow dynamics under location uncertainty.
\newblock {\em Geophys. \& Astro. Fluid Dyn.}, 108(2):119--146, 2014.


\bibitem{MiRo04}
R.~Mikulevicius and B.~L. Rozovskii.
\newblock Stochastic {N}avier-{S}tokes equations for turbulent flows.
\newblock {\em SIAM J. Math. Anal.}, 35:1250--1310, 2004.

\bibitem{MiRo05}
R.~Mikulevicius and B.~L. Rozovskii.
\newblock {Global ${L}^2$-solutions of stochastic {N}avier–{S}tokes
  equations}.
\newblock {\em Ann. Probab.}, 33(1):137--176, 2005.

\bibitem{Pa13}
R.~L. Panton.
\newblock {\em Turbulent Flows}.
\newblock John Wiley \& Sons, Ltd, 2013.

\bibitem{PaStVa77}
G.~C. Papanicolaou, D.~Stroock, and S.~R.~S. Varadhan.
\newblock Martingale approach to some limit theorems.
\newblock {\em Duke turbulence conference}, 3:1--120, 1988.

\bibitem{Pa21+}
U.~Pappalettera.
\newblock Quantitative mixing and dissipation enhancement property of {O}rnstein-{U}hlenbeck flow, 
\newblock {\em Comm. Partial Differential Equations}, 47(12):2309--2340, 2021.


\bibitem{ReMeCh17}
V.~Resseguier, E.~Mémin, and B.~Chapron.
\newblock{Geophysical flows under location uncertainty, part II: Quasi-geostrophic models and efficient ensemble spreading.}
\newblock{\em Geophysical \& Astrophysical Fluid Dynamics}, 111:177--208, 2017.

\bibitem{RoZhZh15}
M.~Röckner, R.~Zhu, and X.~Zhu.
\newblock {Sub and supercritical stochastic quasi-geostrophic equation}.
\newblock {\em Ann. Probab.}, 43(3):1202--1273, 2015.

\bibitem{Si86}
J.~Simon.
\newblock Compact sets in the space {$L^p(0,T;B)$}.
\newblock {\em Ann. Mat. Pura Appl. (4)}, 146:65--96, 1986.

\bibitem{Te95}
R.~Temam.
\newblock {\em Navier-Stokes Equations and Nonlinear Functional Analysis:
  Second Edition}.
\newblock CBMS-NSF Regional Conference Series in Applied Mathematics. Society
  for Industrial and Applied Mathematics, 1995.

\bibitem{BrBr18}
T.~S. van~den Bremer and {\O}.~Breivik.
\newblock Stokes drift.
\newblock {\em Proc. R. Soc. A.}, 376:20170104, 2018.

\end{thebibliography}

\end{document}